\definecolor{winered}{rgb}{0.8,0,0}
\definecolor{deepblue}{rgb}{0,0,0.8}
\newtheorem{thm}{Theorem}[section]
\newtheorem{prop}[thm]{Proposition}
\newtheorem{cor}[thm]{Corollary}
\newtheorem{lem}[thm]{Lemma}
\theoremstyle{definition}
\newtheorem{df}[thm]{Definition}
\newtheorem{rmk}[thm]{Remark}
\newtheorem{exm}[thm]{Example}
\newtheorem{const}[thm]{Construction}
\newtheorem{recoll}[thm]{Recollection}
\numberwithin{equation}{section}
\newcommand{\E}{\mathbb{E}}
\newcommand{\F}{\mathbb{F}}
\renewcommand{\L}{\mathbb{L}}
\newcommand{\N}{\mathbb{N}}
\newcommand{\Q}{\mathbb{Q}}
\newcommand{\R}{\mathbb{R}}
\newcommand{\Sphere}{\mathbb{S}}
\newcommand{\Z}{\mathbb{Z}}
\newcommand{\cB}{\mathcal{B}}
\newcommand{\cC}{\mathcal{C}}
\newcommand{\cD}{\mathcal{D}}
\newcommand{\cE}{\mathcal{E}}
\newcommand{\cF}{\mathcal{F}}
\newcommand{\cG}{\mathcal{G}}
\newcommand{\cM}{\mathcal{M}}
\newcommand{\cP}{\mathcal{P}}
\newcommand{\cS}{\mathcal{S}}
\newcommand{\rB}{\mathrm{B}}
\newcommand{\rD}{\mathrm{D}}
\newcommand{\rH}{\mathrm{H}}
\newcommand{\rN}{\mathrm{N}}
\newcommand{\ul}{\underline}
\DeclareMathOperator{\im}{im}
\newcommand{\Alg}{\mathrm{Alg}}
\newcommand{\CAlg}{\mathrm{CAlg}}
\newcommand{\CRing}{\mathrm{CRing}}
\newcommand{\sCRing}{\mathrm{sCRing}}
\newcommand{\Fun}{\mathrm{Fun}}
\newcommand{\Mack}{\mathrm{Mack}}
\newcommand{\LMod}{\mathrm{LMod}}
\newcommand{\Mod}{\mathrm{Mod}}
\newcommand{\NAlg}{\mathrm{NAlg}}
\newcommand{\Sp}{\mathrm{Sp}}
\newcommand{\Ch}{\mathrm{Ch}}
\newcommand{\Hom}{\mathrm{Hom}}
\newcommand{\Map}{\mathrm{Map}}
\newcommand{\Poly}{\mathrm{Poly}}
\newcommand{\Set}{\mathrm{Set}}
\newcommand{\fib}{\mathrm{fib}}
\newcommand{\cofib}{\mathrm{cofib}}
\newcommand{\DF}{\mathrm{DF}}
\newcommand{\gr}{\mathrm{gr}}
\newcommand{\Fil}{\mathrm{Fil}}
\newcommand{\id}{\mathrm{id}}
\newcommand{\comp}{\mathrm{comp}}
\newcommand{\colim}{\mathop{\mathrm{colim}}}
\newcommand{\Fin}{\mathrm{Fin}}
\newcommand{\Spc}{\mathrm{Spc}}
\DeclareMathOperator{\sotimes}{\square}
\newcommand{\Assoc}{\mathrm{Assoc}}
\newcommand{\LM}{\mathcal{LM}}
\newcommand{\HH}{\mathrm{HH}}
\newcommand{\HR}{\mathrm{HR}}
\newcommand{\THH}{\mathrm{THH}}
\newcommand{\THR}{\mathrm{THR}}
\newcommand{\TC}{\mathrm{TC}}
\newcommand{\TCR}{\mathrm{TCR}}
\newcommand{\Bdi}{\mathrm{B}^\mathrm{di}}
\newcommand{\Ndi}{\mathrm{N}^\mathrm{di}}
\newcommand{\tr}{\mathrm{tr}}
\newcommand{\res}{\mathrm{res}}
\newcommand{\Spec}{\mathrm{Spec}}
\begin{document}
\author{Jens Hornbostel}
\address{Bergische Universit{\"a}t Wuppertal,
Fakult{\"a}t f\"ur Mathematik und Naturwissenschaften
\\
Gau{\ss}strasse 20, 42119 Wuppertal, Germany}
\email{hornbost@uni-wuppertal.de}
\author{Doosung Park}
\address{Bergische Universit{\"a}t Wuppertal,
Fakult{\"a}t f\"ur Mathematik und Naturwissenschaften
\\
Gau{\ss}strasse 20, 42119 Wuppertal, Germany}
\email{dpark@uni-wuppertal.de}

\title{Real Topological Hochschild Homology of Perfectoid Rings}
\subjclass{Primary 19D55; Secondary 11E70, 16E40, 55P91}
\keywords{real topological Hochschild homology, perfectoid rings, real Hochschild-Kostant-Rosenberg theorem}
\begin{abstract}
We refine several results of Bhatt-Morrow-Scholze on $\THH$ to $\THR$. In particular, we compute $\THR$ of perfectoid rings.
This will be useful for establishing motivic filtrations on real topological Hochschild and cyclic homology of quasisyntomic rings.
We also establish a real refinement of the Hochschild-Kostant-Rosenberg theorem.
\end{abstract}
\maketitle

\section{Introduction}
This article 
may be considered as a continuation of \cite{HP23}.
It establishes both general properties and specific computations for real topological Hochschild homology, which as usual we abbreviate by  $\THR$.
Recently, there has been a lot of progress on $\THR$, see e.g.\ 
\cite{AKGH}, 
\cite{DMP},
\cite{DMPR21}, 
\cite{DO19}, \cite{HW21}, \cite{QS22}, \cite{Par23log},
and \cite{HP23}. We refer to the introduction of the latter article for further background. 

\medskip

Besides the importance of further $\THR$ computations for their own sake, there are at least two motivations for our investigations. 

First, Bhatt, Morrow, and Scholze \cite{BMS18}, \cite{BMS19}, and \cite{BS22} have presented several approaches to integral $p$-adic Hodge theory. The approach of \cite{BMS19} relies on computations of topological Hochschild (and cyclic) homology for perfectoid and more generally quasiregular semiperfectoid rings.  They moreover study certain motivic filtrations on $\THH$ and $\TC$.
The latter then leads to a suitable definition of syntomic cohomology, and the generalization of Fontaine's period rings $A_{inf}(R)$ appears as $\pi_0 \TC^{-}(R;\Z_p)$ for perfectoid rings $R$.
In this article, we refine several key results of \cite{BMS19} to the real setting, thus making a first step towards \emph{real integral $p$-adic Hodge theory}.
This has been continued by the second author in \cite{Par23},
where $\THR$ of quasiregular semiperfectoid and quasisyntomic rings has been investigated, and motivic filtrations on $\THR$ and its companions have been established.
(There is also very interesting recent work on more classical real Hodge theory, see e.g.\ \cite{BW20}, but that's another story.)

Second, some time ago, Harpaz, Nikolaus, and Shah announced the construction of a real refinement of the cyclotomic trace. This real trace should then satisfy a real refinement of the theorems of Dundas, Goodwillie, and McCarthy, and hopefully also of the recent results of Clausen-Mathew-Morrow \cite{CMM21} as quoted in \cite[Theorem 7.15]{BMS19}. This work on the real trace has not yet appeared on arXiv, but it has already been used by Land \cite{L23}
in his work on Gabber rigidity for hermitian $K$-theory. We expect that it will lead to many more interesting results about hermitian $K$-theory, and we hope that the results of this article will be useful for this. 

\medskip
Some theorems of this article only hold for commutative rings with trivial involution. Future research is necessary to improve our understanding in the case with involutions. To prepare the ground for this, we establish large parts of the  following in the more general setting of commutative rings with involution, and even for arbitrary Green functors. We also provide computations for two important families of commutative rings with involution,
see Propositions \ref{thra2} and \ref{HRsigma} as well as Remarks \ref{addinginvolutions} and \ref{perfectoid ring with nontrivial involution}.

\medskip

We now summarize some of our main results.
For a commutative ring $R$ and a simplicial commutative $R$-algebra $A$,
the \emph{real Hochschild homology $\HR(A/R)$} is defined to be the coproduct
\[
\HR(A/R)
:=
\THR(A)\wedge_{\THR(R)}R
\]
in the category of $\Z/2$-normed spectra,
see Definition \ref{def:thrandhr}.
We prove the following real refinement of the classical Hochschild-Kostant-Rosenberg theorem (see e.g.\ \cite[Theorem 9.4.7]{Weibel}):

\begin{thm}
[see Theorems \ref{HRfiltration} and \ref{smoothHKR}]
\label{thm1.1}
There exists a natural descending filtration $\Fil_\bullet \HR(A/R)$ on $\HR(A/R)$ whose $n$th graded piece
\[
\gr^n \HR(A/R)
\simeq
(\iota \wedge_{A}^n  \L_{A/R})[n\sigma]
\]
for every integer $n$. If $A$ is a smooth $R$-algebra or if $2$ is invertible in $R$,
then this filtration is complete.
\end{thm}

We refer to Construction \ref{construction of abelian iota} for the functor $\iota$ and to Construction \ref{alphaupperstar} for the twist denoted by $[n\sigma]$. In the smooth case, note that $\iota \wedge_A^n \L_{A/R}$ is just the constant Mackey functor $\ul{\Omega_{A/R}^n}$.

We expect that this theorem  generalizes to the case when $A$ is a commutative $R$-algebra with involution and $2$ is invertible in $R$,
see Remark \ref{addinginvolutions} for the details.

The next result refines \cite[Theorem 6.1]{BMS19}.
It is a fundamental ingredient of \cite{Par23},
which establishes motivic filtrations on $\THR$ and $\TCR$ and discusses their applications to computations of real and hermitian $K$-theories assuming the results announced by Harpaz, Nikolaus, and Shah.
Recall that any commutative ring
$R$ may be considered as a constant Mackey functor $\ul{R}$ and thus leads to an equivariant Eilenberg-MacLane spectrum $\rH \ul{R}$.

\begin{thm}[see Theorem \ref{perfectoid THR}]
\label{introtheorem}
Let $R$ be a perfectoid ring.
Then there is a natural equivalence of
associative $\Z/2$-equivariant ring spectra
\[
\THR(R;\Z_p)
\simeq
T_{\rH \ul{R}} (S^{1+\sigma})
:=
\bigoplus_{n=0}^\infty
\Sigma^{n+\sigma n} \rH \ul{R},
\]
where $T_{\rH \ul{R}}(S^{1+\sigma})$ denotes the free associative $\rH \ul{R}$-algebra on $S^{1+\sigma}$.
\end{thm}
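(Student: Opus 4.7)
The plan is to adapt the strategy of Bhatt--Morrow--Scholze \cite{BMS19} for the non-equivariant Theorem 6.1 to the genuine $C_2$-equivariant setting, using the real Hochschild--Kostant--Rosenberg filtration of Theorem \ref{thm1.1} as the principal calculational input, together with Dotto--Moi--Patchkoria--Reeh's computation \cite{DMPR21} of $\THR(\F_p)$ as the base case. The proof proceeds in two main steps: first establish the result for perfect $\F_p$-algebras, and then deform to a general perfectoid $R$ along the surjection $R \twoheadrightarrow R/p$.

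For a perfect $\F_p$-algebra $\bar R$, the starting observation is that $\L_{\bar R/\F_p}=0$, so by Theorem \ref{thm1.1} the real HKR filtration on $\HR(\bar R/\F_p)$ collapses to yield $\HR(\bar R/\F_p)\simeq \rH\ul{\bar R}$. Combining this with the equivalence $\THR(\F_p)\simeq T_{\rH\ul{\F_p}}(S^{1+\sigma})$ from \cite{DMPR21} and an equivariant base-change argument for $\THR(\bar R)$ relative to $\THR(\F_p)$ (using that $\THR(\bar R/\F_p)\simeq \rH\ul{\bar R}$), one obtains $\THR(\bar R)\simeq T_{\rH\ul{\bar R}}(S^{1+\sigma})$.

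For a general perfectoid $R$ with reduction $\bar R=R/p$, one applies the real HKR filtration to $\HR(R;\Z_p)$ (taken over a suitable spherical or Witt-vector base). The perfectoid hypothesis forces the $p$-completed cotangent complex $\L_{R/\Z_p}^{\wedge}$ to be almost zero in the relevant range, so the filtration collapses on $p$-completed real Hochschild homology. Proceeding inductively on the Postnikov tower, or equivalently through the $\RO(C_2)$-graded Mackey-functor homotopy $\pi_{\star}^{C_2}$, one identifies $\THR(R;\Z_p)$ with $T_{\rH\ul R}(S^{1+\sigma})$ at the level of underlying $\rH\ul R$-modules. The associative $E_1$-structure is then pinned down by identifying the generator of $\pi_{1+\sigma}^{C_2}\THR(R;\Z_p)$ as an equivariant Bökstedt class $u_\sigma$ lifting the classical Bökstedt generator $u\in\pi_2\THH(R;\Z_p)$; universal freeness of $T_{\rH\ul R}(S^{1+\sigma})$ produces the comparison map, which is an equivalence by the homotopy calculation.

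The main obstacle is the deformation step from $\bar R$ to $R$. Non-equivariantly, \cite{BMS19} exploit concentration of $\THH(R;\Z_p)$ in even degrees together with flatness arguments; equivariantly, one must replace ``even concentration'' by concentration in representation-graded degrees $n(1+\sigma)$ and control the full Mackey-functor-valued $\RO(C_2)$-graded homotopy rather than just integer-graded homotopy. Establishing the appropriate equivariant evenness and controlling the Beilinson/Postnikov filtration, while exploiting the multiplicative norm structure that $\THR$ carries, is the crux of the argument.
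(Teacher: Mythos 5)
The high-level architecture of your proposal (real HKR filtration from Theorem \ref{thm1.1}, the DMPR base case for $\F_p$, and an inductive deformation from perfect $\F_p$-algebras to mixed characteristic) does track the paper's strategy. However, there is a concrete error in the central computational step, and several load-bearing technical ingredients that do not appear in your outline.

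The error: you write that for perfectoid $R$ ``the perfectoid hypothesis forces the $p$-completed cotangent complex $\L_{R/\Z_p}^\wedge$ to be almost zero in the relevant range, so the filtration collapses.'' This is wrong. The relevant fact, from \cite[Proposition 4.19(2)]{BMS19}, is that $(\L_{R/\Z_p})_p^\wedge \simeq R[1]$, a nonzero shift; consequently the $n$th graded piece of the $p$-completed real HKR filtration is $\iota(\wedge_R^n\L_{R/\Z_p})_p^\wedge[n\sigma]\simeq \ul{R}[n+n\sigma]$, which is nonzero for all $n\geq 0$. The filtration does not collapse; rather, the whole point of Proposition \ref{prop:perfectoid HR} is that it reproduces the slice filtration: $\rho_{2a}\HR(R;\Z_p)\cong\ul{R}$ for $a\geq 0$ and all other slices vanish. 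Moreover, even asserting that this is a complete filtration when $2$ is not invertible is a non-trivial input (Theorem \ref{conjHKR}, proved in \cite{Par23}), which you cannot take for granted.

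The gaps: your deformation from $\bar R$ to $R$ is where the real work happens, and your outline appeals to ``control of $\RO(C_2)$-graded homotopy'' and ``the multiplicative norm structure'' without specifying a mechanism. The paper's argument requires three technical pillars you omit: \emph{pseudo-coherence} of $\THR(R;\Z_p)$ as a complex of $\ul{R}$-modules (Lemma \ref{lem:pseudo-coherence}, which relies on the finiteness of $\ul{\pi}_n\THR(\Z)$ from Lemma \ref{lem:finiteness of THR} and a bootstrapping over the Postnikov tower of $\THR(\Z)$); a \emph{Mackey-functor Nakayama lemma} (Lemma \ref{lem:Nakayama}) to descend epimorphisms along $R\to R'$, the colimit perfection of $R/p$; and a \emph{rational comparison} (Lemma \ref{lem:rational THR(Z)}) to handle the characteristic-zero points of $\Spec R$ that are invisible from $R'$. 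In addition, the comparison map $T_{\rH\ul{R}}(S^{1+\sigma})\to\THR(R;\Z_p)$ is constructed not from an abstract ``equivariant Bökstedt class'' but by first proving, via Lemma \ref{lem:second slice}, that $\pi_{1+\sigma}\THR(R;\Z_p)\to\pi_{1+\sigma}\HR(R;\Z_p)\cong R$ is an epimorphism, and lifting a generator; this lemma itself requires the full base-change and Nakayama machinery. Without these steps your inductive argument does not close.
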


In particular, 
for perfectoid rings $\THR(R;\Z_p)$ is a \emph{very even} $\Z/2$-spectrum in the sense explained in Definition \ref{def:even} below.
In a previous version, Theorem \ref{perfectoid THR} was only proved for $p \neq 2$, but thanks to \cite[Theorem 6.20]{Par23} we now have it for all primes.
We refer to \cite[Definition 3.5]{BMS18} for perfectoid rings.
By \cite[Example 3.15]{BMS18},
a perfectoid ring is a generalization of a perfect $\F_p$-algebra to mixed characteristic.
The $p$-completion of $\Z_p[\zeta_{p^\infty}]$ is an example of a perfectoid ring,
see \cite[Example 3.6]{BMS18}.
In the proof of Theorem \ref{perfectoid THR},
one of the key properties of perfectoid rings $R$ we are using is \cite[Proposition 4.19(2)]{BMS19}: The $p$-completed cotangent complex $(\L_{R/\Z_p})_p^\wedge$ is equivalent to $R[1]$.

The proof of Theorem \ref{introtheorem} relies on the computation of $\THR(\F_p)$ due to Dotto-Moi-Patchkoria-Reeh in \cite[Theorem 5.15]{DMPR21} as the computation of $\THH(R;\Z_p)$ for perfectoid ring $R$ in \cite[Theorem 6.1]{BMS19} relies on B{\"o}kstedt's computation of $\THH(\F_p)$ \cite{Boekstedt}.
On the other hand,
we do not directly use the computation of $\THR(R)$ for perfect $\F_p$-algebra $R$ due to Dotto-Moi-Patchkoria in \cite[Remark 5.15]{DMP} (see also \cite[Proposition 6.26]{QS22}). Rather, we need to generalize their base change argument to the perfectoid case, see Lemma \ref{lem:perfectoid basechange}.

To see why inverting $2$ makes life easier sometimes, observe that taking fixed points is not right exact, and in general produces $2$-torsion in cokernels, compare  e.g. the computations in Remark \ref{omegaone} and in Proposition \ref{inverting 2}.
We also point out that Theorem \ref{thm1.1} above is true without assuming $2$ invertible also in the non-smooth case when stated in the derived $\infty$-category of $R$-modules with involution. This corresponds to the derived evaluation at $(G/e)$ of the modules over the Green functor $\ul{R}$ considered in the theorem above. This in turn corresponds to considering homotopy fixed points rather than fixed points. These two are often the same (and always after $2$-adic completion) for hermitian $K$-theory, see \cite{BKSO15}, and it seems reasonable to expect a similar pattern for $\THR$.

\medskip

After this preprint was essentially finished, we discovered that Lucy Yang gave several talks with a ``real Hochschild-Kostant Rosenberg theorem'' in the title. She has now uploaded her preprint \cite{Yang}, whose introduction contains a short comparison with the arXiv preprint version of this article from November 2024.
We hope to obtain a more precise comparison of the filtrations and real HKR theorems in joint future research with her.
Also, Angelini-Knoll mentions on his homepage an ongoing project with Kong and Quigley on even slices and real syntomic cohomology. A few weeks before the final acceptance of this article, 
they uploaded their preprint in arXiv,
and we refer to \cite[Theorem 6.7, Proposition 6.8]{AKQ} for the comparisons between our filtration on $\HR$ and the filtration in \cite{Par23} with their strongly even filtrations. Note that they use ``strongly even'' for what in this article is called ``very even''.

\medskip

Throughout this article we let $G=C_2=\Z/2$, although some arguments and results on Mackey and Green functors obviously extend to other groups $G$. 

We use the notation from \cite{HP23}.
Following \cite{BMS19} most definitions and results are stated using $\infty$-categorical language.
We use model categories only at a very few places,  see e.g.\ Recollection \ref{model structures on A-modules} and Construction \ref{construction of abelian iota}.
We always use homological indexing, which is compatible with simplicial indexing. This is essentially compatible with \cite{BMS19}.
Beware however that when working with $\rD(A)$ rather than $\rH A$-modules, \cite{BMS19} sometimes switches to cohomological indexing. e.g.\ when writing ``Tor amplitude in $[-1,0]$''.
We also refer to \cite{mathew21} for a nice survey from an algebraic topology perspective.

\bigskip

From the above discussion, it is clear that we consider $\THR$ as the central object of our studies. The real refinement of Hochschild homology, denoted $\HR$, is introduced in Definition \ref{def:thrandhr} as something built out of $\THR$. We establish several new results about $\HR$, in particular Theorem \ref{HRfiltration}. Still, we admit we are somewhat less interested in $\HR$ for its own sake,
and $\HR$ is often rather a symbol for an object that appears in all kinds of arguments involving inductions and filtrations. Indeed, results and proofs switch frequently forth and back between $\THR$ and $\HR$ in this article.

Our article uses a few results from \cite{BMS19} about the cotangent complex, and also the classical Hochschild-Kostant-Rosenberg theorem. In combination with a computation of $\THR$ of a spherical monoid ring, this leads to Lemma \ref{HR(R[N])}, which is in some sense where the concrete computations start. 

The article obviously recalls and extends all kinds of technicalities. The end of section \ref{sec2} makes precise the idea that Mackey functors and derived completion interact nicely. The fact that slices for $G=C_2$ have a very simple form, recalled in Proposition \ref{slicesexplicit}, is absolutely crucial for us. We need to develop some theory about filtrations to make some inductive arguments running, e.g.\ in Proposition \ref{prop:perfectoid HR}. Finally, we need a few results about pseudocoherence in section \ref{sec5}, entering via Lemma \ref{lem:pseudo-coherence} and then in Lemma \ref{lem:induction}.

Another crucial ingredient in \cite{BMS19} is that $A \to \THH(A)$ is universal among maps of $E_{\infty}$-rings with a circle action on the target. This is originally due to McClure–Schw\"{a}nzl–Vogt, and was reproved in \cite[Proposition IV.2.2]{NS}. There is a real version of this result in 
\cite[Definition 5.2 and Remark 5.4]{QS22}, but we have not used this in our proofs below.

\emph{Acknowledgement}: This research was conducted in the framework of the DFG-
funded research training group GRK 2240: \emph{Algebro-Geometric Methods in Algebra,
Arithmetic and Topology}.
We thank to the referee for a very detailed and helpful report.

\section{Derived \texorpdfstring{$\infty$-}{infinity }categories of Mackey functors}
\label{sec2}

As noted above in the introduction, we let $G=C_2=\Z/2$.
We often use the notation $G$ instead of $C_2$ or $\Z/2$ in statements that easily generalize to arbitrary finite groups.

See e.g.\ \cite[Definition 2.1]{LM06} for the definition of Mackey functors.
For $G=C_2$, a Mackey functor $M$ can be described as a diagram
\[
\begin{tikzcd}
M(G/e)\ar[loop below,"w"]\ar[r,shift left=0.5ex,"\tr"]\ar[r,shift right=0.5ex,leftarrow,"\res"']&
M(G/G)
\end{tikzcd}
\]
such that $\res\circ \tr = \id + w$.
For an abelian group $M$ with involution $w$,
the associated Mackey functor $\ul{M}$ is given by
\[
\begin{tikzcd}
M\ar[loop below,"w"]\ar[r,shift left=0.5ex,"\tr"]\ar[r,shift right=0.5ex,leftarrow,"\res"']&
\{m\in M:w(m)=m\},
\end{tikzcd}
\]
where $\res$ is the inclusion, and $\tr:=1+w$.
As a special case,
for an abelian group $M$,
the \emph{constant Mackey functor} $\ul{M}$ is given by
\[
\begin{tikzcd}
M\ar[loop below,"\id"]\ar[r,shift left=0.5ex,"2"]\ar[r,shift right=0.5ex,leftarrow,"\id"']&
M.
\end{tikzcd}
\]
For a commutative ring $R$ with involution, $\ul{R}$ admits a natural commutative Green functor structure. We recall that a Green functor is a monoid object in Mackey functors with respect to the monoidal structure given by the box product $\sotimes$, and refer e.g.\ to \cite[p.\ 61]{lewis88}, \cite[section 1.3]{Bouc}, and \cite[sections A.3 and A.4]{HP23} for further details. Throughout this article, we only deal with commutative Green functors, hence we will omit the "commutative" from our terminology.

\begin{rmk}
Since $\res$ for any Mackey functor associated with an abelian group with involution is injective, there is a fully faithful embedding from abelian groups with involution to Mackey functors. Looking at the Burnside Mackey functor, which is not a $\ul{\Z}$-module, one sees that this embedding is not essentially surjective. This functor extends to an embedding from commutative rings with involution to Green functors, which is fully faithful as well. The two  notions of module coincide via this embedding:
if $M$ is an abelian group and $R$ is a commutative ring,  both with or without involution,
then $\ul{M}$ is an $\ul{R}$-module in the sense of \cite[pp.\ 62--63]{lewis88} if and only if $M$ is an $R$-module.
This easily follows from Lemma \ref{lem:iotaR-modules}
in the case without involution on $R$. 
\end{rmk}

\begin{recoll}
\label{model structures on A-modules}
For a Green functor $A$,
let $\Mod_{A}$ be the category of $A$-modules.
This is an abelian category, in which cokernel and kernel can be computed pointwise.
Let $\Ch(A):=\Ch(\Mod_A)$ be the category of chain complexes of $A$-modules.
For every integer $n$,
we have the homology functor
\[
\ul{H}_n\colon \Ch(A)\to \Mod_A
\]
sending a chain complex $\cdots \to \cF_{n+1}\xrightarrow{d_{n+1}} \cF_n \xrightarrow{d_n} \cF_{n-1}\to \cdots$ to $\ker d_n/\im d_{n+1}$.
A morphism $\cF\to \cG$ in $\Ch(A)$ is a \emph{quasi-isomorphism} if $\ul{H}_n(\cF)\to \ul{H}_n(\cG)$ is an isomorphism of $A$-modules for every integer $n$.
Observe that a quasi-isomorphism in $\Ch(A)$ is a pointwise quasi-isomorphism in the sense that $\cF(G/e)\to \cG(G/e)$ and $\cF(G/G)\to \cG(G/G)$ are quasi-isomorphisms.

Let $\rD(A):=\rD(\Mod_{A})$ be the derived $\infty$-category of $\Mod_{A}$,
which is obtained by inverting quasi-isomorphisms in $\Ch(A)$ in the $\infty$-categorical sense.
For every integer $n$,
we have the homology functor
\[
\ul{H}_n\colon \rD(A)\to \Mod_{A}.
\]
For $\cF\in \rD(A)$,
we set
\[
H_n^{G}(\cF):=\ul{H}_n(\cF)(G/G).
\]
Let $\tau_{\leq n},\tau_{\geq n}\colon \rD(A)\to \rD(A)$ denote the truncation functors in the sense of derived $\infty$-categories of abelian categories,
see \cite[Notation 1.2.1.7]{HA}.
Here we use the standard $t$-structure on the derived $\infty$-category of an abelian category.
Hence $\cF \in  \tau_{\leq n} \rD(A)$ if and only if $\ul{H}_m(\cF)=0$ for all $m>n$, and similarly for $\tau_{\geq n} \rD(A)$.
We say that $\cF$ is \emph{$(n-1)$-connected} if the induced morphism $\tau_{\geq n}\cF\to \cF$ in $\rD(A)$ is an equivalence.

Recall that the \emph{Burnside category} $\cB_G$ consists of the finite $G$-sets, with
morphisms given by
\[
\Hom_{\cB_G}(X,Y)
:=
\Hom_{\Sp^G}(\Sigma^\infty X_+,\Sigma^\infty Y_+)
\]
for finite $G$-sets $X$ and $Y$.
Here, $\Sp^G$ denote the $\infty$-category of $G$-spectra,
which is obtained by formally inverting the $G$-representation spheres in the $\infty$-category of pointed $G$-spaces,
see \cite[section 9.2]{BH21}.
We refer to \cite[section V.9]{LMS86} for an algebraic description of $\cB_G$.
Recall from \cite[Definition 2.1]{LM06} that a Mackey functor is a contravariant additive functor from $\cB_G$ to the category of abelian groups.
For a finite $G$-set $X$,
we set $\ul{\cB}^X:=\Hom_{\cB_G}(-,X)$,
which we regard as a Mackey functor.
Recall the box product $\sotimes$ of Mackey functors from above.
The set of objects $A\sotimes \ul{\cB}^{G/H}[n]$
for all subgroups $H$ of $G$ and integers $n$ generates $\rD(A)$ since
\[
\Hom_{\rD(A)}
(A\sotimes \ul{\cB}^{G/H}[n],\cF)
\simeq
\ul{H}_n(\cF)(G/H)
\]
for $\cF\in \rD(A)$.

By the non-graded versions of \cite[Propositions 4.3, 4.4]{LM06},
the abelian category $\Mod_{A}$ has enough projectives,
and every projective $A$-module is a direct summand of a product of $A$-modules of the form $A\sotimes \ul{\cB}^{G/H}$,
where $H$ is a subgroup of $G$.
Hence \cite[Theorem 2.2]{CH02} implies that $\Ch(A)$ admits the projective model structure,
where a morphism $\cF\to \cG$ in $\Ch(A)$ is a projective fibration (resp.\ weak equivalence) if and only if $\ul{H}_n(\cF)\to \ul{H}_n(\cG)$ is an epimorphism of Mackey functors for every $n\in \Z$ (resp.\ a quasi-isomorphism).
In particular,
every object in $\Ch(A)$ is projectively fibrant.
A (homologically) bounded below complex of projective $A$-modules is a cofibrant object in $\Ch(A)$,
see \cite[Lemma 2.7(b)]{CH02}.

On the other hand,
$\Ch(A)$ admits the injective model structure by \cite[Proposition 3.13]{Bek00},
where a morphism $\cF\to \cG$ in $\Ch(A)$ is an injective cofibration if and only if $\ul{H}_n(\cF)\to \ul{H}_n(\cG)$ is a monomorphism of Mackey functors for every $n\in \Z$.
In particular,
every object in $\Ch(A)$ is injectively cofibrant.
Hence the coproduct in $\rD(A)$ can be computed using the coproduct in $\Ch(A)$.
It follows that for every set of objects $\{\cF_i\}_{i\in I}$ and integer $n$,
we have a natural isomorphism
\[
\ul{H}_n(\bigoplus_{i\in I} \cF_i)
\cong
\bigoplus_{i\in I} \ul{H}_n(\cF_i).
\]
Use this to show that  $A\sotimes \ul{\cB}^{G/H}[n]$ is a compact object of $\rD(A)$ for every subgroup $H$ of $G$ and integer $n$.

Since $\Mod_{A}$ is a symmetric monoidal abelian category,
$\rD(A)$ has a natural symmetric monoidal product $\sotimes_{A}^\L$,
which is the derived tensor product.
Using \cite[Lemma 4.1.8.8]{HA},
we see that $\sotimes_{A}^\L$ preserves colimits in each variable.
In particular,
$\sotimes_{A}^\L$ is a bi-exact functor.
\end{recoll}

\begin{exm}
Let $R$ be a commutative ring,
and consider the constant Green functor $\ul{R}$.
Let $M$ be an $\ul{A}$-module.
There is a natural isomorphism
\[
(M\sotimes \ul{\cB}^X)(Y)
\cong
M(X\times Y),
\]
see \cite[p.\ 519]{LM06}.
Use this when $M=\ul{R}$ to obtain a natural isomorphism
\[
\ul{R}\sotimes \ul{\cB}^{G/H}
\cong
\ul{R^{\oplus G/H}}
\]
for every subgroup $H$ of $G$,
where $R^{\oplus G/H}$ is the $R$-module with the $G$-action that permutes the coordinates,
and $\ul{R^{\oplus G/H}}$ is the associated $\ul{R}$-module.
\end{exm}

\begin{lem}
\label{right adjoint colimits}
Let $F\colon \cC\to \cD$ be a colimit preserving functor of stable $\infty$-categories that admit colimits,
and let $S$ be a set of compact objects of $\cC$ that generates $\cC$.
If $F$ sends every object of $S$ to a compact object of $\cD$,
then a right adjoint $G$ of $F$ preserves colimits.
\end{lem}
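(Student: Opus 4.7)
The plan is to show the natural comparison map
\[
\phi \colon \colim_j G(Y_j) \longrightarrow G(\colim_j Y_j)
\]
associated to any small diagram $\{Y_j\}$ in $\cD$ is an equivalence, by testing against the compact generators in $S$. Since $\cC$ is stable and $S$ is a set of compact generators, a morphism in $\cC$ is an equivalence if and only if it induces an equivalence on $\Map(s,-)$ for every $s\in S$ (equivalently, on all homotopy groups $\pi_n\Map(s,-)$). So it suffices to check that $\Map(s,\phi)$ is an equivalence for every $s\in S$.

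First I would compute the source. Because $s$ is compact in $\cC$,
\[
\Map_{\cC}\bigl(s,\colim_j G(Y_j)\bigr)
\simeq \colim_j \Map_{\cC}(s,G(Y_j))
\simeq \colim_j \Map_{\cD}(F(s),Y_j),
\]
the last equivalence being the $F\dashv G$ adjunction. Next I would compute the target. Again using the adjunction,
\[
\Map_{\cC}\bigl(s,G(\colim_j Y_j)\bigr)
\simeq \Map_{\cD}\bigl(F(s),\colim_j Y_j\bigr),
\]
and here is where the hypothesis on $F$ enters: since $F(s)$ is compact in $\cD$ by assumption, this last mapping space is equivalent to $\colim_j \Map_{\cD}(F(s),Y_j)$. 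Tracing through the unit/counit one sees that under these identifications $\Map_{\cC}(s,\phi)$ corresponds to the identity, so it is an equivalence. Hence $\phi$ itself is an equivalence, which is exactly the statement that $G$ preserves colimits.

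The main obstacle, if any, is conceptual rather than technical: one must be comfortable invoking that a set of compact objects ``generates'' a stable cocomplete $\infty$-category in the sense that equivalences are detected by the functors $\Map(s,-)$ for $s\in S$. Everything else is a formal manipulation with the adjunction and with the definition of compactness (commuting $\Map(s,-)$ and $\Map(F(s),-)$ past filtered or general small colimits, as the case requires; in the stable setting one only needs to treat finite colimits and filtered colimits, but here the argument is uniform).
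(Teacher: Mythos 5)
Your approach is the same in spirit as the paper's: reduce to checking against the compact generators using the adjunction and compactness. However, your claim that ``the argument is uniform'' across all small colimits is not correct, and this is precisely where the paper's one extra step is doing real work. For a compact object $s$, the mapping space functor $\Map_\cC(s,-)$ preserves \emph{filtered} colimits, but it does not preserve general small colimits (for example, it does not commute with cofibers). So the equivalence $\Map_\cC(s,\colim_j G(Y_j)) \simeq \colim_j \Map_\cC(s,G(Y_j))$ is simply false as written for an arbitrary diagram $J$.

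The fix is exactly the reduction you mention and then discard. Since $G$ is a right adjoint between stable $\infty$-categories, it automatically preserves finite limits, hence finite colimits, i.e.\ $G$ is exact. By \cite[Proposition 1.4.4.1(2)]{HA}, an exact functor between stable $\infty$-categories with colimits preserves all small colimits as soon as it preserves small coproducts. So you only need to run your adjunction-plus-compactness computation for a coproduct $\bigoplus_j Y_j$, and there compactness of $s$ and $F(s)$ genuinely gives the required commutation (e.g.\ at the level of spectral mapping objects or homotopy groups: for compact $s$ in a stable $\infty$-category, $\pi_n\map(s,\bigoplus_j Y_j) \cong \bigoplus_j \pi_n\map(s,Y_j)$). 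That is precisely how the paper's proof is organized: first invoke HA 1.4.4.1(2) to reduce to sums, then use that functors corepresented by compact objects preserve coproducts. So your plan is sound, but the ``uniform'' shortcut must be replaced by this two-step reduction to be correct.
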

\begin{proof}
This is well-known.
By \cite[Proposition 1.4.4.1(2)]{HA},
it suffices to show that $G$ preserves sums.
To finish the proof,
use the property that the functor corepresented by a compact object preserves coproducts.
\end{proof}

\begin{const}
\label{construction of abelian iota}
Let $R$ be a commutative ring.
There is an adjunction
\[
\iota
:
\Mod_R
\rightleftarrows
\Mod_{\ul{R}}
:
(-)^{G}
\]
such that $\iota M:=\ul{M}$ for every $A$-module $M$ and $\ul{L}^{G}:=\ul{L}(G/G)$ 
for every $\ul{R}$-module $\ul{L}$.
We obtain the induced adjunction
\[
\iota
:
\Ch(R)
\rightleftarrows
\Ch(\ul{R})
:
(-)^{G}.
\]
Using the description of the projective model structure on $\Ch(\ul{A})$ in Recollection \ref{model structures on A-modules},
we see that $(-)^{G}$ is a right Quillen functor with respect to the projective model structures.
Hence we obtain the induced adjunction
\[
\iota
:
\rD(R)
\rightleftarrows
\rD(\ul{R})
:
(-)^{G}.
\]
\end{const}

\begin{const}\label{construction of abelian isharp}
Let $R$ be a commutative ring.
Then the change of groupoids $(i_{\sharp},i^*)$ with respect to $i:* \to BG$, see e.g.\ \cite[section A.1]{HP23}, induces  an adjunction
\[
i_\sharp
:
\Mod_R
\rightleftarrows
\Mod_{\ul{R}}
:
i^*
\]
such that $i_\sharp M:=\ul{M^{\oplus G}}$ for every $R$-module $M$ and $i^*\ul{L}:=\ul{L}(G/e)$ for every $\ul{R}$-module $\ul{L}$.
We obtain the induced adjunction
\[
i_\sharp
:
\Ch(R)
\rightleftarrows
\Ch(\ul{R})
:
i^*.
\]
Using the description of the projective model structure on $\Ch(\ul{R})$ in Recollection \ref{model structures on A-modules},
we see that $i^*$ is a right Quillen functor with respect to the projective model structures.
Hence we obtain the induced adjunction
\[
i_\sharp
:
\rD(R)
\rightleftarrows
\rD(\ul{R})
:
i^*.
\]
Use Lemma \ref{right adjoint colimits} to see that $i^*$ is colimit preserving.
\end{const}

\begin{const}\label{alphaupperstar}
Let $(\Fin_{\rB \Z/2})_*$ denote the category of pointed finite $\Z/2$-sets, and let $A$ be a Green functor.
Consider the functor
\[
\alpha^*\colon (\Fin_{\rB \Z/2})_*
\to
\rD(A)
\]
sending a pointed finite $\Z/2$-set $(V,p)$ to
$(A\sotimes \ul{\cB}^V) / (A \sotimes \ul{\cB}^{\{p\}})$.
This functor sends finite coproducts to finite direct sums and is monoidal if the monoidal structure on $(\Fin_{\rB \Z/2})_*$ is given by the smash product.
Let $\Spc^{\Z/2}_*$ denote the category of pointed $\Z/2$-spaces,
which is equivalent to $\cP_{\Sigma}((\Fin_{\rB \Z/2})_*)$,
see \cite[Lemma 2.2, section 9.2]{BH21}.
Hence we have the induced symmetric monoidal functor
\[
\alpha^*\colon 
\Spc^{\Z/2}_*
\to
\rD(A).
\]
Let $S^\sigma$ be the topological $\Z/2$-space $S^1$ with the $\Z/2$-action given by $(x,y)\in S^1\subset \R^2\mapsto (x,-y)$.
We use the same notation $S^\sigma$ for the associated $\Z/2$-space,
whose value at $(\Z/2)/e$ is $S^1$ and value at $(\Z/2)/(\Z/2)$ is $(S^\sigma)^{\Z/2}=S^0$.
Observe that $S^\sigma$ is equivalent to the homotopy cofiber of the
equivariant map $(\Z/2)_+\to *_+$,
where as usual $\Z/2$ is considered with the non-trivial involution.
Hence $\alpha^*$ sends $S^\sigma$  
to the complex
\[
A[\sigma]:=[A\sotimes \ul{\cB}^{\Z/2}\xrightarrow{f} A\to 0],
\] 
where $A$ sits in degree $0$. 
It admits a monoidal inverse
\[
A[-\sigma]
:=
[0\to A\xrightarrow{g} A\sotimes \ul{\cB}^{\Z/2}],
\]
where $A$ sits in degree $0$.
If $A=\ul{R}$ for some commutative ring $R$ with the trivial involution,
then $f$ becomes the summation $\ul{R^{\oplus \Z/2}}\to \ul{R}$,  as without  involutions the two composite morphisms $R\rightrightarrows R\oplus R\to R$ with the two inclusions from the summands correspond to the
two composite maps $*\rightrightarrows \Z/2\to *$ and hence both composite morphisms $R\rightrightarrows R$ are equal to the identity. Also, $g$ becomes the diagonal morphism $\ul{R}\to \ul{R^{\oplus^{\Z/2}}}$.
Together with \cite[Proposition 2.9(1)]{Rob15} and the $\infty$-categorical construction of $\Sp^{\Z/2}$ in \cite[section 9.2]{BH21},
we obtain a colimit preserving symmetric monoidal functor
\[
\alpha^*
\colon
\Sp^{\Z/2}
\to
\rD(A)
\]
sending $\Sigma^\infty (V,p)$ to $(A\sotimes \ul{\cB}^V) / (A \sotimes \ul{\cB}^{\{p\}})$ for every pointed finite $\Z/2$-set $(V,p)$.
Let $\alpha_*$ be a right adjoint of $\alpha^*$.
\end{const}

\begin{df}\label{defshiftabelian}
Let $A$ be a Green functor.
Recall that $\sotimes_{A}^\L$ denotes the derived tensor product in $\rD(A)$.
For $\cF\in \rD(A)$ and integers $m$ and $n$,
we have the \emph{equivariant shift}
\[
\cF[m+n\sigma]
:=
\cF[m] \sotimes_{A}^\L (A[\sigma])^{\sotimes_A^\L n}.
\]
If $n=0$, then this is the usual shift.
We set
\[
\ul{H}_{m+n\sigma}(\cF)
:=
\ul{H}_m(\cF[-n\sigma])
\text{ and }
H_{m+n\sigma}^{\Z/2}(\cF)
:=
H_m^{\Z/2}(\cF[-n\sigma]).
\]
\end{df}

\begin{rmk}
\label{another generators}
Let $A$ be a Green functor.
Recall that the family of $A[n]$ and $A \sotimes \ul{\cB}^{\Z/2}[n]$ for integers $n$ generates $\rD(A)$.
Using the cofiber sequence $A \sotimes \ul{\cB}^{\Z/2}\to A \to A[\sigma]$,
we see that the family of $A[n]$ and $A[n+\sigma]$ for integers $n$ also generates $\rD(A)$. 
\end{rmk}

Recall from above that $\Sp^G$ denotes the $\infty$-category of (genuine) $G$-spectra.
We have the fixed point functor $(-)^G\colon \Sp^G\to \Sp$,
which admits a left adjoint $\iota$.
We have the forgetful functor $i^*\colon \Sp^G\to \Sp$,
which admits a left adjoint $i_\sharp$.
We refer to \cite[appendix A]{HP23} for a review of equivariant stable homotopy theory including the details about the functors $\iota$, $(-)^G$, $i_\sharp$, and $i^*$.

For the next results,
recall also the functors in Constructions \ref{construction of abelian iota} and \ref{construction of abelian isharp}.

\begin{prop}
\label{alpha isharp}
Let $R$ be a commutative ring.
Then there are commutative squares
\[
\begin{tikzcd}
\Sp\ar[d,"\alpha^*"']\ar[r,"\iota"]&
\Sp^{\Z/2}\ar[d,"\alpha^*"]
\\
\rD(R)\ar[r,"\iota"]&
\rD(\ul{R}),
\end{tikzcd}
\quad
\begin{tikzcd}
\Sp\ar[d,"\alpha^*"']\ar[r,"i_\sharp"]&
\Sp^{\Z/2}\ar[d,"\alpha^*"]
\\
\rD(R)\ar[r,"i_\sharp"]&
\rD(\ul{R}).
\end{tikzcd}
\]
\end{prop}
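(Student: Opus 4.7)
The strategy is to exploit the fact that all four functors in each square preserve colimits, and reduce the commutativity to a check on the monoidal unit of $\Sp$ via suitable universal properties.

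For the first square, all four functors are colimit-preserving and strong symmetric monoidal. (The symmetric monoidality of $\iota\colon \rD(A) \to \rD(\ul{A})$ reflects the fact that the box product of constant Mackey functors of $A$-modules realizes the ordinary tensor product.) Consequently both $\alpha^* \circ \iota$ and $\iota \circ \alpha^*$ are colimit-preserving symmetric monoidal functors $\Sp \to \rD(\ul{A})$. By the universal property of $\Sp$ as the initial presentably symmetric monoidal stable $\infty$-category \cite[\S 4.8.2]{HA}, the space of such functors is contractible, so the two compositions are canonically naturally equivalent.

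For the second square, the forgetful functors $i^*$ are strong symmetric monoidal on both rows, so their left adjoints $i_\sharp$ satisfy the projection formula
\[
i_\sharp(X) \simeq \iota(X) \otimes i_\sharp(\unit)
\]
naturally in $X$, on each side. Combining this with the first square (already established) and the symmetric monoidality of $\alpha^*$, the commutativity of the $i_\sharp$ square reduces to the single equivalence $\alpha^*(i_\sharp \unit_{\Sp}) \simeq i_\sharp(A)$ in $\rD(\ul{A})$. Since $i_\sharp(\unit_{\Sp}) \simeq \Sigma^\infty (\Z/2)_+$, Construction \ref{alphaupperstar} gives $\alpha^*(\Sigma^\infty (\Z/2)_+) \simeq \ul{A} \sotimes \ul{\cB}^{\Z/2}$, and the example following that construction identifies this with $\ul{A^{\oplus G}} = i_\sharp(A)$.

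I expect the main obstacle to be verifying strong symmetric monoidality of $\iota\colon \rD(A) \to \rD(\ul{A})$ at the $\infty$-categorical level, since the target uses the less familiar box-product tensor structure, together with the coherent $\infty$-categorical formulation of the projection formula. These steps are essentially formal once one fixes generators and invokes the universal property of $\Sp$; no substantial new geometric input beyond what is developed in the surrounding constructions is required.
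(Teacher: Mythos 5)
Your proof is correct in outline and can be completed, but it takes a genuinely different and noticeably heavier route than the paper's. The paper's proof is essentially one line: both composites in each square are colimit-preserving functors $\Sp\to\rD(\ul{A})$; they agree on $\Sphere$, namely $\iota\alpha^*\Sphere \simeq \ul{A}\simeq\alpha^*\iota\Sphere$ and $i_\sharp\alpha^*\Sphere\simeq\ul{A^{\oplus G}}\simeq\alpha^*i_\sharp\Sphere$; and then \cite[Corollary 1.4.4.6]{HA} — the universal property of $\Sp$ as the \emph{free} presentable stable $\infty$-category on one generator — forces the two composites to agree. This handles both squares uniformly, with nothing to check beyond the value at the sphere. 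You instead invoke the stronger universal property of $\Sp$ as the \emph{initial presentably symmetric monoidal} stable $\infty$-category, which requires first establishing $\infty$-categorical strong symmetric monoidality of $\iota\colon\rD(A)\to\rD(\ul{A})$ — the very point you correctly flag as the main obstacle — and for the second square you further route through projection formulas for $i_\sharp$ on both sides together with compatibility of the first square's equivalence with the tensor structure. All of this works and the projection-formula perspective is conceptually illuminating, but it is substantially more coherence data than the statement requires: the weaker, non-monoidal universal property used in the paper would have let you bypass monoidality of $\iota$ entirely and close each square by a single computation on $\Sphere$.
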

\begin{proof}
There are equivalences
$
\iota \alpha^* \Sphere \simeq \ul{R} \simeq \alpha^* \iota \Sphere
$
and
$
i_\sharp \alpha^* \Sphere \simeq \ul{R^{\oplus G}} \simeq \alpha^* i_\sharp \Sphere
$.
By \cite[Corollary 1.4.4.6]{HA},
we obtain the desired commutative squares.
\end{proof}

For a symmetric monoidal $\infty$-category $\cC$ and a commutative ring object $R$ of $\cC$,
let $\Mod_R:=\Mod_R(\cC)$ denote the $\infty$-category of $R$-modules.

For a Mackey functor $M$,
let $\rH M$ denote the equivariant Eilenberg-MacLane spectrum.
For a Green functor $A$,
we can regard $\rH A$ as a commutative algebra object of $\Sp^{\Z/2}$, compare \cite[section A.4]{HP23}.
For an $A$-module $M$,
we can regard $\rH M$ as an $\rH A$-module object of $\Sp^{\Z/2}$.
See \cite[sections A.3, A.4]{HP23} for a review.

The following is an equivariant refinement of the stable Dold-Kan correspondence \cite[Theorem 5.1.6]{SS03} proved by Schwede-Shipley.
In the case of the Burnside Mackey functor,
the following is due to Patchkoria-Sanders-Wimmer \cite[Theorem 5.10]{PSW22}.

\begin{prop}\label{greenequivalence}
Let $A$ be a Green functor, e.g.\ $A$ a commutative ring with possibly non-trivial involution.
Then the functor $\rH$
above induces an equivalence of symmetric monoidal stable  $\infty$-categories
\[
\Mod_{\rH A}
\simeq
\rD(A).
\]
\end{prop}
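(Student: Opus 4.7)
The plan is to reduce the result to the special case $A = \unit$, where $\unit$ denotes the Burnside Mackey functor (the unit of $(\Mod_\unit, \sotimes)$). This case is exactly \cite[Theorem 5.10]{PSW22}, which furnishes a symmetric monoidal equivalence
\[
\Phi\colon \Mod_{\rH \unit}(\Sp^{\Z/2}) \xrightarrow{\simeq} \rD(\unit)
\]
of presentable stable $\infty$-categories restricting to the usual Dold-Kan equivalence on hearts. The remaining content is to transport this equivalence over the map $\unit \to A$ of commutative algebras.

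Since $A$ is a Green functor, it defines a commutative algebra object of $\Mod_\unit^\heartsuit$, and $\rH A$ is a commutative algebra object of $\Mod_{\rH \unit}(\Sp^{\Z/2})$. Both are connective with the same $\pi_0 = A$, so the symmetric monoidality of $\Phi$ together with its compatibility with hearts forces $\Phi(\rH A) \simeq A$ as commutative algebras. The change-of-scalars identification for modules in a symmetric monoidal $\infty$-category (compare \cite[\S 4.5]{HA}) then gives
\[
\Mod_{\rH A}(\Sp^{\Z/2})
\simeq
\Mod_{\rH A}\bigl(\Mod_{\rH \unit}(\Sp^{\Z/2})\bigr),
\]
and transporting the right-hand side along $\Phi$ yields $\Mod_A(\rD(\unit))$. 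To conclude, one identifies $\Mod_A(\rD(\unit)) \simeq \rD(A)$ by presenting both sides as the derived $\infty$-category of the abelian category $\Mod_A$ described in Recollection \ref{model structures on A-modules}; the existence of enough projectives proved there allows the standard comparison of the abelian and $\infty$-categorical notions of $A$-modules. The composite equivalence is symmetric monoidal because each factor is.

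The key coherence to verify is the identification $\Phi(\rH A) \simeq A$ as \emph{commutative algebras}, not merely as underlying objects. This is forced by the symmetric monoidality of $\Phi$ combined with the fact that a commutative algebra structure on a connective object is determined by its truncation to the heart through the Postnikov tower on $\CAlg$; concretely, both $\Phi(\rH A)$ and $A$ arise by such Postnikov extension from the same commutative algebra in $\Mod_\unit^\heartsuit$. Once this coherence is handled, every remaining step is a formal manipulation of modules in symmetric monoidal $\infty$-categories together with the classical abelian-versus-derived comparison for $\Mod_A$.
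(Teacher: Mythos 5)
Your proposal is correct as a mathematical outline, but it takes a genuinely different route from the paper, and the perceived simplification is largely illusory. The paper proves the equivalence in one shot: it constructs the colimit-preserving symmetric monoidal functor $\alpha^*\colon \Sp^{\Z/2}\to\rD(A)$ in Construction \ref{alphaupperstar}, verifies that its right adjoint $\alpha_*$ is conservative and colimit-preserving (via compactness of generators, Lemma \ref{right adjoint colimits}) and satisfies the projection formula, and then applies the monoidal Barr--Beck criterion \cite[Corollary 4.8.5.21]{HA}. This is a single application of the machinery, and $\rH A$ arises as $\alpha_*$ of the unit. Your proof instead factors through the Burnside Mackey functor, imports $\Mod_{\rH\unit}(\Sp^{\Z/2})\simeq\rD(\unit)$ from PSW22, and then argues by change of scalars. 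The attractive feature of your route is that it isolates the genuinely topological input in the cited theorem of Patchkoria--Sanders--Wimmer. The drawback is that your final step --- the identification $\Mod_A(\rD(\unit))\simeq\rD(A)$ --- is not a ``standard comparison''; it is precisely the paper's Proposition \ref{abelian Dold-Kan}, whose proof in the paper is itself a Barr--Beck argument of the same shape as the one used for Proposition \ref{greenequivalence} (just run between $\Mod_A(\rD(\unit))$ and $\rD(A)$ rather than between $\Sp^{\Z/2}$ and $\rD(A)$). So you have not avoided that machinery; you have relocated it, at the cost of the extra overhead of transporting through PSW22 and verifying coherence for $\Phi(\rH A)\simeq A$. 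Net, the paper's route is shorter and more self-contained.

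One further caution on the coherence step: the sentence ``a commutative algebra structure on a connective object is determined by its truncation to the heart through the Postnikov tower on $\CAlg$'' is false in general --- the equivariant sphere is a connective commutative algebra with $\ul{\pi}_0 = \unit$ and is not $\rH\unit$. What you actually need, and what does hold, is that the full subcategory of \emph{discrete} objects of $\CAlg(\Mod_{\rH\unit}(\Sp^{\Z/2}))$ (those whose underlying object lies in the heart) is equivalent to $\CAlg$ of the heart, and likewise for $\rD(\unit)$. Since $\rH A$ and $A$ are both discrete and the PSW22 equivalence is $t$-exact and identifies the two hearts with $\Mack_{\Z/2}$ in the expected way, the desired identification $\Phi(\rH A)\simeq A$ in $\CAlg$ follows. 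You should say it this way, and you should explicitly record that the PSW22 equivalence is $t$-exact, since that is the hypothesis doing the work.
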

\begin{proof}
Let $\cF$ be the family of $\Sigma^n\Sphere$ and $\Sigma^n \Sigma^\infty (\Z/2)_+$ for all integers $n$.
Recall the standard fact that $\cF$ is a set of compact objects of $\Sp^{\Z/2}$ that generates $\Sp^{\Z/2}$,
see e.g.\ \cite[Proposition A.1.6]{HP23}.
As the images of all the objects in $\cF$ under $\alpha^*$ are obviously compact,
Lemma \ref{right adjoint colimits} implies that a right adjoint $\alpha_*$ to $\alpha^*
\colon
\Sp^{\Z/2}
\to
\rD(A)$
preserves colimits.

We claim that $\alpha_*$ is conservative.
For this,
it suffices to show that the family of functors $\Map_{\Sp^{\Z/2}}(X,\alpha_*(-))$ for $X\in \cF$ is conservative.
This holds by adjunction since $\alpha^*$ sends $\cF$ to a set of generators of $\rD(A)$.
The canonical morphism
\[
X\wedge \alpha_* Y\to \alpha_*(\alpha^*X\sotimes_{A}^\L Y)
\]
in $\Sp^{\Z/2}$ is an equivalence for $X\in \Sp^{\Z/2}$ and $Y\in \rD(A)$ since this holds for $X=\Sphere,\Sigma^\infty (\Z/2)_+$ and $\alpha^*$ and $\alpha_*$ preserve colimits.
We finish the proof together with \cite[Corollary 4.8.5.21]{HA}.
\end{proof}

\begin{rmk}
\label{Dold-Kan remark}
Let $A$ be a Green functor.
Due to \cite[section 3.4.1]{HA}, the $\infty$-category of commutative algebra objects of $\Sp^{\Z/2}$ equipped with a map from $\rH A$ is equivalent to the $\infty$-category of commutative algebra objects of $\rD(A)$.
Let $\cF$ be a commutative algebra object of $\rD(A)$.
Loc.\ cit.\ implies that the $\infty$-category of $\cF$-modules in $\rD(A)$ is equivalent to the $\infty$-category of $\alpha_*\cF$-modules in $\Sp^{\Z/2}$.

We also note that under the equivalence $\Mod_{\rH A}\simeq \rD(A)$,
$\wedge_{\rH A}$ corresponds to $\sotimes_A^\L$ (see \cite[Proposition A.5.7]{HP23}),
$\Sigma^{m+n\sigma}$ (defined by the smash product with $S^m \wedge (S^\sigma)^{\wedge n}$) corresponds to $[m+n\sigma]$ for integers $m$ and $n$,
$\ul{\pi}$ corresponds to $\ul{H}$.
and the slice filtration corresponds to the filtration in Definition \ref{abelian rho}.
\end{rmk}

\begin{const}
\label{const: derived functors of Green functors}
Let $f\colon A\to B$ be a morphism of Green functors.
We have an adjunction between the categories of chain complexes
\[
f^*
:
\Ch(A)
\rightleftarrows
\Ch(B)
:
f_*,
\]
where $f^*$ sends a complex $\cF$ to the base change $B\Box_{A} \cF$,
and $f_*$ is the forgetful functor.
Since $f_*$ preserves quasi-isomorphisms and projective fibrations,
$f_*$ becomes a right Quillen functor if we impose the projective model structures on $\Ch(A)$ and $\Ch(B)$.
Hence we obtain the adjunction between $\infty$-categories
\[
f^*
:
\rD(A)
\rightleftarrows
\rD(B)
:
f_*,
\]
and this $\infty$-categorical right derived functor $f_*$ is conservative and preserves colimits as in the proof of Proposition \ref{greenequivalence}.
\end{const}

\begin{prop}
\label{abelian Dold-Kan}
Let $A\to B$ be a morphism of Green functors.
We regard $B$ as a commutative algebra object of $\rD(A)$.
Then there exists an equivalence of symmetric monoidal stable $\infty$-categories
\[
\Mod_{B}(\rD(A))
\simeq
\rD(B).
\]
\end{prop}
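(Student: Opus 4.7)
The plan is to deduce this from the Dold-Kan style equivalence already proved in Proposition \ref{greenequivalence}, together with the standard fact that modules over an algebra $R$ in modules over another commutative algebra $S$ (with a map $S \to R$) agree with modules over $R$ in the ambient category.

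First, I would apply Proposition \ref{greenequivalence} to $A$ itself to obtain a symmetric monoidal stable equivalence $\rD(A) \simeq \Mod_{\rH A}$. Under this equivalence, the commutative algebra object $B \in \rD(A)$ corresponds to the commutative $\rH A$-algebra $\rH B$; this is precisely the content of the first paragraph of Remark \ref{Dold-Kan remark}, which identifies commutative algebra objects of $\rD(A)$ with commutative algebra objects of $\Sp^{\Z/2}$ equipped with a map from $\rH A$. Passing to module $\infty$-categories, the same remark (citing \cite[section 3.4.1]{HA}) gives a symmetric monoidal equivalence
\[
\Mod_B(\rD(A)) \simeq \Mod_{\rH B}(\Mod_{\rH A}).
\]

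Next, I would use the fact that modules over $\rH B$ in $\Mod_{\rH A}$ are the same as modules over $\rH B$ in $\Sp^{\Z/2}$, since the forgetful functor $\Mod_{\rH A}(\Sp^{\Z/2}) \to \Sp^{\Z/2}$ is symmetric monoidal and any $\rH B$-module structure in $\Sp^{\Z/2}$ factors through the chosen map $\rH A \to \rH B$. This is again standard and is a consequence of \cite[section 3.4.1]{HA}. Combining with the equivalence $\Mod_{\rH B} \simeq \rD(B)$ from Proposition \ref{greenequivalence} applied to the Green functor $B$, we obtain
\[
\Mod_B(\rD(A)) \simeq \Mod_{\rH B}(\Sp^{\Z/2}) \simeq \rD(B)
\]
as symmetric monoidal stable $\infty$-categories, as desired.

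The only substantive point to check is that the composite equivalence is symmetric monoidal, i.e.\ that the derived tensor product $\sotimes_B^{\L}$ in $\rD(B)$ matches $\wedge_{\rH B}$. This compatibility is recorded in the second paragraph of Remark \ref{Dold-Kan remark} (via \cite[Proposition A.5.7]{HP23}) in the absolute case $A$, and the relative case follows formally since both tensor products are obtained from the respective ambient symmetric monoidal structures by the universal property of modules over a commutative algebra. Thus no new calculation is required; the main task is the careful bookkeeping of identifications, and there is no real obstacle.
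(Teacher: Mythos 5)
Your proof is correct, but it takes a genuinely different route from the paper's. The paper's proof is the one-line ``Argue as in Proposition~\ref{greenequivalence},'' which means: take the symmetric monoidal adjunction $f^* : \rD(A) \rightleftarrows \rD(B) : f_*$ already set up in Construction~\ref{const: derived functors of Green functors} (where it is recorded that $f_*$ is conservative and colimit-preserving), verify the projection formula for $f^*, f_*$ by checking it on the compact generators $A$ and $A \sotimes \ul{\cB}^{\Z/2}$, and then apply \cite[Corollary 4.8.5.21]{HA} to conclude $\rD(B) \simeq \Mod_{f_*B}(\rD(A)) = \Mod_B(\rD(A))$. You instead apply Proposition~\ref{greenequivalence} as a black box on both $A$ and $B$, and stitch the two equivalences together through $\Mod_{\rH B}(\Sp^{\Z/2}) \simeq \Mod_{\rH B}(\Mod_{\rH A})$, which is \cite[Corollary 3.4.1.9]{HA}. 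Both are valid; the paper's version is more self-contained and re-uses the scaffolding from Construction~\ref{const: derived functors of Green functors}, while yours is shorter once Proposition~\ref{greenequivalence} and Remark~\ref{Dold-Kan remark} are granted.

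Two small points in your write-up deserve attention. First, your claim that ``the commutative algebra object $B \in \rD(A)$ corresponds to the commutative $\rH A$-algebra $\rH B$'' is not literally the content of Remark~\ref{Dold-Kan remark}; the remark gives $\Mod_{\cF}(\rD(A)) \simeq \Mod_{\alpha_*\cF}(\Sp^{\Z/2})$, so you additionally need the identification $\alpha_* B \simeq \rH B$ of commutative algebra objects of $\Sp^{\Z/2}$. This is true (e.g.\ since $\alpha_* B$ is connective, coconnective, and has $\ul{\pi}_0 \cong B$ as a Green functor, together with the fact that $\rH$ is fully faithful from Green functors onto such objects), but you should spell it out rather than absorbing it into the citation. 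Second, your justification via ``the forgetful functor $\Mod_{\rH A}(\Sp^{\Z/2}) \to \Sp^{\Z/2}$ is symmetric monoidal'' is not accurate --- that functor is only lax symmetric monoidal (it does not preserve the monoidal unit). The correct reference is \cite[Corollary 3.4.1.9]{HA}, which directly gives $\Mod_{\rH B}(\Mod_{\rH A}(\Sp^{\Z/2})) \simeq \Mod_{\rH B}(\Sp^{\Z/2})$; your conclusion is right but the stated reason is not.
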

\begin{proof}
Combine Proposition \ref{greenequivalence} with \cite[Corollary 3.4.1.9]{HA}.
\end{proof}

\begin{prop}
\label{conservativity}
Let $R$ be a commutative ring.
Then the pair of functors $i^*,(-)^{\Z/2}\colon \rD(\ul{R})\to \rD(R)$ is conservative.
\end{prop}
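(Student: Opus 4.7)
The plan is to combine three ingredients: the adjunctions $\iota \dashv (-)^{\Z/2}$ and $i_\sharp \dashv i^*$ from Constructions \ref{construction of abelian iota} and \ref{construction of abelian isharp}, the explicit generating set of $\rD(\ul{A})$ described in Recollection \ref{model structures on A-modules}, and the identification of the left adjoints $\iota$ and $i_\sharp$ on the free generator $A \in \rD(A)$.

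First I would spell out that, since the only subgroups of $G = \Z/2$ are $\{e\}$ and $G$ itself, the generators $\ul{A} \sotimes \ul{\cB}^{G/H}[n]$ of $\rD(\ul{A})$ from Recollection \ref{model structures on A-modules} reduce to the family
\[
\{\ul{A}[n],\ \ul{A} \sotimes \ul{\cB}^{\Z/2}[n] : n \in \Z\},
\]
as already recorded in Remark \ref{another generators}. Next I would use the explicit formulas given in Constructions \ref{construction of abelian iota} and \ref{construction of abelian isharp}, together with the example identification $\ul{A}\sotimes \ul{\cB}^{G/H} \cong \ul{A^{\oplus G/H}}$, to conclude that $\iota A \simeq \ul{A}$ and $i_\sharp A \simeq \ul{A^{\oplus G}} \simeq \ul{A}\sotimes \ul{\cB}^{\Z/2}$. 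Hence every member of the above generating family arises as a shift of the image of $A$ under $\iota$ or $i_\sharp$.

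Assuming $i^* \cF \simeq 0$ and $\cF^{\Z/2} \simeq 0$, the argument is then formal: for every $n \in \Z$, adjunction gives
\[
\Map_{\rD(\ul{A})}(\ul{A}[n], \cF) \simeq \Map_{\rD(A)}(A[n], \cF^{\Z/2}) \simeq 0,
\]
and
\[
\Map_{\rD(\ul{A})}(\ul{A}\sotimes \ul{\cB}^{\Z/2}[n], \cF) \simeq \Map_{\rD(A)}(A[n], i^*\cF) \simeq 0,
\]
so every map from a set of generators to $\cF$ vanishes, forcing $\cF \simeq 0$ in the stable $\infty$-category $\rD(\ul{A})$.

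There is no serious obstacle here; the only point that deserves an explicit verification is that $i_\sharp A$ is genuinely equivalent to $\ul{A}\sotimes \ul{\cB}^{\Z/2}$, so that both chosen generators lie in the image of one of the two left adjoints. This follows directly from the formulas for $i_\sharp$ and for the box product with $\ul{\cB}^{G/e}$ recalled earlier in the section.
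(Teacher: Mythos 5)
Your proof is correct, but it takes a genuinely different route from the paper's. The paper's argument reduces to the topological side of the Dold--Kan correspondence: it recalls that the forgetful functor $\alpha_*\colon \rD(\ul{A})\to \Sp^{\Z/2}$ is conservative, uses the commutative squares of Proposition~\ref{alpha isharp} (passing to right adjoints to obtain $\alpha_*i^*\simeq i^*\alpha_*$ and $\alpha_*(-)^{\Z/2}\simeq(-)^{\Z/2}\alpha_*$), and then appeals to the classical fact that the pair $i^*,(-)^{\Z/2}\colon\Sp^{\Z/2}\to\Sp$ is jointly conservative, together with conservativity of $\alpha_*\colon\rD(A)\to\Sp$. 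Your argument stays entirely on the algebraic side: you use the compact generating family $\{\ul{A}[n],\ \ul{A}\sotimes\ul{\cB}^{\Z/2}[n]\}$ of $\rD(\ul{A})$ from Recollection~\ref{model structures on A-modules}, the adjunctions $\iota\dashv(-)^{\Z/2}$ and $i_\sharp\dashv i^*$ of Constructions~\ref{construction of abelian iota} and~\ref{construction of abelian isharp}, and the identifications $\iota A\simeq\ul{A}$, $i_\sharp A\simeq\ul{A^{\oplus G}}\simeq\ul{A}\sotimes\ul{\cB}^{\Z/2}$, to conclude that every mapping space out of a generator into $\cF$ vanishes once $i^*\cF$ and $\cF^{\Z/2}$ do. Your route is more self-contained and arguably more elementary, since it avoids the passage through genuine $\Z/2$-spectra and the external input on conservativity there; the paper's route is more in keeping with the way the authors establish conservativity of $\alpha_*$ itself and makes the parallelism with equivariant stable homotopy theory transparent. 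Both arguments are complete and correct.
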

\begin{proof}
We observed that the functor $\alpha_*\colon \rD(\ul{R})\to \Sp^{\Z/2}$ is conservative
in the proof of Proposition \ref{greenequivalence}.
Hence it suffices to show that the pair of functors $\alpha_*i^*,\alpha_*(-)^{\Z/2}\colon \rD(\ul{R})\to\Sp$ is conservative.
Proposition \ref{alpha isharp} yields natural equivalences $\alpha_*i^*\simeq i^*\alpha_*$ and $\alpha_*(-)^{\Z/2}\simeq (-)^{\Z/2}\alpha_*$.
To conclude,
observe that the pair of functors $i^*,(-)^{\Z/2}\colon \Sp^{\Z/2}\to \Sp$ is conservative and $\alpha_*\colon \rD(R)\to \Sp$ is conservative.
\end{proof}

\begin{lem}
\label{lem:box product}
Let $M$ be a Mackey functor,
and let $N$ be an abelian group.
Then the box product $M\sotimes \ul{N}$ can be described as
\[
\begin{tikzcd}
M(C_2/e)\otimes N\ar[loop below,"w\otimes \id"]\ar[r,shift left=0.5ex,"\tr \otimes \id"]\ar[r,shift right=0.5ex,leftarrow,"\res \otimes \id"']&
(M(C_2/C_2)\otimes N
)/I,
\end{tikzcd}
\]
where $I$ is the ideal generated by
$(\tr(\res(x))-2)\otimes z$
for $x\in \ul{M}(C_2/C_2)$ and $z\in N
$.
\end{lem}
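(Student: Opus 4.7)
The plan is to compute $M \sotimes \ul{A}$ directly from the explicit presentation of the box product of two Mackey functors over $G = C_2$. Recall that $(M \sotimes N)(C_2/e) = M(C_2/e) \otimes_{\Z} N(C_2/e)$ with diagonal involution, while $(M \sotimes N)(C_2/C_2)$ is the quotient of
\[
M(C_2/C_2) \otimes N(C_2/C_2) \;\oplus\; \bigl(M(C_2/e) \otimes N(C_2/e)\bigr)_{C_2}
\]
by the Frobenius reciprocity relations $\tr_M(x) \otimes y = [x \otimes \res_N(y)]$ and $x \otimes \tr_N(y) = [\res_M(x) \otimes y]$, where $[-]$ denotes the class in the coinvariants summand (which under the transfer becomes an element of the top level).

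First I would specialize to $N = \ul{A}$: both levels equal $A$ with trivial involution, $\res_{\ul{A}} = \id$, and $\tr_{\ul{A}} = \cdot 2$. The bottom level is then $M(C_2/e) \otimes A$ with involution $w \otimes \id$, exactly as in the claim. For the top level, the first Frobenius relation reads $[x \otimes z] = \tr_M(x) \otimes z$, so every element of the coinvariants summand is identified with an element of $M(C_2/C_2) \otimes A$; the presentation therefore collapses to a quotient of $M(C_2/C_2) \otimes A$. The second Frobenius relation then gives
\[
a \otimes 2z
=
[\res_M(a) \otimes z]
=
\tr_M(\res_M(a)) \otimes z,
\]
which rearranges to $(\tr\res(a) - 2) \otimes z = 0$, that is, the ideal $I$.

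Finally, the structure maps can be read off from the presentation: the restriction sends $a \otimes z$ to $\res(a) \otimes z$, the transfer sends $x \otimes z$ at the bottom to $[x \otimes z] = \tr(x) \otimes z$ at the top, and the Mackey double coset identity $\res\tr = \id + w$ at the bottom is the tautology $\res\tr_M(x) \otimes z = (x + wx) \otimes z$. The main obstacle is the bookkeeping with the Frobenius relations — specifically, verifying that after the first relation fully absorbs the coinvariants summand into $M(C_2/C_2) \otimes A$, the only remaining relations are those generating $I$, with no hidden interaction between the two Frobenius identities.
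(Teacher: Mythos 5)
Your proof takes essentially the same route as the paper's: both start from the explicit presentation of the box product (the paper cites a lemma of Shulman listing three generating relations on $(M(C_2/C_2)\otimes A)\oplus(M(C_2/e)\otimes A)$; you absorb the third, the involution relation, into the coinvariants from the outset, and the paper instead shows it is redundant) and then perform the same manipulations to collapse the second summand via one Frobenius relation and extract $I$ from the other. The bookkeeping you flag as the main obstacle is handled in the paper simply by writing down the explicit isomorphism $((M(C_2/C_2)\otimes A)\oplus(M(C_2/e)\otimes A))/J \to (M(C_2/C_2)\otimes A)/I$, the identity on the first summand and $\tr$ on the second, whose inverse is induced by the inclusion of $M(C_2/C_2)\otimes A$; the same identities you derived show both maps are well-defined, so there is no hidden interaction between the relations.
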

\begin{proof}
By \cite[Proposition 1.5.1]{Bouc} (see also \cite[Lemma 2.46]{Shulman} for the special case that we need),
$(M\sotimes \ul{N})(C_2/C_2)$ is the quotient of
\[
(M(C_2/C_2)\otimes N)\oplus (M(C_2/e)\otimes N)
\]
by the ideal $J$ generated by
\begin{gather*}
(x\otimes \tr(z),0)-(0,\res(x)\otimes z),
\\
(\tr(y)\otimes z,0)-(0,y\otimes \res(z)),
\\
(0,w(y) \otimes z)- (0,y \otimes z)
\end{gather*}
for $x\in M(C_2/C_2)$, $y\in M(C_2/e)$, and $z\in N$.
Recall that we have $\res(z)=z$ and $\tr(z)=2z$.
The third relation follows from the second since
\[
(0,w(y)\otimes z)
=
(\tr(w(y))\otimes z,0)
=
(\tr(y)\otimes z,0)
=
(0,y\otimes z).
\]
We also have
\[
(x\otimes 2z,0)=(0,\res(x)\otimes z)=(\tr(\res(x))\otimes z,0).
\]
Hence we obtain the isomorphism
\[
((M(C_2/C_2)\otimes N)\oplus (M(C_2/e)\otimes N))/J
\to
(M(C_2/C_2)\otimes N)/I
\]
induced by $\id \otimes \id$ on the left summand and by $\id\otimes \tr$ on the right summand.
The description of the transition, restriction, and involution for $M\sotimes \ul{N}$ is also due to loc.\ cit.
\end{proof}

\begin{lem}
\label{lem:box base change}
Let $R\to A$ be a homomorphism of commutative rings,
and let $M$ be an $\ul{R}$-module.
Then the base change $M\sotimes_{\ul{R}}\ul{A}$ is obtained by taking $\otimes_{R} A$ pointwise,
i.e., $M\sotimes_{\ul{R}}\ul{A}$ can be described as
\[
\begin{tikzcd}
M(C_2/e)\otimes_R A\ar[loop below,"w\otimes \id"]\ar[r,shift left=0.5ex,"\tr \otimes \id"]\ar[r,shift right=0.5ex,leftarrow,"\res \otimes \id"']&
M(C_2/C_2)\otimes_R A.
\end{tikzcd}
\]
\end{lem}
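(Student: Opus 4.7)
The plan is to realize $M \sotimes_{\ul{R}} \ul{A}$ as the coequalizer in $\Mod_{\ul{R}}$ of the two $\ul{R}$-action maps
\[
M \sotimes \ul{R} \sotimes \ul{A} \rightrightarrows M \sotimes \ul{A},
\]
which is the standard construction of the relative tensor product over a commutative monoid in a symmetric monoidal abelian category. Because Recollection \ref{model structures on A-modules} tells us that colimits in $\Mod_{\ul{R}}$ (and more generally in Mackey functors) are computed pointwise, it suffices to describe this coequalizer diagram at each orbit $C_2/H$.

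Next I would apply Lemma \ref{lem:box product} to obtain pointwise descriptions of $M \sotimes \ul{A}$ and $M \sotimes \ul{R} \sotimes \ul{A}$. The key simplification is that $M$ is an $\ul{R}$-module, so by Lemma \ref{lem:iotaR-modules} we have $\tr \circ \res = 2$ on $M$, which makes the ideal $I$ appearing in Lemma \ref{lem:box product} trivial. Consequently $(M \sotimes \ul{A})(C_2/H) = M(C_2/H) \otimes_\Z A$ with structure maps $\res \otimes \id$, $\tr \otimes \id$, and $w \otimes \id$. The same argument applied with $M$ replaced by $\ul{R}$ shows $\ul{R} \sotimes \ul{A} \cong \ul{R \otimes_\Z A}$, and one more application of Lemma \ref{lem:box product} identifies $(M \sotimes \ul{R} \sotimes \ul{A})(C_2/H)$ with $M(C_2/H) \otimes_\Z R \otimes_\Z A$.

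With these identifications, the coequalizer at $C_2/H$ becomes the cokernel of
\[
M(C_2/H) \otimes_\Z R \otimes_\Z A \rightrightarrows M(C_2/H) \otimes_\Z A,
\]
in which the two parallel arrows encode the $R$-action on $M(C_2/H)$ and multiplication by $R$ on $A$. By the very definition of the tensor product over $R$, this cokernel is $M(C_2/H) \otimes_R A$. Finally, the restriction, transfer, and involution maps on $M \sotimes \ul{A}$ recorded in Lemma \ref{lem:box product} are natural with respect to the coequalizer diagram, hence descend to $\res \otimes \id$, $\tr \otimes \id$, and $w \otimes \id$ on the quotient, producing the Mackey functor displayed in the statement.

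The main—and really only—obstacle is the bookkeeping at the last step: one must verify that the descended structure maps agree exactly with those claimed, rather than being modified by unexpected sign or factor-of-two relations. This works out cleanly here precisely because the ideal $I$ of Lemma \ref{lem:box product} was already trivial on $M$ from the start, so no additional relations arise beyond those imposed in defining $\otimes_R$.
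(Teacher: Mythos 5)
Your proof is correct and takes essentially the same approach as the paper: both reduce to Lemma \ref{lem:iotaR-modules} (to kill the ideal $I$ of Lemma \ref{lem:box product} via $\tr\circ\res=2$), apply Lemma \ref{lem:box product} to $M\sotimes \ul{A}$ and $M\sotimes\ul{R}\sotimes\ul{A}$, and then identify $M\sotimes_{\ul{R}}\ul{A}$ as a coequalizer whose components are computed pointwise. One minor note: you correctly describe the relative box product as a coequalizer, whereas the paper writes ``equalizer''; the paper's wording appears to be a slip.
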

\begin{proof}
By Lemma \ref{lem:iotaR-modules},
we have $\tr\circ \res=2$ for $M$.
Together with Lemma \ref{lem:box product},
we see that $M\sotimes \ul{A}$ is obtained by taking $\otimes A$ pointwise to $M$,
i.e., $M\sotimes \ul{A}$ can be described as
\[
\begin{tikzcd}
M(C_2/e)\otimes  A\ar[loop below,"w\otimes \id"]\ar[r,shift left=0.5ex,"\tr \otimes \id"]\ar[r,shift right=0.5ex,leftarrow,"\res \otimes \id"']&
M(C_2/C_2)\otimes  A.
\end{tikzcd}
\]
We can similarly show that $M\sotimes \ul{R}\sotimes \ul{A}$ is obtained by taking $\otimes R \otimes A$ pointwise to $M$.
Since $M\sotimes_{\ul{R}} \ul{A}$ is the equalizer of the two induced morphisms $M\sotimes \ul{R}\sotimes \ul{A} \rightrightarrows M\sotimes \ul{A}$,
the above descriptions for $M\sotimes \ul{A}$ and $M\sotimes \ul{R} \sotimes \ul{A}$ imply the claim.
\end{proof}

We considered modules over arbitrary Green functors in Recollection \ref{model structures on A-modules}. In the special case of constant Green functors, these can be described as follows.
The next Lemma shows in particular that a Mackey functor is a module over $\ul{\Z}$ if and only if $\tr \circ \res = 2$.

\begin{lem}
\label{lem:iotaR-modules}
Let $R$ be a commutative ring.
Then the objects of the category of $\ul{R}$-modules are precisely those Mackey functors $M$ for which $M(C_2/e)$ and $M(C_2/C_2)$ are equipped with $R$-module structures, $\res$, $\tr$, and $w$ are $R$-linear, and $\tr\circ \res=2$. 
The morphisms in the category of $\ul{R}$-modules are the pointwise $R$-linear morphisms of Mackey functors.
\end{lem}
\begin{proof}
Let $M$ be an $\ul{R}$-module.
We have the module structure morphism $\ul{R}\sotimes M\to M$ of Mackey functors.
The relation
$\tr\circ \res=2$
holds for $\ul{R}\sotimes M$ by Lemma \ref{lem:box product}.
This implies that the relation
$\tr\circ \res=2$ still holds for $M$ since the structure morphism $\ul{R}\sotimes M\to M$ is an epimorphism.
With this in hand,
we see that the box product $\ul{R}\sotimes M$ can be described as
\[
\begin{tikzcd}
\ul{R}\otimes M(C_2/e)\ar[loop below,"\id\otimes w"]\ar[r,shift left=0.5ex,"\id \otimes \tr"]\ar[r,shift right=0.5ex,leftarrow,"\id \otimes \res"']&
\ul{R}\otimes M(C_2/C_2)
\end{tikzcd}
\]
by Lemma \ref{lem:box product} again.
Then we can interpret the module axioms for $\ul{R}\sotimes M\to M$ as what is written in the statement. 
\end{proof}

Taking some Mackey functor $\ul{M}$ arising from an abelian group $M$ with involution and then adding $\oplus \Z/2$ to $M(G/G)$ produces a Mackey functor which does not come from an abelian group (=  $\Z$-module) with involution, but by Lemma \ref{lem:iotaR-modules} still yields a $\ul{\Z}$-module, so that there are strictly more $\ul{\Z}$-modules than abelian groups with involution.
By Proposition \ref{inverting 2}, this is essentially the only possible difference.
Also recall \cite[Lemma 2.2.3]{HP23} that morphisms on the underlying classical modules uniquely extend to morphisms of modules over Green functors.

Recall \cite[Lemma 2.2.3]{HP23} that the functor $u$ from the category of $R$-modules with involutions to the category of $\ul{R}$-modules given by $M\mapsto \ul{M}$ is right adjoint to the evaluation functor $(C_2/e)$.
We discuss some subtleties for this adjunction at the end of Remark \ref{omegaone}.
Also note that the image of $u$ only contains objects which are fixed under the projector $\cP^0$,
which is the endofunctor of $\Mack_{\Z/2}$ killing the kernel of the restriction, and which also appears in Proposition \ref{slicesexplicit} below.
If we invert $2$, everything becomes much easier:

\begin{prop}
\label{inverting 2}
Let $R$ be a commutative ring such that $2$ is invertible in $R$.
Then the functor $u$ above is an equivalence of categories.
\end{prop}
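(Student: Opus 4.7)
The plan is to show that the adjunction $\bigl((C_2/e), u\bigr)$ recalled just before the proposition is an adjoint equivalence. The counit $u(N)(C_2/e) \to N$ is the identity on any $R$-module with involution $N$, so the task reduces to verifying that the unit $\eta_N\colon N \to u(N(C_2/e))$ is an isomorphism of $\ul{R}$-modules for every $\ul{R}$-module $N$. The component of $\eta_N$ at $C_2/e$ is the identity on $N(C_2/e)$, while at $C_2/C_2$ it is the restriction
\[
\res\colon N(C_2/C_2) \longrightarrow N(C_2/e)^{C_2},
\]
which lands in the fixed points by the Mackey axiom $w\circ \res = \res$.

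The whole proof then boils down to showing that this restriction map is an $R$-module isomorphism when $2\in R^\times$. The two key inputs are Lemma \ref{lem:iotaR-modules}, which ensures $\tr\circ \res = 2\cdot \id$ on $N(C_2/C_2)$, and the standard $C_2$-Mackey double coset relation $\res\circ \tr = 1+w$. For injectivity, the invertibility of $2$ promotes $\tfrac{1}{2}\tr$ to an $R$-linear left inverse of $\res$. For surjectivity, given $y\in N(C_2/e)^{C_2}$, I would set $x := \tfrac{1}{2}\tr(y)$ and compute $\res(x) = \tfrac{1}{2}(y + w(y)) = y$ using the double coset formula together with $w(y)=y$.

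The compatibility of $\eta_N$ with $\res$, $\tr$, and $w$ is then a routine check, relying once more on $\res\circ \tr = 1+w$ for the $\tr$-compatibility. No serious obstacle is expected: the argument really amounts to the observation that once $2$ is invertible, $\tfrac{1}{2}\tr$ and $\tfrac{1}{2}(1+w)$ supply reciprocal splittings. This also makes transparent why the hypothesis cannot be weakened, consistent with the examples flagged in Remark \ref{omegaone} and the preceding discussion.
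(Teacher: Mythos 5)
Your proof is correct and rests on the same core computation as the paper's: using $\tr\circ\res=2$ from Lemma \ref{lem:iotaR-modules}, the double coset relation $\res\circ\tr=1+w$, and invertibility of $2$ to show that $\res$ is injective with image exactly the $w$-fixed points of $N(C_2/e)$. The only difference is organizational: you verify directly that the unit of the adjunction is an isomorphism (with the counit trivially an identity), whereas the paper proves essential surjectivity from this same identification and then invokes \cite[Lemma 2.2.3]{HP23} separately for full faithfulness.
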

\begin{proof}
Let $M$ be an $\ul{R}$-module.
We have $\res\circ \tr=1+w$,
where $w$ is the involution on $M(C_2/e)$.
By Lemma \ref{lem:iotaR-modules},
we also have $\tr\circ\res=2$.
Let $M'$ be the submodule of $M(C_2/e)$ consisting of the elements $a$ satisfying $w(a)=a$.
When we restrict $\res$ and $\tr$ to $M'$,
we have $\res\circ\tr=2$ and $\tr\circ \res=2$.
Together with the assumption that $2$ is invertible in $R$,
we see that $\res\colon M(C_2/C_2)\to M(C_2/e)$ is injective and its image is precisely $M'$.
This description means $M\simeq\ul{L}$, where $L$ is the $R$-module $M(C_2/e)$ with the involution $w$.
Hence $u$ is essentially surjective.
Using \cite[Lemma 2.2.3]{HP23},
we deduce that $u$ is fully faithful.
\end{proof}

Let $R$ be a commutative ring.
We refer to \cite[\href{https://stacks.math.columbia.edu/tag/091N}{Tag 091N}]{stacks} and the references given there for various equivalent descriptions of derived completion.
The \emph{derived $p$-completion of $\cF\in \rD(R)$} is
\[
\cF_p^\wedge:=\R\lim_n \cF\otimes_{R}^\L (R/p^nR),
\]
where $R/p^n R$ means $\cofib(p^n\colon R\to R)$ taken in $\rD(R)$.
We often omit the $\R$-decoration in $\R\lim$ when it is clear from the context that we are dealing with a derived limit and a derived tensor product.
If $\cF$ is concentrated in degree $0$,
then the derived $p$-completion of $\cF$ is different from the classical $p$-completion $\lim_n \cF\otimes_R (R/p^n R)$ in general.

We now study derived p-completion in more detail, following \cite{SAG}.

\begin{df}\label{def:completion}
Let $A$ be a Green functor.
The category $\Mod_{A}$ of $A$-modules is $\Z$-linear, i.e., an additive category.
Hence the stable $\infty$-category $\rD(A)$ is $\Z$-linear in the sense of \cite[Definition D.1.2.1]{SAG}.
We set
\[
A[1/p]
:=
\colim(A\xrightarrow{p} A\xrightarrow{p}\cdots)
\]
in $\rD(A)$,
which is concentrated in degree $0$ and its $\ul{H}_0$ has a natural Green functor structure.
Following \cite[Definitions 7.1.1.1, 7.2.4.1, 7.3.1.1]{SAG},
we say that $\cF\in \rD(A)$ is
\begin{enumerate}
\item[(1)] \emph{derived $p$-nilpotent} if $\cF\sotimes_{A}^\L A[1/p]\simeq 0$,
\item[(2)] \emph{derived $p$-local} if $\Map_{\rD(A)}(\cG,\cF)$ is contractible for every derived $p$-nilpotent object $\cG$ of $\rD(A)$,
\item[(3)] \emph{derived $p$-complete} if $\Map_{\rD(A)}(\cG,\cF)$ is contractible for every derived $p$-local object $\cG$ of $\rD(A)$.
\end{enumerate}
Let $\rD_{p-\comp}(A)$ denote the full subcategory of $\rD(A)$ spanned by the derived $p$-complete objects.
\end{df}

We have the localization functor
\[
(-)_p^\wedge
\colon
\rD(A)
\to
\rD_{p-\comp}(A)
\]
which by definition is left adjoint to the inclusion functor
$\rD_{p-\comp}(A)\to \rD(A)$,
and hence $(-)_p^\wedge$ preserves colimits.
Assume that $A=\ul{R}$ for some commutative ring $R$.
By \cite[Corollary 7.3.2.2]{SAG},
$\cF\in \rD(\ul{R})$ is derived $p$-complete if it is a local object in the sense of \cite[Definition 5.5.4.1]{HTT} with respect to the class of morphisms $\ul{R[1/p]}[n]\to 0$ and $\ul{R[1/p]^{\oplus \Z/2}}[n]\to 0$ for all integers $n$.

\begin{prop}
\label{completioncommuteswith}
Let $R$ be a commutative ring.
Then there are natural equivalences
\[
(\iota \cF)_p^\wedge
\simeq
\iota (\cF_p^\wedge),
\;
(i_\sharp \cF)_p^\wedge
\simeq
i_\sharp (\cF_p^\wedge),
\text{ and }
(i^*\cG)_p^\wedge
\simeq
i^*(\cG_p^\wedge)
\]
for $\cF\in \rD(R)$ and $\cG\in \rD(\ul{R})$.
\end{prop}
\begin{proof}
Observe that $\iota$ sends $R[1/p]$ to $\ul{R}[1/p]$,
$i_\sharp$ sends $R[1/p]$ to $\ul{R^{\oplus \Z/2}}[1/p]$, and $i^*$ sends $\ul{R}[1/p]$ to $R[1/p]$.
Using the universal property of the localization \cite[Proposition 5.5.4.20]{HTT},
we obtain the desired equivalences.
\end{proof}

For a Green functor $A$ and integer $m\geq 1$,
the notation $A/mA$ means the cofiber of the multiplication morphism $m\colon A\to A$ taken in $\rD(A)$.

\begin{prop}
\label{completion results}
Let $A$ be a Green functor,
and let $\cF$ be an object of $\rD(A)$.
\begin{enumerate}
\item[\textup{(1)}]
We have equivalences
\[
\cF_p^\wedge
\simeq
\cofib((\lim(\cdots \xrightarrow{p} \cF\xrightarrow{p} \cF)\to \cF)
\simeq
\lim_n
\cF \sotimes_{A}^\L A/p^n A
\]
in $\rD(A)$.
Hence the derived $p$-completion $\cF_p^\wedge$ is the pointwise derived $p$-completion if $A=\ul{R}$ for some commutative ring $R$.
\item[\textup{(2)}]
$\fib(\cF\to \cF_p^\wedge)$ is derived $p$-local.
\item[\textup{(3)}]
$\cF$ is derived $p$-local if and only if $\cF_p^\wedge\simeq 0$.
\item[\textup{(4)}]
$\cF$ is derived $p$-complete if and only if the induced morphism $\cF\to \cF_p^\wedge$ in $\rD(A)$ is an equivalence.
\item[\textup{(5)}]
If $\cF$ is derived $p$-local,
then $\cF[m+n\sigma]$ is derived $p$-local for all integers $m$ and $n$.
\item[\textup{(6)}]
If $\cF$ is derived $p$-local and $m$ is an integer coprime to $p$,
then the multiplication morphism $m\colon \cF\to \cF$ is an equivalence.
\item[\textup{(7)}]
If $A=\ul{R}$ for some commutative ring $R$,
then $\cF$ of $\rD(\ul{R})$ is derived $p$-complete if and only if $i^*\cF$ and $\cF^{\Z/2}$ in $\rD(R)$ are derived $p$-complete.
\end{enumerate}
\end{prop}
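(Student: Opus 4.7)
The plan is to derive (1)--(7) from the general theory of Bousfield localization applied to the reflection $\rD(A)\to \rD_{p-\comp}(A)$, combined with the interaction of $\sotimes_A^\L$, equivariant shifts, and the fixed-point/change-of-groups functors. Set $T_p(\cF):=\lim(\cdots\xrightarrow{p}\cF\xrightarrow{p}\cF)$. For (1), I would tensor the cofiber sequence $A\xrightarrow{p^n}A\to A/p^nA$ with $\cF$, producing a cofiber sequence of towers whose $n$-th row is $\cF\xrightarrow{p^n}\cF\to \cF\sotimes_A^\L A/p^nA$; passing to $\lim_n$ yields the fiber sequence
\[
T_p(\cF)\to \cF\to \lim_n \cF\sotimes_A^\L A/p^nA.
\]
One then checks that the right-hand object is derived $p$-complete and that the induced map $\cF\to \lim_n \cF\sotimes_A^\L A/p^nA$ is initial among maps to derived $p$-complete targets, identifying it with $\cF_p^\wedge$ and yielding both descriptions. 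For $A=\ul{R}$, the pointwise claim is immediate since $\sotimes_{\ul{R}}^\L \ul{R}/p^n$ is the cofiber of $p^n$, computed pointwise on the Burnside category, and homotopy limits in $\rD(\ul{R})$ are pointwise.

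From the fiber sequence above, I deduce (2)--(5). A cofinality argument on the defining inverse system shows that $p$ is an equivalence on $T_p(\cF)$, making it naturally an $A[1/p]$-module; hence for any derived $p$-nilpotent $\cG$ the adjunction
\[
\Map_A(\cG,T_p(\cF))\simeq \Map_{A[1/p]}(\cG\sotimes_A^\L A[1/p],T_p(\cF))
\]
vanishes because $\cG\sotimes_A^\L A[1/p]\simeq 0$, proving (2). Parts (3) and (4) are standard consequences of the reflective localization: for (3), the $\Leftarrow$ direction combines $\fib(\cF\to 0)\simeq \cF$ with (2), while $\Rightarrow$ uses that a $p$-local $\cF$ maps trivially to every $p$-complete target; (4) is the familiar criterion that the unit of a reflective localization is an equivalence precisely on the local objects. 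For (5), tensoring the relation $\cG\sotimes_A^\L A[1/p]\simeq 0$ with the invertible object $A[\sigma]$ (or shifting) preserves the relation, so derived $p$-nilpotence is stable under $[m+n\sigma]$; by the same adjunction as above, so is derived $p$-locality.

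For (6), the route I would take uses (3) to reduce to $\cF\simeq T_p(\cF)$, an $A[1/p]$-module. For $m$ coprime to $p$ and each $n$, B\'ezout yields $a_n,b_n\in \Z$ with $a_n m + b_n p^n = 1$, so $m$ is a unit in $A/p^nA$ compatibly in $n$; this produces an inverse to $m$ on each $\cF\sotimes_A^\L A/p^nA$ and, through the $A[1/p]$-module structure on $T_p(\cF)\simeq \cF$, an inverse on $\cF$ itself. The main technical obstacle I anticipate here is the bookkeeping between the ``$A[1/p]$-module'' and the ``$p$-adic inverse limit'' viewpoints; the cleanest route is likely to first reduce to $A$ a commutative ring via (7) and then argue pointwise with classical commutative algebra. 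Finally, (7) is an immediate consequence of Propositions~\ref{conservativity} and~\ref{completioncommuteswith}: $\cF\to\cF_p^\wedge$ is an equivalence iff its images under the jointly conservative pair $(i^*,(-)^{\Z/2})$ are, and by Proposition~\ref{completioncommuteswith} those images identify with $i^*\cF\to (i^*\cF)_p^\wedge$ and $\cF^{\Z/2}\to (\cF^{\Z/2})_p^\wedge$, reducing to (4).
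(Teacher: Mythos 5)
Your treatment of (1)--(5) is sound and follows the same standard Bousfield-localization outline that the paper encodes by citing \cite[Propositions 7.3.2.1, 7.3.1.4, Remark 7.2.0.2]{SAG}; the paper simply cites those results while you unwind the proofs, which is fine.

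The serious issue is (6). As literally stated (with ``derived $p$-local''), the claim is false: over $A=\ul{\Z}$ take $p=2$, $m=3$, and $\cF=\ul{\Z/3}$. Then $2$ is invertible on $\Z/3$, so $\cF$ is derived $2$-local, yet multiplication by $3$ is zero. The paper's own proof of (6) cites part (4), not part (3), which signals that the intended hypothesis is \emph{derived $p$-complete}, not derived $p$-local; with that hypothesis the proof is: $\cF\simeq \lim_n \cF\sotimes_A^\L A/p^nA$ by (1) and (4), and $m$ is an equivalence on each $A/p^nA$ (B\'ezout), hence on the limit. Your argument accepts the hypothesis ``derived $p$-local'' at face value and then tries to transport an inverse of $m$ on $\cF\sotimes_A^\L A/p^nA$ back to $\cF$ ``through the $A[1/p]$-module structure.'' There is no such transport: when $\cF$ is derived $p$-local, $\lim_n\cF\sotimes_A^\L A/p^nA\simeq 0$, so invertibility of $m$ on those quotients tells you nothing about $\cF$. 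The suggested fallback (reduce to a commutative ring via (7) and argue pointwise) cannot repair it either, since the statement is false already for $A=\Z$. You should flag the hypothesis as ``derived $p$-complete'' and use (4) rather than (3).

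One smaller point on (7): your route via Propositions~\ref{conservativity} and~\ref{completioncommuteswith} is reasonable, but Proposition~\ref{completioncommuteswith} only asserts commutation of $p$-completion with $\iota$, $i_\sharp$, and $i^*$, not with $(-)^{\Z/2}$. You need the (easy but separate) observation that $(-)^{\Z/2}\colon\rD(\ul{R})\to\rD(R)$ commutes with derived $p$-completion --- it is exact and preserves limits, so it commutes with $\lim_n(-)\sotimes A/p^nA$ --- before the conservativity argument closes. The paper instead argues directly via the adjunctions $(\iota,(-)^{\Z/2})$ and $(i_\sharp,i^*)$ and the generators $\ul{R[1/p]}[n]$ and $\ul{R[1/p]^{\oplus\Z/2}}[n]$, which avoids having to prove the commutation at all.
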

\begin{proof}
The first equivalence in (1) is a consequence of \cite[Proposition 7.3.2.1]{SAG}, and the second uses that lim and cofib commute in the stable setting.
(2) and (3) are consequences of \cite[Remark 7.2.0.2, Proposition 7.3.1.4]{SAG}.
(4) holds since $(-)_p^\wedge$ is a localization functor in the sense of \cite[Definition 5.2.7.2]{HTT}.
(5) is a consequence of (1) and (3).

(6) For every integer $n\geq 1$,
using that $m$ and $p^n$ are coprime, we see that the multiplication $m\colon A/p^n A\to A/p^n A$ in $\rD(A)$ is an equivalence.
Take $\lim_n(\cF\sotimes_A^\L (-))$ and use (1) and (4) to deduce the claim.

(7) As noted above,
$\cF$ is derived $p$-complete if and only if
\[
\Hom_{\rD(\ul{R})}
(\ul{R[1/p]^{\oplus \Z/2}}[n],\cF) = 0 =
\Hom_{\rD(\ul{R})}
 (\ul{R[1/p]}[n],\cF)
\]
for every integer $n$.
We conclude using $i_\sharp (R[1/p])\simeq \ul{R[1/p]^{\oplus \Z/2}}$ and $\iota (R[1/p])\simeq \ul{R[1/p]}$.
\end{proof}

\begin{prop}
\label{locality and monoidal}
Let $A$ be a Green functor.
If $\cF$ is a derived $p$-local object of $\rD(A)$,
then $\cF\sotimes_{A}^\L \cG$ is derived $p$-local for $\cG\in \rD(A)$.
\end{prop}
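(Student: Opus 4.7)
The proof strategy is to reformulate derived $p$-locality in terms of $p$-invertibility, at which point the proposition becomes immediate from associativity of $\sotimes_A^\L$. The target characterization is that $\cF \in \rD(A)$ is derived $p$-local if and only if multiplication by $p$ is an equivalence on $\cF$; equivalently, the canonical map $\cF \to \cF \sotimes_A^\L A[1/p]$ is an equivalence.

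To prove this characterization, first suppose $p$ is invertible on $\cF$. Then $\cF \simeq \cF \sotimes_A^\L A[1/p]$ is a module over the Green functor $A[1/p]$, so by Proposition \ref{abelian Dold-Kan} (applied to the morphism $A \to A[1/p]$) together with the standard restriction-extension-of-scalars adjunction, one has
\[
\Map_{\rD(A)}(\cH, \cF)
\simeq
\Map_{\rD(A[1/p])}(\cH \sotimes_A^\L A[1/p], \cF)
\simeq 0
\]
for every derived $p$-nilpotent $\cH$, since $\cH \sotimes_A^\L A[1/p] \simeq 0$ by Definition \ref{def:completion}. Conversely, suppose $\cF$ is derived $p$-local, and form the fiber sequence $\cF' \to \cF \to \cF \sotimes_A^\L A[1/p]$. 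Tensoring this sequence with $A[1/p]$ shows that $\cF'$ is derived $p$-nilpotent, so $\Map(\cF', \cF) \simeq 0$ and the fiber sequence splits; the resulting direct summand $\cF' \hookrightarrow \cF$ is then both derived $p$-local (as a summand of a $p$-local object) and derived $p$-nilpotent, forcing $\Map(\cF', \cF') \simeq 0$ and hence $\cF' \simeq 0$.

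With the characterization in hand, the proposition is a one-line computation. Multiplication by $p$ on $\cF \sotimes_A^\L \cG$ equals $(p \cdot \id_\cF) \sotimes \id_\cG$, which is an equivalence whenever $p \cdot \id_\cF$ is. Equivalently, for every $n \geq 1$,
\[
(\cF \sotimes_A^\L \cG) \sotimes_A^\L A/p^n A
\simeq
(\cF \sotimes_A^\L A/p^n A) \sotimes_A^\L \cG
\simeq 0,
\]
so $(\cF \sotimes_A^\L \cG)_p^\wedge \simeq 0$ and Proposition \ref{completion results}(3) then yields derived $p$-locality of $\cF \sotimes_A^\L \cG$. The main obstacle is thus the reformulation of derived $p$-locality as invertibility of $p$; once this is established, the remainder of the argument is essentially trivial.
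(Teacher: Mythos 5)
Your approach is correct in substance but takes a genuinely different route from the paper. The paper argues directly on the $\cG$ variable: the class $\cS$ of those $\cG$ for which $\cF\sotimes_A^\L\cG$ is derived $p$-local contains $A$, is closed under colimits (by a SAG citation) and under equivariant shifts (Proposition \ref{completion results}(5)), and therefore contains all of $\rD(A)$ by the generators of Remark \ref{another generators}. You instead first establish the characterization that an object of $\rD(A)$ is derived $p$-local if and only if multiplication by $p$ on it is an equivalence (equivalently, the canonical map to $(-)\sotimes_A^\L A[1/p]$ is an equivalence), after which the proposition really is a one-liner. This reformulation is more conceptual, and makes clear that the content of the statement is entirely in the recognition theorem for derived $p$-local objects; the trade-off is that the work is front-loaded into the characterization, which is standard SAG material but worth having in the Green-functor setting.

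There is, however, a small slip in your proof of the converse direction. From $\Map_{\rD(A)}(\cF',\cF)\simeq 0$, i.e.\ nullhomotopy of $\cF'\to\cF$, you conclude that the fiber sequence splits with $\cF'$ a direct summand of $\cF$. That is not what nullhomotopy of the first map gives. The cofiber of a nullhomotopic map decomposes, so what actually follows is $\cF\sotimes_A^\L A[1/p]\simeq\cF\oplus\cF'[1]$; hence $\cF'[1]$ is a direct summand of $\cF\sotimes_A^\L A[1/p]$, not $\cF'$ a summand of $\cF$. The conclusion still follows with a minor patch: $\cF\sotimes_A^\L A[1/p]$ visibly has $p$ invertible, hence is derived $p$-local by the direction you already proved; direct summands and equivariant shifts of derived $p$-local objects are derived $p$-local (the latter by Proposition \ref{completion results}(5)); so $\cF'$ is both derived $p$-nilpotent and derived $p$-local, forcing $\id_{\cF'}\simeq 0$ and $\cF'\simeq 0$. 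Alternatively and more quickly, Proposition \ref{completion results}(1) identifies $\cF_p^\wedge$ with $\cofib\bigl(\lim(\cdots\xrightarrow{p}\cF\xrightarrow{p}\cF)\to\cF\bigr)$, and $p$ is manifestly invertible on the limit; so $\cF_p^\wedge\simeq 0$ forces $p$ to be invertible on $\cF$, giving the converse directly without the splitting argument.
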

\begin{proof}
The class $\cS$ of those objects $\cG$ for which $\cF\sotimes_{A}^\L \cG$ is
derived $p$-local is closed under colimits by \cite[Proposition 7.2.4.9(2)]{SAG} and under equivariant shifts
$[m+n\sigma]$ for all integers $m$ and $n$ by Proposition \ref{completion results}(5).
Furthermore,
$\cS$ contains $A$.
Using Remark \ref{another generators}, it follows that $\cS$ contains every object of $\rD(A)$.
\end{proof}

\begin{prop}
\label{monoidal completion local}
Let $A$ be a Green functor,
and let $\cF\to \cF'$ be a morphism in $\rD(A)$.
If the induced morphism $\cF_p^\wedge \to \cF_p'^\wedge$ in $\rD(A)$ is an equivalence,
then the induced morphism
\[
(\cF\sotimes_{A}^\L \cG)_p^\wedge
\to
(\cF'\sotimes_{A}^\L \cG)_p^\wedge
\]
in $\rD(A)$ is an equivalence for $\cG\in \rD(A)$.
\end{prop}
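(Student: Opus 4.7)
The plan is to reduce the claim to showing that a certain fiber is zero after $p$-completion, and then invoke Propositions \ref{completion results} and \ref{locality and monoidal}.

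First, I would form the fiber $\cH := \fib(\cF \to \cF')$ in $\rD(A)$. Since $(-)_p^\wedge \colon \rD(A) \to \rD_{p\text{-}\comp}(A)$ is a left adjoint between stable $\infty$-categories, it is exact, so the hypothesis gives
\[
\cH_p^\wedge \simeq \fib(\cF_p^\wedge \to \cF_p'^\wedge) \simeq 0.
\]
By Proposition \ref{completion results}(3), this is equivalent to $\cH$ being derived $p$-local.

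Next, by Proposition \ref{locality and monoidal}, $\cH \sotimes_{A}^\L \cG$ is derived $p$-local for any $\cG \in \rD(A)$, and hence again by Proposition \ref{completion results}(3) we have $(\cH \sotimes_{A}^\L \cG)_p^\wedge \simeq 0$. Since $\sotimes_A^\L \cG$ preserves colimits in each variable (hence is exact) and $(-)_p^\wedge$ is exact, we obtain
\[
\fib\bigl((\cF \sotimes_A^\L \cG)_p^\wedge \to (\cF' \sotimes_A^\L \cG)_p^\wedge\bigr)
\simeq
\bigl(\fib(\cF \to \cF') \sotimes_A^\L \cG\bigr)_p^\wedge
\simeq
(\cH \sotimes_A^\L \cG)_p^\wedge
\simeq 0,
\]
which gives the desired equivalence.

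The only step requiring any attention is the exactness of $(-)_p^\wedge$; but this is automatic since it is a left adjoint between stable $\infty$-categories, and the exactness of $\sotimes_A^\L \cG$ is already stated in Recollection \ref{model structures on A-modules}. So there is no real obstacle, only a careful identification of fibers before and after completion.
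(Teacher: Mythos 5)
Your proof is correct and follows essentially the same structure as the paper's: identify the fiber $\cH := \fib(\cF\to\cF')$ as derived $p$-local, invoke Proposition \ref{locality and monoidal} to see $\cH\sotimes_A^\L\cG$ is derived $p$-local, and conclude via Proposition \ref{completion results}(3). The only difference is cosmetic: you establish $\cH$ is $p$-local by computing $\cH_p^\wedge\simeq 0$ via exactness of $(-)_p^\wedge$ together with Proposition \ref{completion results}(3), whereas the paper invokes Proposition \ref{completion results}(2) (that $\fib(\cF\to\cF_p^\wedge)$ is $p$-local) and the fact that $p$-local objects form a stable subcategory closed under fibers; both routes are valid and equally short.
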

\begin{proof}
Using Proposition \ref{completion results}(2), we see that the fiber of $\cF\to \cF'$ is derived $p$-local.
Hence the fiber of
\(
\cF\sotimes_{A}^\L \cG
\to
\cF'\sotimes_{A}^\L \cG
\)
is derived $p$-local by Proposition \ref{locality and monoidal}.
Proposition \ref{completion results}(3) finishes the proof.
\end{proof}

\begin{prop}
Let $A$ be a Green functor.
Then there exists a unique 
symmetric monoidal structure on $\rD_{p-\comp}(A)$ such that $(-)_p^\wedge$ is monoidal,
and the symmetric monoidal product is given by $(M,N)\mapsto (M\sotimes_A^\L N)_p^\wedge$ for all $M,N\in \rD_{p-\comp}(A)$.
\end{prop}
\begin{proof}
As in \cite[Corollary 7.3.5.2]{SAG},
this is a consequence of \cite[Proposition 2.2.1.9]{HA} and Proposition \ref{monoidal completion local}.
\end{proof}
The next result will be useful when proving Theorem \ref{cofibersequence}.

\begin{prop}
\label{monoidal completion algebra}
Let $A$ be a Green functor,
and let $B$ be a commutative algebra object of $\rD(A)$.
For $B$-modules $M$ and $N$,
the induced morphism
\[
(M\sotimes_B^\L N)_p^\wedge
\to
(M_p^\wedge \sotimes_{B_p^\wedge }^\L N_p^\wedge)_p^\wedge
\]
in $\Mod_B$ is an equivalence.
\end{prop}
\begin{proof}
The description of the monoidal structure on $\infty$-categories of modules in \cite[section 4.4.1, Theorem 4.5.2.1(2)]{HA} yields a natural equivalence
\[
M\sotimes_B^\L N
\simeq
\colim
\big(
\cdots
\substack{\rightarrow\\[-1em] \rightarrow \\[-1em] \rightarrow}
M\sotimes_{A}^\L B \sotimes_{A}^\L N
\substack{\rightarrow\\[-1em] \rightarrow}
M\sotimes_{A}^\L N
\big)
\]
in $\Mod_B$,
where the colimit is obtained by the two-sided bar construction.
Take $(-)_p^\wedge$ on both sides,
permute $\colim$ and $(-)_p^\wedge$,
and use Proposition \ref{monoidal completion local} to obtain a natural equivalence
\[
(M\sotimes_B^\L N)_p^\wedge
\simeq
\big(
\colim
\big(
\cdots
\substack{\rightarrow\\[-1em] \rightarrow \\[-1em] \rightarrow}
M_p^\wedge \sotimes_{A}^\L B_p^\wedge \sotimes_{A}^\L N_p^\wedge
\substack{\rightarrow\\[-1em] \rightarrow}
M_p^\wedge \sotimes_{A}^\L N_p^\wedge
\big)
\big)_p^\wedge
\]
in $\Mod_B$.
The latter one is equivalent to $(M_p^\wedge \sotimes_{B_p^\wedge }^\L N_p^\wedge)_p^\wedge$.
\end{proof}

\section{Recollection on slices in stable equivariant homotopy theory}
\label{sec3}

Hill-Hopkins-Ravenel \cite[section 4]{HHR} introduces a slice filtration.
For example, they defined when $X\in \Sp^{\Z/2}$ is slice $(-1)$-positive, written $X\geq 0$.
Ullman \cite{Ull13} suggests a slightly modified version,
also considered in \cite{HHR21}, which we now briefly recall. 

\begin{rmk}\label{compareslices}
Both \cite{HHR} and \cite{HHR21} yield the same condition for $X\geq 0$. Comparing \cite[4.48]{HHR} and \cite[11.1.18]{HHR21}, we see that $X\geq 1$ in \cite{HHR} is a weaker condition than the one in \cite{HHR21}. We also see that they have different $0$-slice by comparing \cite[4.50]{HHR} with \cite[11.1.45]{HHR21}. Hence the condition $X\leq 0$ differs as well, the condition of \cite{HHR} is stronger than the one in \cite[11.1.18 (iv)]{HHR21}.
\end{rmk}

For every integer $n$,
let $\Sp_{\geq n}^{\Z/2}$ be the localizing (in the sense of \cite[Definition 6.3.12]{HHR21}, in particular not triangulated) subcategory of $\Sp^{\Z/2}$ generated by the set
\[
\cS_n
:=
\{
\Sigma^{m+m\sigma} \Sphere
:
2m\geq n
\}
\cup
\{
\Sigma^{m} (\Sphere^{\oplus \Z/2})
:
m\geq n
\},
\]
where the involution on $\Sphere^{\oplus \Z/2}:=i_\sharp \Sphere$ permutes the two coordinates of $\Sphere^{\oplus \Z/2}$ (on $(G/e)$ in the Mackey functor description).
The inclusion $\Sp_{\geq n}^{\Z/2}\to \Sp^{\Z/2}$ admits a right adjoint $P_n\colon \Sp^{\Z/2}\to \Sp_{\geq n}^{\Z/2}$,
and we still write $P_n$ for the endofunctor on $\Sp^{\Z/2}$.
We also obtain the natural transformations
\[
\cdots \to P_n \to P_{n-1}\to \cdots,
\]
which we call the slice filtration, and which by definition has
layers $P_n^n:=\cofib(P_{n+1}\to P_n)$.
See e.g.\ \cite[section 11.1.E]{HHR21} for details.
We set $P^{n}:=\cofib(P_{n+1} \to \id)$.
Let $\Sp_{\leq n}^{\Z/2}$ be the full subcategory of $\Sp^{\Z/2}$ spanned by the objects $X$ such that $\Hom_{\Sp^{\Z/2}}(Y,X)=0$ for every $Y\in \cS_n$.

For a $\Z/2$-spectrum $X$ and integers $m$ and $n$,
we set
\begin{gather*}
\pi_{m+n\sigma}(X)
:=
\pi_0((\Sigma^{-m-n\sigma}X)^{\Z/2}),
\\
\ul{\pi}_{m+n\sigma}(X)
:=
\ul{\pi}_m(\Sigma^{-n\sigma} X)
\cong
\ul{\pi}_0(\Sigma^{-m-n\sigma} X)
\in
\Mack_{\Z/2},
\end{gather*}
compare also Definition \ref{defshiftabelian}.
Here, recall that $(-)^{\Z/2}$ denotes the fixed-point functor $\Sp^{\Z/2}\to \Sp$.
The standard notation $\ul{\pi}_n$ appears in e.g.\ \cite[section 3]{HHR}.
Note that $\Sp^{\Z/2}$ can be equivalently defined by spectral Mackey functors \cite{Bar17}, and the equivariant homotopy groups obtained this way agree with the above ones.

The following is the key computational result for the equivariant slices.

\begin{prop}\label{slicesexplicit}
Let $X$ be a $\Z/2$-spectrum.
Then we have equivalences
\begin{gather*}
P_{2n}^{2n}(X)
\simeq
\Sigma^{n+n\sigma} \rH \ul{\pi}_{n+n\sigma}(X),
\\
P_{2n+1}^{2n+1}(X)
\simeq
\Sigma^{n+1+n\sigma}  \rH \cP^0 \ul{\pi}_{n+1+n\sigma}(X)
\end{gather*}
for all integers $n$,
where $\cP^0$
is the endofunctor of $\Mack_{\Z/2}$ killing the kernel of the restriction.
\end{prop}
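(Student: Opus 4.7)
My strategy is to split the proposition into two essentially independent tasks: first, characterize which Eilenberg--MacLane $\Z/2$-spectra are pure slices of a given dimension, and second, use the slice tower to identify the coefficient Mackey functor of $P_n^n(X)$ with an appropriate piece of $\ul{\pi}_\bullet(X)$.

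For the first task, I would verify by direct computation that $\Sigma^{n+n\sigma}\rH\ul{M}$ is a $2n$-slice for every Mackey functor $\ul{M}$. Since $\Sigma^{n+n\sigma}\Sphere$ is a generator of $\Sp^{\Z/2}_{\geq 2n}$ and since $\rH\ul{M}$ is obtained from $\Sphere$ via operations that preserve the localizing subcategory, the ``slice $\geq 2n$'' half is straightforward. The ``slice $\leq 2n$'' half reduces, by adjunction on the generators of $\cS_{2n+1}$, to verifying the vanishing of
\[
\pi_{m+m\sigma}\bigl(\Sigma^{-n-n\sigma}\rH\ul{M}\bigr)\quad(2m\geq 2n+1)
\quad\text{and}\quad
\pi_m\bigl(i^*\Sigma^{-n-n\sigma}\rH\ul{M}\bigr)\quad(m\geq 2n+1),
\]
which follows from the standard computation of the homotopy groups of $\rH\ul{M}$ in representation-graded degree. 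For the odd case, an analogous calculation shows that $\Sigma^{n+1+n\sigma}\rH\ul{N}$ lies in $\Sp^{\Z/2}_{\geq 2n+1}$ automatically, but the slice $\leq 2n+1$ condition, tested against $\Sigma^{2n+1}(\Sphere^{\oplus \Z/2})= \Sigma^{2n+1}i_\sharp\Sphere$, produces via $(i_\sharp,i^*)$-adjunction the condition that $\ul{\pi}_{n+(n+1)\sigma}$ of the underlying spectrum vanishes---which translates precisely to the restriction map of $\ul{N}$ being injective, i.e.\ $\cP^0\ul{N}=\ul{N}$.

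For the second task, I would use the defining cofiber sequences $P_{n+1}(X)\to P_n(X)\to P_n^n(X)$ to track homotopy Mackey functors. Using that $P_{2n+1}(X)\in \Sp^{\Z/2}_{\geq 2n+1}$ kills $\ul{\pi}_{n+n\sigma}$ (by the cell computation above) and that $P^{2n-1}(X)\in \Sp^{\Z/2}_{\leq 2n-1}$ also has vanishing $\ul{\pi}_{n+n\sigma}$, the two-step passage $X\to P^{2n}(X)\to P_{2n}^{2n}(X)$ is an isomorphism on $\ul{\pi}_{n+n\sigma}$, forcing $\ul{M}\cong \ul{\pi}_{n+n\sigma}(X)$ by the $\ul{\pi}_{n+n\sigma}$-computation of $\Sigma^{n+n\sigma}\rH\ul{M}$ from the first step. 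For the odd case the same diagram gives a surjection $\ul{\pi}_{n+1+n\sigma}(X)\twoheadrightarrow \ul{N}$; since the target must satisfy $\cP^0\ul{N}=\ul{N}$ by Step~1, the kernel must contain the kernel of restriction, and by universality of $\cP^0$ as the reflection onto ``$\res$-injective'' Mackey functors the map factors through an isomorphism $\cP^0\ul{\pi}_{n+1+n\sigma}(X)\xrightarrow{\sim}\ul{N}$.

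\textbf{Main obstacle.} The subtle point is the odd case: proving that $\cP^0$ is the precise projector that appears, rather than some coarser quotient. This requires both directions of Step~1 for odd slices (namely that $\Sigma^{n+1+n\sigma}\rH\ul{N}$ is a $(2n+1)$-slice \emph{if and only if} $\ul{N}$ has injective restriction), and a careful universal-property argument showing that $\cP^0\ul{\pi}_{n+1+n\sigma}(X)$ is initial among $\res$-injective quotients of $\ul{\pi}_{n+1+n\sigma}(X)$. The equivariant shift formalism of Definition~\ref{defshiftabelian} together with the explicit description of the generators $\cS_n$ should reduce all the slice-membership checks to finite calculations with the restriction, transfer and Weyl action on $\ul{\pi}_\bullet$.
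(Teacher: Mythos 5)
The paper's own proof is a one-line citation to Hill's monograph, so your proposal is a genuine attempt to re-derive the result from scratch; the two-step architecture (characterize which suspended Eilenberg--MacLane spectra are pure slices, then chase the slice tower on homotopy Mackey functors) is the standard way to do this. But two parts of the odd-degree argument are wrong or incomplete.

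\emph{Wrong test object in Step 1.} To verify $\Sigma^{n+1+n\sigma}\rH\ul{N}\in\Sp^{\Z/2}_{\leq 2n+1}$ one tests against $\cS_{2n+2}$, whose induced generators have the form $\Sigma^m(\Sphere^{\oplus\Z/2})$ with $m\geq 2n+2$; your proposed test object $\Sigma^{2n+1}(\Sphere^{\oplus\Z/2})$ is not in this set. More seriously, \emph{any} test against $\Sigma^m(\Sphere^{\oplus\Z/2})\simeq i_\sharp\Sigma^m\Sphere$ passes via $(i_\sharp,i^*)$-adjunction to a computation in the underlying non-equivariant spectrum $i^*\Sigma^{n+1+n\sigma}\rH\ul{N}\simeq\Sigma^{2n+1}\rH\bigl(N(C_2/e)\bigr)$, which carries only integer-graded homotopy and is therefore blind to the restriction map of $\ul{N}$; the required vanishing in degrees $\geq 2n+2$ holds automatically for every $\ul{N}$. (Your phrase ``$\ul{\pi}_{n+(n+1)\sigma}$ of the underlying spectrum'' is not meaningful, since the underlying spectrum has no $\sigma$-grading.) The condition $\cP^0\ul{N}=\ul{N}$ is in fact detected by the \emph{other} family of generators: the test against $\Sigma^{(n+1)+(n+1)\sigma}\Sphere$ asks that $\pi_\sigma^{\Z/2}(\rH\ul{N})=0$, and the cofiber sequence $(\Z/2)_+\to S^0\to S^\sigma$ together with $\pi_1^e(\rH\ul{N})=0$ identifies $\pi_\sigma^{\Z/2}(\rH\ul{N})$ with $\ker\bigl(\res\colon N(C_2/C_2)\to N(C_2/e)\bigr)$.

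\emph{Missing half of Step 2 for the odd slice.} You show that the surjection $\ul{\pi}_{n+1+n\sigma}(X)\twoheadrightarrow\ul{N}$ kills $\ker(\res)$ and invoke the universal property of $\cP^0$. That only gives a surjection $\cP^0\ul{\pi}_{n+1+n\sigma}(X)\to\ul{N}$; it does not give injectivity, i.e.\ it does not show that the kernel of $\ul{\pi}_{n+1+n\sigma}(X)\to\ul{N}$ is contained in $\ker(\res)$. The missing input is that this kernel is the image of $\ul{\pi}_{n+1+n\sigma}(P_{2n+2}(X))$, and since $P_{2n+2}(X)\in\Sp^{\Z/2}_{\geq 2n+2}$ is $(2n+1)$-connected on underlying spectra one has $\ul{\pi}_{n+1+n\sigma}(P_{2n+2}(X))(C_2/e)=0$; naturality of $\res$ in Mackey functors then forces that image to land in $\ker(\res)$. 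Without this containment your identification of $\ul{N}$ with $\cP^0\ul{\pi}_{n+1+n\sigma}(X)$ is open, exactly at the ``precise projector'' subtlety you yourself flag as the main obstacle.
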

\begin{proof}
We refer to \cite[Theorem 17.5.25]{Hil20}.
\end{proof}

For any $\Z/2$-spectrum $X$ and integer $n$,
we set
\begin{gather*}
\rho_{2n}(X)
:=
\ul{\pi}_{n+n\sigma}(X),
\\
\rho_{2n+1}(X)
:=
\cP^0 \ul{\pi}_{n+1+n\sigma}(X)
\end{gather*}
so that we have $P_{2n}^{2n}(X)\simeq \Sigma^{n+n\sigma}\rH \rho_{2n}(X)$ and $P_{2n+1}^{2n+1}(X)\simeq \Sigma^{n+1+n\sigma} \rH \rho_{2n+1}(X)$.

We now introduce a similar filtration on the derived $\infty$-category $\rD(\ul{A})$ of $\ul{A}$-modules for a commutative ring $A$,
which under Proposition \ref{greenequivalence} is easily seen to correspond to the slice filtration.

\begin{df}
\label{abelian rho}
Let $A$ be a Green functor, e.g.\ $A=\ul{R}$ for a commutative ring $R$ with or without involution.
For every integer $n$,
let $\rD_{\geq n}(A)$ be the smallest full subcategory of $\rD_{\geq n}(A)$ closed under colimits and extensions and containing
\[
\cS_n(A)
:=
\{
A[m+m\sigma]
:
2m\geq n
\}
\cup
\{
A\sotimes \ul{\cB}^{\Z/2}[m]
:
m\geq n
\}.
\]
Let $\rD_{\leq n-1}(A)$ be the full subcategory of $\rD(A)$ spanned by the objects $\cF$ such that $\Hom_{\rD(A)}(\cG,\cF)=0$ for every $\cG\in \cS_n(A)$.
Observe that we have $\cF\in \rD_{\leq n-1}(A)$ if and only if $\Hom_{\rD(A)}(\cG,\cF)=0$ for every $\cG\in \rD_{\geq n}(A)$.

The inclusion $\rD_{\geq n}(A)\to \rD(A)$ admits a right adjoint $P_n\colon \rD(A)\to \rD_{\geq n}(A)$, and again we also write $P_n$ for the endofunctor on $\rD(A)$.
We also obtain the natural transformations
\[
\cdots \to P_n \to P_{n-1}\to \cdots,
\]
which we call the slice filtration.
We set $P^n:=\cofib(P_{n+1} \to \id)$ and $P_n^n:=\cofib(P_{n+1}\to P_n)$ for every integer $n$.
Observe that there is an equivalence $P_n^n\simeq \fib(P^n\to P^{n-1})$, 
and that $P^n(\cF) \in \rD_{\leq n}(A)$.
\end{df}

\begin{lem}
\label{shift 2}
Let $A$ be a Green functor.
For every integer $n$,
there are natural equivalences of $\infty$-categories
\[
\rD_{\geq n+2}(A)\simeq \rD_{\geq n}(A)[1+\sigma]
\text{ and }
\rD_{\leq  n+2}(A)\simeq \rD_{\leq n}(A)[1+\sigma].
\]
This notation means the full subcategory spanned by all the objects of this shifted form.
\end{lem}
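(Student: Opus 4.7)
The plan is to exploit the fact that $A[1+\sigma]$ is an invertible object of $\rD(A)$, with monoidal inverse $A[-1-\sigma]$ described in Construction \ref{alphaupperstar}. Consequently, tensoring with $A[1+\sigma]$ is an autoequivalence of $\rD(A)$; in particular it preserves colimits, cofiber sequences (hence extensions), and the mapping spaces $\Map_{\rD(A)}(-,-)$. Everything reduces to understanding the action of this autoequivalence on the generating set $\cS_n(A)$.

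The first type of generator is easy: $A[m+m\sigma]\sotimes_A^\L A[1+\sigma]\simeq A[(m+1)+(m+1)\sigma]$, and the condition $2m\geq n$ is equivalent to $2(m+1)\geq n+2$. For the second type of generator, I need the identification
\[
(A\sotimes \ul{\cB}^{\Z/2})\sotimes_A^\L A[\sigma]
\simeq
(A\sotimes \ul{\cB}^{\Z/2})[1],
\]
which is the main technical input. Under Proposition \ref{greenequivalence} this translates to $\Sigma^\infty (\Z/2)_+ \wedge S^\sigma\simeq \Sigma^\infty (\Z/2)_+ \wedge S^1$ in $\Sp^{\Z/2}$, which follows from the projection formula together with the non-equivariant identification $i^* S^\sigma\simeq S^1$; alternatively, using $A\sotimes \ul{\cB}^{\Z/2}\simeq i_\sharp i^* A$ and Proposition \ref{alpha isharp}, it follows from the projection formula for the adjunction $(i_\sharp,i^*)$ together with $i^*(A[\sigma])\simeq A[1]$. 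Combining these two calculations gives $(A\sotimes \ul{\cB}^{\Z/2})[m]\sotimes_A^\L A[1+\sigma]\simeq (A\sotimes \ul{\cB}^{\Z/2})[m+2]$, and $m\geq n$ is equivalent to $m+2\geq n+2$. Therefore $(-)\sotimes_A^\L A[1+\sigma]$ induces a bijection between the equivalence classes of $\cS_n(A)$ and of $\cS_{n+2}(A)$.

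Because this autoequivalence preserves colimits and extensions, it carries the smallest full subcategory generated by $\cS_n(A)$ onto the one generated by $\cS_{n+2}(A)$, establishing $\rD_{\geq n}(A)[1+\sigma]=\rD_{\geq n+2}(A)$. For the second equivalence I use the orthogonality characterization: for $\cF\in \rD(A)$,
\[
\cF\in \rD_{\leq n+1}(A)
\Longleftrightarrow
\Hom_{\rD(A)}(\cG',\cF)=0 \text{ for all } \cG'\in \cS_{n+2}(A),
\]
and substituting $\cG'=\cG\sotimes_A^\L A[1+\sigma]$ for $\cG\in \cS_n(A)$ and using that tensoring with $A[1+\sigma]$ is an autoequivalence rewrites the vanishing as $\Hom_{\rD(A)}(\cG,\cF\sotimes_A^\L A[-1-\sigma])=0$, i.e.\ as $\cF\sotimes_A^\L A[-1-\sigma]\in \rD_{\leq n-1}(A)$. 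Reindexing gives $\rD_{\leq n}(A)[1+\sigma]=\rD_{\leq n+2}(A)$.

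The main obstacle is purely conceptual: checking the identification $(A\sotimes \ul{\cB}^{\Z/2})\sotimes_A^\L A[\sigma]\simeq (A\sotimes \ul{\cB}^{\Z/2})[1]$ in an economical way. Everything else is formal from the fact that $A[1+\sigma]$ is $\sotimes$-invertible.
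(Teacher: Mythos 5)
Your proof is correct and takes the same approach as the paper: the paper's proof is simply the one-line observation that $\cS_{n+2}(A)=\cS_n(A)[1+\sigma]$, which is exactly the equality you verify (including the key untwisting identification $(A\sotimes \ul{\cB}^{\Z/2})[\sigma]\simeq (A\sotimes \ul{\cB}^{\Z/2})[1]$). You just spell out the formal consequences in more detail than the paper does.
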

\begin{proof}
We have equivalences
\[
\Sigma^\sigma i_\sharp \Sphere
\simeq
\Sigma^\sigma \Sphere \wedge i_\sharp \Sphere
\simeq
i_\sharp(i^* \Sigma^\sigma \Sphere \wedge \Sphere)
\simeq
\Sigma^1 i_\sharp \Sphere,
\]
where the second equivalence is due to \cite[Proposition A.1.10]{HP23}.
It follows that we have $\cS_{n+2}(A)=\cS_n(A)[1+\sigma]$,
whence the result follows.
\end{proof}

As for $\Sp^{\Z/2}$ above,
we define group valued functors $H^{\Z/2}_{m+n\sigma}(-)$ and Mackey functor valued functors $\ul{H}_{m+n\sigma}(-)$ for integers $m$ and $n$.

\begin{lem}
\label{abelian filtration}
Let $A$ be a Green functor.
For $\cF\in \rD(A)$,
we have the following properties.
\begin{enumerate}
\item[\textup{(1)}]
$\cF\in \rD_{\geq 0}(A)$ if and only if $\ul{H}_n(\cF)=0$ for every integer $n<0$.
\item[\textup{(2)}]
$\cF\in \rD_{\geq 1}(A)$ if and only if $\ul{H}_n(\cF)=0$ for every integer $n<1$.
\item[\textup{(3)}]
$\cF\in \rD_{\leq -1}(A)$ if and only if $\ul{H}_n(\cF)=0$ for every integer $n>-1$.
\item[\textup{(4)}]
$\cF\in \rD_{\leq 0}(A)$ if and only if $\ul{H}_n(\cF)=0$ for every integer $n>0$.
\end{enumerate}
\end{lem}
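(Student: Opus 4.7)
My plan is to reduce everything to the standard $t$-structure on $\rD(A)$ associated with the abelian heart $\Mod_A$; I will denote its connective and coconnective parts by $\tau_{\geq n}\rD(A)$ and $\tau_{\leq n}\rD(A)$. The central claim is that $\rD_{\geq n}(A) = \tau_{\geq n}\rD(A)$ for $n = 0, 1$. Granting this, parts (1) and (2) are just the standard characterization of $\tau_{\geq n}$ by the vanishing of $\ul{H}_k$ below degree $n$. Parts (3) and (4) then follow automatically: the observation in Definition \ref{abelian rho} identifies $\rD_{\leq n-1}(A)$ with the right orthogonal of $\rD_{\geq n}(A)$, and under our identification this coincides with $\tau_{\leq n-1}\rD(A)$ by the orthogonality axiom of the $t$-structure.

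For the inclusion $\rD_{\geq n}(A) \subseteq \tau_{\geq n}\rD(A)$, it suffices to check that every element of the generating set $\cS_n(A)$ is $n$-connective, because $\tau_{\geq n}\rD(A)$ is closed under colimits and extensions. The objects $A \sotimes \ul{\cB}^{\Z/2}[m]$ with $m \geq n$ are plainly $n$-connective. For $A[m + m\sigma]$ with $2m \geq n$, I first observe that $A[\sigma]$ lies in $\tau_{\geq 0}\rD(A)$, since by Construction \ref{alphaupperstar} it is the cofiber of a morphism $A \sotimes \ul{\cB}^{\Z/2} \to A$ between objects sitting in degree zero. Definition \ref{defshiftabelian} then gives $A[m + m\sigma] \simeq A[m] \sotimes_A^\L (A[\sigma])^{\sotimes_A^\L m}$, which lies in $\tau_{\geq m}\rD(A)$ since $\sotimes_A^\L$ is right $t$-exact in each variable. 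Because $m \geq 0$ when $n = 0$ and $m \geq 1$ when $n = 1$, this is contained in $\tau_{\geq n}\rD(A)$.

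For the reverse inclusion, I invoke the standard fact that $\tau_{\geq n}\rD(A)$ is generated under colimits and extensions by $A[n]$ and $A \sotimes \ul{\cB}^{\Z/2}[n]$; this follows from the compact projective generation of $\Mod_A$ recalled in Recollection \ref{model structures on A-modules}. For $n = 0$, both generators lie directly in $\cS_0(A)$ (take $m = 0$), so $\tau_{\geq 0}\rD(A) \subseteq \rD_{\geq 0}(A)$. For $n = 1$, the object $A \sotimes \ul{\cB}^{\Z/2}[1]$ is in $\cS_1(A)$, while $A[1]$ fits into the cofiber sequence $A \sotimes \ul{\cB}^{\Z/2}[1] \to A[1] \to A[1 + \sigma]$ obtained by shifting the defining one; both endpoints lie in $\cS_1(A)$, so closure under extensions places $A[1]$ in $\rD_{\geq 1}(A)$.

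The only substantive obstacle is verifying the $m$-connectivity of $A[m + m\sigma]$ in the second paragraph, which rests on the connectivity of $A[\sigma]$ and on the right $t$-exactness of $\sotimes_A^\L$. The former is immediate from the defining cofiber sequence, while the latter is a standard consequence of the projective model structure on $\Ch(A)$ recalled in Recollection \ref{model structures on A-modules} together with \cite[Lemma 4.1.8.8]{HA}. Everything else reduces to formal manipulation of the generators $\cS_n(A)$ and of $t$-structure orthogonality.
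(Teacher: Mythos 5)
Your proof is correct, and it follows the same strategy that the paper cites from \cite[Proposition 11.1.18]{HHR21}: show that $\rD_{\geq n}(A)$ coincides with the standard $t$-structure truncation $\tau_{\geq n}\rD(A)$ for $n=0,1$ by checking connectivity of the generators $\cS_n(A)$ in one direction and rebuilding the $t$-structure generators $A[n]$, $A\sotimes\ul{\cB}^{\Z/2}[n]$ from $\cS_n(A)$ in the other, and then deduce the coconnective statements from orthogonality. The paper offers this only as a pointer to the reference, whereas you have spelled out a self-contained argument in the setting of $\rD(A)$; the slight care you take with the cofiber sequence $A\sotimes\ul{\cB}^{\Z/2}[1]\to A[1]\to A[1+\sigma]$ for the $n=1$ case and with the right $t$-exactness of $\sotimes_A^\L$ is exactly what is needed and is correct.
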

\begin{proof}
Argue as in \cite[Proposition 11.1.18]{HHR21}.
\end{proof}

\begin{prop}
\label{abelian slicesexplicit}
Let $A$ be a commutative ring, possibly with involution.
For $\cF\in \rD(\ul{A})$ and integer $n$,
there are canonical equivalences in $\rD(\ul{A})$
\begin{gather*}
P_{2n}^{2n}(\cF)
\simeq
(\ul{H}_{n+n\sigma}(\cF))[n+n\sigma],
\\
P_{2n+1}^{2n+1}(\cF)
\simeq
(\cP^0 \ul{H}_{n+1+n\sigma}(\cF))[n+1+n\sigma],
\end{gather*}
where $\cP^0$ is the endofunctor of $\Mod_{\ul{A}}$ killing the kernel of the restriction.
\end{prop}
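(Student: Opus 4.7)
The plan is to transport Proposition \ref{slicesexplicit} across the symmetric monoidal equivalence of stable $\infty$-categories $\Mod_{\rH\ul{A}}\simeq \rD(\ul{A})$ established in Proposition \ref{greenequivalence}. By Remark \ref{Dold-Kan remark}, this equivalence identifies equivariant shifts with equivariant shifts, the Mackey-functor-valued homotopy $\ul{\pi}_{m+n\sigma}$ with the Mackey-functor-valued homology $\ul{H}_{m+n\sigma}$, and the slice filtration on $\Sp^{\Z/2}$ restricted to $\Mod_{\rH\ul{A}}$ with the filtration in Definition \ref{abelian rho}. I would begin by reviewing these three identifications carefully; the third is the least obvious, but reduces to the observation that $\alpha^*$ sends the generators $\Sigma^{m+m\sigma}\Sphere$ and $\Sigma^m \Sigma^\infty(\Z/2)_+$ of $\Sp_{\geq n}^{\Z/2}$ to $\ul{A}[m+m\sigma]$ and $\ul{A}\sotimes \ul{\cB}^{\Z/2}[m]$ respectively, which are exactly the generators $\cS_n(\ul{A})$ of $\rD_{\geq n}(\ul{A})$.

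Once these identifications are in hand, let $M\in \Mod_{\rH\ul{A}}$ correspond to $\cF\in \rD(\ul{A})$ under the equivalence. Proposition \ref{slicesexplicit} provides
\[
P_{2n}^{2n}(M)\simeq \Sigma^{n+n\sigma}\rH\ul{\pi}_{n+n\sigma}(M),\quad
P_{2n+1}^{2n+1}(M)\simeq \Sigma^{n+1+n\sigma}\rH\cP^0\,\ul{\pi}_{n+1+n\sigma}(M).
\]
Since $M$ is an $\rH\ul{A}$-module, the Mackey functors $\ul{\pi}_{m+n\sigma}(M)$ inherit an $\ul{A}$-module structure; the endofunctor $\cP^0$ preserves $\ul{A}$-modules because the kernel of the $\ul{A}$-linear restriction map is itself an $\ul{A}$-submodule. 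Hence both right-hand sides already lie in $\Mod_{\rH\ul{A}}$, and translating across the equivalence yields the two displayed equivalences of the statement.

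The main obstacle I anticipate is making the slice-filtration comparison fully rigorous: one must verify that the $P_n$ produced in $\Sp^{\Z/2}$ from an $\rH\ul{A}$-module remains in $\Mod_{\rH\ul{A}}$ and coincides there with the intrinsically defined $P_n$ of Definition \ref{abelian rho}. This follows because $\Mod_{\rH\ul{A}}$ is closed under colimits and extensions inside $\Sp^{\Z/2}$ and contains the smashed generators of the slice filtration, so the two localizing subcategories coincide and hence so do their right adjoints. With this technical point settled, the rest is a direct translation through the equivalence.
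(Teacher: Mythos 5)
Your strategy of transporting Proposition \ref{slicesexplicit} across the equivalence $\Mod_{\rH\ul{A}}\simeq\rD(\ul{A})$ of Proposition \ref{greenequivalence} is a legitimate alternative to the paper's proof, which instead re-runs the argument of \cite[Theorem 17.5.25]{Hil20} directly inside $\rD(\ul{A})$. The difficulty is that you correctly identify the crucial step -- commuting the slice functor with the passage between $\Mod_{\rH\ul{A}}$ and $\Sp^{\Z/2}$ -- but the justification you give for it does not work. You treat $\Mod_{\rH\ul{A}}$ as though it were a full subcategory of $\Sp^{\Z/2}$ that is ``closed under colimits and extensions,'' but the forgetful functor $\alpha_*\colon\rD(\ul{A})\simeq\Mod_{\rH\ul{A}}\to\Sp^{\Z/2}$ is not a full embedding, so this reading is unavailable, and it is unclear which ``two localizing subcategories'' you intend to compare. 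Your observation that $\alpha^*$ sends slice generators to slice generators concerns the free functor; on its own it does not yield a statement about the forgetful functor, which is what you actually need.

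The compatibility is nevertheless true, and the correct argument is by adjunction and orthogonality rather than by comparing localizing subcategories. Since $\alpha_*$ is exact and colimit-preserving (see the proof of Proposition \ref{greenequivalence}) and carries the generators $\cS_n(\ul{A})$ into $\Sp^{\Z/2}_{\geq n}$ (for instance $\alpha_*(\ul{A}[m+m\sigma])\simeq\Sigma^{m+m\sigma}\rH\ul{A}$, and $\rH\ul{A}$ is connective), it sends $\rD_{\geq n}(\ul{A})$ into $\Sp^{\Z/2}_{\geq n}$. On the other hand, the adjunction $\Map_{\rD(\ul{A})}(\alpha^*X,\cG)\simeq\Map_{\Sp^{\Z/2}}(X,\alpha_*\cG)$ for $X\in\cS_n$ shows that $\cG\in\rD_{\leq n-1}(\ul{A})$ if and only if $\alpha_*\cG\in\Sp^{\Z/2}_{\leq n-1}$. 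Applying $\alpha_*$ to the fiber sequence $P_n\cF\to\cF\to P^{n-1}\cF$ therefore exhibits the slice decomposition of $\alpha_*\cF$ in $\Sp^{\Z/2}$, and uniqueness of that decomposition gives $\alpha_*P_n\cF\simeq P_n\alpha_*\cF$. With this repair the remainder of your proposal, including the $\ul{A}$-linearity of $\cP^0$ (which the paper also notes), goes through.
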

\begin{proof}
Argue as in \cite[Theorem 17.5.25]{Hil20}.
If $A$ has a non-trivial involution, note that the functor $\cP^0$ exists as $\res$ is $A$-linear.
\end{proof}

For example, with the above notation,
we have
\begin{equation}
\label{b}
\ul{A}[n+n\sigma]
\simeq
P_{2n}^{2n}(\ul{A}[n+n\sigma])
\in \rD_{\geq 2n}(\ul{A})\cap \rD_{\leq 2n}(\ul{A}).
\end{equation}

\begin{df}
\label{rho definition}
Let $A$ be a commutative ring.
For $\cF\in \rD(\ul{A})$ and integer $n$,
we set
\begin{gather*}
\rho_{2n}(\cF)
:=
\ul{H}_{n+n\sigma}(\cF),
\\
\rho_{2n+1}(\cF)
:=
\cP^0 \ul{H}_{n+1+n\sigma}(\cF).
\end{gather*}
Observe that we have a natural isomorphism of Mackey functors $\rho_m(\cF)\cong \rho_m(\alpha_*\cF)$ for every integer $m$,
where $\alpha_*\colon \rD(\ul{A})\to \Sp^{\Z/2}$ is right adjoint to the symmetric monoidal functor $\alpha^*\colon \Sp^{\Z/2}\to \rD(\ul{A})$ in Construction \ref{alphaupperstar}.
\end{df}

For a Green functor $A$ and integers $m$, $n$, and $l$ with $n,l\geq 0$,
note that we have an inclusion
\begin{equation}
\label{a}
\rD_{\geq m}(A)[n+l\sigma]\subset \rD_{\geq m}(A).
\end{equation}

\begin{prop}
\label{conservative slices}
Let $A$ be a Green functor.
If $\cF\in \rD(A)$ satisfies $P_n^n(\cF)\simeq 0$ for every integer $n$,
then $\cF\simeq 0$ in $\rD(A)$.
\end{prop}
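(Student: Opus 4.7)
Under the hypothesis $P_n^n(\cF) \simeq 0$ for every $n$, the defining cofiber sequence $P_{n+1}(\cF) \to P_n(\cF) \to P_n^n(\cF)$ forces every transition map to be an equivalence, so $L := P_n(\cF)$ is independent of $n$. The strategy is first to establish the general convergence statement $\colim_{n \to -\infty} P_n(\cF) \simeq \cF$ for every $\cF \in \rD(A)$, which combined with the constancy yields $\cF \simeq L$ and hence $\cF \in \bigcap_n \rD_{\geq n}(A)$; and then to verify that this intersection contains only the zero object.

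The convergence would be tested on the compact generating set $\{A \sotimes \ul{\cB}^{G/H}[k] : H \leq G,\, k \in \Z\}$ from Recollection~\ref{model structures on A-modules}. Any such $\cG$ lies in $\cS_{k_0} \subseteq \rD_{\geq k_0}(A)$ for some $k_0$, and for $n \leq k_0$ the universal property of the reflector $P_n \colon \rD(A) \to \rD_{\geq n}(A)$ gives $\Map(\cG, P_n(\cF)) \simeq \Map(\cG, \cF)$; compactness of $\cG$ lets the colimit commute with $\Map(\cG, -)$, making the natural map $\colim P_n(\cF) \to \cF$ an equivalence on each generator, and hence globally.

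For the vanishing of the intersection, I would verify that the exact, colimit-preserving pointwise functors $i^* = (-)(G/e)$ and $(-)^{\Z/2} = (-)(G/G)$ send $\rD_{\geq n}(A)$ into $\rD^{\geq n}$ and $\rD^{\geq \lceil n/2 \rceil}$, respectively, under the standard $t$-structures. This reduces to a computation on the generators of $\cS_n$: using the symmetric monoidality of $i^*$ (which follows from Proposition~\ref{greenequivalence} and the fact that the underlying functor $\Sp^{\Z/2} \to \Sp$ is symmetric monoidal) together with the identification $i^*(A[\sigma]) \simeq A(G/e)[1]$ obtained from the cofiber of the summation $A \sotimes \ul{\cB}^{\Z/2} \to A$ at $G/e$, one gets $i^*(A[m+m\sigma]) \simeq A(G/e)[2m]$, and $i^*(A \sotimes \ul{\cB}^{\Z/2}[m]) \simeq A(G/e)^{\oplus 2}[m]$ is immediate; an iterative box-product calculation at $G/G$, noting that $A[\sigma]^{\sotimes m}(G/G)$ lies in homological degrees $[0,m]$, gives the corresponding bound for $(-)^{\Z/2}$. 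Applied to $\cF \in \bigcap_n \rD_{\geq n}(A)$ this forces $i^*(\cF) = 0$ and $\cF^{\Z/2} = 0$, and Proposition~\ref{conservativity} (for $A = \ul{R}$), or the analogous conservativity in $\Sp^{\Z/2}$ obtained via $\rD(A) \simeq \Mod_{\rH A}$ and the conservative forgetful functor $\alpha_*$ (for a general Green functor $A$), then concludes $\cF \simeq 0$. The main obstacle is the $(-)^{\Z/2}$ step, where the iterative box product at $G/G$ must be tracked carefully through the transfer structure of $A$.
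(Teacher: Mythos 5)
Your proof is correct, but it takes a noticeably longer route than the paper. Both arguments start the same way, noting that vanishing slices force all the transition maps $P_{n+1}(\cF) \to P_n(\cF)$ to be equivalences. From there the paper does not prove any convergence statement $\colim_n P_n(\cF) \simeq \cF$; instead it observes that the constancy already implies $P_1(\cF)$ lies in $\bigcap_n \rD_{\geq n}(A)$, and it kills that intersection directly: by Lemma~\ref{shift 2}, $\rD_{\geq 2m}(A) = \rD_{\geq 0}(A)[m+m\sigma] \subset \rD_{\geq 0}(A)[m]$, so anything in all $\rD_{\geq n}(A)$ has $\ul{H}_k$ vanishing in every degree, hence is zero by Lemma~\ref{abelian filtration}(1). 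A symmetric argument gives $P^0(\cF)\simeq 0$, and the fiber sequence $P_1(\cF)\to\cF\to P^0(\cF)$ finishes. This sidesteps both of your heavier steps: the compactness-of-generators computation for the colimit, and the separate conservativity/connectivity analysis of $i^*$ and $(-)^{\Z/2}$. Your route buys a reusable fact (the slice tower converges), but it is not needed here; the homology-level observation $\rD_{\geq 0}(A)[m+m\sigma]\subset\rD_{\geq 0}(A)[m]$ is the shortcut you are missing, and it would also let you conclude immediately once you know $\cF\in\bigcap_n\rD_{\geq n}(A)$, without invoking Proposition~\ref{conservativity} or its $\alpha_*$-analogue.

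One small inaccuracy: the generators $A\sotimes\ul{\cB}^{G/G}[k]=A[k]$ do \emph{not} lie in any $\cS_{k_0}$ when $k\neq 0$, since $\cS_n$ only contains shifts of the form $A[m+m\sigma]$. What is true, and what your argument actually uses, is that $A[k]\in\rD_{\geq k_0}(A)$ for $k_0$ sufficiently negative (e.g.\ $A[k]=A[k+k\sigma][|k|\sigma]\in\rD_{\geq 2k}(A)$ for $k<0$, using that $[\sigma]$ preserves $\rD_{\geq 2k}(A)$ because $A[\sigma]$ is a cofiber of generators). The subsequent adjunction argument $\Map(\cG,P_n(\cF))\simeq\Map(\cG,\cF)$ needs only $\cG\in\rD_{\geq n}(A)$, so the conclusion is unaffected. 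Your $(-)^{\Z/2}$ connectivity estimate, which you flag as the main obstacle, is actually fine: evaluation at $G/G$ is pointwise and $t$-exact, and $A[m+m\sigma]$ is a complex concentrated in homological degrees $[m,2m]$, so its $G/G$-value lies in $\rD^{\geq m}\subset\rD^{\geq\lceil n/2\rceil}$ whenever $2m\geq n$.
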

\begin{proof}
We have the fiber sequence $P_{n+1}(\cF)\to P_n(\cF)\to P_n^n(\cF)$ in $\rD(A)$.
Together with the assumption $P_n^n(\cF)\simeq 0$,
we have $P_n(\cF)\simeq P_{n+1}(\cF)$ for every integer $n$.
In particular, $P_1(\cF)\in \rD_{\geq n}(A)$ for every integer $n$.
Lemma \ref{shift 2} implies that
$P_1(\cF)\in \rD_{\geq 2m}(A)=\rD_{\geq 0}(A)[m+m\sigma]$ for every integer $m$.
Since by \eqref{a}, we have $\rD_{\geq 0}(A)[m+m\sigma]\subset  \rD_{\geq 0}(A)[m]$ if $m\geq 0$,
we obtain $P_1(\cF)\simeq 0$ using Lemma \ref{abelian filtration}(1).
A similar argument shows that $P^0(\cF)\simeq 0$.
Use the fiber sequence $P_1(\cF)\to \cF\to P^0(\cF)$ to finish the proof.
\end{proof}

\begin{df}\label{def:even}
Let $X$ be a $\Z/2$-spectrum.
We say that $X$ is \emph{even} if $\rho_{2n+1}(X)=0$ for every integer $n$.

We say that $X$ is \emph{very even} if $X$ is even and $\rho_{2n}(X)$ is a constant Mackey functor for every integer $n$.
In this case,
we have isomorphisms
\begin{align*}
\rho_{2n}(X)(C_2/e)
= &
\Hom_{\Sp^{\Z/2}}(\Sigma^\infty (C_2/e)_+,\Sigma^{-n-n\sigma}X)
\\
\cong &
\Hom_{\Sp^{\Z/2}}(i_\sharp \Sphere,\Sigma^{-n-n\sigma}X)
\\
\cong &
\Hom_{\Sp}(\Sphere,i^*\Sigma^{-n-n\sigma}X)
\\
\cong &
\pi_{2n}(i^*X).
\end{align*}
Hence we have a natural isomorphism
\begin{equation}
\label{eq:very even}
\rho_{2n}(X)\cong \ul{\pi_{2n}(i^* X)}.
\end{equation}

For a commutative ring $R$,
we use a similar terminology for objects of $\rD(\ul{R})$.
\end{df}

\begin{prop}\label{prop:even}
Let $X$ be a $\Z/2$-spectrum.
Then $X$ is even if and only if $i^*X$ is even in the sense that $\pi_{2n+1}(i^*X)=0$ for every integer $n$.
\end{prop}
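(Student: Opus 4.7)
The plan is to unwind both definitions and observe that they match term-by-term. The key point is that the functor $\cP^0$ on the Mackey functor $\ul{\pi}_{n+1+n\sigma}(X)$ vanishes precisely when the underlying abelian group at $G/e$ vanishes, which in turn is exactly $\pi_{2n+1}(i^*X)$.

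First I would analyze when $\cP^0 M = 0$ for a Mackey functor $M$. Since $\cP^0$ kills the kernel of the restriction, its underlying values are $M(G/G)/\ker(\res)$ and $M(G/e)$ respectively. If $M(G/e) = 0$, then $\res$ is the zero map, hence $\ker(\res) = M(G/G)$ and both values of $\cP^0 M$ are zero; conversely, if $\cP^0 M = 0$ then in particular its value at $G/e$ is zero, so $M(G/e) = 0$. Thus the vanishing of $\rho_{2n+1}(X) = \cP^0 \ul{\pi}_{n+1+n\sigma}(X)$ is equivalent to the vanishing of $\ul{\pi}_{n+1+n\sigma}(X)(G/e)$.

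Next I would identify this underlying group with an ordinary homotopy group of $i^*X$. By definition $\ul{\pi}_{n+1+n\sigma}(X) = \ul{\pi}_0(\Sigma^{-(n+1)-n\sigma} X)$, and evaluation at $G/e$ is the functor $i^*$ on the level of Mackey functors. Since $i^*$ commutes with suspensions and $i^* \Sigma^{n\sigma} \simeq \Sigma^n i^*$ (because the underlying space of $S^\sigma$ is $S^1$), we obtain
\[
\ul{\pi}_{n+1+n\sigma}(X)(G/e) \;\cong\; \pi_0\bigl(i^*\Sigma^{-(n+1)-n\sigma} X\bigr) \;\cong\; \pi_0\bigl(\Sigma^{-(2n+1)} i^*X\bigr) \;=\; \pi_{2n+1}(i^*X).
\]

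Combining these two observations, $\rho_{2n+1}(X) = 0$ for every integer $n$ if and only if $\pi_{2n+1}(i^*X) = 0$ for every integer $n$, which is exactly the claim. There is no real obstacle here; the entire argument is a direct unpacking of definitions, and the only mild subtlety is correctly identifying what $\cP^0 M = 0$ means in terms of the two underlying abelian groups of $M$.
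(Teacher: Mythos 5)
Your proof is correct and follows essentially the same route as the paper's: unwind $\cP^0$ to see that $\rho_{2n+1}(X)=0$ is equivalent to $\ul{\pi}_{n+1+n\sigma}(X)(C_2/e)=0$, then identify the latter group with $\pi_{2n+1}(i^*X)$ via $i^*\Sigma^{n\sigma}\simeq\Sigma^n i^*$. Your write-up of the $\cP^0$ analysis is a bit more explicit than the paper's, but the underlying argument is identical.
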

\begin{proof}
If $\pi_{2n+1}(i^*X)=0$,
then $\ul{\pi}_{n+1+n\sigma}(X)(C_2/e)=0$.
This implies that the map $\res$ is $0$ for $\rho_{2n+1}(X)$,
so we have
$\cP^0\ul{\pi}_{n+1+n\sigma}(X)=0$.
On the other hand,
$\cP^0\ul{\pi}_{n+1+n\sigma}(X)=0$ implies $\pi_{2n+1}(i^*X)=0$.
Together with the definition $\rho_{2n+1}(X)=\cP^0\ul{\pi}_{n+1+n\sigma}(X)$,
we finish the proof.
\end{proof}

\section{Real Hochschild homology and real topological Hochschild homology}
\label{sec4}

Let $\NAlg^{\Z/2}$ denote the $\infty$-category of normed $\Z/2$-spectra.
This can be defined as the underlying $\infty$-category of a certain model category \cite[Proposition B.129]{HHR} of commutative monoids in orthogonal $\Z/2$-spectra.
Alternatively,
Bachmann-Hoyois provide a purely $\infty$-categorical formulation,
see \cite[Definition 9.14]{BH21}.
We refer to \cite[Remark A.2.3]{HP23} for a review.
A crucial property of $\NAlg^{\Z/2}$ due to Hill-Hopkins-Ravenel \cite[Proposition 2.27]{HHR} is that there exists a symmetric monoidal functor $N^{\Z/2}\colon \CAlg(\Sp) \to \NAlg^{\Z/2}$ that is left adjoint to the forgetful functor $i^*\colon \NAlg^{\Z/2}\to \CAlg$.

There are various equivalent definitions of $\THR$.
We continue to use the following form in \cite[Definition 2.1.1]{HP23} as a definition of $\THR$, which is equivalent to the original ``B\"okstedt model'' definition of $\THR$ due to Hasselholt-Madsen \cite{HM} under a mild flatness condition, see \cite[Theorem, p.\ 65]{DMPR21}.

\begin{df}\label{def:thrandhr}
For a normed $\Z/2$-spectrum $A$,
the \emph{real topological Hochschild homology of $A$} is the pushout
\[
\THR(A)
:=
A\wedge_{N^{\Z/2} i^* A} A
\]
in $\NAlg^{\Z/2}$,
where both morphisms $N^{\Z/2}i^*A\to A$ are the counit map.
We may regard $\THR(A)$ as an $A$-algebra using the morphism $A\to A\wedge_{N^{\Z/2}i^*A}A$ to the second smash factor.
On the other hand,
we have the map $\THR(A)\to A$ given by $A\wedge_{N^{\Z/2}i^*A }A\to A\wedge_A A\simeq A$.
For a map of normed $\Z/2$-spectra $A\to B$,
the \emph{real Hochschild homology of $B$ over $A$} is the pushout
\[
\HR(B/A)
:=
\THR(B)\wedge_{\THR(A)}A
\]
in $\NAlg^{\Z/2}$.
We may also regard $\HR(B/A)$ as a $B$-algebra.
\end{df}

The above definition immediately implies an equivalence of normed $\Z/2$-spectra
\[
\HR(B/\Sphere)
\simeq
\THR(B).
\]

\begin{rmk}
For rings, forgetting the involution this is equivalent to the classical $\HH$ in the flat case thanks to \cite[Lemma 2.5]{BMS19},  keeping in mind the convention on pages 211/212 of loc.\ cit.
So we take the equivariant refinement of  \cite[Lemma 2.5]{BMS19} as our definition. Establishing an equivariant HKR theorem in the nontrivial involution case with this definition will be difficult, but see our partial results further below. Also, note that this definition of $\HR$ produces an equivariant ring spectrum, whereas the traditional $\HH(A/R)$ is a simplicial ring. 
However, using the stable Dold-Kan functor $\rH$ from  Proposition \ref{greenequivalence},
$\rH(\HH(A/R))$ is indeed equivalent to $i^*\HR(A/R)$,
see Proposition \ref{i* and THR}.
\end{rmk}

\begin{rmk}\label{shortnotation}
For a homomorphism of commutative rings $R\to A$ with involutions,
we often use the abbreviated notation
\[
\THR(A)
:=
\THR(\rH \ul{A})
\text{ and }
\HR(A/R)
:=
\HR(\rH \ul{A}/\rH \ul{R}).
\]
Let $\THR(A;\Z_p)$ and $\HR(A/R;\Z_p)$ be their derived $p$-completions in $\rD(\ul{A})$.
If $R=\Z$,
then we set $\HR(A):=\HR(A/\Z)$.

For morphisms of Green functors $R\to A$, we also use the abbreviated notation $\THR(A):=\THR(\rH A)$ and $\HR(A/R):=\HR(\rH A/\rH R)$ and their $p$-completions $\THR(A;\Z_p)$ and $\THR(A/R;\Z_p)$.
\end{rmk}

\begin{prop}
\label{prop:NAlg has colimits}
The $\infty$-category $\NAlg^{\Z/2}$ has colimits and limits,
and the forgetful functor $\NAlg^{\Z/2}\to \Sp^{\Z/2}$ is conservative and preserves limits and sifted colimits.
\end{prop}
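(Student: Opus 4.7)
The plan is to exhibit the forgetful functor $U\colon \NAlg^{\Z/2}\to \Sp^{\Z/2}$ as monadic, via a ``norm monad'' $T$ on $\Sp^{\Z/2}$ whose algebras are precisely the normed $\Z/2$-spectra, and then deduce the three assertions formally.

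First, for the existence of limits and colimits, I would appeal to presentability of $\NAlg^{\Z/2}$: this is established in \cite[Section 9]{BH21} in the $\infty$-categorical formulation, and it can also be read off from the combinatorial model structure of \cite[Proposition B.129]{HHR}. Either way, $\NAlg^{\Z/2}$ is presentable and hence admits all small limits and colimits.

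To prove conservativity and preservation of limits and sifted colimits by $U$, I would construct an explicit left adjoint $F$: the free normed-algebra functor, of the form $F(X)\simeq \bigoplus_{[S]}(N_S X^{\wedge S})_{h\Sigma_S}$, where $[S]$ ranges over isomorphism classes of finite $\Z/2$-sets and $N_S$ denotes the corresponding indexed smash product. Granted the adjunction $(F,U)$, one verifies the hypotheses of the monadic Barr--Beck theorem \cite[Theorem 4.7.3.5]{HA}: $U$ preserves all limits, hence $U$-split simplicial objects, and it is conservative because a map of normed algebras inducing an equivalence on underlying $\Z/2$-spectra induces one on every summand of each free algebra. Hence $U$ is monadic, giving conservativity and preservation of limits at once.

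For preservation of sifted colimits, by monadicity and the dual of \cite[Proposition 4.7.3.14]{HA} it suffices to show that the monad $T=UF$ preserves sifted colimits. Unwinding the formula for $F$ above, this reduces to checking that each summand $(N_S X^{\wedge S})_{h\Sigma_S}$ preserves sifted colimits in $X$. Homotopy orbits pose no problem, and the indexed smash products $N_S$ preserve sifted colimits by \cite[Proposition 5.10]{BH21} (norm functors of presentably symmetric monoidal $\infty$-categories commute with sifted colimits). The main obstacle will be matching the explicit monadic description of the free functor with the Bachmann--Hoyois definition of $\NAlg^{\Z/2}$ via sections of a cocartesian fibration over $\Span(\Fin_{\rB \Z/2})$; once this identification is in hand, every other step is a formal application of monadicity.
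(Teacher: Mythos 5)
The paper proves this by a direct citation: \cite[Proposition 7.6(1),(2)]{BH21} establishes the analogous statement in the motivic setting, and the argument transports to $\Sp^{\Z/2}$. That argument works entirely with the description of $\NAlg$ as the subcategory of Segal objects inside the $\infty$-category of sections of a cocartesian fibration over $\Span(\Fin_{\rB G})$: limits and colimits of sections are computed levelwise, the Segal condition is closed under limits and under sifted colimits (the latter because the cocartesian transport functors, i.e.\ the indexed smash products and norms, preserve sifted colimits), and the forgetful functor is an evaluation, so conservativity and preservation of the relevant (co)limits fall out immediately. No free-algebra functor appears.

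Your proposal reroutes through monadicity and an explicit free normed algebra $F(X)\simeq \bigoplus_{[S]}(N_S X^{\wedge S})_{h\Sigma_S}$. The outer monadic skeleton is sound, but you have concentrated the entire content into the one step you yourself label ``the main obstacle'' and do not carry out: establishing this formula for $F$ and reconciling it with the Bachmann--Hoyois sections model of $\NAlg^{\Z/2}$. That reconciliation is not bookkeeping---it amounts to computing an operadic left Kan extension along a map of span categories and verifying that the result satisfies the Segal condition, and this verification already uses the same commutation of norms with sifted colimits that the direct argument in \cite{BH21} isolates as the key input. So as written your proof has a genuine gap precisely where it would have to reproduce the substance of the cited result, and closing it is at least as hard as the original statement; the direct sections argument avoids the free-functor computation entirely, which is why the paper can dispense with the proof in two sentences.
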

\begin{proof}
An analogous result is proved in \cite[Proposition 7.6(1),(2)]{BH21} for the motivic setting. 
One can similarly argue for the topological setting too. 
\end{proof}

\begin{const}
\label{left Kan}
Let $R$ be a commutative ring,
and let $\Poly_R$ be the category of finitely generated polynomial $R$-algebras.
A cohomology theory on the $\infty$-category $\sCRing_R$ of simplicial
commutative $R$-algebras is often
extended from a cohomology theory on $\Poly_R$ as observed in \cite[Construction  2.1]{BMS19},
which we review as follows.
According to loc.\ cit,
there is a natural equivalence of categories between the category $\CRing_R$ of $R$-algebras and the category of functors $\Poly_R\to \Set$ sending coproducts to products.
By \cite[Corollary 5.5.9.3]{HTT},
this yields an equivalence of $\infty$-categories
$\sCRing_R \simeq \cP_{\Sigma}(\Poly_R)$.
Hence for every $\infty$-category $\cD$ with sifted colimits,
there exists an equivalence of $\infty$-categories
\begin{equation}
\Fun_\Sigma(\sCRing_R,\cD)
\xrightarrow{\simeq}
\Fun(\Poly_R,\cD)
\end{equation}
using \cite[Proposition 5.5.8.15]{HTT},
where $\Fun_\Sigma(\sCRing_R,\cD)$ denotes the full subcategory of $\Fun(\sCRing_R,\cD)$ spanned by the functors preserving sifted colimits. 
Hence for every functor $f\colon \Poly_R\to \cD$,
there exists a functor $F\colon \sCRing_R\to \cD$ preserving sifted colimits that is unique.
In this case,
we say that $F$ is \emph{the left Kan extension of $f$}.
\end{const}

\begin{const}\label{extendwithinvolutions}
The approach of \cite[Construction 2.1]{BMS19} of extending $\infty$-functors from polynomial rings to simplicial rings
can be adapted to the equivariant setting as follows:
Let $R$ be a commutative ring with involution,
and let $\Poly_R^{\Z/2}$ be the category of $R$-algebras of the form $R[\N^X]$ with a finite $\Z/2$-set $X$.
Here,
$\N^X$ is the commutative monoid of functions $X\to \N$ with the involution induced by the involution on $X$,
and $R[M]$ for a commutative monoid $M$ with involution $\sigma$ is the commutative monoid ring $\{\sum_{m\in M} r_m x^m : r_m \in R\}$ with involution obtained by $x^m\mapsto x^{\sigma(m)}$.
The functor $X\mapsto R[\N^X]$ is a left adjoint,
see Example \ref{exm:THR is a left Kan extension}.
Consider the $\infty$-category $\cP_{\Sigma}(\Poly_R^{\Z/2})$,
which is the full subcategory of the $\infty$-category of presheaves of spaces $\cP(\Poly_R^{\Z/2})$ spanned by the functors $(\Poly_R^{\Z/2})^{op}
\to \Spc$ preserving finite products.
By \cite[Corollary 5.5.9.3]{HTT},
$\cP(\Poly_R^{\Z/2})$ is the underlying $\infty$-category of the simplicial model category of the simplicial presheaves on $\Poly_R^{\Z/2}$ with the projective model structure.

Let $\CRing_{R}^{\Z/2}$ be the
category of commutative $R$-algebras with involution.
The inclusion functor $\Poly_R^{\Z/2}\to \CRing_R^{\Z/2}$ preserves coproducts.
This implies that for every $A\in \CRing_R^{\Z/2}$,
the presheaf on $\Poly_R^{\Z/2}$ represented by $A$ is in $\cP_{\Sigma}(\Poly_R^{\Z/2})$.
Hence the Yoneda embedding
\[
\Poly_R^{\Z/2}
\to
\cP_\Sigma(\Poly_R^{\Z/2})
\]
factors through $\CRing_R^{\Z/2}$.
(To proceed exactly as in \cite[Construction 2.1]{BMS19}, we would have to consider simplicial rings here and show that $\cP_\Sigma(\Poly_R^{\Z/2})$ is indeed equivalent to the underlying $\infty$-category of the simplicial commutative rings with involutions.
This would require an equivariant version of \cite[Example 5.1.3]{CS}.)

Let $\cD$ be an $\infty$-category with sifted colimits.
Then \cite[Proposition 5.5.8.15]{HTT} yields an induced equivalence of $\infty$-categories
\[
\Fun_\Sigma(\cP_{\Sigma}(\Poly_R^{\Z/2}),\cD)
\xrightarrow{\simeq}
\Fun(\Poly_R^{\Z/2},\cD),
\]
where $\Fun_\Sigma(\cP_{\Sigma}(\Poly_R^{\Z/2}),\cD)$ denotes the full subcategory of $\Fun(\cP_{\Sigma}(\Poly_R^{\Z/2}),\cD)$ spanned by the functors preserving sifted colimits.
This implies that any functor $f\colon \Poly_R^{\Z/2} \to \cD$ admits a unique extension $F\colon \cP_\Sigma(\Poly_R^{\Z/2})\to \cD$ in the $\infty$-sense 
 preserving sifted colimits.
In this case,
we say that $F$ is a \emph{left Kan extension of $f$}.
Compose $F$ with the above functor $\CRing_R^{\Z/2}\to \cP_\Sigma(\Poly_R^{\Z/2})$ to obtain a functor
\[
\CRing_R^{\Z/2}
\to
\cD,
\]
which is a natural extension of $f$.
\end{const}

\begin{rmk}
In most of the article below, we will not use
this equivariant construction, the following Example \ref{equiexample} and in particular
\eqref{eqn:THR for presheaves 2} below, but only need the non-equivariant version in Construction \ref{thrandhrforsimplicialrings}.
However, we keep this example since it will be useful for future work generalizing Theorem \ref{HRfiltration} to the nontrivial involution case, compare also Remark \ref{addinginvolutions}.
\end{rmk}

\begin{exm}\label{equiexample}
\label{exm:THR is a left Kan extension}
Let $R$ be a commutative ring with involution.
We have the adjunction
\[
F:
\Set^{\Z/2}
\rightleftarrows
\CRing_R^{\Z/2}
:
U,
\]
where $F$ sends a $\Z/2$-set $X$ to $R[\N^X]$, and $U$ is the forgetful functor.
Using $F$ and $U$,
one can form the \emph{standard resolution} \cite[\href{https://stacks.math.columbia.edu/tag/08N8}{Tag 08N8}]{stacks}
\begin{equation}
\label{eqn:standard resolution}
P_\bullet
\to
A
\end{equation}
for every $R$-algebra $A$ with involution with the terms $P_n:=(FU)^{(n+1)}(A)$ for all integers $n\geq 0$.
Observe that $P_\bullet$ is a simplicial $R$-module with involution.
Consider the Green functor $\ul{A}$ associated with $A$.
Apply $\ul{(-)}$ to \eqref{eqn:standard resolution} to obtain the standard resolution $\ul{P_\bullet}\to \ul{A}$,
and in this case $\ul{P_\bullet}$ is a simplicial $\ul{A}$-module.

Consider the functor
\begin{equation}
\label{eqn:Eilenberg-MacLane functor}
\rH
\colon
\CRing_R^{\Z/2}
\to
\NAlg^{\Z/2}
\end{equation}
sending a commutative $R$-algebra $A$ with involution to the Eilenberg-MacLane spectrum $\rH \ul{A}$, see e.g.\ \cite[Definition 2.3.2]{HP23}.
This restricts to the functor
\begin{equation}
\label{eqn:Eilenberg-MacLane functor 3}
\rH
\colon
\Poly_R^{\Z/2}
\to
\NAlg^{\Z/2},
\end{equation}
which admits a left Kan extension 
\begin{equation}
\label{eqn:Eilenberg-MacLane functor 2}
\rH
\colon
\cP_\Sigma(\Poly_R^{\Z/2})
\to
\NAlg^{\Z/2}
\end{equation}
as a special case of Construction \ref{extendwithinvolutions} for $f=\rH$ and $\cD=\NAlg^{\Z/2}$.
If $A\in \CRing_R^{\Z/2}$,
then consider the standard resolution $P_\bullet\to A$.
The induced morphism $\colim \rH(P_\bullet)\to \rH(A)$ in $\NAlg^{\Z/2}$ is an equivalence.
It follows that
\eqref{eqn:Eilenberg-MacLane functor 2},
which is originally an extension of \eqref{eqn:Eilenberg-MacLane functor 3},
is indeed an extension of \eqref{eqn:Eilenberg-MacLane functor}.

We have the functor
\begin{equation}
\label{eqn:THR for presheaves}
\THR
\colon
\cP_\Sigma(\Poly_R^{\Z/2})
\to
\NAlg^{\Z/2}
\end{equation}
given by $\THR(\cF):=\rH \cF\wedge_{N^{\Z/2}i^*\rH \cF}\rH \cF$ for $\cF\in 
\cP_\Sigma(\Poly_R^{\Z/2})$.
If $\cF$ is represented by a commutative $R$-algebra $A$ with involution,
then we have $\THR(\cF)\simeq \THR(A)$
by Definition \ref{def:thrandhr}.
Since \eqref{eqn:Eilenberg-MacLane functor 2} preserves sifted colimits and $\wedge$ is a colimit,
\eqref{eqn:THR for presheaves} preserves sifted colimits too.
It follows that \eqref{eqn:THR for presheaves} is the left Kan extension of the functor $\THR\colon \Poly_R^{\Z/2}\to \NAlg^{\Z/2}$.
Composing with the map $i^*$, one easily sees that we have $\THH(i^* \cF) \simeq i^*\THR(\cF)$ which generalizes the first statement of \cite[Proposition 2.1.3]{HP23}.
We compose \eqref{eqn:Eilenberg-MacLane functor 2} with the forgetful functor $\NAlg^{\Z/2}\to \Sp^{\Z/2}$ to obtain the functor
\begin{equation}
\label{eqn:THR for presheaves 2}
\THR\colon \cP_\Sigma(\Poly_R^{\Z/2})
\to
\Sp^{\Z/2}.
\end{equation}
This preserves sifted colimits by Proposition \ref{prop:NAlg has colimits}.
It follows that 
\eqref{eqn:THR for presheaves 2} is the left Kan extension of the functor $\THR\colon \Poly_R^{\Z/2}\to \Sp^{\Z/2}$ too.
\end{exm}

We now give several examples of left Kan extension obtained by Construction \ref{left Kan}.

\begin{exm}
\label{left Kan H}
Let $R$ be a commutative ring.
We have the functor
\[
\rH
\colon
\Poly_R
\to
\CAlg(\Sp)
\]
sending $A\in \Poly_R$ to its Eilenberg-MacLane spectrum $\rH A$. 
We have a
left Kan extension
\[
\rH
\colon
\sCRing_R
\to
\CAlg(\Sp).
\]
This preserves sifted colimits by Construction \ref{left Kan},
$\rH A$ is equivalent to the usual Eilenberg-MacLane spectrum for any commutative $R$-algebra $A$ arguing
as in Example \ref{exm:THR is a left Kan extension}.

We similarly
define a functor $\rH \colon \sCRing_R\to \NAlg^{\Z/2}$ that
is obtained as a left Kan extension such that
$\rH \ul{A}$ is the equivariant Eilenberg-MacLane spectrum for any commutative $R$-algebra $A$.
(As $A$ has trivial involution here, we don't need Construction \ref{extendwithinvolutions} for this.)
Note that this $\rH \colon \sCRing_R\to \NAlg^{\Z/2}$ is the restriction of \eqref{eqn:Eilenberg-MacLane functor 2}.
\end{exm}

\begin{exm}
\label{left Kan Omega}
Let $R$ be a commutative ring.
We have the functor
\[
\Omega_{/R}^1
\colon
\Poly_R \to \Mod(\rD(R))
\]
sending $A\in \Poly_R$ to the $A$-module $\Omega_{A/R}^1$,
where
$\Mod(\cC)$ denotes the underlying $\infty$-category of the generalized $\infty$-operad $\Mod(\cC)^{\otimes}$ in \cite[Definition 4.5.1.1]{HA} for every symmetric monoidal $\infty$-category $\cC$.
Recall from \cite[Remark 4.2.1.15, Corollary 4.5.1.6]{HA}
that an object of the $\infty$-category $\Mod(\rD(R))$ consists of a pair of a commutative algebra object
$B$ and a $B$-module.
By Proposition \ref{module sifted colimit} and \cite[Corollary 4.2.3.2]{HA}
$\Mod(\rD(R))$ admits sifted colimits, and the forgetful functor $\Mod(\rD(R))\to \CAlg(\rD(R))\times \rD(R)$ is conservative and preserves sifted colimits.
Let
\[
\L_{/R}
\colon
\sCRing_R \to \Mod(\rD(R))
\]
be a 
left Kan extension of $\Omega_{/R}^1$.
For an $R$-algebra $A$,
$\L_{A/R}$ is an $\rH A$-module and hence an object of $\rD(A)$ due to Example \ref{left Kan H}.
We wish to keep track of this $\rH A$-module structure,
and this is the reason why we work with $\Mod(\rD(R))$ rather than $\rD(R)$.
Together with the equivalence of $\infty$-categories $\Mod_A(\rD(R))\simeq \rD(A)$ (a non-equivariant version of Proposition \ref{abelian Dold-Kan}),
we may regard $\L_{A/R}$ as an object of $\rD(A)$.

We later might wish to extend the last example to commutative $R$-algebras with involution using Construction \ref{extendwithinvolutions}. This could be very useful below, see Remark \ref{addinginvolutions} below.
\end{exm}

\begin{const}\label{thrandhrforsimplicialrings}
Let $R$ be a commutative ring.
We have the functor
\begin{equation}
\label{eqn:thrandhrforsimplicialrings}
\THR
\colon
\sCRing_R
\to
\Mod(\Sp^{\Z/2})
\end{equation}
sending $A\in \sCRing_R$ to the $\rH \ul{A}$-module $\THR(\rH \ul{A})$,
where $\rH$ is the left Kan extension of the equivariant Eilenberg-MacLane spectrum functor in Example \ref{left Kan H}.
This preserves sifted colimits since $\rH$ preserves sifted colimits by Example \ref{left Kan H},
the coproduct $\wedge$ in $\NAlg^{\Z/2}$ preserves colimits,
the forgetful functor $\NAlg^{\Z/2}\to \Sp^{\Z/2}$ preserves sifted colimits by Proposition \ref{prop:NAlg has colimits},
and the functors $N^{\Z/2}\colon \CAlg(\Sp)\to \NAlg^{\Z/2}$ and $i^*\colon \NAlg^{\Z/2}\to \CAlg(\Sp)$ preserve colimits.
Arguing as in Construction \ref{left Kan},
we see that $\THR$ is the left Kan extension of its restriction
\[
\THR\colon \Poly_R\to \Mod(\Sp^{\Z/2}).
\]
Note that \eqref{eqn:THR for presheaves 2} is an extension of \eqref{eqn:thrandhrforsimplicialrings} after forgetting the module structure,
and we only need \eqref{eqn:thrandhrforsimplicialrings} for the later pats.

Similarly,
the functor
\[
\HR(-/\ul{R})
\colon
\sCRing_R
\to
\Mod(\rD(\ul{R}))
\]
preserves sifted colimits.
Hence $\HR(-/\ul{R})$ is the left Kan extension of its restriction
\[
\HR(-/\ul{R})
\colon
\Poly_R
\to
\Mod(\rD(\ul{R})).
\]
\end{const}

\begin{prop}
\label{i* and THR}
Let $R\to A$ be a homomorphism of commutative rings.
Then there are equivalences in $\rD(A)$
\[
i^*\THR(A)
\simeq
\THH(A)
\text{ and }
i^*\HR(A/R)
\simeq
\HH(A/R)
\]
which are natural in $D(R)$.
\end{prop}
\begin{proof}
Consider the forgetful conservative functors $\alpha_*\colon \rD(A)\to \Sp$ and $\alpha_*\colon \rD(\ul{A})\to \Sp^{\Z/2}$.
By Proposition \ref{alpha isharp},
we have a natural isomorphism $\alpha_* i^*\simeq i^*\alpha_*$.
Hence it suffices to provide equivalences
\[
i^*\alpha_*\THR(A)
\simeq
\alpha_*\THH(A)
\text{ and }
i^*\alpha_* \HR(A/R)
\simeq
\alpha_*\HH(A/R),
\]
i.e.,
we can forget the $\rH A$-module and $\rH \ul{A}$-module structures.
Then the first equivalence is a special case of \cite[Proposition 2.1.3]{HP23}, and the second equivalence holds as $i^*$ commutes with push-outs in $\NAlg^{\Z/2}$.
\end{proof}

\begin{prop}
\label{connective}
Let $B$ be a normed $\Z/2$-spectrum such that its underlying $\Z/2$-spectrum is $(-1)$-connected.
Then $\THR(B)$ is $(-1)$-connected.
Let $A$ be a commutative ring with involution.
Then $\THR(A;\Z_p)$ is $(-1)$-connected.
\end{prop}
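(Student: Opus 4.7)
The plan is to handle the two assertions in turn. For the first, I would present $\THR(B) = B \wedge_{N^{\Z/2} i^* B} B$ as the geometric realization of the two-sided bar construction
\[
B \wedge (N^{\Z/2} i^* B)^{\wedge \bullet} \wedge B
\]
computed in $\Sp^{\Z/2}$. Since geometric realization is a sifted colimit and the standard equivariant $t$-structure on $\Sp^{\Z/2}$ is compatible with sifted colimits, it suffices to check that every bar term is $(-1)$-connected.

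For this, the forgetful functor $i^*\colon \Sp^{\Z/2} \to \Sp$ trivially preserves $(-1)$-connectivity, and the norm $N^{\Z/2}\colon \Sp \to \Sp^{\Z/2}$ sends $(-1)$-connected spectra to $(-1)$-connected $\Z/2$-spectra; the latter is the standard connectivity statement for the Hill--Hopkins--Ravenel norm, and is the only nontrivial input. Combined with the fact that the smash product of $(-1)$-connected $\Z/2$-spectra is $(-1)$-connected, this establishes the first claim.

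For the second statement, I would apply the first part to $B = \rH \ul{A}$, which is $(-1)$-connected, so that $\THR(A)$ is $(-1)$-connected in $\rD(\ul{A})$. It then remains to check that derived $p$-completion preserves $(-1)$-connectivity. Using Proposition \ref{completion results}(1),
\[
\THR(A;\Z_p) \simeq \lim_n \cofib\bigl(p^n\colon \THR(A) \to \THR(A)\bigr),
\]
each term is $(-1)$-connected as the cofiber of a self-map between $(-1)$-connected objects, and the tower $\{\ul{H}_0(\THR(A))/p^n\}$ has surjective transition maps, so the Milnor $\lim^1$ contribution in $\ul{H}_{-1}$ of the derived limit vanishes.

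The main obstacle I foresee is the connectivity of the norm: one must verify both that $i^* N^{\Z/2} X \simeq X \wedge X$ is $(-1)$-connected (immediate) and that the $\Z/2$-fixed points $(N^{\Z/2} X)^{\Z/2}$ are $(-1)$-connected, where the Hill--Hopkins--Ravenel computation of the norm on $\pi_0$ is required. Everything else is a routine combination of bar resolutions, compatibility of the $t$-structure with sifted colimits, and the Milnor sequence.
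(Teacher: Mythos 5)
Your proof is correct and follows essentially the same route as the paper: the paper cites \cite[Proposition A.3.7]{HP23} for $(-1)$-connectivity of $N^{\Z/2}i^*B$ and \cite[Corollary 6.8.1]{BGS20} for the relative smash product, which is precisely what your bar-resolution argument unpacks. For the $p$-completed statement the Milnor sequence argument is the same in outline; the one point the paper is careful about, and which you should not gloss over, is that the terms $\ul{\pi}_0\THR(A)\sotimes_{\ul{A}}\ul{\pi}_0(\ul{A}/p^n\ul{A})$ appearing in the $\lim^1$ are box products, not naive quotients $\ul{H}_0(\THR(A))/p^n$, so the surjectivity of the transition maps (hence the Mittag--Leffler condition in the abelian category of $\ul{\Z}$-modules) must be checked pointwise via Lemma~\ref{lem:box base change}.
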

\begin{proof}
By \cite[Proposition A.3.7]{HP23},
$N^{\Z/2} i^*B$ is $(-1)$-connected.
Together with \cite[Corollary 6.8.1]{BGS20},
we see that $\THR(B)$ is $(-1)$-connected.

We have the equivalence $\THR(A;\Z_p)\simeq \lim_n \THR(A)\sotimes_{\ul{A}}^\L (\ul{A}/p^n\ul{A})$.
Since the $\lim^1$ terms add at most $-1$ homological degree in the limit,
we see that $\THR(A;\Z_p)$ is $(-2)$-connected 
and (using \cite[Corollary 6.8.1]{BGS20} again) there is an isomorphism
\[
\ul{\pi}_{-1}\THR(A;\Z_p)
\cong
\mathop{\mathrm{lim}^1}_n \ul{\pi}_0\THR(A) \sotimes_{\ul{A}}\ul{\pi}_0 (\ul{A}/p^n \ul{A}).
\]
Using the description of Lemma \ref{lem:box base change}, we see that the tower on the right hand side consists of pointwise epimorphisms, hence satisfies the Mittag-Leffler condition in the abelian category of $\ul{\Z}$-modules.
We deduce that $\ul{\pi}_{-1}\THR(A;\Z_p)$ vanishes.
\end{proof}

\begin{lem}
\label{lem:finiteness of THR}
For every integer $n>0$,
$\ul{\pi}_n(\THR(\Z))$ is finite, i.e., $\ul{\pi}_n(\THR(\Z))(C_2/e)$ and $\ul{\pi}_n(\THR(\Z))(C_2/C_2)$ are finite.
\end{lem}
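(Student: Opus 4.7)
The strategy is to verify finiteness separately at each of the two orbits $C_2/e$ and $C_2/C_2$, reducing everything to Bökstedt's classical computation of $\pi_\ast\THH(\Z)$ together with a control of the equivariant ``correction terms'' arising from the $C_2$-fixed points.

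For the orbit $(C_2/e)$, Proposition \ref{i* and THR} supplies an equivalence $i^\ast\THR(\Z) \simeq \THH(\Z)$, so that $\ul{\pi}_n(\THR(\Z))(C_2/e) \cong \pi_n\THH(\Z)$; this group is finite for every $n > 0$ by Bökstedt's theorem.

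For the orbit $(C_2/C_2)$ we have $\ul{\pi}_n(\THR(\Z))(C_2/C_2) \cong \pi_n\THR(\Z)^{C_2}$, and the plan is to attack this via the isotropy separation cofiber sequence
\[
\THR(\Z)_{hC_2} \to \THR(\Z)^{C_2} \to \Phi^{C_2}\THR(\Z),
\]
whose long exact sequence reduces the claim to finiteness of $\pi_n$ of the two outer terms for $n > 0$. The homotopy orbits are handled by the homotopy orbit spectral sequence $E^2_{s,t} = H_s(C_2;\pi_t\THH(\Z)) \Rightarrow \pi_{s+t}\THR(\Z)_{hC_2}$; since $\THR(\Z)$ is $(-1)$-connected by Proposition \ref{connective}, this is first-quadrant, each diagonal $s+t=n$ contains only finitely many entries, and each entry with $s+t>0$ is finite (using that $\pi_t\THH(\Z)$ is finite for $t>0$ and that $H_s(C_2;\Z)$ is $0$ or $\Z/2$ for $s>0$).

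The main obstacle is controlling $\pi_n\Phi^{C_2}\THR(\Z)$. The cleanest route is to appeal to the explicit description of the geometric fixed points of $\THR(\Z)$ due to Dotto-Moi-Patchkoria-Reeh \cite{DMPR21}: one identifies $\Phi^{C_2}\THR(\Z)$ with a spectrum built by finitely many extensions from Eilenberg-MacLane spectra whose defining abelian groups are finite in positive degrees, whence $\pi_n\Phi^{C_2}\THR(\Z)$ is finite for $n > 0$. The long exact sequence of the isotropy separation cofiber sequence then gives finiteness of $\pi_n\THR(\Z)^{C_2}$ for every $n>0$, completing the proof.
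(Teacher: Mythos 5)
Your proposal is correct and follows essentially the same route as the paper: reduce the underlying orbit to B\"okstedt's finiteness of $\pi_*\THH(\Z)$, handle the $(C_2/C_2)$-orbit via the isotropy separation cofiber sequence, control the homotopy orbits with the homotopy orbit spectral sequence, and cite Dotto-Moi-Patchkoria-Reeh (\cite[Theorem 5.20]{DMPR21} in the paper) for finiteness of $\pi_n\Phi^{C_2}\THR(\Z)$. Your write-up is somewhat more explicit about the first-quadrant vanishing in the spectral sequence, but the logical skeleton is identical to the paper's.
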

\begin{proof}
By the finiteness result \cite[Lemma 2.5]{BMS19},
it remains to show that $\pi_n(\THR(\Z)^{\Z/2})$ is finite.
Using the homotopy orbit spectral sequence, \cite[Lemma 2.5]{BMS19} implies that
$\pi_n(\THR(\Z)_{h \Z/2})$ is finite for $n>0$. 
By \cite[Theorem 5.20]{DMPR21},
$\pi_n(\Phi^{\Z/2} \THR(\Z))$ is finite.
The isotropy cofiber sequence
\[
\THR(\Z)_{h \Z/2}
\to
\THR(\Z)^{\Z/2}
\to
\Phi^{\Z/2} \THR(\Z)
\]
implies that $\pi_n(\THR(\Z)^{\Z/2})$ is finite too.
\end{proof}

\begin{rmk}
\label{DMPR conjecture}
This important finiteness result refines the last part of \cite[Lemma 2.5]{BMS19}.
It is also related to \cite[Theorem 5.24]{DMPR21}. In fact,
Dotto-Moi-Patchkoria-Reeh \cite[Remark 5.23]{DMPR21} conjectured that there is an equivalence of $\Z/2$-spectra
\[
\THR(\Z)
\simeq
\rH \ul{\Z} \oplus \bigoplus_{i=1}^\infty \Sigma^{i-1+i\sigma} \rH \ul{\Z/i}.
\]
Hahn and Wilson \cite[Theorem B.0.1]{AKF} have now proved this conjecture,
which also implies Lemma \ref{lem:finiteness of THR}.

As noted in \cite[Remark 1.5]{BMS19},
the construction of the Bhatt-Morrow-Scholze filtrations mostly relies on the ``formal'' aspect of algebraic topology, with the important exception of B\"okstedt's computation of $\pi_*\THH(\F_p)$.
In particular, they do not use B\"okstedt's computation of $\pi_*\THH(\Z)$ \cite{Boekstedt}.
In our case,
we rely on the computation of $\ul{\pi}_*\THR(\F_p)$ \cite[Theorem 5.15]{DMPR21} and the computation of $\pi_n(\Phi^{\Z/2}\THR(\Z))$  \cite[Theorem 5.20]{DMPR21} that uses the computation of $\pi_*\Phi^{\Z/2}\rH \Z$ and the K\"unneth spectral sequence.
We do not need the computation of $\ul{\pi}_*\THR(\Z)$.
\end{rmk}

\begin{const}
\label{const:finite Mackey}
Let $M$ be an $\ul{\Z}$-module such that $M$ is finite, i.e.,
$A:=M(C_2/e)$ and $B:=M(C_2/C_2)$ are finite abelian groups.
We have the morphism of $\ul{\Z}$-modules $\ul{B}\to M$ given by the diagram
\[
\begin{tikzcd}
B
\ar[r,"\id"]
\ar[d,"\id"',shift right=0.5ex]
\ar[d,"2",shift left=0.5ex,leftarrow]
&
B
\ar[d,"\res"',shift right=0.5ex]\ar[d,"\tr",shift left=0.5ex,leftarrow]
\\
B
\ar[r,"\res"]
\ar[loop,out=190, in=170,looseness=9]
&
A
\ar[loop,out=350, in=10,looseness=9]
\end{tikzcd}
\]
since $\tr\circ \res=2$ for $M$ by Lemma \ref{lem:iotaR-modules}
and the involution on $B$ is trivial by definition.
Both the kernel and cokernel of $\ul{B}\to M$ have the form
\[
\begin{tikzcd}
C\ar[loop below,"w"]\ar[r,shift left=0.5ex,"\tr"]\ar[r,shift right=0.5ex,leftarrow,"\res"']&
0
\end{tikzcd}
\]
for some $\Z$-module $C$.
The identity $1+w=\res\circ \tr$ implies $w=-1$.
(Note that in the case of the kernel, we have $w=1$, and $C$ is $2$-torsion.)
Hence both the kernel and cokernel Mackey functors are the cokernel of the morphism of $\ul{\Z}$-modules $\ul{C}\to \ul{C^{\oplus \Z/2}}$ given by the diagram
\[
\begin{tikzcd}
C
\ar[r,"\id"]
\ar[d,"\id"',shift right=0.5ex]
\ar[d,"2",shift left=0.5ex,leftarrow]
&
C
\ar[d,"\res"',shift right=0.5ex]\ar[d,"\tr",shift left=0.5ex,leftarrow]
\\
C
\ar[r,"\res"]
\ar[loop,out=190, in=170,looseness=9]
&
C\oplus C,
\ar[loop,out=355, in=5,looseness=7]
\end{tikzcd}
\]
where $\res$ is the diagonal embedding,
the involution on $C$ is trivial,
and the involution on $C\oplus C$ changes the summands.
Hence there exists a finite number of finite abelian groups $L_1,\ldots,L_n$ such that we can also construct $M$ from $\ul{L_i}$ and $\ul{L_i^{\oplus \Z/2}}$ using kernels of surjections, cokernels of injections, and extensions finitely many times.
It follows that we can construct $M$ from $\ul{\Z}$ and $\ul{\Z^{\oplus \Z/2}}$ using the above operations finitely many times.
This implies that we can construct $\rH M$ from $\rH \ul{\Z}$ and $\rH \ul{\Z}[\sigma]$ using fibers, cofibers, and extensions only  finitely many times.
\end{const}

\begin{lem}
\label{finite derived tensor}
Let $L$ and $M$ be $\ul{\Z}$-modules.
If $L$ and $M$ are finite,
then the $\ul{\Z}$-module $\ul{H}_n(L\sotimes_{\ul{\Z}}^\L M)$ is finite for every integer $n$.
\end{lem}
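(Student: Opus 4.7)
The plan is to argue by dévissage on $L$, exploiting Construction \ref{const:finite Mackey}. That construction shows that any finite $\ul{\Z}$-module $L$ can be built from $\ul{\Z}$ and $\ul{\Z^{\oplus\Z/2}}$ via finitely many kernels of surjections, cokernels of injections, and extensions in the category of $\ul{\Z}$-modules. Each such operation corresponds to a cofiber sequence in $\rD(\ul{\Z})$, so $L$ lies in the smallest full subcategory of $\rD(\ul{\Z})$ that contains $\ul{\Z}$ and $\ul{\Z^{\oplus\Z/2}}$ and is closed under fibers and cofibers.

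Let $\cS\subset \rD(\ul{\Z})$ denote the class of $\cF$ with $\ul{H}_n(\cF)$ finite for every integer $n$. Since $(-)\sotimes_{\ul{\Z}}^\L M$ is exact, it preserves cofiber sequences, and the induced long exact sequence of Mackey functor homology, combined with the fact that kernels, cokernels, and extensions of finite Mackey functors (pointwise finite at $C_2/e$ and $C_2/C_2$) remain finite in $\Mod_{\ul{\Z}}$, shows that $\cS$ is closed under cofibers. It therefore suffices to verify that $L\sotimes_{\ul{\Z}}^\L M\in\cS$ in the two base cases $L=\ul{\Z}$ and $L=\ul{\Z^{\oplus\Z/2}}$.

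For $L=\ul{\Z}$ this is trivial, since $\ul{\Z}\sotimes_{\ul{\Z}}^\L M\simeq M$ is concentrated in degree $0$ with value $M$, which is finite by assumption. For $L=\ul{\Z^{\oplus\Z/2}}=\ul{\Z}\sotimes\ul{\cB}^{\Z/2}$, Recollection \ref{model structures on A-modules} tells us that $\ul{\Z}\sotimes\ul{\cB}^{\Z/2}$ is projective as a $\ul{\Z}$-module, so the derived tensor product agrees with the underived box product $M\sotimes\ul{\cB}^{\Z/2}$, again concentrated in degree~$0$. Using the formula $(M\sotimes\ul{\cB}^{X})(Y)\cong M(X\times Y)$ recalled after Recollection \ref{model structures on A-modules}, its values at $C_2/e$ and $C_2/C_2$ are $M(C_2/e)\oplus M(C_2/e)$ and $M(C_2/e)$ respectively, both finite.

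I do not expect serious obstacles here: the entire argument is dévissage on $L$ once Construction \ref{const:finite Mackey} is in hand; the only thing to double-check is that the two base cases have finite homology, which reduces to the explicit description of the box product with the representable $\ul{\cB}^{\Z/2}$. Note that one does not need to iterate the dévissage in $M$ (where it would fail, since e.g.\ $\ul{\Z}[\sigma]\sotimes_{\ul{\Z}}^\L\ul{\Z}[\sigma]\simeq\ul{\Z}[2\sigma]$ is not finite), because the finiteness of $M$ enters only through the finiteness of the values of $M\sotimes\ul{\cB}^{\Z/2}$ in the second base case.
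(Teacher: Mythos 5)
Your proof is correct and is essentially the paper's own argument: both run the dévissage of Construction \ref{const:finite Mackey} on one of the two factors (you on $L$, the paper on $M$, which is the same thing by symmetry of $\sotimes_{\ul{\Z}}^\L$), reduce to the base cases $\ul{\Z}$ and $\ul{\Z^{\oplus\Z/2}}$, and use the finiteness of the remaining factor to conclude. Your justification that the second base case is concentrated in degree $0$ (projectivity of $\ul{\Z}\sotimes\ul{\cB}^{\Z/2}$ and the explicit values of the box product) simply spells out what the paper's ``the assumption that $L$ is finite implies this'' leaves implicit.
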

\begin{proof}
Using Construction \ref{const:finite Mackey},
it suffices to show that
\[
\ul{H}_n(L\sotimes_{\ul{\Z}}^\L \ul{\Z})
\text{ and }
\ul{H}_n(L\sotimes_{\ul{\Z}}^\L \ul{\Z^{\oplus \Z/2}})
\]
are finite for every integer $n$.
The assumption that $L$ is finite implies this.
\end{proof}

\begin{df}\label{def:filtration}
Let $\Z^{op}$ denote the category whose object set is $\Z$ and whose morphism $m\to n$ in $\Z$ is uniquely given if and only if $m\geq n$.
Let $\cC$ be a stable $\infty$-category.
A \emph{filtration in $\cC$} is a functor
\(
F
\colon
\Z^{op}\to \cC.
\)
The \emph{underlying object of $F$} is $F(-\infty):=\colim_n F(n)$.
We say that $F$ is \emph{complete}
if $\lim_n F(n) \simeq 0$.
For every integer $n$,
we use the notation
\[
\Fil_nF:=F(n),
\;
\Fil^n F:=\cofib(\Fil_{n+1}F\to F),
\;
\gr^nF:=\cofib(\Fil_{n+1}F\to \Fil_nF).
\]
\end{df}

\begin{exm}
For $\cF\in \rD(\ul{A})$,
the slice filtration on $\cF$ in Definition \ref{abelian rho} is a filtration.
For every integer $n$,
we have $P_n\cF=\Fil_n \cF$,
$P^n\cF=\Fil^n \cF$, and $P_n^n\cF=\gr^n \cF$.
\end{exm}

The following is an equivariant refinement of \cite[section 5.1]{BMS19}.

\begin{df}
Let $A$ be a Green functor.
The \emph{filtered derived $\infty$-category of $A$-modules} is
\[
\DF(A)
:=
\Fun(\Z^{op},\rD(A)).
\]
There is a natural symmetric monoidal structure on $\DF(A)$,
see \cite[Theorem 1.13]{GP18}.
It is given by
\[
(F\sotimes^\L_A G)(i)
:=
\colim_{j+k\geq i}
F(j)\sotimes^\L_A G(k)
\]
for $F,G\in \DF(A)$ and integers $i$.
Let $\widehat{\DF}(A)$ denote the full subcategory of $\DF(A)$ spanned by complete filtered complexes.
\end{df}

\begin{lem}
\label{Properties of DF}
Let $A$ be a Green functor.
\begin{enumerate}
\item[\textup{(1)}]
The symmetric monoidal structure on $\DF(A)$ restricts to a symmetric monoidal structure on $\widehat{\DF}(A)$.
\item[\textup{(2)}]
The inclusion functor $\widehat{\DF}(A)\to \DF(A)$ admits a left adjoint,
which we call the \emph{completion}.
\end{enumerate}
\end{lem}
\begin{proof}
We refer to \cite{GP18},
which is written for a general setting of symmetric monoidal $\infty$-categories.
Compare also \cite[Lemma 5.2]{BMS19},
which restricts to the case of derived categories of $\E_\infty$-rings.
\end{proof}

\begin{lem}
\label{limit and smash}
Let $A \in \CAlg^{\Z/2}$ be a commutative ring object of $\Sp^{\Z/2}$,
let $L$ and $M$ an $A$-modules,
and let $\cdots \to \Fil_1 M\to \Fil_0 M:=M$ be a complete filtration on $M$.
Assume that $A$, $L$, and $M$ are $(-1)$-connected and $\Fil_n M$ is $(n-1)$-connected for every integer $n$.
Then there are natural equivalence of $A$-modules
\[
\lim_n (L\wedge_A \Fil_n M)
\simeq
0
\text{ and }
\lim_n (L\wedge_A \Fil^n M)
\simeq
L\wedge_A \lim_n \Fil^n M.
\]
\end{lem}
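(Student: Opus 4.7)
The plan is to deduce both equivalences from a single connectivity estimate for $L\wedge_A \Fil_n M$, after which elementary manipulations of limits and fiber sequences in the stable $\infty$-category $\Mod_A$ do the rest.

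First I would prove the key estimate: $L\wedge_A \Fil_n M\in \Sp_{\geq n}^{\Z/2}$, that is, $(n-1)$-connected. Writing the relative smash product as the geometric realization of the two-sided bar construction, $L\wedge_A \Fil_n M\simeq |L\wedge A^{\wedge \bullet}\wedge \Fil_n M|$, each simplicial level is a smash product of the $(-1)$-connected equivariant spectra $L,A,\ldots,A$ with the $(n-1)$-connected spectrum $\Fil_n M$. The standard $t$-structure on $\Sp^{\Z/2}$ satisfies $\Sp_{\geq p}^{\Z/2}\wedge \Sp_{\geq q}^{\Z/2}\subset \Sp_{\geq p+q}^{\Z/2}$, which reduces to the case of the compact generators $\Sphere$ and $\Sigma^\infty(\Z/2)_+$ of $\Sp_{\geq 0}^{\Z/2}$, so each level lies in $\Sp_{\geq n}^{\Z/2}$. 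Since $\Sp_{\geq n}^{\Z/2}$ is closed under colimits, the geometric realization lies there too.

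Given this estimate, the first equivalence $\lim_n(L\wedge_A \Fil_n M)\simeq 0$ follows from the Milnor sequence for the homotopy limit of the tower: for every integer $k$, the Mackey homotopy groups $\ul{\pi}_k(L\wedge_A \Fil_n M)$ vanish as soon as $n>k+1$, so both $\lim_n$ and $\lim^1_n$ vanish in degree $k$. For the second equivalence, I would apply $L\wedge_A -$ to the fiber sequence $\Fil_{n+1} M\to M\to \Fil^n M$ and take $\lim_n$ (limits preserve fiber sequences in a stable $\infty$-category); since the first term has limit $0$ by what we just proved and $L\wedge_A M$ is constant in $n$, this gives $L\wedge_A M\simeq \lim_n(L\wedge_A \Fil^n M)$. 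On the other hand, completeness of the original filtration, $\lim_n \Fil_n M\simeq 0$, applied to the same fiber sequence before smashing yields $M\simeq \lim_n \Fil^n M$. Combining the two identifications produces the desired equivalence.

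The main obstacle is the connectivity estimate in the equivariant setting, where the $t$-structure and the smash product interact somewhat subtly because equivariant connectivity is stronger than connectivity of the underlying non-equivariant spectrum. However, once one notes that $\Sp_{\geq 0}^{\Z/2}$ is generated under colimits by $\Sphere$ and $\Sigma^\infty(\Z/2)_+$ and that these generators smash to connective spectra, closure of $\Sp_{\geq 0}^{\Z/2}$ under $\wedge$ and colimits follows at once, and the shift identity $\Sp_{\geq n}^{\Z/2}=\Sigma^n \Sp_{\geq 0}^{\Z/2}$ takes care of the general case.
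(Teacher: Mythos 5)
Your proof is correct and follows essentially the same route as the paper: establish that $L\wedge_A \Fil_n M$ is $(n-1)$-connected, deduce vanishing of the first limit, and then apply $L\wedge_A-$ to the cofiber sequence $\Fil_{n+1}M\to M\to \Fil^n M$ for the second equivalence. The only difference is that where you prove the connectivity estimate from scratch via the two-sided bar construction and the $t$-structure product rule $\Sp_{\geq p}^{\Z/2}\wedge\Sp_{\geq q}^{\Z/2}\subset\Sp_{\geq p+q}^{\Z/2}$, the paper simply cites \cite[Corollary 6.8.1]{BGS20} for that step.
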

\begin{proof}
Using \cite[Corollary 6.8.1]{BGS20},
we see that $L\wedge_A \Fil_n M$ is $(n-1)$-connected.
It follows that $\lim_n (L\wedge_A \Fil_n M)$ vanishes.
Use the cofiber sequence $\Fil_{n+1}M\to M\to \Fil^n M$ to finish the proof.
\end{proof}

\begin{lem}
\label{p-complete and filtration}
Let $A$ be a commutative ring,
and let $\cdots \to \Fil_1\cF\to \Fil_0\cF:=\cF$ be a complete filtration on $\cF\in \rD(\ul{A})$.
Then we have the following properties:
\begin{enumerate}
\item[\textup{(1)}]
$\Fil_\bullet (\cF_p^\wedge) :=(\Fil_\bullet \cF)_p^\wedge$ is a complete filtration on $\cF_p^\wedge$.
\item[\textup{(2)}]
We have an equivalence
\(
(\gr^n \cF)_p^\wedge
\simeq
\gr^n (\cF_p^\wedge)
\)
in $\rD(\ul{A})$ for every integer $n$.
\item[\textup{(3)}]
If $\gr^n\cF$ is derived $p$-complete for every integer $n$,
then $\cF$ is derived $p$-complete. 
\end{enumerate}
\end{lem}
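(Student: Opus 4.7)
The plan is to build all three parts from two inputs: Proposition \ref{completion results}(1), which identifies $\cF_p^\wedge \simeq \lim_m \cF \sotimes_{\ul{A}}^\L \ul{A}/p^m \ul{A}$, and the adjunction giving $(-)_p^\wedge$ as a left adjoint to the inclusion $\rD_{p-\comp}(\ul{A}) \hookrightarrow \rD(\ul{A})$. Consequently $(-)_p^\wedge$ preserves colimits (hence cofibers), while the inclusion preserves limits (hence derived $p$-complete objects are closed under limits in $\rD(\ul{A})$).

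For (1), the identification $\Fil_0(\cF_p^\wedge) = (\Fil_0 \cF)_p^\wedge = \cF_p^\wedge$ is immediate, so we have an honest filtration on $\cF_p^\wedge$. For completeness I would compute
\[
\lim_n \Fil_n(\cF_p^\wedge)
\simeq
\lim_n \lim_m \Fil_n \cF \sotimes_{\ul{A}}^\L \ul{A}/p^m \ul{A}
\simeq
\lim_m \bigl((\lim_n \Fil_n \cF) \sotimes_{\ul{A}}^\L \ul{A}/p^m \ul{A}\bigr)
\simeq 0,
\]
where the swap of limits uses that $(-) \sotimes_{\ul{A}}^\L \ul{A}/p^m\ul{A} \simeq \cofib(p^m)$ is a shifted fiber in the stable $\infty$-category $\rD(\ul{A})$ and so commutes with limits, while the last step uses completeness of the original filtration. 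Part (2) then follows at once from $(-)_p^\wedge$ preserving cofibers: $(\gr^n \cF)_p^\wedge \simeq \cofib\bigl((\Fil_{n+1}\cF)_p^\wedge \to (\Fil_n \cF)_p^\wedge\bigr) \simeq \gr^n(\cF_p^\wedge)$.

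For (3), I would induct using the cofiber sequence $\gr^n \cF \to \Fil^n \cF \to \Fil^{n-1}\cF$ obtained by applying the octahedral axiom to the composable pair $\Fil_{n+1}\cF \to \Fil_n \cF \to \cF$. The base case $\Fil^{-1}\cF \simeq \cofib(\id_\cF) \simeq 0$ is vacuously derived $p$-complete, and since $\rD_{p-\comp}(\ul{A})$ is a stable full subcategory of $\rD(\ul{A})$ (closed under cofibers), the induction step shows that each $\Fil^n \cF$ is derived $p$-complete. Finally, the cofiber sequences $\Fil_{n+1}\cF \to \cF \to \Fil^n \cF$ yield $\cF \simeq \lim_n \Fil^n \cF$ from completeness of the original filtration (the fiber of the canonical map is $\lim_n \Fil_{n+1}\cF \simeq 0$), and a limit of derived $p$-complete objects is derived $p$-complete. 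The one delicate point I expect is the interchange of limits in (1), which is justified by the stability of $\rD(\ul{A})$; everything else is formal.
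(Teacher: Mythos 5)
Your proof is correct, and the overall strategy matches the paper's. For part (1), the limit swap followed by invoking completeness and the observation that $(-)\sotimes_{\ul{A}}^\L \ul{A}/p^m\ul{A} \simeq \cofib(p^m)$ commutes with limits in the stable setting is exactly the paper's computation. Part (2) likewise matches: the paper compares the two cofiber sequences $\Fil_{n+1}\cF \to \Fil_n\cF \to \gr^n\cF$ and $\Fil_{n+1}(\cF_p^\wedge) \to \Fil_n(\cF_p^\wedge) \to \gr^n(\cF_p^\wedge)$, which is the same as your appeal to $(-)_p^\wedge$ preserving cofibers.

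Part (3) is where you diverge mildly, and your version is arguably cleaner. The paper uses parts (1) and (2) together with induction to establish an equivalence $\Fil^n\cF \simeq \Fil^n(\cF_p^\wedge)$ for every $n$, takes $\lim_n$ of both sides, and uses completeness of both filtrations to conclude $\cF \simeq \cF_p^\wedge$. You instead argue intrinsically: by the cofiber sequence $\gr^n\cF \to \Fil^n\cF \to \Fil^{n-1}\cF$ and the fact that $\rD_{p\text{-}\comp}(\ul{A})$ is a stable full subcategory (closed under shifts and finite limits, hence cofibers), each $\Fil^n\cF$ is derived $p$-complete; then $\cF \simeq \lim_n \Fil^n\cF$ is a limit of $p$-complete objects. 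The two inductions are effectively the same, but your closure-properties formulation avoids routing through the comparison with $\cF_p^\wedge$ and so does not actually depend on (1) or (2). Both are valid; the paper's phrasing makes the statement $\cF \simeq \cF_p^\wedge$ explicit, which is marginally more informative, while yours is more self-contained.
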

\begin{proof}
(1) We have equivalences
\[
\lim_n \Fil_n (\cF_p^\wedge)
\simeq
\lim_n \lim_i \Fil_n (\cF \sotimes_{\ul{A}}^\L (\ul{A/p^i A}))
\simeq
\lim_i \cofib( \lim_n \Fil_n \cF \xrightarrow{p^i}   \lim_n \Fil_n \cF  )
\]
in $\rD(\ul{A})$,
and the last one vanishes since $\lim_{n} \Fil_n \cF$ vanishes.
Hence $\Fil_\bullet (\cF_p^\wedge)$ gives a complete filtration on $\cF_p^\wedge$.

(2) Use the cofiber sequences $\Fil_{n+1}\cF\to \Fil_n\cF\to \gr^n \cF$ and $\Fil_{n+1} (\cF_p^\wedge) \to \Fil_n (\cF_p^\wedge)\to \gr^n (\cF_p^\wedge)$.

(3) By induction and (2),
we have an equivalence $\Fil^n \cF \simeq \Fil^n(\cF_p^\wedge)$ for every integer $n$.
Take $\lim_n$ on both sides, and use the completeness of $\Fil_n\cF$ and (1) to conclude.
\end{proof}

The following result seems well-known.
\begin{lem}
\label{completed cotangent}
Let $R$ be a commutative $\Z_p$-algebra.
Then there is a natural equivalence $(\L_{R/\Z})_p^\wedge \simeq (\L_{R/\Z_p})_p^\wedge$ in $\rD(R)$.
\end{lem}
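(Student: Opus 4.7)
The plan is to base-change along the standard transitivity fiber sequence for $\Z \to \Z_p \to R$,
\[
\L_{\Z_p/\Z} \otimes_{\Z_p}^{\L} R \;\to\; \L_{R/\Z} \;\to\; \L_{R/\Z_p},
\]
which remains a cofiber sequence after derived $p$-completion. It therefore suffices to verify that the left-hand term vanishes after $p$-completion, and by (a non-equivariant version of) Proposition \ref{monoidal completion local} this reduces to proving
\[
(\L_{\Z_p/\Z})_p^\wedge \simeq 0 \quad \text{in } \rD(\Z_p).
\]

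Since the cotangent complex of a ring map is always connective, I would invoke derived Nakayama: it is enough to show $\L_{\Z_p/\Z} \otimes_{\Z_p}^{\L} \Z/p \simeq 0$. For this, apply transitivity once more, now to $\Z \to \Z_p \to \Z/p$, to obtain a fiber sequence
\[
\L_{\Z_p/\Z} \otimes_{\Z_p}^{\L} \Z/p \;\to\; \L_{(\Z/p)/\Z} \;\to\; \L_{(\Z/p)/\Z_p}.
\]
Both right-hand terms are computed by the well-known fact that the cotangent complex of a quotient by a non-zero-divisor is a free module in homological degree one; in both cases the generator is $p$, and the canonical identifications give $\L_{(\Z/p)/\Z} \simeq \Z/p[1] \simeq \L_{(\Z/p)/\Z_p}$.

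The remaining, and main, point is to check that the functorial map $\L_{(\Z/p)/\Z} \to \L_{(\Z/p)/\Z_p}$ induced by $\Z \to \Z_p$ is precisely this equivalence, i.e.\ sends the class of $p \in \Z$ to the class of $p \in \Z_p$. This is a naturality check one can do on the level of Koszul-type resolutions presenting $\Z/p$ over $\Z$ and over $\Z_p$. Once this identification is in place, the fiber in the transitivity sequence is zero, derived Nakayama yields $(\L_{\Z_p/\Z})_p^\wedge \simeq 0$, and the original claim follows. Naturality of every step, as well as of the reduction via base change, is automatic from the functoriality of the cotangent complex and of $p$-completion.
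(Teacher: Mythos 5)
Your proof is correct and follows essentially the same route as the paper: both use the transitivity triangle for $\Z \to \Z_p \to R$, reduce to showing $\L_{\Z_p/\Z}$ vanishes after $p$-completion (you via derived Nakayama, the paper via the finite filtration of $\Z/p^n$ with graded pieces $\F_p$, which is the same idea), and conclude from the transitivity triangle for $\Z \to \Z_p \to \F_p$ and the identification of $\L_{\F_p/\Z}$ and $\L_{\F_p/\Z_p}$ with $\F_p[1]$. You are a bit more explicit than the paper about the naturality check that the comparison map $\L_{\F_p/\Z}\to\L_{\F_p/\Z_p}$ is the evident equivalence, which is a reasonable precaution.
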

\begin{proof}
We have the transitivity fiber sequence
\(
\L_{\Z_p/\Z}\otimes_{\Z_p}^\L R
\to
\L_{R/\Z}
\to
\L_{R/\Z_p}.
\)
Since $\Z_p/p^n \Z_p$ admits a finite filtration whose graded pieces are $\F_p$, it suffices to show $\L_{\Z_p/\Z}\otimes_{\Z_p}^\L \F_p\simeq 0$. 
The induced morphism $\L_{\F_p/\Z}
\to
\L_{\F_p/\Z_p}$ in $\rD(\F_p)$
is an equivalence, as a standard computation, see e.g.\  \cite[\href{https://stacks.math.columbia.edu/tag/08SJ}{Tag 08SJ}]{stacks} shows  $\L_{\F_p/\Z}\simeq (p)/(p)^2[1]$ and  similarly for $\L_{\F_p/\Z_p}$.
Use the transitivity fiber sequence
\(
\L_{\Z_p/\Z}\otimes_{\Z_p}^\L \F_p
\to
\L_{\F_p/\Z}
\to
\L_{\F_p/\Z_p}
\)
to conclude.
\end{proof}

The following result is an equivariant refinement of \cite[a part of Lemma 2.5]{BMS19},
which will be used later for relating statements about $\THR$
and $\HR$.

\begin{prop}
\label{prop:hrcompleted}
Let $A$ be a commutative ring.
Then the natural morphism
\[
\THR(A;\Z_p)
\sotimes_{\THR(\Z)}^\L
\ul{\Z}
\to
\HR(A;\Z_p)
\]
in $\rD(\ul{A})$ is an equivalence.
\end{prop}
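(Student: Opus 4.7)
The plan is to use the defining equivalence $\HR(A) \simeq \THR(A) \sotimes^\L_{\THR(\Z)} \ul{\Z}$, pass through $p$-completion via Proposition \ref{monoidal completion local}, and then show the resulting tensor product is already derived $p$-complete by filtering through the Postnikov tower of $\THR(\Z)$ and invoking the finiteness result Lemma \ref{lem:finiteness of THR}.

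From the definition of $\HR$ we have $\HR(A) \simeq \THR(A) \sotimes^\L_{\THR(\Z)} \ul{\Z}$ in $\rD(\ul{A})$. Taking derived $p$-completion and applying Proposition \ref{monoidal completion local} to the $p$-completion map $\THR(A) \to \THR(A;\Z_p)$ (whose fiber is derived $p$-local by Proposition \ref{completion results}(2)) yields
\[
\HR(A;\Z_p) \simeq (\THR(A;\Z_p) \sotimes^\L_{\THR(\Z)} \ul{\Z})_p^\wedge.
\]
The morphism in the statement is the unit of this $p$-completion, so it suffices to prove that $Y := \THR(A;\Z_p) \sotimes^\L_{\THR(\Z)} \ul{\Z}$ is itself derived $p$-complete.

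Set $X := \THR(A;\Z_p)$, which is derived $p$-complete by construction and $(-1)$-connected by Proposition \ref{connective}. Tensoring the fiber sequence of $\THR(\Z)$-modules $\tau_{\geq 1}\THR(\Z) \to \THR(\Z) \to \ul{\Z}$ with $X$ over $\THR(\Z)$ yields $Y \simeq \cofib(X \sotimes^\L_{\THR(\Z)} \tau_{\geq 1}\THR(\Z) \to X)$. Since derived $p$-complete objects are closed under cofibers, it is enough to show $Z := X \sotimes^\L_{\THR(\Z)} \tau_{\geq 1}\THR(\Z)$ is derived $p$-complete. Filter $\tau_{\geq 1}\THR(\Z)$ by its connective covers $\tau_{\geq n}\THR(\Z)$ for $n \geq 1$: this is a complete decreasing filtration of $\THR(\Z)$-modules with graded pieces $(\ul{\pi}_n\THR(\Z))[n]$, regarded as $\THR(\Z)$-modules via the augmentation $\THR(\Z) \to \ul{\Z}$. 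Tensoring with the connective $X$ preserves completeness (since the connectivity of $X \sotimes^\L_{\THR(\Z)} \tau_{\geq n}\THR(\Z)$ grows with $n$), so by Lemma \ref{p-complete and filtration}(3) it remains to prove that each graded piece $X \sotimes^\L_{\THR(\Z)} (\ul{\pi}_n\THR(\Z))[n]$ is derived $p$-complete.

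The main obstacle is this final verification, which rests on Lemma \ref{lem:finiteness of THR}: for each $n \geq 1$, $\ul{\pi}_n\THR(\Z)$ is finite, so we may split it as $M\{p\} \oplus M^{p'}$ into its $p$-primary and prime-to-$p$ parts. Choose an integer $q$ coprime to $p$ annihilating $M^{p'}$; since $X$ is derived $p$-complete, multiplication by $q$ is an equivalence on $X$, whereas $q$ annihilates $M^{p'}$, forcing $X \sotimes^\L_{\THR(\Z)} M^{p'} = 0$. Choose $N$ with $p^N M\{p\} = 0$; then $p^N$ acts as zero on $X \sotimes^\L_{\THR(\Z)} M\{p\}[n]$ via the augmentation, making this object bounded $p$-torsion and hence derived $p$-complete by a direct computation of $\lim_m (-) \sotimes^\L \ul{\Z}/p^m\ul{\Z}$. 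This completes the argument.
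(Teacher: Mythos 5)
Your proof is correct and follows the same overall strategy as the paper: use Proposition \ref{monoidal completion local} to reduce to showing that $Y := \THR(A;\Z_p)\sotimes_{\THR(\Z)}^\L \ul{\Z}$ is derived $p$-complete, filter $\tau_{\geq 1}\THR(\Z)$ by its Postnikov tower, and invoke the finiteness of $\ul{\pi}_n\THR(\Z)$ from Lemma \ref{lem:finiteness of THR}. Where you diverge is in the final verification. The paper invokes Construction \ref{const:finite Mackey} to build a finite $\ul{\Z}$-module $M$ out of constant Mackey functors $\ul{L_i}$ and $\ul{L_i^{\oplus\Z/2}}$, reduces to $L_i=\Z/\ell^m$, handles $\ell\neq p$ via Proposition \ref{completion results}(6), and for $\ell=p$ runs an explicit inverse-limit diagram argument to exhibit $\alpha(M)\simeq\alpha(M)_p^\wedge$. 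You instead split the finite Mackey functor $\ul{\pi}_n\THR(\Z)$ directly into its $p$-primary part $M\{p\}$ and prime-to-$p$ part $M^{p'}$ (a legitimate splitting of Mackey functors, since the canonical primary decomposition is preserved by $\res$, $\tr$, and $w$), kill $M^{p'}$ via a choice of $q$ coprime to $p$, and handle $M\{p\}$ by observing that an object annihilated by $p^N$ is automatically derived $p$-complete. This is a genuinely shorter route: it bypasses Construction \ref{const:finite Mackey} entirely and replaces the paper's diagram-chase for the $\ell=p$ case with a one-line torsion observation. The paper's route has the compensating virtue that Construction \ref{const:finite Mackey} is reused elsewhere (e.g.\ in the proof of Lemma \ref{lem:pseudo-coherence}), so the heavier machinery earns its keep there.

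One small point you should make explicit: the step ``since $X$ is derived $p$-complete, multiplication by $q$ is an equivalence on $X$'' does not follow directly from any proposition stated in the paper — Proposition \ref{completion results}(6) concerns derived $p$-\emph{local} objects. The statement is nevertheless true and easy: $X/q \sotimes_{\ul{A}}^\L \ul{A}/p^n\ul{A}$ vanishes because $q$ is invertible mod $p^n$, so $(X/q)_p^\wedge\simeq 0$; but $X/q$ is derived $p$-complete as a cofiber of derived $p$-complete objects, hence $X/q\simeq(X/q)_p^\wedge\simeq 0$ by Proposition \ref{completion results}(4). Including this two-line argument (or an equivalent citation) would close the only hole in an otherwise complete proof.
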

\begin{proof}
By Proposition \ref{monoidal completion algebra},
we have equivalences
\begin{align*}
(\THR(A)\sotimes_{\THR(\Z)}^\L \ul{\Z})_p^\wedge
\simeq &
(\THR(A;\Z_p)\sotimes_{\THR(\Z;\Z_p)}^\L \ul{\Z}_p^\wedge)_p^\wedge
\\
\simeq &
(\THR(A;\Z_p)\sotimes_{\THR(\Z)}^\L \ul{\Z})_p^\wedge.
\end{align*}
Hence
it suffices to show that
\(
\THR(A;\Z_p)
\sotimes_{\THR(\Z)}^\L
\ul{\Z}
\)
is derived $p$-complete.

Recall that $\ul{H}_i(\THR(\Z))$ is finite for $i>0$ by Lemma \ref{lem:finiteness of THR},
$\ul{H}_0(\THR(\Z))\simeq \ul{\Z}$ by
\cite[Theorem 5.1]{DMPR21} (see also 
\cite[Proposition 2.3.5]{HP23} and \cite{HP23erratum}),
and $\ul{H}_i(\THR(\Z))=0$ for $i<0$ by Proposition \ref{connective}.
As $\THR(A;\Z_p)$ is derived $p$-complete,
it suffices to show that
\[
\THR(A;\Z_p)
\sotimes_{\THR(\Z)}^\L
\tau_{\geq 1}\THR(\Z)
\]
is derived $p$-complete.
Recall that $\THR(A;\Z_p)$ is $(-1)$-connected by Proposition \ref{connective}.
The filtration
\(
\THR(A;\Z_p)\sotimes_{\THR(\Z)}^\L \tau_{\geq \bullet} \THR(\Z)
\)
is complete by a $\rD(\ul{A})$-variant of Lemma \ref{limit and smash}.
Using Lemma \ref{p-complete and filtration}(3),
we reduce to showing that
\[
\THR(A;\Z_p)\sotimes_{\THR(\Z)}^\L \ul{H}_i \THR(\Z)
\]
is derived $p$-complete for every integer $i\geq 1$.
Since $\ul{H}_i \THR(\Z)$ is finite for $i\geq 1$,
Construction \ref{const:finite Mackey} allows us to reduce to showing that
\[
\alpha(M):=
\THR(A;\Z_p)\sotimes_{\THR(\Z)}^\L  \ul{M}
\text{ and }
\beta(M):=
\THR(A;\Z_p)\sotimes_{\THR(\Z)}^\L  \ul{M^{\oplus \Z/2}}
\]
are derived $p$-complete for every finite abelian group $M$.
We further reduce to the case when $M=\Z/\ell^m$ for some prime $\ell$ and integer $m\geq 1$.

The cofiber of the multiplication $\ell^m\colon \ul{\Z}\to \ul{\Z}$ is $\ul{\Z/\ell^m}$.
If $\ell\neq p$,
then we have $\alpha(M)\simeq 0$ by Proposition \ref{completion results}(6). A similar argument yields $\beta(M)\simeq 0$.

Now assume $\ell=p$.
For every integer $k\geq 1$,
the quotient homomorphism $\Z/p^{m(k+1)}\to \Z/p^{mk}$ induces the following commutative diagram in $\rD(\ul{A})$
\[
\begin{tikzcd}
\ul{\Z/p^m} \ar[d,"0"']\ar[r,"0"]&
\ul{\Z/p^m} \ar[d,"\id"]\ar[r]&
\ul{\Z/p^m}\sotimes_{\ul{\Z}}^\L \ul{\Z/p^{m(k+1)}}\ar[d]
\\
\ul{\Z/p^m}\ar[r,"0"]&
\ul{\Z/p^m}\ar[r]&
\ul{\Z/p^m}\sotimes_{\ul{\Z}}^\L \ul{\Z/p^{mk}}
\end{tikzcd}
\]
whose rows are
obtained by applying $\ul{\Z/p^m} \sotimes_{\ul{\Z}}^\L$ to the cofiber sequences $\ul{\Z} \xrightarrow{p^{m(k+1)}} \ul{\Z} \to  \ul{\Z/p^{m(k+1)}}$ and $\ul{\Z}\xrightarrow{p^{mk}} \ul{\Z} \to \ul{\Z/p^{mk}}$.
Apply $\THR(A;\Z_p)\sotimes_{\THR(\Z)}^\L(-)$ to this diagram,
and take $\lim_k$ as $k$ varies.
Using $\lim(\cdots \xrightarrow{0}\ul{\Z/p^m}\xrightarrow{0} \ul{\Z/p^m})\simeq 0$,
we obtain
\begin{align*}
\alpha(M)
\simeq  &
\lim(\cdots \xrightarrow{\id} \alpha(M)\xrightarrow{\id}  \alpha(M))
\\
\simeq &
\lim(\cdots \to \alpha(M)\sotimes_{\ul{\Z}}^\L \ul{\Z/p^{2k}}
\to
\alpha(M)\sotimes_{\ul{\Z}}^\L \ul{\Z/p^{k}})
\simeq 
\alpha(M)_p^\wedge.
\end{align*}
This implies that $\alpha(M)$ is derived $p$-complete by Proposition \ref{completion results}(4).
Again, a variant of this argument shows that $\beta(M)$ is derived $p$-complete as well:
We have a commutative diagram in $\rD(\ul{A})$
\[
\begin{tikzcd}
\ul{(\Z/p^m)^{\oplus \Z/2}} \ar[d,"0"']\ar[r,"0"]&
\ul{(\Z/p^m)^{\oplus \Z/2}} \ar[d,"\id"]\ar[r]&
\ul{(\Z/p^m)^{\oplus \Z/2}}\sotimes_{\ul{\Z}}^\L \ul{\Z/p^{m(k+1)}}\ar[d]
\\
\ul{(\Z/p^m)^{\oplus \Z/2}}\ar[r,"0"]&
\ul{(\Z/p^m)^{\oplus \Z/2}}\ar[r]&
\ul{(\Z/p^m)^{\oplus \Z/2}}\sotimes_{\ul{\Z}}^\L \ul{\Z/p^{mk}}
\end{tikzcd}
\]
whose rows are cofiber sequences, and then apply $\THR(A;\Z_p)\sotimes_{\THR(\Z)}^\L(-)$ to this diagram and take $\lim_k$ as $k$ varies.
\end{proof}

\begin{df}
For a commutative ring $R$,
let $\rD(\ul{R})_{\sigma-\mathrm{sums}}$ be the full subcategory of the $\infty$-category $\rD(\ul{R})$
spanned by the objects of the form
\[
\cF:=
\bigoplus_{n=0}^\infty \ul{\cF_n}[n\sigma]
\]
such that $\cF_n$ is an $R$-module for all integers $n\geq 0$.
In this case,
we have an isomorphism $\cF_n\cong H_n(i^*\cF)$ of $R$-modules. 
This rather technical definition will be considered in the following Lemma, which will be used in the proof of 
Theorem \ref{HRfiltration}.
\end{df}

\begin{lem}
\label{sigma filtration}
Let $R$ be a commutative ring.
Then there exists a natural filtration $\Fil_n \cF$ with $n\geq 0$ for all $\cF\in  \rD(\ul{R})_{\sigma-\mathrm{sums}}$ such that the $n$th graded piece $\gr^n \cF$ is equivalent to $\ul{H_n(i^*\cF)}[n\sigma]$ for all integers $n\geq 0$.
\end{lem}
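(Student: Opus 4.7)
The plan is to construct $\Fil_n \cF$ directly from the direct-sum decomposition provided by the definition of $\rD(\ul{R})_{\sigma\text{-sums}}$. Given $\cF \simeq \bigoplus_{m \geq 0} \ul{\cF_m}[m\sigma]$, I would set
\[
\Fil_n \cF := \bigoplus_{m \geq n} \ul{\cF_m}[m\sigma]
\]
for $n \geq 0$, with transition maps $\Fil_{n+1} \cF \to \Fil_n \cF$ given by the obvious inclusions of direct summands (and $\Fil_n \cF := \cF$ for $n \leq 0$). Then $\Fil_0 \cF \simeq \cF$ is the underlying object, and completeness $\lim_n \Fil_n \cF \simeq 0$ follows from the fact that each summand $\ul{\cF_m}[m\sigma]$ appears only in $\Fil_n \cF$ for $n \leq m$.

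The cofiber of the direct-summand inclusion $\Fil_{n+1}\cF \to \Fil_n \cF$ is immediately $\ul{\cF_n}[n\sigma]$, so $\gr^n \cF \simeq \ul{\cF_n}[n\sigma]$. To obtain the form stated in the lemma, I would identify $\cF_n \cong H_n(i^*\cF)$ as follows. Using the explicit description of $\ul{R}[\sigma]$ in Construction \ref{alphaupperstar} as the cofiber of $\ul{R^{\oplus \Z/2}} \xrightarrow{\mathrm{sum}} \ul{R}$, and noting that the functor $i^*$ evaluates at $C_2/e$ where this map becomes the surjection $R \oplus R \to R$ with kernel $R$, one concludes $i^*(\ul{R}[\sigma]) \simeq R[1]$. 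Since $i^*$ is a symmetric monoidal colimit-preserving functor (Proposition \ref{alpha isharp}), it follows inductively that $i^*(\ul{\cF_m}[m\sigma]) \simeq \cF_m[m]$, hence $i^*\cF \simeq \bigoplus_m \cF_m[m]$, and so $H_n(i^*\cF) \cong \cF_n$. Combining these identifications gives $\gr^n \cF \simeq \ul{H_n(i^*\cF)}[n\sigma]$.

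The main obstacle is the meaning of ``naturality''. A general morphism $\phi\colon \cF \to \cG$ in the full subcategory $\rD(\ul{R})_{\sigma\text{-sums}}$ need not respect the direct-sum decompositions, because the $RO(C_2)$-graded homotopy of $\rH\ul{R}$ produces non-vanishing groups $\Hom_{\rD(\ul{R})}(\ul{R}[n\sigma], \ul{R}[m\sigma])$ even when $n > m$ (for example, $\pi_{-\sigma}\rH\ul{\Z} \cong \Z/2$). The intended content of ``natural'' is thus the canonical identification of graded pieces: for any $\cF$ the filtration $\Fil_\bullet \cF$ comes with a canonical isomorphism $\gr^n \cF \simeq \ul{H_n(i^*\cF)}[n\sigma]$, and this identification is functorial in $\cF$ via $H_n \circ i^*$. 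In the intended applications (the proof of Theorem \ref{HRfiltration}), the lemma is applied to simplicial objects admitting compatible direct-sum decompositions, so this level of naturality is sufficient.
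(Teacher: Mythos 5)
Your filtration produces the right graded pieces on each fixed $\cF$, and your identification $i^*(\ul{\cF_m}[m\sigma]) \simeq \cF_m[m]$ via the symmetric monoidality of $i^*$ is correct. However, there is a genuine gap: you have not actually constructed a \emph{natural} filtration, and naturality is the whole point of the lemma. Defining $\Fil_n\cF := \bigoplus_{m\geq n}\ul{\cF_m}[m\sigma]$ requires \emph{choosing} a direct-sum decomposition of $\cF$, and as you yourself observe, a morphism $\phi\colon\cF\to\cG$ in $\rD(\ul{R})_{\sigma\text{-sums}}$ need not respect such decompositions (because $\Hom_{\rD(\ul{R})}(\ul{R}[n\sigma],\ul{R}[m\sigma])$ can be nonzero for $n>m$). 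So your recipe does not, as written, produce a functor $\rD(\ul{R})_{\sigma\text{-sums}}\to\DF(\ul{R})$. Your attempted rescue --- reinterpreting ``natural'' as merely a canonical identification of graded pieces, and asserting that the applications provide ``compatible'' decompositions --- is not what the lemma asserts and not what its use in Theorem \ref{HRfiltration} needs: there the lemma is applied to $\HR(-/R)$ on $\Poly_R$ to produce an actual functor $\Poly_R\to\Fun(\Z^{op},\Mod(\rD(\ul{R})))$ which is then left Kan extended, and that requires the filtration itself to be functorial.

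The paper avoids this entirely by not choosing a decomposition. It sets $\Fil_0\cF:=\cF$ and recursively $\Fil_{n+1}\cF := (P_{2n+2}(\Fil_n\cF[n+1]))[-n-1]$, where $P_{2n+2}$ is the slice-connective-cover functor from Definition \ref{abelian rho}. Since $P_{2n+2}$ and the shift are functorial, the filtration is natural in $\cF$ by construction; the direct-sum decomposition is then used only \emph{a posteriori}, in an inductive argument with Lemma \ref{shift 2}, Lemma \ref{abelian filtration}(1), and Proposition \ref{abelian slicesexplicit}, to verify that $\Fil_n\cF\simeq\bigoplus_{j\geq n}\ul{\cF_j}[j\sigma]$ and hence that the graded pieces have the claimed form. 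To repair your proof you would need to replace the chosen-decomposition definition of $\Fil_n$ by such an intrinsic, functorial one (or prove directly that every morphism in $\rD(\ul{R})_{\sigma\text{-sums}}$ preserves the $\Fil_n$'s, which is essentially the same slice argument in disguise).
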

\begin{proof}
We set $\Fil_0\cF:=\cF$.
For every integer $n\geq 0$,
we recursively construct a \emph{natural} filtration
\[
\Fil_{n+1} \cF
:=
(P_{2n+2}(\Fil_n \cF[n+1]))[-n-1].
\]
We have the natural morphism
\[
\Fil_{n+1}\cF
\to
\Fil_n\cF
\]
induced by the natural morphism $P_{2n+2}(\Fil_n \cF[n+1])\to \Fil_n \cF[n+1]$.
We claim that there exists an equivalence
\[
\Fil_n \cF
\simeq
\bigoplus_{j=n}^\infty \ul{\cF_j}[j\sigma]
\]
for all integers $n\geq 0$,
where $\cF_j:=H_j(i^*\cF)$ for every integer $j\geq 0$.
This holds if $n=0$.
Assume that the claim holds for $n$.
We have
\[
\ul{\cF_j}[j\sigma+n+1]\in \rD_{\geq 2n+2}(\ul{R})
\]
for $j>n$ by Lemmas \ref{shift 2} and \ref{abelian filtration}(1) and
\[
P_{2n+1}^{2n+1} \ul{\cF_n}[n\sigma + n+1]
\simeq
\ul{\cF_n}[n\sigma + n+1]
\]
by Proposition \ref{abelian slicesexplicit},
see Definition \ref{abelian rho} for the notation $\rD_{\geq 2n+2}(\ul{R})$.
Together with $P_{2n+2}P_{2n+1}^{2n+1}\simeq 0$, we have $P_{2n+2}\ul{\cF_n}[n\sigma+n+1]\simeq 0$.
This shows the claim for $n+1$,
which completes the induction.
Then $\gr^n \cF$ can be identified with $\ul{\cF_n}[n\sigma]$.
\end{proof}

The following Lemma is the starting point of all computations, ultimately leading to Theorem \ref{perfectoid THR}. We currently can't generalize this lemma to $R$ or $\N$ having a non-trivial involution, but see Remark \ref{addinginvolutions} below.

\begin{lem}
\label{HR(R[N])}
Let $R$ be a commutative ring.
Then there is an equivalence
\[
\HR(R[\N]/R)
\simeq
\ul{\Omega_{R[\N]/R}^0} \oplus
\ul{\Omega_{R[\N]/R}^1} [\sigma].
\]
in $\rD(\ul{R[\N]})$.
Note that by definition the left-hand side is an algebra and hence a module over $\rH \ul{R[\N]}$,
which we then consider as an object in  $\rD(\ul{R[\N]})$ using \textup{Proposition \ref{greenequivalence}}.
In particular,
we have $
\HR(R[\N]/\R)\in \rD(\ul{R[\N]})_{\sigma-\mathrm{sums}}.
$
\end{lem}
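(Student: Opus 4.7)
The plan is to reduce the lemma to a computation of $\THR$ for the spherical monoid ring $\Sphere[\N]$, and then base-change along $\Sphere \to \rH\ul{R}$. First, I would invoke the explicit computation of $\THR$ for spherical monoid rings---the ``computation of $\THR$ of a spherical monoid ring'' alluded to in the introduction. For a commutative monoid $M$ with trivial involution, this gives a natural equivalence of normed $\Z/2$-spectra
\[
\THR(\Sphere[M]) \simeq \Sphere \wedge B^{di}(M)_+,
\]
where $B^{di}M$ is the dihedral (real) bar construction---a $\Z/2$-simplicial object whose underlying $S^1$-object is the classical cyclic bar construction $B^{cyc}M$. For $M = \N$, the weight filtration yields $B^{cyc}\N \simeq \ast \amalg \coprod_{k \geq 1} S^1_k$ with free $S^1$-orbits in each nontrivial component; the dihedral enhancement endows each $S^1_k$ with the orientation-reversing involution, giving $B^{di}\N \simeq \ast \amalg \coprod_{k \geq 1} S^\sigma_k$ as $\Z/2$-spaces.

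Next, I would base-change along the map of commutative $\Z/2$-equivariant ring spectra $\Sphere \to \rH\ul{R}$. Using $\rH\ul{R[\N]} \simeq \rH\ul{R} \wedge \Sphere[\N]$ together with Definition \ref{def:thrandhr} and the base-change properties of $\THR$, one obtains
\[
\HR(R[\N]/R) \simeq \rH\ul{R} \wedge \THR(\Sphere[\N])
\]
as $\rH\ul{R[\N]}$-modules. Combined with the decomposition of $B^{di}\N$ above and Proposition \ref{greenequivalence}, this yields $\HR(R[\N]/R) \simeq \ul{R[\N]} \oplus \bigoplus_{k \geq 1} \ul{R}[\sigma]$ in $\rD(\ul{R[\N]})$. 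Tracking the $\ul{R[\N]}$-action---on the $S^\sigma_k$-summand, multiplication by $t$ shifts $k \mapsto k+1$---identifies the direct sum $\bigoplus_{k \geq 1} \ul{R}$ with $\ul{\Omega^1_{R[\N]/R}}$ via the correspondence $1_k \leftrightarrow t^{k-1}\,dt$, completing the proof.

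The main obstacle is justifying the $\Z/2$-equivariant structure of $B^{di}\N$: while the underlying non-equivariant statement is essentially classical HKR, identifying the reflection action on each cyclic circle with the $\sigma$-representation takes combinatorial care with the real cyclic structure. An alternative that bypasses this bar construction analysis uses conservativity of the pair $\{i^*, (-)^{\Z/2}\}$ from Proposition \ref{conservativity}: one verifies the equivalence after $i^*$, where it reduces to classical HKR via Proposition \ref{i* and THR}, and separately after $(-)^{\Z/2}$, where a direct computation using the real cyclic structure applies.
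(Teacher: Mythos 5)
Your proposal follows essentially the same route as the paper: compute $\THR(\Sphere[\N])$ via the dihedral bar construction (as a $\Sphere[\N]$-module), base-change along $\Sphere \to \rH\ul{R}$ using the analogue of $\rH\ul{R[\N]} \simeq \rH\ul{R}\wedge\Sphere[\N]$ and the multiplicativity of $\THR$, and then identify the weight-positive part with $\ul{\Omega^1_{R[\N]/R}}[\sigma]$. The paper does exactly the first two steps via \cite[Propositions 4.2.11, 4.2.13, 4.2.15]{HP23}.

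Two small points of divergence are worth noting. First, your assertion that the nontrivial weight components of $B^{cyc}\N$ are \emph{free} $S^1$-orbits is not right for the monoid $\N$ (as opposed to the group $\Z$); the weight-$k$ circle does not have a free $S^1$-action. Luckily this claim is never used: what your argument actually needs is only the $\Z/2$-equivariant type $S^\sigma$ of each weight-$k$ circle, which is correct and is exactly what the cited propositions encode. Second, you identify the $\ul{R[\N]}$-module structure of $\bigoplus_{k\geq 1}\ul{R}[\sigma]$ by hand through index-shifting in the bar construction. The paper instead establishes the decomposition only as $\ul{R}$-modules, then deduces the shape $Y \simeq \ul{M}[\sigma]$ in $\rD(\ul{R[\N]})$ from conservativity of $\alpha_*$ and the vanishing of $\ul{H}_{m+\sigma}$ in degrees $m \neq 0$, and finally pins down $M$ by applying $i^*$, invoking Proposition \ref{i* and THR}, and quoting the classical HKR theorem for $H_1(\HH(R[\N]/R)) \cong \Omega^1_{R[\N]/R}$. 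The paper's route avoids your flagged obstacle (tracking the module structure simplicially) and is cleaner for exactly that reason; your route is more explicit but, as you correctly note, requires combinatorial care that the paper sidesteps. Your suggested alternative via conservativity of $\{i^*,(-)^{\Z/2}\}$ is a different conservativity argument from the paper's $\alpha_*$, and as you leave it sketched it does not quite constitute a proof: you would still need a candidate map and a genuine computation of $(-)^{\Z/2}$, which is not straightforward for $\HR$.
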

\begin{proof}
We have the maps $\Sphere[\N]\to \THR(\Sphere[\N])\to \Sphere[\N]$ in $\NAlg^{\Z/2}$ from Definition \ref{def:thrandhr}.
Hence $\THR(\Sphere[\N])$ admits a decomposition $\Sphere[\N]\oplus X$ as an $\Sphere[\N]$-module.
By \cite[Propositions 4.2.11, 4.2.13]{HP23},
we obtain an equivalence of $\Z/2$-spectra
\[
\THR(\Sphere[\N])
\simeq
\bigoplus_{i=0}^\infty \Sphere
\oplus
\bigoplus_{i=0}^\infty \Sigma^\sigma \Sphere.
\]
using that $\Sphere[S^{\sigma}] \simeq \Sphere \oplus \Sigma^{\sigma}\Sphere$ and an index shift.
From this,
we obtain an equivalence of $\Z/2$-spectra
\(
X
\simeq
\bigoplus_{i=0}^\infty \Sigma^\sigma \Sphere.
\)
We have equivalences of $\rH \ul{R}$-modules
\begin{align*}
\HR(R[\N]/R)
= &
\THR(R[\N])
\wedge_{\THR(R)}
\rH \ul{R}
\\
\simeq &
\THR(\Sphere[\N])
\wedge_{\THR(\Sphere)}
\rH \ul{R}
\\
\simeq &
\THR(\Sphere[\N]) \wedge \rH \ul{R}
\end{align*}
using \cite[Proposition 4.2.15]{HP23} and the equivalence $\THR(\Sphere)\simeq \Sphere$.
Hence setting $Y\simeq X\wedge \rH \ul{R}$, we obtain an equivalence
\[
\HR(R[\N]/R)
\simeq
\ul{R[\N]} \oplus Y
\]
in $\rD(\ul{R[\N]})$.
Observe that we have an equivalence $Y\simeq \bigoplus_{i=0}^\infty \ul{R}[\sigma]$ in $\rD(\ul{R})$ by the equivalence of Proposition \ref{greenequivalence},
under which $[\sigma]$ corresponds to $\Sigma^{\sigma}$.

Recall the conservative forgetful functor $\alpha_*\colon \rD(\ul{R[\N]})\to \Sp^{\Z/2}$.
For every $\cF\in \rD(\ul{R[\N]})$,
we have a natural isomorphism of Mackey functors
\[
\ul{H}_{m+n\sigma}(\cF)\cong \ul{\pi}_{m+n\sigma}(\alpha_*\cF)
\]
by adjunction since $\alpha^*(\Sigma^{m+n\sigma}\Sphere)
\simeq
\ul{R[\N]}[m+n\sigma]$.
Using this,
we have an equivalence $Y\simeq \ul{M}[\sigma]$ in $\rD(\ul{R[\N]})$ for some
$R[\N]$-module $M$.
Together with the Hochschild-Kostant-Rosenberg theorem (see e.g.\ \cite[Theorem 3.4.4]{Lo})
and Proposition \ref{i* and THR},
we have isomorphisms of $R[\N]$-modules
\[
M\simeq H_1(i^* \HR(R[\N]/R))\cong H_1(\HH(R[\N]/R))\cong \Omega_{R[\N]/R}^1.
\]
On the other hand, the direct summand $\ul{R[\N]}$ canonically corresponds to $\ul{\Omega_{R[\N]/R}^0}$.
\end{proof}

\begin{lem}
\label{sigmaofRxy}
Let $R$ be a commutative ring, $M$ an $R$-module with involution $\tau$.
\begin{enumerate}
\item[\textup{(1)}] If 2 is invertible in $R$, 
we have an equivalence $\ul{M}[\sigma] \simeq \ul{(M,w)}[1]$ in $\rD(\ul{R})$, where $w(m)=-\tau(m)$ for all $m \in M$.
\item[\textup{(2)}] 
The $R$-module $M \oplus M$ with involution $(a,b) \mapsto (b,a)$ is isomorphic as an $R$-module with involution to $M \oplus M$ with involution  $(a,b) \mapsto (\tau(b),\tau(a))$.
\end{enumerate}
\end{lem}
\begin{proof}
For the second statement, one uses the isomorphism of $R$-modules with involution given by $(a,b) \mapsto (a,\tau(b))$.
For the first part,
we compute the kernel and cokernel of the morphism $\ul{M^{\oplus \Z/2}}\to \ul{M}$ induced by the summation $M\oplus M\to M$, where by the definition of $[\sigma]$ in Construction \ref{alphaupperstar} the kernel is $\ul{M}[\sigma -1]$, and the cokernel is $0$ as 2 is invertible.
Now one easily verifies that $\ul{M}[\sigma-1]$
is quasi-isomorphic to the $\ul{R}$-module given by the diagram
\[
\begin{tikzcd}
M\ar[loop below,"-\tau"]\ar[r,shift left=0.5ex,"1-\tau"]\ar[r,shift right=0.5ex,leftarrow,"\res"']&
\{m\in M:m=-\tau(m)\},
\end{tikzcd}
\]
where $\res$ is the inclusion.
This $\ul{R}$-module is isomorphic to $\ul{(M,w)}$.
\end{proof}

\begin{rmk}\label{omegaone}
We now assume that $2$ is invertible in $R$.
As before, the following computation is carried out in the abelian category of modules over the Green functor $\ul{R[\N]}=\ul{R[x]}$.
\[
\begin{tikzcd}
R[\N]\ar[loop below,"w"]\ar[r,shift left=0.5ex,"0"]\ar[r,shift right=0.5ex,leftarrow,"0"']&
0,
\end{tikzcd}
\]
Using the isomorphism of $R[\N]$-modules $R[\N] \cong \Omega^1_{R[\N]/R}$ and Lemma \ref{sigmaofRxy}(1),
we obtain an equivalence 
in $\rD(\ul{R[\N]})$
\[
\HR(R[\N]/R)
\simeq
\ul{\Omega^0_{R[\N]/R}}
\oplus
\ul{(\Omega^1_{R[\N]/R},w)}[1],
\]
where $w$ is the $R[\N]$-linear involution on $R[\N]$ given by $a\mapsto -a$ for $a\in R[\N]$.
So in this description the involution on $\Omega^1_{R[x]/R}$ is induced by $fdx \mapsto -fdx$.
Note that via the standard isomorphism 
$\HH_1(R[x]/R) \cong \Omega_{R[x]/R}^{1}$ given by $1 \otimes x - x \otimes 1 \mapsto dx$, this involution corresponds to changing the two tensor factors on $\HH_1$, as expected.
Let us also point out a possible source of confusion here: the cokernel above in modules over Green functors is $0$ only if $2$ is invertible in $R$, but the underlying map of $R$-modules with involution is surjective even if $2$ is not invertible.
\end{rmk}

In the next,
we use the notion of flat modules of Green functors \cite[Definition A.5.1]{HP23}.

\begin{lem}
\label{monoidal HR}
Let $R\to A$, $R \to B$ be morphisms of Green functors.
If either $A$ or $B$ is flat over $R$,
then there is a natural equivalence
in $\rD(A\sotimes_R B)$
\[
\HR(A\sotimes_R B)
\simeq
\HR(A/R)
\sotimes_{R}^\L \HR(B/R).
\]
\end{lem}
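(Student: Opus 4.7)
\emph{Plan.} My strategy is to show that $\HR(-/\rH R)$, viewed as a functor $\NAlg^{\Z/2}_{\rH R/}\to \NAlg^{\Z/2}_{\rH R/}$, preserves pushouts, and then to conclude via the flatness hypothesis and the Dold-Kan equivalence of Proposition \ref{greenequivalence}.

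\emph{Step 1: A K\"unneth formula for $\THR$.} First I aim to establish, for any span $\rH A\leftarrow \rH R\to \rH B$ in $\NAlg^{\Z/2}$, the equivalence
\[
\THR(\rH A\wedge_{\rH R}\rH B)\simeq \THR(\rH A)\wedge_{\THR(\rH R)}\THR(\rH B)
\]
in $\NAlg^{\Z/2}$. Starting from the definition $\THR(X)=X\wedge_{N^{\Z/2}i^*X}X$, note that $N^{\Z/2}$ is left adjoint to $i^*$ (cf.\ \cite[Remark A.2.3]{HP23}), so the composite $N^{\Z/2}i^*$ preserves the relevant pushouts in $\NAlg^{\Z/2}$; this yields $N^{\Z/2}i^*(\rH A\wedge_{\rH R}\rH B)\simeq N^{\Z/2}i^*\rH A\wedge_{N^{\Z/2}i^*\rH R}N^{\Z/2}i^*\rH B$. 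The K\"unneth claim then follows by a Fubini-type manipulation of iterated pushouts, which can be verified by matching universal properties: mapping either side to a test $T\in \NAlg^{\Z/2}$ yields the same data, namely a pair of maps $\rH A\rightrightarrows T$ agreeing on $N^{\Z/2}i^*\rH A$, a pair $\rH B\rightrightarrows T$ agreeing on $N^{\Z/2}i^*\rH B$, and the constraint that the two induced pairs of maps $\rH R\rightrightarrows T$ coincide.

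\emph{Step 2: Passing to $\HR$.} Smashing the K\"unneth identity with $\rH R$ over $\THR(\rH R)$ and rearranging via the associativity identity $(M\wedge_S N)\wedge_S T\simeq (M\wedge_S T)\wedge_T (T\wedge_S N)$ for $S$-algebras $M,N,T$, I obtain
\[
\HR((\rH A\wedge_{\rH R}\rH B)/\rH R)\simeq \HR(A/R)\wedge_{\rH R}\HR(B/R)
\]
in $\NAlg^{\Z/2}_{\rH R/}$. The flatness hypothesis ensures that the underived box product $A\sotimes_R B$ agrees with its derived version, so $\rH(A\sotimes_R B)\simeq \rH A\wedge_{\rH R}\rH B$ in $\NAlg^{\Z/2}$, and the left-hand side equals $\HR((A\sotimes_R B)/R)$. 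Finally, by Proposition \ref{greenequivalence} the smash product $\wedge_{\rH R}$ corresponds to $\sotimes_R^\L$ under the equivalence $\Mod_{\rH R}\simeq \rD(\ul R)$, yielding the claimed equivalence in $\rD(A\sotimes_R B)$.

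\emph{Main obstacle.} The key technical point is the K\"unneth claim for $\THR$ in Step 1, i.e., that the construction $X\mapsto X\wedge_{N^{\Z/2}i^*X}X$ behaves well with respect to pushouts in $\NAlg^{\Z/2}$. This rests on the interplay between pushouts in $\NAlg^{\Z/2}$ and the adjunction $N^{\Z/2}\dashv i^*$; while such Fubini manipulations are routine in classical (non-equivariant) Hochschild homology, the normed $\Z/2$-setting from \cite{BH21} requires some care to ensure that colimits of commutative algebras in the norm-enriched sense are computed as expected at the underlying level. Once this is in hand, the remaining steps are formal manipulations of relative smash products.
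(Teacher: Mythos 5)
Your proposal takes essentially the same route as the paper. The paper's proof also reduces to the K\"unneth formula $\THR(A\sotimes_R B)\simeq\THR(A)\sotimes_{\THR(R)}^\L\THR(B)$ and then shuffles relative smash products — the only difference is that the paper simply cites this K\"unneth formula from \cite[Proposition 2.1.5]{HP23} rather than re-deriving it.

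One small reasoning gap in your Step 1 is worth flagging: from the adjunction $N^{\Z/2}\dashv i^*$ you can only conclude that $N^{\Z/2}$ preserves colimits (and $i^*$ preserves \emph{limits}), so the claim that the composite $N^{\Z/2}i^*$ preserves pushouts does not follow from the adjunction alone. You must additionally know that $i^*\colon\NAlg^{\Z/2}\to\CAlg(\Sp)$ preserves colimits, which the paper asserts separately in Construction \ref{thrandhrforsimplicialrings}. Also, since $R$ here is an arbitrary Green functor (not necessarily of constant form), the target category should be written $\rD(R)$ rather than $\rD(\ul R)$; and the relevant identification of $\wedge_{\rH R}$ with $\sotimes_R^\L$ is the content of Remark \ref{Dold-Kan remark}, not Proposition \ref{greenequivalence} itself. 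These are minor; the overall argument is sound and matches the paper's.
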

\begin{proof}
We set $C:=A\sotimes_R B$, which is equivalent to $A\sotimes_R^\L B$ using the assumption of flatness.
We have the equivalences
\begin{align*}
\HR(C/R)
= &
\THR(C)
\sotimes_{\THR(R)}^\L
R
\\
\simeq &
(\THR(A)\sotimes_{\THR(R)}^\L\THR(B))
\sotimes_{\THR(R)}^\L
R
\\
\simeq &
(\THR(A)\sotimes_{\THR(R)}^\L R)
\sotimes_{R}^\L
(\THR(B)\sotimes^\L_{\THR(R)} R)
\\
\simeq &
\HR(A/R)\sotimes_{R}^\L\HR(B/R)
\end{align*}
where the second one is due to \cite[Proposition 2.1.5]{HP23}. 
\end{proof}

The following is an equivariant refinement of the Hochschild-Kostant-Rosenberg theorem \cite{HKR},
see also \cite[p.\ 215]{BMS19}.
We will relate this filtration to slice filtrations after derived $p$-completion in Theorem \ref{perfectoid THR} below,
refining \cite[Theorem 6.1]{BMS19}, in the case where $A$ is a perfectoid ring and $R=\Z_p$.
Recall that Example \ref{left Kan H} extended the equivariant Eilenberg-MacLane spectrum $\rH(-)$ to simplicial commutative $R$-algebras.

\begin{thm}\label{HRfiltration}
Let $R$ be a commutative ring,
and let $A$ be a simplicial commutative $R$-algebra.
Then there exists a natural filtration $\Fil_\bullet \HR(A/R)$ on $\HR(A/R)$ whose $n$th graded piece admits a natural identification
\[
\gr^n \HR(A/R)
\simeq
(\iota \wedge_{A}^n  \L_{A/R})[n\sigma]
\]
for every integer $n$.
If $2$ is invertible in $R$,
then this filtration is complete.
\end{thm}
\begin{proof}
If $A=R[x_1,\ldots,x_d]$ with an integer $d\geq 0$,
then Lemmas \ref{HR(R[N])} and \ref{monoidal HR} imply $\HR(A/R)\in \rD_{\sigma-\mathrm{sums}}(\ul{A})$,
 i.e.,
$\HR(A/R)\simeq \bigoplus_{n=0}^d\ul{\cF_n}[n\sigma]$ for some $A$-modules $\cF_n$.
Using the Hochschild-Kostant-Rosenberg theorem
and Proposition \ref{i* and THR},
we have isomorphisms of $A$-modules
\[
\cF_n
\cong
H_n(i^* \HR(A/R))
\cong
H_n(\HH(A/R))
\cong
\Omega_{A/R}^n,
\]
where the second and third isomorphisms are natural. We will crucially rely on this naturality in the Hochschild-Kostant-Rosenberg theorem when we identify the graded pieces of $\HR(A/R)$ below.
Hence we obtain a (non-natural)
equivalence
\begin{equation}
\label{HRomegasum}
\HR(A/R)
\simeq
\bigoplus_{n=0}^d
\ul{\Omega_{A/R}^n}[n\sigma]
\end{equation}
in $\rD(\ul{A})$.
In particular, we have $\HR(A/R)\in \rD_{\sigma-\mathrm{sums}}(\ul{A})$.
Together with the natural filtration in Lemma \ref{sigma filtration},
we can regard $\HR(A/R)$ as an object of $\Fun(\Z^{op},\rD(\ul{A}))$.
Proposition \ref{abelian Dold-Kan}
allows us to regard $\HR(A/R)$ as an object $\Fun(\Z^{op},\Mod(\rD(\ul{R})))$,
where $\Mod(-)$ denotes the $\infty$-category of the module objects of a symmetric monoidal category.
Hence we obtain a functor
\[
\HR(-/R)
\colon
\Poly_R
\to
\Fun(\Z^{op},\Mod(\rD(\ul{R})))
\]
(where again we work with $\Mod$ to keep track the $A$-module structure on $\HR(A/R)$) 
such that its $n$th graded piece $\gr^n\HR(A/R)$ is naturally equivalent to $\underline{\Omega_{A/R}^n} [n\sigma]$ as $A$-modules in $\rD(\ul{R})$ for every integer $n$.

For general $A\in \sCRing_R$,
we get the desired filtration by left Kan extension,
using the description of $\L_{A/R}$ from Example \ref{left Kan Omega}.
We also use that the $\wedge_A^n$ in $\Omega_{A/R}^n=\wedge_A^n \Omega_{A/R}^1$ has left Kan extension $\wedge_A^n$, and $\ul{(-)}$ has left Kan extension $\iota$.
To check that this filtration is complete if $2$ is invertible in $R$,
use the forgetful functor $\Mod(\rD(\ul{R}))\to \rD(\ul{R})$ to obtain a functor
\[
\HR(-/R)\colon \sCRing_R\to \Fun(\Z^{op},\rD(\ul{R}))=\DF(\ul{R}).
\]
By Lemma \ref{sigmaofRxy}(1) below,
we see that $\Fil_n\HR(A/R)$ is $(n-1)$-connected for $A\in \Poly_R$ and integer $n$.
As any sifted colimit of $(n-1)$-connected objects is $(n-1)$-connected, 
we deduce that $\Fil_n\HR(A/R)$ is $(n-1)$-connected for
$A\in \sCRing_R$ and integer $n$.
Hence the filtration on $\HR(A/R)$ is complete.
\end{proof}

The following result deserves to be called the \emph{real
Hochschild-Kostant-Rosenberg theorem}.

\begin{thm}
\label{smoothHKR}
Let $R$ be a commutative ring,
and let $A$ be a smooth $R$-algebra.
Then the natural filtration on $\HR(A/R)$ in Theorem \ref{HRfiltration} is complete, and its $n$th graded piece in $\rD(A)$ given by
\[
\gr^n\HR(A/R)
\simeq
\ul{\Omega_{A/R}^n}[n\sigma]
\]
for every integer $n$.
\end{thm}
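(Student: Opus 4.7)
The plan is to derive this from Theorem \ref{HRfiltration} by exploiting that smoothness trivialises the derived exterior powers appearing in the graded pieces.

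First I would identify the graded pieces. Since $A$ is smooth over $R$, the cotangent complex $\L_{A/R}$ is concentrated in degree $0$ and equals the finitely generated projective $A$-module $\Omega_{A/R}^1$. In particular, the derived exterior power $\wedge_A^n \L_{A/R}$ reduces to the classical exterior power $\Omega_{A/R}^n$, and substituting into the formula of Theorem \ref{HRfiltration} immediately yields
\[
\gr^n \HR(A/R) \simeq \ul{\Omega_{A/R}^n}[n\sigma].
\]

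Second, I would address completeness, which does not come for free from Theorem \ref{HRfiltration} when $2$ is not invertible. The key point is that a smooth $R$-algebra has locally bounded relative dimension: if $d$ is an upper bound for the rank of $\Omega_{A/R}^1$, then $\Omega_{A/R}^n = 0$ for $n > d$, so $\gr^n \HR(A/R) = 0$ in that range, and hence the transition map $\Fil_{n+1}\HR(A/R) \to \Fil_n \HR(A/R)$ is an equivalence for all $n \geq d$. It follows that $\lim_n \Fil_n \HR(A/R) \simeq \Fil_{d+1}\HR(A/R)$, and the task is reduced to showing this object vanishes.

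To prove $\Fil_{d+1}\HR(A/R) = 0$ I would reduce to the polynomial case via localisation. \'Etale-locally on $\Spec A$ smoothness exhibits $A$ as $R[x_1,\ldots,x_d]$, and for such an $A$ Lemmas \ref{HR(R[N])} and \ref{monoidal HR} produce the explicit finite direct sum decomposition $\HR(R[x_1,\ldots,x_d]/R) \simeq \bigoplus_{n=0}^d \ul{\Omega_{R[x_1,\ldots,x_d]/R}^n}[n\sigma]$, where $\Fil_{d+1} = 0$ on the nose. An \'etale descent statement for $\HR$ and its filtration would then deliver the global vanishing. The main obstacle is precisely this \'etale base change in the equivariant setting, since $\HR$ is not manifestly a sheaf; an alternative route that avoids descent is to establish a direct connectivity estimate $\Fil_n \HR(A/R) \in \rD_{\geq n-1}(\ul{A})$ for smooth $A$, parallel to the $2$-invertible case invoked in the proof of Theorem \ref{HRfiltration} via Lemma \ref{sigmaofRxy}(1), by leveraging that the $\Omega_{A/R}^n$ are projective $A$-modules to split off the higher pieces of the filtration locally.
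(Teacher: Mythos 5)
Your primary route is exactly the paper's argument: the identification of the graded pieces is immediate from $\L_{A/R}\simeq\Omega^1_{A/R}$, and completeness is deduced by reducing to the polynomial case, where the filtration is finite. The paper supplies the two ingredients you flag as obstacles: $\THR$ (hence $\HR(-/R)$) is an \'etale (hyper)sheaf by \cite[Theorem 3.4.3]{HP23}, which reduces the completeness assertion to a Zariski-local one; and the local structure theorem \cite[Corollaire IV.17.11.4]{EGA} factors $R\to A$ Zariski-locally as $R\to R[x_1,\ldots,x_d]\to A$ with the second map \'etale (note: your phrasing ``\'etale-locally $A$ \emph{is} a polynomial ring'' is imprecise; one only gets that $A$ is \'etale \emph{over} a polynomial ring). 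The \'etale base change equivalence $\HR(R[x_1,\ldots,x_d]/R)\wedge_{\rH\ul{R[x_1,\ldots,x_d]}}\rH\ul{A}\to\HR(A/R)$ is \cite[Theorem 3.2.3]{HP23}, and the paper verifies it respects the filtrations by checking on $\Poly_{R'}$ and left Kan extending. Your alternative ``Route B'' (a direct slice-connectivity estimate) is not pursued in the paper and is genuinely delicate when $2$ is not invertible: as the paper observes after Corollary \ref{complete HRfiltration}, the intersection $\bigcap_{n\geq 0}\rD_{\geq 0}(\ul{R})[n\sigma]$ can be nonzero, so membership of $\Fil_n$ in a shifted $\rD_{\geq 0}$ by $[n\sigma]$ does not by itself force the limit to vanish, and the projectivity of $\Omega^n_{A/R}$ does not obviously rescue this without the localisation to bound $d$.
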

\begin{proof}
The last claim follows from $\L_{A/R}\simeq \Omega_{A/R}^1$ and Theorem \ref{HRfiltration}.

By \cite[Theorem 3.4.3]{HP23} (see also \cite{HP23erratum}) 
$\THR(-)$ is an \'etale hypersheaf on the affine isovariant site, and hence on the subsite of affine schemes with trivial $C_2$-action over a given base ring $R$. Definition \ref{def:thrandhr} immediately implies that $\HR(-/R)$ is also an \'etale sheaf.
Furthermore,
every limit of complete filtered complexes is complete.
Hence it suffices to work Zariski locally on $A$ to show that the filtration on $\HR(A/R)$ is complete.
Together with \cite[Corollaire IV.17.11.4]{EGA},
we may assume that $R\to A$ factors through $R\to R[x_1,\ldots,x_d]\to A$,
where the first homomorphism is the obvious inclusion and the second homomorphism is \'etale.

We set $R':=R[x_1,\ldots,x_d]$.
For $B\in \Poly_{R'}$,
we have
\[
\HR(R'/R)\wedge_{\rH \ul{R'}}\rH \ul{B},
\;
\HR(B/R)
\in
\rD(\ul{R})_{\sigma-\mathrm{sums}}
\]
by \eqref{HRomegasum}.
Using the naturality of the filtration in Lemma \ref{sigma filtration},
we see that the base change map
\begin{equation}
\label{basechange}
\HR(R'/R)\wedge_{\rH \ul{R'}}\rH \ul{B}
\to
\HR(B/R)
\end{equation}
obtained from \cite[(3.1)]{HP23} is compatible with the filtrations for $B\in \Poly_{R'}$.
By left Kan extension,
we see that \eqref{basechange} is compatible with the filtrations for $B\in \sCRing_{R'}$.
In particular, the base change map
\[
\HR(R'/R)\wedge_{\rH \ul{R'}}\rH \ul{A}
\to
\HR(A/R)
\]
is compatible with the filtrations.
This map is an equivalence by \cite[Theorem 3.2.3]{HP23}.
To conclude,
observe that the filtration on $\HR(R'/R)$ is finite and hence complete as $\Omega^n_{R'/R}$ vanishes for $n>d$.
\end{proof}

\begin{rmk}
Let $R$ be a commutative ring,
and let $A$ be a smooth $R$-algebra.
If $2$ is invertible in $R$, then $R[\sigma-1]$ is equivalent to $\ul{(R,w)}$ by Lemma \ref{sigmaofRxy}(1) below, where $w$ is the involution $R\to R$ sending $x$ to $-x$. In particular, $R[\sigma-1]$ is  then concentrated in degree $0$.
This implies that $\ul{H}_{i\sigma} \ul{\Omega_{A/R}^n}[n\sigma]=0$ whenever $i\neq n$.
Hence for every integer $n$,
the equivalence of Theorem \ref{smoothHKR} induces a natural isomorphism
\[
\ul{H}_{n\sigma}\HR(A/R)
\simeq
\ul{\Omega_{A/R}^n}.
\]
\end{rmk}

We now discuss some partial results towards a generalization to commutative $R$-algebras with a non-trivial involution.

\medskip

\medskip

\begin{df}
\label{equivariant Omega}
For any commutative ring $R$ and for any commutative $R$-algebra $A$, we considered the graded $A$-module $\Omega^*_{A/R}$.
Observe that if $A$ comes with an $R$-linear involution, then for any $n \geq 0$, this induces an involution on $\Omega^n_{A/R}$.
Restricting to $\Poly_R^{\Z/2}$ and using Construction \ref
{extendwithinvolutions}, we obtain the definition of the {\em real cotangent complex} in degree $n$, denoted $\ul{\L^n_{A/R}}$, in $\rD(\ul{A})$, proceeding as in Example \ref{left Kan Omega} and also using Proposition \ref{abelian Dold-Kan}.
\end{df}

\begin{prop}\label{thra2}
Let $R$ be a commutative ring in which 2 is invertible, and let $A:=R[\N^{\oplus\Z/2}]$. Then we have an equivalence
\[
\HR(A/R)
\simeq  \bigoplus _{i=0}^2 \ul{\Omega_{A/R}^i}[i \sigma]
\]
in $\rD(\ul{A})$.
For all commutative rings $R$,
with $2$ invertible or not invertible,
we have an equivalence
\[
\HR(R[\N^{\oplus \Z/2}]/R)
\simeq  \ul{R[\N^{\oplus \Z/2}]} \oplus  (\ul{R[v,w] \oplus R[x,y]})[1] \oplus \ul{R[\N^{\oplus \Z/2}]}[1 + \sigma]
\]
in $\rD(\ul{A})$,
where the middle summand has the involution changing $v$ and $x$ as well as $w$ and $y$, but no involution on the individual summands $R[v,w]$ and $R[x,y]$.
We also have an equivalence of $\Sphere[\N^{\oplus \Z/2}]$-modules
\[
\THR(\Sphere[\N^{\oplus \Z/2}])
\simeq
\Sphere[\N^{\oplus \Z/2}] \oplus \Sigma^1 i_*\Sphere[\N^{\oplus \Z/2}]
\oplus
\Sigma^{1+\sigma} \Sphere[\N^{\oplus \Z/2}].
\]
\end{prop}
\begin{proof}
There are equivalences
\[
\THR(\Sphere[\N^{\oplus \Z/2}])
\simeq 
\THR(N^{\Z/2}\Sphere[\N])
\simeq
N^{\Z/2}
\THH(\Sphere[\N])
\]
in $\NAlg^{\Z/2}$,
where we need \cite[Proposition 2.1.3]{HP23} for the second one.
Recall that $\THR(\Sphere[\N^{\oplus \Z/2}])$ is an $\Sphere[\N^{\oplus \Z/2}]$-algebra via the inclusion on the second smash factor. Hence the above chain of equivalences induces an $\Sphere[\N^{\oplus \Z/2}]$-algebra structure on $N^{\Z/2}
\THH(\Sphere[\N])$, where the action is induced by applying $N^{\Z/2}$ to the usual action (via the second smash factor) of $\Sphere[\N]$ on $\THH(\Sphere[\N])$.
There is an equivalence of spectra $\THH(\Sphere[\N])\simeq \Sphere\oplus \bigoplus_{j\geq 1}\Sphere[S^1]$ by \cite[Proposition 3.20]{Rog09},
which is equivalent to $X\oplus \Sigma^1 X$ with $X:=\bigoplus_{j\geq 0}\Sphere\in \Sp$.
Analyzing this computation which reduces to the cyclic nerve of $\N$, compare also \cite[Proposition 4.2.11]{HP23}, we see that the $\Sphere[\N]$-module structure of this normed spectrum is induced by the action of $\N$ shifting the index in $\bigoplus$.
Hence we have an equivalence of $\Sphere[\N]$-modules $\THH(\Sphere[\N])\simeq \Sphere[\N]\oplus \Sigma^1\Sphere[\N]$.
Since $N^{\Z/2}\Sigma^1 \Sphere
\simeq
\Sigma^{1+\sigma}\Sphere$ by \cite[Proposition A.59]{HHR},
we have an equivalence
\[
N^{\Z/2}\THH(\Sphere[\N])
\simeq
\Sphere[\N^{\oplus \Z/2}] \oplus \Sigma^1 i_*\Sphere[\N^{\oplus \Z/2}]
\oplus
\Sigma^{1+\sigma} \Sphere[\N^{\oplus \Z/2}]
\]
of $\Sphere[\N^{\oplus \Z/2}]$-modules
using the usual behavior on the smash product on direct sums and the model categorical description of the norm functor in \cite[section 2.2.3]{HHR}:
for an orthogonal $\Z/2$-spectrum $X$,
we have $N^{\Z/2} X = X\wedge X$ with the involution switching the two factors.

Now, let $R$ be a commutative ring.
By \cite[Proposition 4.2.15]{HP23} and the definition of $\HR$,
we have equivalences
\begin{align*}
\HR(R[\N^{\oplus \Z/2}]/R)
= &
\THR(R[\N^{\oplus \Z/2}])\wedge_{\THR(R)}\rH \ul{R}
\\
\simeq &
(\THR(\Sphere[\N^{\oplus \Z/2}])\wedge \THR(R))\wedge_{\THR(R)}\rH \ul{R}
\\
\simeq &
\THR(\Sphere[\N^{\oplus \Z/2}])\wedge \rH \ul{R}.
\end{align*}
Consequently, we obtain an equivalence in $\rD(\ul{R[\N^{\oplus \Z/2}]})$
\[
\HR(R[\N^{\oplus \Z/2}]/R)
\simeq  \ul{R[\N^{\oplus \Z/2}]} \oplus  (\ul{R[v,w] \oplus R[x,y]})[1] \oplus \ul{R[\N^{\oplus \Z/2}]}[1 + \sigma].
\]

From now on, we assume that $2$ is invertible in $R$.
Using the above description of the middle summand via $i_*(X \wedge X)$,
it has the involution changing $v$ and $x$ as well as $w$ and $y$,
but no involution on the individual summands.
However, we can use both parts of Lemma  \ref{sigmaofRxy}, namely the second for $\tau$ the sign involution, to deduce that the middle summand is equivalent to $\ul{\Omega_{A/R}^1}[\sigma]$,
where the involution on $\Omega_{A/R}^1\cong R[x,y]dx \oplus R[x,y]dy$ is given by
\[
f(x,y)dx+g(x,y)dy \mapsto f(y,x)dy+g(y,x)dx.
\]
For the last summand, we use the Lemma \ref{sigmaofRxy}(1) and $dxdy=-dydx$ to see that $\ul{\Omega_{A/R}^2}[2\sigma]$ is equivalent to $\ul{R[\N^{\oplus \Z/2}]}[1+\sigma]$.
\end{proof}

Note that parts of this computation work for $i_*$ and arbitrary monoids $M$,  e.g.\ $\THR(\Sphere[M^{\oplus \Z/2}])\simeq N^{\Z/2}\THR(\Sphere[M])$. One should also observe the following: even if $\rho_n \THR(R)\simeq 0$ for every odd $n$, e.g.\ for $R$ a perfectoid ring in the completed variant, see Theorem \ref{perfectoid THR}), we no longer have $\rho_n \THR(R[\N^{\oplus \Z/2}])\simeq 0$ for every odd $n$.
This follows from Proposition \ref{prop:even} and \cite[Proposition 2.1.3]{HP23}, because  $i^*\THR(R[\N^{\oplus \Z/2}])=\THH(R[x,y])$ has homotopy groups in odd degrees
using
\[
\THH(R[x,y])
\simeq
\THH(R)\wedge \THH(\Sphere[\N])\wedge \THH(\Sphere[\N])
\]
and the explicit description of $\THH(\Sphere[\N])$ in \cite[Proposition 3.20]{Rog09}.
However, if we proceed as for Proposition \ref{HRsigma}, Example \ref{thrgm}, and  Remark \ref{perfectoid ring with nontrivial involution}, i.e.\ use colimit perfection to construct a (quasiregular semi-)perfectoid ring $S$ with involution from $R[\N^{\oplus \Z/2}]$, then applying $i^*$ and arguing as above we see that $\THR(S)$ is even.

\begin{rmk}\label{addinginvolutions}
It would be nice to generalize Theorem \ref{HRfiltration} to commutative rings $A$ and even $R$ with involution, thus establishing the most general version of a real Hochschild-Kostant-Rosenberg theorem.
We expect that if $2$ is invertible in $R$, then there exists a natural complete filtration $\Fil_\bullet \HR(A/R)$ on $\HR(A/R)$ whose $n$th graded piece is
\[
\gr^n \HR(A/R)
\simeq
\iota \wedge_A^n \L_{A/R}[n\sigma]
\]
for every integer $n$.
If $A$ is a smooth $R$-algebra,
then this would induce
an equivalence
\[
\gr^n \HR(A/R)
\simeq
\ul{\Omega_{A/R}^n}[n\sigma]
\]
in $\rD(\ul{A})$ for every integer $n$.
Combining Lemmas \ref{HR(R[N])} and \ref{monoidal HR} and Proposition
\ref{thra2} as well as a formula for $\Omega^1$ of finitely generated polynomial algebras, it is possible to show that this expected real Hochschild-Kostant-Rosenberg theorem for $\HR(A/R)$ and individual $A \in \Poly_R^{\Z/2}$ is true if $2$ is invertible in $R$.
However, we cannot use Construction \ref{extendwithinvolutions} to extend the filtration from $\Poly_R^{\Z/2}$ to $\CRing_R$ from the above  results unless we show that the expected filtration is natural in $A\in \Poly_R^{\Z/2}$.
Without involution, the naturality was established in the proof of Theorem \ref{HRfiltration} using the naturality of the classical Hochschild-Kostant-Rosenberg  theorem as an ingredient. In the more general case with involutions, one has to 
write down a map  $\ul{\Omega^1_{A/R}} \to \ul{H}_{\sigma}\HR(A/R)$ and show that it is natural on $\Poly_R^{\Z/2}$. There are at least two possible strategies for this, both assuming 2 is invertible in $R$.  First, one could try to describe $\HR(A/R)$ in terms of real simplicial sets to write down this map for general $A$.
We expect that after the Segal subdivision, $\HR(A/R)$ can be written as a certain chain complex
\[
\cdots
\to
A\otimes_R A\otimes_R A\otimes_R A
\to
A\otimes_R A.
\]
Second, one could try to define this map using Lemma \ref{HR(R[N])} and  Proposition \ref{thra2}, Lemma \ref{monoidal HR} and a similar formula for $\Omega^1$ for tensor products of $R$-algebras. This second approach might be easier for defining the map $\ul{\Omega^1}\to \ul{H}_{\sigma}\HR$, but  verifying that this is natural will presumably be more complicated.

The Example \ref{thrgm} below discusses functoriality in a very special case. 

If $2$ is not invertible in $R$, then the situation is actually worse:
Lemma \ref{sigma filtration} is not generalizable because $\HR(A/R)$ will not be contained in $\rD_{\sigma-\mathrm{sums}}(\ul{A})$.
Once functoriality is established, we can proceed
in combination with Construction 
\ref{extendwithinvolutions} to extend Theorem \ref{HRfiltration} to the case of nontrivial involutions.
Of course, extending results of later chapters, e.g.\ Theorem \ref{perfectoid THR}, to (perfectoid) rings $R$ with involution would require further additional work.
Finally, recall the evaluation functor $ev=(-)(C_2/e)$ considered e.g. before Proposition \ref{inverting 2}. This functor is not conservative. Still, we may consider its derived functor from $\rD(\ul{R})$ to the derived $\infty$-category of the abelian category of $R$-modules with involution. This commutes with $[\sigma]$ in an obvious sense, and computations as in Proposition \ref{thra2} suggest that the expected real Hochschild-Kostant-Rosenberg theorem for $R$-algebras with involution might hold in the setting of modules with involution (rather than Green modules) even if $2$ is not invertible.
\end{rmk}

Let us conclude the discussion about $R$-algebras with involutions by studying the other standard example. Observe that this result is compatible with the above conjecture on $\HR$ of $R$-algebras with involution.

\begin{prop}
\label{HRsigma}
Let $R$ be a commutative ring in which $2$ is invertible, and consider $A:=R[\Z^\sigma]=R[x,x^{-1}]$,
with $\Z^\sigma$ the abelian group $\Z$ with the sign involution and $R[x,x^{-1}]$ with the corresponding involution $x \mapsto x^{-1}$.
Then we have an equivalence
$$\HR(A/R)
\simeq  \ul{\Omega_{A/R}^0} \oplus \ul{\Omega_{A/R}^1}[\sigma]$$
in $\rD(\ul{A})$.
For all commutative rings $R$,
with $2$ invertible or not invertible,
we have an equivalence
\[
\HR(A/R)
\simeq  \ul{A} \oplus  \ul{A}[1]
\]
in $\rD(\ul{A})$,
We also have an equivalence of $\Sphere[\Z^\sigma]$-modules
\[
\THR(\Sphere[\Z^\sigma])
\simeq
\Sphere[\Z^\sigma] \oplus \Sigma^1\Sphere[\Z^\sigma].
\]
\end{prop}
\begin{proof}
By \cite[Proposition 5.9]{DMPR21} and \cite[(4.13)]{HP23},
we have equivalences of $\Z/2$-spectra
\[
\THR(\Sphere[\Z^{\sigma}])
\simeq
\Sphere[\Bdi \Z^\sigma]
\simeq \bigoplus_{j\in \Z^\sigma} \Sphere[S^1],
\]
where the involution acts on the index $\Z^\sigma$.
This is equivalent to $X\oplus \Sigma^1 X$ with $X:=\bigoplus_{j\in \Z^\sigma} \Sphere$ in $\Sp^{\Z/2}$.
Analyzing this computation which reduces to the dihedral nerve of $\Z^\sigma$, compare also \cite[Proposition 4.2.6]{HP23}, we see that the $\Sphere[\Z^\sigma]$-module structure of this normed spectrum is induced by the action of $\Z$ shifting the index in $\bigoplus$.
Hence we have an equivalence of $\Sphere[\Z^\sigma]$-modules $\THH(\Sphere[\Z^\sigma])\simeq \Sphere[\Z^\sigma]\oplus \Sigma^1\Sphere[\Z^\sigma]$.

We also have the equivalence of $\Z/2$-spectra $\THR(R[M]) \simeq \THR(R) \wedge \Sphere [\Bdi M]$ for arbitrary monoids $M$ with involution by \cite[Proposition 4.2.15]{HP23}.
Arguing as in Proposition \ref{thra2}, we deduce
an equivalence
\(
\HR(A/R)
\simeq  \ul{A} \oplus  \ul{A}[1]
\)
in $\rD(\ul{A})$.

From now on,
we assume that $2$ is invertible in $R$.
By Lemma \ref{sigmaofRxy}(1),
we have $\ul{A}[1]\simeq \ul{(A,w)}[\sigma]$, where $w$ is the involution on $A$ given by $f(x)\mapsto -f(1/x)$.
The involution on $\Omega_{A/R}^1$ is given by $f(x)dx\mapsto f(1/x)d(1/x)$.
As $A$-modules with involution,
we have the isomorphism $\Omega_{A/R}^1\to (A,w)$ sending $f(x)dx$ to $xf(x)$.
Hence we have an equivalence
\(
\HR(A/R)
\simeq  \ul{\Omega_{A/R}^0} \oplus \ul{\Omega_{A/R}^1} [\sigma]\)
in $\rD(\ul{A})$,
\end{proof}

We continue to study the $R$-algebra with involution from Proposition \ref{HRsigma} in the next example, illustrating  functoriality in one of the easiest possible cases.

\begin{exm}\label{thrgm}
Consider the real simplicial set $\rN^\sigma \Z^\sigma$ and its real geometric realization $\rB^\sigma \Z^\sigma$, see \cite[Definition 4.2.1]{HP23}.
For an integer $n$,
the map $n\colon \rN^\sigma \Z^\sigma\to \rN^\sigma \Z^\sigma$ induced by the multiplication $n\colon \Z^\sigma \to \Z^\sigma$ sends the element $1\in (\rN^\sigma \Z^\sigma)_1$ in simplicial degree $1$ to the element $n\in (\rN^\sigma \Z^\sigma)_1$ in simplicial degree $1$.
Together with the equivalence of $\Z/2$-spaces $S^1\simeq \rB^\sigma \Z^\sigma$ in \cite[Example 5.13]{DMPR21},
we obtain a commutative square of $\Z/2$-spaces
\[
\begin{tikzcd}
\rB^\sigma \Z^\sigma\ar[d,"n"']\ar[r,"\simeq"]&
S^1\ar[d,"n"]&
\\
\rB^\sigma \Z^\sigma\ar[r,"\simeq"]&
S^1
\end{tikzcd}
\]
with horizontal equivalences,
where $n\colon S^1\to S^1$ denotes the multiplication by $n$.
Now, assume $n\neq 0$.
There is an isomorphism of real simplicial sets $\Ndi \Z^\sigma \cong \Z^\sigma \times \N^\sigma \Z^\sigma$ which was already used implicitly in the proof of Proposition \ref{HRsigma} above,
see \cite[proof of Proposition 4.2.6]{HP23} for this description.
Using this,
we see that the morphism $n\colon \THR(\Sphere[\Z^\sigma])\to \THR(\Sphere[\Z^\sigma])$ induced by $n\colon \Z^\sigma\to \Z^\sigma$ can be identified with the map
\[
\bigoplus_{j \in \Z^\sigma} \Sphere[S^1]
\to
 \bigoplus_{j \in \Z^\sigma} \Sphere[S^1]
\]
sending the $j$th factor to $nj$th factor for $j\in \Z^\sigma$ whose underlying maps $\Sphere[S^1]\to \Sphere[S^1]$ is induced by $n\colon S^1\to S^1$.
Using the description of $\HR(R[\Z^\sigma]/R)$ in Proposition \ref{HRsigma},
we deduce that the morphism $\HR(R[\Z^\sigma]/R)\to \HR(R[\Z^\sigma]/R)$ in $\rD(\ul{R})$ induced by $n\colon \Z^\sigma\to \Z^\sigma$ can be identified with
\[
\alpha \oplus \beta\colon \ul{R[\Z^\sigma]}\oplus \ul{R[\Z^\sigma]}[1]\to \ul{R[\Z^\sigma]}\oplus \ul{R[\Z^\sigma]}[1],
\]
where
$\alpha(f(x)):=f(x^n)$ and $\beta(f(x)):=n f(x^n)x^{n-1}$ for $f(x)\in R[\Z^\sigma]$.
If we assume that $2$ is invertible in $R$,
then this can be also identified with the endomorphism on $\ul{\Omega_{R[\Z^\sigma]/R}^0}\oplus \ul{\Omega_{R[\Z^\sigma]/R}^1}[\sigma]$ induced by $f\colon R[\Z^\sigma]\to R[\Z^\sigma]$ given by $f(x)=x^n$, which is a morphism in $\Poly_R^{\Z/2}$.
Hence the isomorphism 
$$\HR(R[\Z^\sigma]/R)
\simeq  \ul{\Omega_{R[\Z^\sigma]/R}^0} \oplus \ul{\Omega_{R[\Z^\sigma]/R}^1}[\sigma]$$
is functorial with respect to $f$.
We will continue to study $R[\Z^{\sigma}]$ in Remark \ref{perfectoid ring with nontrivial involution} below, which will be modified to obtain a quasi-
regular semiperfectoid – and in fact even perfectoid – $\F_p$-algebra with involution, and whose $\THR$ will be shown to have the expected form.
\end{exm}

\begin{cor}
\label{complete HRfiltration}
Let $R$ be a commutative ring,
and let $A$ be a simplicial commutative $R$-algebra.
Then there exists a natural filtration $\Fil_\bullet \HR(A/R;\Z_p)$ on $\HR(A/R;\Z_p)$ whose $n$th graded piece is
\[
\gr^n \HR(A/R;\Z_p)
\simeq
\iota (\wedge_{A}^n  \L_{A/R})_p^\wedge [n\sigma]
\]
for every integer $n$.
If $2$ is invertible or if $A$ is a smooth $R$-algebra, 
then this filtration is complete.
\end{cor}
\begin{proof}
Note that derived $p$-completion obviously
commutes with $\gr$ and $[n\sigma]$, and also with $\iota$ by Proposition \ref{completioncommuteswith}.
Hence Lemmas \ref{p-complete and filtration}(1),(2) and
\ref{completed cotangent} together with Theorems \ref{HRfiltration} and \ref{smoothHKR} finish the proof.
\end{proof}

For a commutative ring $R$ in which $2$ is not invertible,
the difficulty of proving that the filtration in Theorem \ref{HRfiltration} is complete is caused by the fact that the intersection $\bigcap_{n\geq 0} (\rD_{\geq 0}(\ul{R})[n\sigma])$ is nonempty.
Indeed, if $\cF\in \rD_{\geq 0}(\ul{R})$ satisfies $i^*\cF\simeq 0$,
then we have $\cF\sotimes \ul{R^{\oplus \Z/2}}\simeq 0$,
so we have $\cF\simeq \cF[\sigma]$.
It follows that we have $\cF\in \rD_{\geq 0}(\ul{R})[n\sigma]$ for every integer $n$.
The following theorem is proved in \cite[Theorem 6.20]{Par23}, 
see also Remark 6.21 of loc.\ cit.

\begin{thm}
\label{conjHKR}
Let $R\to A$ be a map of commutative rings.
If the Frobenius $\varphi\colon A/2\to A/2$ (i.e., the squaring map) is surjective,
then the filtrations in Theorem \ref{HRfiltration} and Corollary \ref{complete HRfiltration} are complete.
\end{thm}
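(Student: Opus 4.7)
The strategy is to exploit the conservativity of the pair $(i^*, (-)^{\Z/2}) \colon \rD(\ul A) \to \rD(A) \times \rD(A)$ from Proposition~\ref{conservativity}: since both functors are right adjoints and hence preserve limits, it suffices to establish $\lim_n i^*(\Fil_n \HR(A/R)) \simeq 0$ and $\lim_n (\Fil_n \HR(A/R))^{\Z/2} \simeq 0$ separately. Completeness of the $p$-completed filtration of Corollary~\ref{complete HRfiltration} then follows via Lemma~\ref{p-complete and filtration}. The first vanishing is immediate: by Proposition~\ref{i* and THR} combined with the compatibility $i^*[\sigma]\simeq [1]$, the $i^*$-pullback of the filtration is the classical HKR filtration on $\HH(A/R)$, whose $n$th graded piece $(\wedge^n_A \L_{A/R})[n]$ is $(n-1)$-connected.

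For the fixed-point side, I would first dispose of the $2$-inverted portion via Theorem~\ref{HRfiltration} applied after base change to $R[1/2]$, so that by an arithmetic fracture argument it remains to show $\lim_n (\Fil_n \HR(A/R;\Z_2))^{\Z/2}\simeq 0$. As remarked just before the statement, the essential obstacle is that any $\cF \in \rD_{\geq 0}(\ul A)$ with $i^*\cF \simeq 0$ satisfies $\cF \simeq \cF[\sigma]$ and so sits in every twist $\rD_{\geq 0}(\ul A)[n\sigma]$; the naive connectivity of the graded piece $(\iota \wedge^n_A \L_{A/R})[n\sigma]$ is therefore \emph{not} improved by the $[n\sigma]$-shift on the $(-)^{\Z/2}$-side. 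The Frobenius surjectivity on $A/2$ is exactly what rules out such phantom contributions: morally, $\L_{A/R}/2$ acquires a Frobenius twist compatible with the Green-functor structure, so that after $2$-completion each $\iota \wedge^n_A \L_{A/R}$ behaves, for the purposes of fixed-point evaluation, as if it lay in the image of $i_\sharp$ up to a connectivity gain that grows linearly in $n$.

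In practice, one would verify the required quantitative connectivity bound first on $\Poly_R^{\Z/2}$ using the explicit formulas of Lemma~\ref{HR(R[N])} and Proposition~\ref{thra2}, the slice description of Proposition~\ref{abelian slicesexplicit}, and the base-change multiplicativity of Lemma~\ref{monoidal HR}; then extend it to general simplicial commutative $A$ by left Kan extension along Construction~\ref{thrandhrforsimplicialrings}, using that the surjectivity of Frobenius on $A/2$ is inherited by the filtered colimits arising in that extension. The main obstacle I expect is precisely this quantitative step: one must produce the Frobenius twist on $\L_{A/R}/2$ in a form that genuinely improves connectivity after $(-)^{\Z/2}$, and control how this improvement propagates through the derived exterior powers $\wedge^n_A$ and through the left Kan extension from polynomial to simplicial $R$-algebras.
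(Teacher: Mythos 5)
The paper does not supply a proof of Theorem~\ref{conjHKR}; the sentence immediately preceding the statement says it is proved in \cite[Theorem 6.20]{Par23}, so there is no internal argument in this paper against which to compare your proposal. Your proposal is explicitly a sketch, and the place where it remains a sketch is exactly the place where the real work lies.

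The reduction via Proposition~\ref{conservativity} to the two limits $\lim_n i^*\Fil_n\HR(A/R)$ and $\lim_n(\Fil_n\HR(A/R))^{\Z/2}$ is sound, and the vanishing on the $i^*$ side does follow from the classical HKR connectivity (the $n$th graded piece pulls back to $(\wedge^n_A\L_{A/R})[n]$, which is $(n-1)$-connected). The fixed-point side, however, is where the whole content of the theorem sits, and your argument there is not a proof. The pivotal assertion --- that ``$\L_{A/R}/2$ acquires a Frobenius twist compatible with the Green-functor structure, so that $\dots$ $\iota\wedge^n_A\L_{A/R}$ behaves, for the purposes of fixed-point evaluation, as if it lay in the image of $i_\sharp$ up to a connectivity gain that grows linearly in $n$'' --- is offered as a heuristic and never established. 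Constant and induced Mackey functors behave very differently under geometric fixed points: $\Phi^{\Z/2}\rH(\iota M)\simeq\rH(M/2)$ is generally nonzero whereas $\Phi^{\Z/2}\rH(i_\sharp M)\simeq 0$, and this non-vanishing of $\Phi^{\Z/2}$ is precisely the source of the ``phantom'' classes in $\bigcap_n\rD_{\geq 0}(\ul{A})[n\sigma]$ that the paper discusses right before the theorem. Frobenius surjectivity on $A/2$ is what must kill these contributions, but the actual mechanism --- how it controls $\Phi^{\Z/2}$ of the $[n\sigma]$-twisted graded pieces, how it interacts with the derived exterior powers $\wedge^n_A$, and how it descends through the isotropy cofiber sequence to give a genuine fixed-point connectivity gain that is uniform in $n$ --- is exactly what you leave to ``the quantitative step,'' and you concede in the last sentence that you do not know how to do it. That is a genuine gap, not an inessential detail.

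Two further issues with the plan as written. First, the arithmetic-fracture reduction to the $2$-complete tower is not automatic: inverting $2$ is a filtered colimit and does not commute with $\lim_n$ in general, so the corner term of the fracture square does not obviously vanish; one already needs a uniform connectivity bound on the fixed-point tower (which is what you are trying to prove) before the fracture argument closes up. Second, the strategy of first verifying a quantitative bound on $\Poly^{\Z/2}_R$ and then left Kan extending is in tension with the hypothesis: polynomial $R$-algebras never have surjective Frobenius modulo $2$ (generators are not squares), so whatever bound you can prove at the polynomial level cannot itself use the Frobenius hypothesis and, if it implied completeness, would prove the theorem with no hypothesis at all --- contradicting the known failure of completeness for general $A$. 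The hypothesis on $A$ must therefore enter after the left Kan extension, not before, and your sketch does not say how.
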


Observe that the condition of Theorem \ref{conjHKR} holds if $A$ is perfectoid and $R=\Z_p$ by definition.
Of course, 
if $2$ is invertible or if $A$ is a smooth $R$-algebra,
then the filtration of Theorem \ref{HRfiltration} is complete for other reasons (see Theorem \ref{smoothHKR}), and the surjectivity of $\varphi$ will not even hold in general in the smooth case.

\section{THR of perfectoid rings}
\label{sec5}

Here is the outline of the proof of Theorem \ref{perfectoid THR},
which is an equivariant refinement of \cite[Theorem 6.1]{BMS19}.
Let $R$ be a perfectoid ring.
\begin{enumerate}
\item[(1)]
We compute the slices of $\HR(R;\Z_p)$ in Proposition \ref{prop:perfectoid HR} using Theorem \ref{HRfiltration}, which is an equivariant refinement of the Hochschild-Kostant-Rosenberg theorem.
\item[(2)]
We define pseudo-coherent objects of $\rD(\ul{R})$ in Definition \ref{pseudo coherent}.
Lemmas \ref{lem:pseudo-coherence and exact sequence}--\ref{lem:pseudo-coherence and finitely generated} establish some useful facts about pseudo-coherent objects.
Lemma \ref{lem:pseudo-coherence} shows that $\HR(R;\Z_p)$ and $\THR(R;\Z_p)$ are pseudo-coherent.
\item[(3)]
In Lemmas \ref{perfectoid basechange HR} and \ref{lem:perfectoid basechange}, we establish a certain base change property for $\HR$ and $\THR$.
Lemma \ref{lem:Nakayama} deals with an application of the Nakayama lemma.
\item[(4)]
We do not establish a real refinement of \cite[Proposition IV.4.2]{NS} in general.
Instead,
we first compare the zeroth slices of $\THR(R;\Z_p)$ and $\HR(R;\Z_p)$ in Lemma \ref{lem:zeroth slice}.
Lemma \ref{lem:induction} provides an equivariant refinement of the induction argument in \cite[proof of Theorem 6.1]{BMS19},
and we use Lemma \ref{lem:induction} in Lemmas \ref{lem:first slice} and \ref{lem:second slice} to compare the first and second slices.
\item[(5)]
To finish the proof of Theorem \ref{perfectoid THR},
we combine the above arguments as in \cite[proof of Theorem 6.1]{BMS19},
and use Lemma \ref{lem:induction} again for the induction argument.
\end{enumerate}

Hence in general, several of the following results are real respectively Mackey refinements of statements established or used in the proof of \cite[Theorem 6.1]{BMS19}.
\begin{prop}
\label{prop:perfectoid HR}
Let $R$ be a perfectoid ring.
Then using notation introduced after Proposition \ref{slicesexplicit},
there is a natural equivalence of $\ul{R}$-modules
\[
\rho_n(\HR(R;\Z_p))
\simeq
\left\{
\begin{array}{ll}
\ul{R}& \text{if $n$ is even and nonnegative},
\\
0 & \text{otherwise}.
\end{array}
\right.
\]
\end{prop}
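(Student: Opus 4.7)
The plan is to deduce the proposition by identifying the real HKR filtration of Corollary \ref{complete HRfiltration} with the slice filtration, and then reading off the slices. First, I would compute the graded pieces: Corollary \ref{complete HRfiltration} applied with base $\Z$ yields a natural complete filtration $\Fil_\bullet \HR(R;\Z_p)$ with
\[
\gr^n \HR(R;\Z_p) \simeq \iota\bigl(\wedge_R^n \L_{R/\Z}\bigr)_p^\wedge[n\sigma].
\]
By Lemma \ref{completed cotangent} and \cite[Proposition 4.19(2)]{BMS19}, $(\L_{R/\Z})_p^\wedge \simeq R[1]$; together with the d\'ecalage $\wedge_R^n(R[1]) \simeq \Gamma_R^n(R)[n] \simeq R[n]$ for derived exterior powers of shifted modules, this gives $\gr^n \HR(R;\Z_p) \simeq \ul{R}[n + n\sigma]$ for $n \geq 0$ and $\gr^n \HR(R;\Z_p) \simeq 0$ for $n < 0$. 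Completeness of this filtration is supplied by Theorem \ref{conjHKR}, whose hypothesis holds trivially for odd $p$ (as $2 \in R^\times$, so $R/2 = 0$) and by the definition of perfectoid when $p = 2$.

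Next, I would observe that each graded piece is pure in the slice filtration. A direct computation from Proposition \ref{abelian slicesexplicit} shows that the constant Mackey functor $\ul{R}$ is a $0$-slice, using the cofiber/fiber descriptions $\ul{R}[\sigma] \simeq \cofib(\ul{R^{\oplus \Z/2}} \to \ul{R})$ and $\ul{R}[-\sigma] \simeq \fib(\ul{R} \to \ul{R^{\oplus \Z/2}})$ to check that $\rho_m(\ul{R}) = 0$ for all $m \neq 0$. By Lemma \ref{shift 2}, the shift $[n + n\sigma]$ raises slice degree by $2n$, so $\ul{R}[n + n\sigma]$ is a pure $2n$-slice: $P_{2n}^{2n}(\ul{R}[n+n\sigma]) \simeq \ul{R}[n+n\sigma]$ and $P_m^m$ vanishes on it for all $m \neq 2n$.

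The technical core is then to show $\Fil_n \HR(R;\Z_p) \simeq P_{2n}\HR(R;\Z_p)$ by verifying the two defining properties of the slice filtration. For the containment $\Fil_n\HR(R;\Z_p) \in \rD_{\geq 2n}(\ul{R})$: each finite quotient $\Fil_n/\Fil_{N+1}$ is an iterated extension of $\gr^k = \ul{R}[k+k\sigma]$ with $k \geq n$, hence lies in $\rD_{\geq 2n}(\ul{R})$. Since $P_{2n}$ is a right adjoint (and so preserves limits) while $\Fil_\bullet$ is complete,
\[
P_{2n}\Fil_n\HR(R;\Z_p) \simeq \lim_N P_{2n}(\Fil_n/\Fil_{N+1}) \simeq \lim_N \Fil_n/\Fil_{N+1} \simeq \Fil_n\HR(R;\Z_p).
\]
For the complementary condition $\HR(R;\Z_p)/\Fil_n \in \rD_{\leq 2n - 1}(\ul{R})$: the quotient is a finite extension of pure $2k$-slices with $0 \leq k \leq n - 1$, all in $\rD_{\leq 2n - 2}(\ul{R})$, and this subcategory is closed under extensions. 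This gives the desired identification.

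To conclude, I would combine $\Fil_n = P_{2n}$ with the refinement $P_{2n+2} \to P_{2n+1} \to P_{2n}$ of the slice tower to produce a cofiber sequence
\[
P_{2n+1}^{2n+1}\HR(R;\Z_p) \to \gr^n \HR(R;\Z_p) \to P_{2n}^{2n}\HR(R;\Z_p).
\]
Applying the exact functor $P_{2n+1}^{2n+1}$ kills both the middle term ($\gr^n$ is pure slice $2n$) and the right term (a pure $(2n{+}1)$-slice of a pure $2n$-slice vanishes), forcing $P_{2n+1}^{2n+1}\HR(R;\Z_p) \simeq 0$ and so $\rho_{2n+1}\HR(R;\Z_p) = 0$. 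Applying $P_{2n}^{2n}$ to the same sequence identifies $P_{2n}^{2n}\HR(R;\Z_p) \simeq \gr^n\HR(R;\Z_p)$, so by Proposition \ref{abelian slicesexplicit} we obtain $\rho_{2n}\HR(R;\Z_p) \simeq \ul{R}$ for $n \geq 0$ and $0$ for $n < 0$. The main obstacle is the matching of the HKR and slice filtrations: the lower bound genuinely uses completeness of the HKR filtration, where the perfectoid hypothesis enters through Theorem \ref{conjHKR}, and both halves rely on the special form $(\L_{R/\Z_p})_p^\wedge \simeq R[1]$ which makes each graded piece a pure slice.
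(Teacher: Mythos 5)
Your proposal follows essentially the same route as the paper: compute the graded pieces of the HKR filtration using $(\L_{R/\Z_p})_p^\wedge\simeq R[1]$, invoke Theorem \ref{conjHKR} for completeness, show the filtration is (up to reindexing) the slice filtration, and read off the slices. Your framing via the explicit identification $\Fil_n\HR(R;\Z_p)\simeq P_{2n}\HR(R;\Z_p)$ is a cleaner way to package what the paper proves; the paper does not state the identification but establishes exactly the two containments $\Fil_{n+1}\in\rD_{\geq 2n+2}(\ul R)$ and $\cofib(\Fil_n\to\HR)\in\rD_{\leq 2n-2}(\ul R)$, which is equivalent information.

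However, the sentence ``Since $P_{2n}$ is a right adjoint (and so preserves limits)'' is a misstatement that does not hold. The functor $P_{2n}$ factors as $\iota_{\geq 2n}\circ r_{2n}$, where $r_{2n}\colon\rD(\ul R)\to\rD_{\geq 2n}(\ul R)$ is the coreflector (a genuine right adjoint to the inclusion) and $\iota_{\geq 2n}$ is the fully faithful inclusion. The coreflector $r_{2n}$ does preserve limits, but $\iota_{\geq 2n}$ only preserves colimits and extensions --- $\rD_{\geq 2n}(\ul R)$ is not closed under infinite limits in $\rD(\ul R)$ (think of $\lim^1$ issues in the connective case), so $P_{2n}$ does not commute with the tower limit $\lim_N$. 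What actually makes the step work --- and this is implicit in the paper's proof as well --- is a connectivity argument on the tower: the fiber of the transition $\Fil_n/\Fil_{N+1}\to\Fil_n/\Fil_N$ is $\gr^N[-1]\simeq\ul R[N-1+N\sigma]$, so after shifting by $[-n-n\sigma]$ the transition fibers are $\ul R[(N-n-1)+(N-n)\sigma]$, hence $(N-n-1)$-connected. The tower $\{\Fil_n/\Fil_{N+1}[-n-n\sigma]\}_N$ is therefore pro-constant in each homological degree, its limit $\Fil_n[-n-n\sigma]$ is $(-1)$-connected, and $\Fil_n\in\rD_{\geq 2n}(\ul R)$ by Lemma \ref{shift 2} together with Lemma \ref{abelian filtration}(1). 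With this repair, your argument coincides with the paper's.
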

\begin{proof}
Consider the filtration $\Fil_\bullet\HR(R;\Z_p)$ on $\HR(R;\Z_p)$ in Corollary \ref{complete HRfiltration} whose $n$th graded piece is $\iota (\wedge_R^n \L_{R/\Z_p})_p^\wedge[n\sigma]$ for every integer $n$.
By \cite[Proposition 4.19(2)]{BMS19},
we have $(\wedge_R^n \L_{R/\Z_p})_p^\wedge \simeq R[n]$ if $n\geq 0$.
Hence we have
$\iota (\wedge_R^n \L_{R/\Z_p})_p^\wedge[n\sigma]\simeq \ul{R}[n+n\sigma]$,
which
is an object of $\rD_{\geq 2n}(\ul{R})\cap \rD_{\leq 2n}(\ul{R})$ by
\eqref{b}.
We claim that
\begin{equation}
\label{eqn:perfectoid HR}
\fib(\Fil_{m}\HR(R;\Z_p)\to \Fil_n\HR(R;\Z_p))\in \rD_{\geq 2n+2}(\ul{R})\cap \rD_{\leq 2m}(\ul{R})
\end{equation}
for every integer $m\geq n$, as we now show by induction on $m$. 
The claim is obvious if $m=n$.
Assume that the claim holds for $m$.
In the cofiber sequence
\begin{align*}
\gr^{m+1} \HR(R;\Z_p)
\to &
\fib(\Fil_{m+1} \HR(R;\Z_p)\to \Fil_n\HR(R;\Z_p))
\\
\to &
\fib(\Fil_m \HR(R;\Z_p)\to \Fil_n\HR(R;\Z_p)),
\end{align*}
the first and third terms are contained in $D_{\geq 2n+2}(\ul{R})\cap \rD_{\leq 2m+2}(\ul{R})$, so the same holds for the second one too.
This completes the induction argument for \eqref{eqn:perfectoid HR}.
Now apply $\lim_m$ to \eqref{eqn:perfectoid HR}
and use the completeness of the filtration established in \cite{Par23} (see also Theorem \ref{conjHKR}) to deduce $\Fil_n\HR(R;\Z_p)\in \rD_{\geq 2n}(\ul{R})$.
In particular,
we have $\Fil_{n+1}\HR(R;\Z_p)\in \rD_{\geq 2n+2}(\ul{R})$.
We also have $\cofib(\Fil_{n}\HR(R;\Z_p)\to \HR(R;\Z_p))\in \rD_{\leq 2n-2}(\ul{R})$.
Together with the cofiber sequences
\[
\Fil_{n+1}\HR(R;\Z_p)
\to
\HR(R;\Z_p)
\to
\cofib(\Fil_{n+1}\HR(R;\Z_p)\to \HR(R;\Z_p))
\]
and
\begin{align*}
\gr^n\HR(R;\Z_p)
\to &
\cofib(\Fil_{n+1}\HR(R;\Z_p)\to \HR(R;\Z_p))
\\
\to &
\cofib(\Fil_{n}\HR(R;\Z_p)\to \HR(R;\Z_p)),
\end{align*}
we have
\[
P_{a}^{a} \HR(R;\Z_p)\simeq P_a^a \cofib(\Fil_{n+1}\HR(R;\Z_p)\to \HR(R;\Z_p))
\simeq
P_{a}^{a}\gr^n \HR(R;\Z_p)
\]
for $a=2n,2n+1$.
Combine this with the above computation of $\gr^n \HR(R;\Z_p)$ to conclude.
\end{proof}

If $2$ is invertible in $R$ or equivalently if the fixed prime $p$ is different from $2$,
then we can use Corollary \ref{complete HRfiltration} instead of Theorem \ref{conjHKR} in the above proof.

A complex of $R$-modules is \emph{pseudo-coherent} if it is quasi-isomorphic to a (homologically) bounded below complex of finitely generated free $R$-modules.
Here, a bounded below complex means that $H_n(-)$ vanishes for $n\ll 0$, and which is called ``bounded to the right'' \cite[p.\ 243]{BMS19}.

\begin{df}
\label{pseudo coherent}
Let $R$ be a commutative ring.
An $\ul{R}$-module is \emph{finitely generated} if it is pointwise finitely generated.
An $\ul{R}$-module is \emph{free} 
if it is isomorphic to the sum of copies of $\ul{R}$ and $\ul{R^{\oplus \Z/2}}$. 
A complex of $\ul{R}$-modules is \emph{pseudo-coherent} if it is
quasi-isomorphic
to a bounded below complex of finitely generated free
$\ul{R}$-modules.
\end{df}

\begin{lem}
\label{lem:pseudo-coherence and exact sequence}
Let $\cE\to \cF\to \cG$ be a fiber sequence
in $\rD(\ul{R})$,
where $R$ is a commutative ring.
If two of $\cE$, $\cF$, and $\cG$ are pseudo-coherent,
then the remaining one is pseudo-coherent too.
\end{lem}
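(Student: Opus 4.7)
The plan is to reduce the three cases of the two-of-three property to a single one: \emph{if $\cE$ and $\cF$ are pseudo-coherent, then so is $\cofib(\cE\to \cF)$}. First I note that a shift $[1]$ or $[-1]$ clearly preserves pseudo-coherence, since for a bounded-below complex $E^\bullet$ of finitely generated free $\ul{R}$-modules the shift $E^\bullet[\pm 1]$ is again bounded-below and termwise finitely generated free. A fiber sequence in the stable $\infty$-category $\rD(\ul{R})$ is simultaneously a cofiber sequence, so rotating we have equivalences $\cE\simeq \cofib(\cF\to \cG)[-1]$ and $\cG\simeq \cofib(\cE\to \cF)$; in either case Case~1 applied to a shift of the given fiber sequence produces the missing term.

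To establish the reduced case, I use the projective model structure on $\Ch(\ul{R})$ from Recollection~\ref{model structures on A-modules}. The free $\ul{R}$-modules $\ul{R}$ and $\ul{R^{\oplus \Z/2}}$ are naturally isomorphic to $\ul{R}\sotimes \ul{\cB}^{G/G}$ and $\ul{R}\sotimes \ul{\cB}^{G/e}$ respectively, so by the generation statement in Recollection~\ref{model structures on A-modules} they are projective, and hence so is any finitely generated free $\ul{R}$-module. By \cite[Lemma~2.7(b)]{CH02}, any bounded-below complex of projective $\ul{R}$-modules is projectively cofibrant; in particular any pseudo-coherent object $\cE\in\rD(\ul{R})$ can be represented by a projectively cofibrant chain complex $E^\bullet$ which is bounded below and termwise finitely generated free.

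Given pseudo-coherent $\cE$ and $\cF$ represented in this way by cofibrant chain complexes $E^\bullet, F^\bullet$, the map $\cE\to \cF$ in $\rD(\ul{R})$ lifts (up to chain homotopy) to a chain map $\phi\colon E^\bullet\to F^\bullet$, since every object of $\Ch(\ul{R})$ is projectively fibrant. The mapping cone $\mathrm{Cone}(\phi)$, with $\mathrm{Cone}(\phi)_n = F_n\oplus E_{n-1}$ and the standard differential, then represents $\cofib(\cE\to \cF)\simeq \cG$ in $\rD(\ul{R})$. Since $E^\bullet$ and $F^\bullet$ are bounded below and termwise finitely generated free, so is $\mathrm{Cone}(\phi)$, so $\cG$ is pseudo-coherent.

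The main (mild) obstacle is simply the bookkeeping around the rotations and the choice of chain-level representatives; the essential point is that the class of bounded-below complexes of finitely generated free $\ul{R}$-modules is closed under shifts and mapping cones, and that projective cofibrancy allows one to realize morphisms in $\rD(\ul{R})$ by honest chain maps. No new Mackey-functor input is needed beyond the identification of the standard free generators with $\ul{R}\sotimes \ul{\cB}^{G/G}$ and $\ul{R}\sotimes \ul{\cB}^{G/e}$.
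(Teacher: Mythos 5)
Your argument is correct and matches the paper's proof in essence: reduce to a single case by rotation (implicitly using that the class of pseudo-coherent objects is closed under shift), lift the map to a chain map between bounded-below complexes of finitely generated free $\ul{R}$-modules using the projective model structure, and observe that the mapping cone is again such a complex. The paper states the same argument more tersely, modeling $\cF\to\cG$ instead of $\cE\to\cF$ (a different but equivalent rotation), and your extra detail on cofibrancy and lifting simply makes explicit what the paper leaves implicit.
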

\begin{proof}
Without loss of generality
we may assume that $\cF$ and $\cG$ are pseudo-coherent and hence given by bounded below complexes of finitely generated free $\ul{R}$-modules.
Then $\cF$ and $\cG$ are cofibrant-fibrant objects with respect to the projective model structure on $\Ch(\ul{R})$,
so a morphism $\cF\to \cG$ in $\rD(\ul{R})$ is given by a morphism of bounded below complexes of finitely generated free $\ul{R}$-modules in $\Ch(\ul{R})$.
To conclude, observe that the mapping cone construction in $\Ch(\ul{R})$ for $\cF\to \cG$ yields a bounded below complex of finitely generated free $\ul{R}$-modules.
\end{proof}

\begin{lem}
\label{bounded pseudo-coherent}
Let $R$ be a commutative ring,
and let $\cF$ be a pseudo-coherent complex of $\ul{R}$-modules.
If there exists an integer $n$ such that $\ul{H}_m(\cF)=0$ for every integer $m<n$,
then there exists a quasi-isomorphism $\cE\to \cF$ such that $\cE$ is a bounded below complex of finitely generated free $\ul{R}$-modules and the entries of $\cE$ in degrees less than $n$ are $0$.
\end{lem}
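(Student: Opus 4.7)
The plan is to proceed by induction on the difference between $n$ and the lowest nonzero degree of a chosen resolution. Since $\cF$ is pseudo-coherent, fix some quasi-isomorphism $\cE^{(0)}\to \cF$ with $\cE^{(0)}$ a bounded below complex of finitely generated free $\ul{R}$-modules, and let $k$ be the smallest integer with $\cE^{(0)}_k\neq 0$. If $k\geq n$ we are done, so assume $k<n$. The goal of the inductive step is to construct a quasi-isomorphism $\cE^{(1)}\to \cE^{(0)}$ from another such complex whose lowest nonzero degree is at least $k+1$; iterating $n-k$ times produces the desired resolution.

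The hypothesis $k<n$ forces $\ul{H}_k(\cE^{(0)})\cong \ul{H}_k(\cF)=0$, so $d_{k+1}\colon \cE^{(0)}_{k+1}\to \cE^{(0)}_k$ is an epimorphism of $\ul{R}$-modules. By Recollection \ref{model structures on A-modules}, the finitely generated free module $\cE^{(0)}_k$ is projective in $\Mod_{\ul{R}}$, so $d_{k+1}$ admits a section, producing a splitting $\cE^{(0)}_{k+1}\cong \cE^{(0)}_k\oplus K$ with $K:=\ker(d_{k+1})$ under which $d_{k+1}$ is identified with the first projection. The identity $d_{k+1}\circ d_{k+2}=0$ then forces $d_{k+2}$ to factor as $\cE^{(0)}_{k+2}\xrightarrow{\beta}K\hookrightarrow \cE^{(0)}_{k+1}$.

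Next, I define $\cE^{(1)}$ by setting $\cE^{(1)}_m:=\cE^{(0)}_m$ for $m\geq k+3$, $\cE^{(1)}_{k+2}:=\cE^{(0)}_{k+2}\oplus \cE^{(0)}_k$, $\cE^{(1)}_{k+1}:=\cE^{(0)}_{k+1}$, and $\cE^{(1)}_m:=0$ for $m\leq k$, equipped with the differential $(x,y)\mapsto (y,\beta(x))\in \cE^{(0)}_k\oplus K$ in degree $k+2$ and with $(d^{(0)}_{k+3},0)$ in degree $k+3$. Each $\cE^{(1)}_m$ remains finitely generated free. The chain map $\cE^{(1)}\to \cE^{(0)}$ given by the identity in degrees $\geq k+3$, by the projection $(x,y)\mapsto x$ in degree $k+2$, by $(y,z)\mapsto (0,z)$ in degree $k+1$, and by zero in degree $k$, is readily checked to induce isomorphisms on $\ul{H}_m$ in every degree, using the explicit description of the splitting and of $\beta$. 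Composing with $\cE^{(0)}\to \cF$ yields the improved resolution, closing the induction.

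The main technical point is the requirement of \emph{free} (and not merely projective) modules at each stage. The most naive construction, passing to the good truncation $\tau_{\geq k+1}\cE^{(0)}$, would place $K$ itself in degree $k+1$, but $K$ is only guaranteed to be a projective $\ul{R}$-module, as a direct summand of the free $\cE^{(0)}_{k+1}$, and need not be free in the strict sense of Definition \ref{pseudo coherent}. The cancellation trick above absorbs the identified copy of $\cE^{(0)}_k$ inside $\cE^{(0)}_{k+1}$ into an auxiliary free summand in degree $k+2$, thereby restoring freeness at the cost of enlarging that one term. No other step is delicate.
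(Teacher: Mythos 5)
Your proof is correct and carries out, in full detail, the inductive truncation argument that the paper delegates to \cite[Tag 064U]{stacks}. In particular you have correctly isolated the one genuine subtlety — $\ker d_{k+1}$ is only a finitely generated \emph{projective} $\ul{R}$-module in the sense of Definition \ref{pseudo coherent}, so the good truncation $\tau_{\geq k+1}\cE^{(0)}$ would break freeness — and your device of keeping the free module $\cE^{(0)}_{k+1}$ in degree $k+1$ while enlarging degree $k+2$ by the free summand $\cE^{(0)}_k$ (so that the new differential kills the superfluous part of $\cE^{(0)}_{k+1}$) is the standard way to repair this.
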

\begin{proof}
Argue as in \cite[\href{https://stacks.math.columbia.edu/tag/064U}{Tag 064U}]{stacks}.
\end{proof}

Recall the full subcategory $\rD_{\geq n}(\ul{R})$ of $\rD(\ul{R})$ introduced in Definition \ref{abelian rho}.

\begin{lem}
\label{lem:pseudo-coherence and finitely generated}
Let $R$ be a commutative ring,
and let $\cF$ be a pseudo-coherent complex of $\ul{R}$-modules.
If $\cF\in \rD_{\geq n}(\ul{R})$ for some integer $n$,
then $\rho_n(\cF)$ is a finitely generated $\ul{R}$-module.
\end{lem}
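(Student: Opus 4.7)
The plan is to reduce to the base cases $n \in \{0,1\}$ via an equivariant shift, and then read off $\rho_n(\cF)$ from a bounded-below resolution by finitely generated free $\ul{R}$-modules.

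Writing $n = 2k + \epsilon$ with $\epsilon \in \{0,1\}$, I would set $\cG := \cF[-k-k\sigma]$. Iterating Lemma \ref{shift 2} shows $\cG \in \rD_{\geq \epsilon}(\ul{R})$, while Definition \ref{rho definition} together with the identity $\cF[-k\sigma] \simeq \cG[k]$ gives
\[
\rho_{2k}(\cF) \cong \ul{H}_0(\cG),
\qquad
\rho_{2k+1}(\cF) \cong \cP^0 \ul{H}_1(\cG).
\]
Next, I would verify that $\cG$ is still pseudo-coherent. The shift $[-k-k\sigma]$ is derived tensor product over $\ul{R}$ with $\ul{R}[-k-k\sigma]$. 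From Construction \ref{alphaupperstar}, $\ul{R}[-\sigma]$ is a two-term complex of finitely generated free $\ul{R}$-modules, and the projection formula for $i_\sharp$ (Construction \ref{construction of abelian isharp}) yields $\ul{R^{\oplus \Z/2}} \sotimes_{\ul{R}}^\L \ul{R^{\oplus \Z/2}} \simeq (\ul{R^{\oplus \Z/2}})^{\oplus 2}$; iterating, $\ul{R}[-k-k\sigma]$ itself is a bounded complex of finitely generated free $\ul{R}$-modules, so derived tensor product with it preserves pseudo-coherence.

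For the main step, Lemma \ref{abelian filtration}(1)--(2) translates the hypothesis $\cG \in \rD_{\geq \epsilon}(\ul{R})$ into the vanishing $\ul{H}_m(\cG) = 0$ for all $m < \epsilon$. Lemma \ref{bounded pseudo-coherent} then produces a quasi-isomorphism $\cE \to \cG$ with $\cE$ a bounded below complex of finitely generated free $\ul{R}$-modules such that $\cE_m = 0$ for $m < \epsilon$. Consequently $\ul{H}_\epsilon(\cG) \cong \coker(d_{\epsilon+1}\colon \cE_{\epsilon+1} \to \cE_\epsilon)$ is a quotient of the finitely generated $\ul{R}$-module $\cE_\epsilon$, hence finitely generated; for $\epsilon = 1$ one further applies $\cP^0$, which is a quotient functor and therefore preserves finite generation. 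The only delicate point in the argument is the closure of finitely generated free $\ul{R}$-modules under $\sotimes_{\ul{R}}^\L$ used in the reduction step, which is ultimately just bookkeeping with the projection formula.
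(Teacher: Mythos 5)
Your proof is correct and follows the same route as the paper: reduce to $n\in\{0,1\}$ via Lemma \ref{shift 2}, then read off the bottom homology from the explicit bounded-below free resolution furnished by Lemmas \ref{abelian filtration} and \ref{bounded pseudo-coherent}. The only difference is that you spell out explicitly that the equivariant shift preserves pseudo-coherence (via the projection formula, showing $\ul{R}[-k-k\sigma]$ is a bounded complex of finitely generated free $\ul{R}$-modules); the paper's one-line "after shifting" leaves this implicit, so your version is a slightly more careful rendering of the same argument.
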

\begin{proof}
After shifting and using Lemma \ref{shift 2},
we may assume $n=0$ or $n=1$.
By Lemma \ref{abelian filtration},
$\ul{H}_m(\cF)=0$ for every integer $m<n$.
Consider the quasi-isomorphism $\cE\to \cF$ in Lemma \ref{bounded pseudo-coherent}.
If $n=0$, then $\rho_0(\cE)=\ul{H}_0(\cE)$ is finitely generated.
If $n=1$, then $\ul{H}_1(\cE)$ is finitely generated, which implies that $\rho_1(\cE)$ is finitely generated.
\end{proof}

Consider the homotopy $t$-structure on $\Sp^{\Z/2}$ in \cite[Proposition A.3.4]{HP23}.
This is different from the slice filtrations in \cite{HHR} and \cite{HHR21}.
Let
\[
\tau_{\leq n},\tau_{\geq n}\colon \Sp^{\Z/2}\to \Sp^{\Z/2}
\]
be the associated truncation functors,
which are compatible with those of Recollection
\ref{model structures on A-modules} under Proposition \ref{greenequivalence} after forgetting the module structure.

\begin{lem}
\label{lem:pseudo-coherence}
Let $R$ be a perfectoid ring.
Then $\HR(R;\Z_p)$ and $\THR(R;\Z_p)$ are pseudo-coherent complexes of $\ul{R}$-modules.
\end{lem}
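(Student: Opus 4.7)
The plan is to show pseudo-coherence of $\HR(R;\Z_p)$ first, and then deduce the case of $\THR(R;\Z_p)$ through a Postnikov filtration argument based on Proposition \ref{prop:hrcompleted}.

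For $\HR(R;\Z_p)$, the filtration of Corollary \ref{complete HRfiltration} is complete by Theorem \ref{conjHKR} since $R$ is perfectoid. Its $n$th graded piece is $\iota(\wedge_R^n \L_{R/\Z_p})_p^\wedge[n\sigma] \simeq \ul{R}[n+n\sigma]$ by \cite[Proposition 4.19(2)]{BMS19}. Combining the explicit two-term description $\ul{R}[\sigma] = [\ul{R^{\oplus\Z/2}}\to \ul{R}]$ from Construction \ref{alphaupperstar} with the identity $\ul{\cB}^{\Z/2}\sotimes \ul{\cB}^{\Z/2}\simeq \ul{\cB}^{\Z/2}\oplus \ul{\cB}^{\Z/2}$ (coming from the decomposition $\Z/2\times \Z/2\cong \Z/2\sqcup \Z/2$ of $\Z/2$-sets under diagonal action), the iterated tensor $(\ul{R}[\sigma])^{\sotimes n}[n]$ is represented by a bounded complex of finitely generated free $\ul{R}$-modules concentrated in degrees $n$ through $2n$. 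Hence each $\gr^n \HR(R;\Z_p)$ is pseudo-coherent. Lemma \ref{lem:pseudo-coherence and exact sequence} and induction on $n$ then show that each finite truncation $\Fil^n \HR(R;\Z_p) := \HR(R;\Z_p)/\Fil_{n+1}\HR(R;\Z_p)$ is pseudo-coherent. The proof of Proposition \ref{prop:perfectoid HR} yields $\Fil_{n+1}\HR(R;\Z_p)\in \rD_{\geq 2n+2}(\ul{R})$, so $\HR(R;\Z_p)\to \Fil^n\HR(R;\Z_p)$ is an isomorphism on $\ul{H}_m$ for $m\leq n$.

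For $\THR(R;\Z_p)$, the Postnikov filtration of $\THR(\Z)$ induces a complete filtration $\Fil_i := \THR(R;\Z_p)\sotimes_{\THR(\Z)}^\L \tau_{\geq i}\THR(\Z)$ whose $i$th graded piece is $\gr^i \simeq \HR(R;\Z_p)\sotimes_{\ul{\Z}}^\L (\ul{H}_i\THR(\Z))[i]$. For $i=0$ this is $\HR(R;\Z_p)$, already handled. For $i\geq 1$, Lemma \ref{lem:finiteness of THR} and Construction \ref{const:finite Mackey} express $\ul{H}_i\THR(\Z)$ as a finite iterated extension of $\ul{\Z/k}$'s and $\ul{(\Z/k)^{\oplus\Z/2}}$'s, each admitting a length-$2$ resolution by $\ul{\Z}$ and $\ul{\Z^{\oplus\Z/2}}$. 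Base-changing to $\ul{R}$ and then deriving with the pseudo-coherent $\HR(R;\Z_p)$ over $\ul{R}$ produces pseudo-coherent $\ul{R}$-modules; Lemma \ref{lem:pseudo-coherence and exact sequence} takes care of the finite extensions from Construction \ref{const:finite Mackey}, so every $\gr^i$ is pseudo-coherent.

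The common technical hurdle in both parts is the passage from pseudo-coherence of each finite truncation $\Fil^n\cF$ to that of the limit $\cF = \lim_n \Fil^n\cF$. This calls for a Milnor-type argument: recursively lift a bounded-below complex of finitely generated free $\ul{R}$-modules resolving $\Fil^n\cF$ in degrees $\leq n$ to one resolving $\Fil^{n+1}\cF$ in degrees $\leq n+1$, adjoining finitely many generators in degrees $n$ and $n+1$. Lemma \ref{lem:pseudo-coherence and finitely generated} ensures that at each stage $\ul{H}_n(\Fil^n\cF)$ is finitely generated, so only finitely many new generators in $\ul{R}$ and $\ul{R^{\oplus\Z/2}}$ are needed; the existence of the projective model structure on $\Ch(\ul{R})$ (Recollection \ref{model structures on A-modules}) makes this lifting step available. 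Carrying out this classical resolution-lifting procedure in the Mackey-module setting is where I expect most of the bookkeeping in the full write-up to reside.
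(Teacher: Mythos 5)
Your overall plan is sound and your treatment of $\THR(R;\Z_p)$ matches the paper's proof closely, but for $\HR(R;\Z_p)$ you take a genuinely different and more elaborate route than necessary. The paper dispenses with the $\HR$-case in two lines: Proposition \ref{prop:perfectoid HR} gives $H_{n+n\sigma}\HR(R;\Z_p)\cong R$ for $n\geq 0$, so choosing a generator for each $n$ produces a map $\bigoplus_{n\geq 0}\ul{R}[n+n\sigma]\to\HR(R;\Z_p)$ which induces isomorphisms on slices and hence (by Proposition \ref{conservative slices}) is a quasi-isomorphism; the source is manifestly a bounded-below complex of finitely generated free $\ul{R}$-modules (in each homological degree $d$ only finitely many $n$ with $n\leq d\leq 2n$ contribute). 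You instead filter $\HR(R;\Z_p)$ by the complete HKR filtration, prove each graded piece and each finite truncation is pseudo-coherent, and then invoke a Milnor-type stabilisation argument. This works, but it recreates in the $\HR$ part the same limit bookkeeping you then need again for $\THR$; the paper's trick is that the direct quasi-isomorphism is available because $\HR(R;\Z_p)$ has no odd slices and the even ones are free, so no limit passage is needed at all in that step.

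One small correction: you appeal to Lemma \ref{lem:pseudo-coherence and finitely generated} to ``ensure that at each stage $\ul{H}_n(\Fil^n\cF)$ is finitely generated,'' but that lemma speaks about the bottom slice $\rho_n$ of a pseudo-coherent object in $\rD_{\geq n}(\ul{R})$, not about an arbitrary homology group of a truncation. What you actually need here, and what the paper uses for the $\THR$-part, is the simpler observation that a pseudo-coherent bounded-below complex has finitely generated $\ul{H}_m$ in every degree; it is the resolution in degrees $n$ and $n+1$ of $\gr^n$ that stays finitely generated, and the key point the paper records explicitly is that the inductively constructed complexes $\cE_n$ \emph{stabilise in any fixed degree} once $n$ is large enough, so $\lim_n\cE_n$ is again a bounded-below complex of finitely generated free $\ul{R}$-modules; the identification of this limit with $\THR(R;\Z_p)$ then requires Lemma \ref{limit and smash}, which you should cite rather than leaving as ``bookkeeping.'' With these two repairs your argument is correct, if longer than the paper's.
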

\begin{proof}
By Proposition \ref{prop:perfectoid HR},
we obtain a morphism $\ul{R}[n+n\sigma]\to \HR(R;\Z_p)$ in $\rD(\ul{R})$ corresponding to a generator of $R\simeq H_{n+n\sigma}^{\Z/2}\HR(R;\Z_p)$ for every integer $n\geq 0$.
Hence we obtain a morphism
\[
\bigoplus_{n\geq 0} \ul{R}[n+n\sigma]
\to
\HR(R;\Z_p)
\]
in $\rD(\ul{R})$,
which is a quasi-isomorphism.
In particular,
$\HR(R;\Z_p)$ is pseudo-coherent.
This implies that
\[
\THR(R;\Z_p)
\wedge_{\THR(\Z)}
\rH \ul{\Z}
\]
is pseudo-coherent by Proposition \ref{prop:hrcompleted}.
We can view $\ul{\pi}_n(\THR(\Z))$ as a $\ul{\pi}_0(\THR(\Z))$-module for every integer $n$.
Since $\ul{\pi}_0(\THR(\Z))\simeq \ul{\Z}$ by \cite[Theorem 5.1]{DMPR21} (see also \cite[Proposition 2.3.5]{HP23} which holds for $A=\Z$ by \cite{HP23erratum}) and $\ul{\pi}_n(\THR(\Z))$ is finite for $n>0$ by Lemma \ref{lem:finiteness of THR},
use Construction \ref{const:finite Mackey} to see that
\[
\THR(R;\Z_p)
\wedge_{\THR(\Z)}
\rH \ul{\pi}_{n} \THR(\Z)
\]
is pseudo-coherent
for every integer $n$,
i.e.,
it is quasi-isomorphic to a bounded below complex $\cF_n$ of finitely generated free $\ul{R}$-modules.
By \cite[Corollary 6.8.1]{BGS20} and Proposition \ref{connective}, $\cF_n$ is $(-1)$-connected.
Hence Lemma \ref{bounded pseudo-coherent} is applicable to $\cF_n$,
so
we may assume that $\cF_n$ is vanishing in negative degrees.
Let us inductively construct a quasi-isomorphism
\[
\cE_n
\xrightarrow{\simeq}
\THR(R;\Z_p)
\wedge_{\THR(\Z)}
\tau_{\leq n} \THR(\Z)
\]
with a bounded below complex $\cE_n$ of finitely generated free $\ul{R}$-modules.
If $n=0$,
then take $\cE_0:=\cF_n$.
Assume that we have constructed a quasi-isomorphism for $n$.
Then there exists a morphism $\cE_n[-1]\to \cF_{n+1}$ of chain complexes with a commutative square
\[
\begin{tikzcd}
\cE_{n}[-1]
\ar[d]\ar[r,"\simeq"]&
\THR(R;\Z_p)
\wedge_{\THR(\Z)}
\tau_{\leq n} \THR(\Z)[-1]\ar[d]
\\
\cF_{n+1}
\ar[r,"\simeq"]&
\THR(R;\Z_p)
\wedge_{\THR(\Z)}
\rH \ul{\pi}_{n+1} \THR(\Z)
\end{tikzcd}
\]
in $\rD(\ul{R})$.
Let $\cE_{n+1}$ be the mapping cone of $\cE_n[-1]\to \cF_{n+1}$ so that we have a quasi-isomorphism
\[
\cE_{n+1}
\xrightarrow{\simeq}
\THR(R;\Z_p)
\wedge_{\THR(\Z)}
\tau_{\leq n+1} \THR(\Z).
\]
When $n$ is sufficiently large,
the construction implies that $\cE_n$ becomes stable in a given degree.
This implies that $\lim_n \cE_n$ is a bounded below complex of finitely generated free $\ul{R}$-modules.
Lemma \ref{limit and smash} finishes the proof.
\end{proof}

\begin{lem}
\label{perfectoid basechange HR}
Let $R\to R'$ be a homomorphism of perfectoid rings.
Then the natural morphism
\[
\HR(R;\Z_p)
\sotimes_{\ul{R}}^\L
\ul{R'}
\to
\HR(R';\Z_p)
\]
in $\rD(\ul{R'})$ is an equivalence.
\end{lem}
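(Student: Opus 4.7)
The plan is to compare both sides via the real Hochschild-Kostant-Rosenberg filtration of Corollary \ref{complete HRfiltration}. Since both $R$ and $R'$ are perfectoid, the Frobenius on their mod-$2$ reductions is surjective, so Theorem \ref{conjHKR} ensures that the natural filtrations $\Fil_\bullet \HR(R;\Z_p)$ and $\Fil_\bullet \HR(R';\Z_p)$ are complete, with $n$th graded pieces $\iota(\wedge_R^n \L_{R/\Z_p})_p^\wedge[n\sigma]$ and $\iota(\wedge_{R'}^n \L_{R'/\Z_p})_p^\wedge[n\sigma]$, respectively. First I would base-change the first filtration along $\ul{R}\to \ul{R'}$: since $\sotimes_{\ul{R}}^\L \ul{R'}$ preserves colimits, this yields a filtration on $\HR(R;\Z_p)\sotimes_{\ul{R}}^\L \ul{R'}$ whose formation commutes with $\gr^n$, and the natural map to $\HR(R';\Z_p)$ is filtration-preserving by naturality of the HKR filtration. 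It then suffices to check that this map is an equivalence on each graded piece and that the base-changed filtration is complete.

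For the comparison of graded pieces, recall from \cite[Proposition 4.19(2)]{BMS19} that $(\wedge_S^n \L_{S/\Z_p})_p^\wedge \simeq S[n]$ for any perfectoid ring $S$. Combined with the pointwise identification $\iota M \sotimes_{\ul{R}}^\L \ul{R'}\simeq \iota(M\otimes_R^\L R')$, which follows from Lemma \ref{lem:box base change}, the map on the $n$th graded piece becomes a morphism of $\ul{R'}$-modules $\ul{R'}[n+n\sigma]\to \ul{R'}[n+n\sigma]$, hence multiplication by some element $a_n\in R'$. Applying $i^*$ and Proposition \ref{i* and THR}, the underlying non-equivariant map is the classical base change $\HH(R;\Z_p)\otimes_R^\L R'\to \HH(R';\Z_p)$ on its $n$th HKR graded piece; this is an equivalence by the classical Hochschild-Kostant-Rosenberg theorem combined with the vanishing $(\L_{R'/R})_p^\wedge\simeq 0$ for a map of perfectoid rings (which itself follows from the transitivity triangle for $\Z_p\to R\to R'$ together with the identification $(\L_{S/\Z_p})_p^\wedge\simeq S[1]$). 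Hence $a_n$ is a unit.

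For completeness of the base-changed filtration, one argues as in the proof of Proposition \ref{prop:perfectoid HR} that $\Fil_n\HR(R;\Z_p)$ lies in the slice subcategory $\rD_{\geq 2n}(\ul{R})$. Its generators $\ul{R}[m+m\sigma]$ for $m\geq n$ and $\ul{R^{\oplus \Z/2}}[m]$ for $m\geq 2n$ all lie in the standard-$t$-structure subcategory $\rD_{\geq n}(\ul{R})$, since $[\sigma]$ is standard-$(-1)$-connected by its explicit two-term representative in Construction \ref{alphaupperstar}. As $\ul{R'}$ is likewise standard-$(-1)$-connected, base change preserves this bound, so $\lim_n (\Fil_n \HR(R;\Z_p)\sotimes_{\ul{R}}^\L \ul{R'})\simeq 0$. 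Combined with the graded-piece comparison and the completeness of the filtration on $\HR(R';\Z_p)$, this yields the desired equivalence. The hard part is the graded-piece identification, which depends on either the intrinsic vanishing $(\L_{R'/R})_p^\wedge\simeq 0$ along maps of perfectoid rings or, more economically, the classical BMS19 base change statement via $i^*$.
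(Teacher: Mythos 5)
Your proof is correct, but it takes a noticeably different route from the paper's. The paper works with the slice filtration on $\HR(R;\Z_p)$ rather than the HKR filtration directly: using Proposition \ref{prop:perfectoid HR} (which has already digested the HKR computation) it observes that $\sotimes_{\ul{R}}^{\L}\ul{R'}$ sends $\rD_{\geq n}(\ul R)$ into $\rD_{\geq n}(\ul{R'})$ because this holds on the generating objects $\cS_n(\ul R)$, that the already-computed $\leq n$-truncated pieces stay in $\rD_{\leq n}$ after base change, and hence that $\rho_n(\HR(R;\Z_p)\sotimes_{\ul R}^{\L}\ul{R'}) \simeq \rho_n\HR(R;\Z_p)\sotimes_{\ul R}^{\L}\ul{R'}$. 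At that point Proposition \ref{conservative slices} dispenses with any convergence question, and the comparison of graded pieces reduces, as in your argument, to the non-equivariant statement quoted from \cite[Theorem 6.1]{BMS19}. Your route via the HKR filtration is sound but forces you to argue completeness of the base-changed filtration by hand (via the $t$-connectivity bound $\rD_{\geq 2n}^{\mathrm{slice}} \subset \rD_{\geq n}^{t\text{-str}}$ and $\lim$-vanishing), which the slice-conservativity argument sidesteps. The net inputs from \cite{BMS19} are the same in both.

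One small point of care: your parenthetical derivation of $(\L_{R'/R})_p^{\wedge}\simeq 0$ "from the transitivity triangle together with $(\L_{S/\Z_p})_p^\wedge \simeq S[1]$" is not quite self-contained — after substituting the identifications, the transitivity triangle reads $R'[1]\to R'[1]\to (\L_{R'/R})_p^\wedge$, and you still need to know the first map is an equivalence, which is exactly what you are trying to show. The vanishing $(\L_{R'/R})_p^{\wedge}\simeq 0$ for a map of perfectoid rings is true (it is in \cite{BMS18}), but the honest route is your second one: invoke the non-equivariant base change statement from the proof of \cite[Theorem 6.1]{BMS19}, which is precisely what the paper does and suffices to identify the unit $a_n$.
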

\begin{proof}
The functor $\sotimes_{\ul{R}}^\L \ul{R'}\colon \rD(\ul{R})\to \rD(\ul{R'})$ sends the family $\cS_n(R)$ in Definition \ref{abelian rho} into $\rD_{\geq n}(\ul{R'})$ for every integer $n$.
This implies that $\sotimes_{\ul{R}}^\L \ul{R'}$ sends $\rD_{\geq n}(\ul{R})$ into $\rD_{\geq n}(\ul{R'})$.
In particular,
we have
\[
(P_n \HR(R;\Z_p))\sotimes_{\ul{R}}^\L \ul{R'}
\in
\rD_{\geq n}(\ul{R'})
\]
We claim
\begin{equation}
\label{eqn:perfectoid basechange HR}
(P^{m} \HR(R;\Z_p))\sotimes_{\ul{R}}^\L \ul{R'}
\in
\rD_{\leq m}(\ul{R'}).
\end{equation}
for every integer $m$.
This is obvious if $m<0$.
Assume that the claim holds for $m-1$.
In the cofiber sequence
\[
(P_m^m \HR(R;\Z_p))\sotimes_{\ul{R}}^\L \ul{R'}
\to
(P^m \HR(R;\Z_p))\sotimes_{\ul{R}}^\L \ul{R'}
\to
(P^{m-1} \HR(R;\Z_p))\sotimes_{\ul{R}}^\L \ul{R'},
\]
the first and third term are contained in $\rD_{\leq m}(\ul{R'})$ by Proposition \ref{prop:perfectoid HR} and the induction hypothesis, hence so is the second one.
This completes the induction argument for \eqref{eqn:perfectoid basechange HR}.
We will use \eqref{eqn:perfectoid basechange HR} in the case $m=n-1$.
Now we have equivalences
\begin{gather*}
(P_n \HR(R;\Z_p))\sotimes_{\ul{R}}^\L \ul{R'}
\simeq
P_n(\HR(R;\Z_p)\sotimes_{\ul{R}}^\L \ul{R'}),
\\
(P^{n-1} \HR(R;\Z_p))\sotimes_{\ul{R}}^\L \ul{R'}
\simeq
P^{n-1} (\HR(R;\Z_p)\sotimes_{\ul{R}}^\L \ul{R'})
\end{gather*}
in $\rD(\ul{R'})$.
Using these,
we have an equivalence
\[
(\rho_n \HR(R;\Z_p))\sotimes_{\ul{R}}^\L \ul{R'}
\simeq
\rho_n(\HR(R;\Z_p)\sotimes_{\ul{R}}^\L \ul{R'})
\]
in $\rD(\ul{R'})$.
Hence it suffices to show that the induced morphism
\[
(\rho_n\HR(R;\Z_p))
\sotimes_{\ul{R}}^\L \ul{R'}
\to
\rho_n\HR(R';\Z_p)
\]
in $\rD(\ul{R'})$ is an equivalence for every integer $n$.
By Proposition \ref{prop:perfectoid HR},
the claim is clear if $n<0$ or $n$ is odd,
Furthermore,
if $n\geq 0$ and $n$ is even,
then we have isomorphisms $\rho_n\HR(R;\Z_p)\simeq \ul{R}$ and $\rho_n\HR(R';\Z_p)\simeq \ul{R'}$.
Hence it suffices to show that the induced morphism
\[
(\pi_n\HH(R;\Z_p))
\sotimes_{R}^\L R'
\to
\pi_n\HH(R';\Z_p)
\]
in $\rD(R')$ is an equivalence.
This is observed in the proof of \cite[Theorem 6.1]{BMS19}.
\end{proof}

Combined with the computation of $\THR(\F_p)$ \cite[Theorem 5.15]{DMPR21},
the following result yields a new proof for the known computation  of $\THR(R;\Z_p)$ for perfect $\F_p$-algebra $R$ \cite[Remark 5.15]{DMP}, \cite[Proposition 6.26]{QS22}, since every perfect $\F_p$-algebra is a perfectoid ring.

\begin{lem}
\label{lem:perfectoid basechange}
Let $R\to R'$ be a homomorphism of perfectoid rings.
Then the natural morphism
\[
\THR(R;\Z_p)
\sotimes_{\ul{R}}^\L
\ul{R'}
\to
\THR(R';\Z_p)
\]
in $\rD(\ul{R'})$ is an equivalence.
\end{lem}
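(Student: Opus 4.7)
The plan is to reduce to the $\HR$ base change statement in Lemma \ref{perfectoid basechange HR} by using the $\THR(\Z)$-module structure on $\THR(R)$, in parallel with the reduction from $\THH$ to $\HH$ in \cite[proof of Theorem 6.1]{BMS19}. The starting point is the Postnikov-type filtration
\[
\Fil_n \THR(R;\Z_p) := (\THR(R) \sotimes_{\THR(\Z)}^\L \tau_{\geq n}\THR(\Z))_p^\wedge,
\]
together with the analogous filtration on $\THR(R';\Z_p)$. The map in question respects these filtrations, so I would try to show that each filtration is complete, that completeness persists after applying $\sotimes_{\ul{R}}^\L \ul{R'}$, and that the induced morphism on the $n$th graded piece is an equivalence for every $n$.

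Completeness follows because $\tau_{\geq n}\THR(\Z)$ is $(n-1)$-connected and $\THR(R)$ is $(-1)$-connected (Proposition \ref{connective}), so a variant of Lemma \ref{limit and smash} in $\rD(\ul{R})$ shows that $\Fil_n \THR(R;\Z_p)$ has connectivity growing to infinity with $n$. Since $\sotimes_{\ul{R}}^\L \ul{R'}$ preserves the subcategories $\rD_{\geq n}$, as observed in the proof of Lemma \ref{perfectoid basechange HR}, the base-changed filtration is again complete. The $n$th graded piece is $(\THR(R) \sotimes_{\THR(\Z)}^\L \rH\ul{\pi}_n\THR(\Z))_p^\wedge[n]$. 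For $n=0$ this is $\HR(R;\Z_p)$ by Proposition \ref{prop:hrcompleted}, so base change is Lemma \ref{perfectoid basechange HR}. For $n>0$, Lemma \ref{lem:finiteness of THR} says $\ul{\pi}_n\THR(\Z)$ is finite, and Construction \ref{const:finite Mackey} produces a finite-step construction of $\rH\ul{\pi}_n\THR(\Z)$ from $\rH\ul{\Z}$ and $\rH\ul{\Z}[\sigma]$ via fibers, cofibers, and extensions. Since $\sotimes_{\ul{R}}^\L \ul{R'}$, $(-)_p^\wedge$, and equivariant shifts all commute with these operations, an induction reduces to the two building-block cases: $\ul{\Z}$, which is again Lemma \ref{perfectoid basechange HR}, and $\ul{\Z}[\sigma]$, which follows from the $\ul{\Z}$ case because $[\sigma] = - \sotimes_{\ul{R}}^\L \ul{R}[\sigma]$ commutes with base change to $\ul{R'}$.

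The main obstacle is ensuring that $p$-completion interacts compatibly with both the outer filtration (requiring Lemma \ref{p-complete and filtration}) and the inner devissage on $\rH\ul{\pi}_n\THR(\Z)$ (requiring Proposition \ref{monoidal completion local} together with preservation of $\rD_{\geq n}$ under $\sotimes_{\ul{R}}^\L \ul{R'}$); once these compatibilities are in place, the complete-filtration and slice-by-slice comparison yield the desired equivalence, mirroring the $\THH$ argument of \cite{BMS19}.
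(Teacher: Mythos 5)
Your proposal is correct and takes essentially the same route as the paper: both reduce to the $\HR$ base change (Lemma \ref{perfectoid basechange HR} via Proposition \ref{prop:hrcompleted}), dévissage along the Postnikov tower of $\THR(\Z)$ using the finiteness of $\ul{\pi}_n\THR(\Z)$ (Lemma \ref{lem:finiteness of THR}) and Construction \ref{const:finite Mackey} to reduce graded pieces to shifts of $\rH\ul{\Z}$ and $\rH\ul{\Z}[\sigma]$, and invoke Lemma \ref{limit and smash} for convergence. The paper phrases the convergence step via the dual tower $\tau_{\leq n}\THR(\Z)\wedge_{\THR(\Z)}(-)$ and takes $\lim_n$ directly, rather than packaging things as a complete $\tau_{\geq n}$-filtration, but this is the same argument.
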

\begin{proof}
It suffices to show 
that the induced map of $\Z/2$-spectra
\[
\THR(R;\Z_p)
\wedge_{\rH \ul{R}}
\rH \ul{R'}
\to
\THR(R';\Z_p)
\]
is an equivalence.
By Proposition \ref{prop:hrcompleted} and Lemma \ref{perfectoid basechange HR},
the induced map of $\Z/2$-spectra
\[
\rH \ul{\Z}
\wedge_{\THR(\Z)}
\THR(R;\Z_p)
\wedge_{\rH \ul{R}}
\rH \ul{R'}
\to
\rH \ul{\Z}
\wedge_{\THR(\Z)}
\THR(R';\Z_p)
\]
is an equivalence.
As in the proof of Lemma \ref{lem:pseudo-coherence},
this implies that the induced map of $\Z/2$-spectra
\begin{align*}
&
\ul{\pi}_n(\THR(\Z))
\wedge_{\THR(\Z)}
\THR(R;\Z_p)
\wedge_{\rH \ul{R}}
\rH \ul{R'}
\\
\to
&
\ul{\pi}_n(\THR(\Z))
\wedge_{\THR(\Z)}
\THR(R';\Z_p)
\end{align*}
is an equivalence for every integer $n$.
By induction on $n$,
we see that the induced map of $\Z/2$-spectra
\begin{align*}
&
\tau_{\leq n}(\THR(\Z))
\wedge_{\THR(\Z)}
\THR(R;\Z_p)
\wedge_{\rH \ul{R}}
\rH \ul{R'}
\\
\to
&
\tau_{\leq n}(\THR(\Z))
\wedge_{\THR(\Z)}
\THR(R';\Z_p)
\end{align*}
is an equivalence for every integer $n$.
Take $\lim_n$ on both sides,
and use Lemma \ref{limit and smash} to conclude.
\end{proof}

In \cite[Theorem 3.4.3]{HP23}, we proved a descent theorem for $\THR$ with respect to the isovariant \'etale topology. A key ingredient for this was the \'etale base change Theorem 3.2.3 in loc.\ cit.. It is an obvious question to ask if $\THR$ satisfies descent even for the isovariant flat topology. When restricted to sites with trivial $C_2$-action, the above Lemma \ref{lem:perfectoid basechange} yields base change for the special case of perfectoid rings.
In \cite{Par23},
fpqc and syntomic descent for $\THR$ will be investigated.

\begin{lem}
\label{lem:rational THR(Z)}
The induced morphism
\[
\ul{\Q}
\to
\THR(\Z)
\sotimes_{\ul{\Z}}^\L
\ul{\Q}
\]
in $\rD(\ul{\Q})$ is an equivalence.
\end{lem}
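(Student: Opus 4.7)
My plan is to exploit the description of $\ul{\Q}$ as the sequential filtered colimit
$
\ul{\Q}
\simeq
\colim
\bigl(
\ul{\Z}
\xrightarrow{2}
\ul{\Z}
\xrightarrow{3}
\ul{\Z}
\xrightarrow{4}
\cdots
\bigr)
$
in $\rD(\ul{\Z})$.
Since $\sotimes_{\ul{\Z}}^\L$ preserves colimits in each variable, this yields
\[
\THR(\ul{\Z})
\sotimes_{\ul{\Z}}^\L
\ul{\Q}
\simeq
\colim
\bigl(
\THR(\ul{\Z})
\xrightarrow{2}
\THR(\ul{\Z})
\xrightarrow{3}
\cdots
\bigr).
\]
Because $\ul{H}_n$ commutes with filtered colimits (the abelian category $\Mod_{\ul{\Z}}$ satisfies AB5), applying it gives
$
\ul{H}_n
(
\THR(\ul{\Z})
\sotimes_{\ul{\Z}}^\L
\ul{\Q}
)
\cong
\colim_k
\bigl(
\ul{\pi}_n\THR(\ul{\Z})
\xrightarrow{k}
\ul{\pi}_n\THR(\ul{\Z})
\bigr),
$
where the transition maps cycle through multiplications by all positive integers, and by Lemma \ref{lem:box base change} the resulting Mackey functor is computed pointwise.

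I would then perform a case analysis on $n$. For $n < 0$, Proposition \ref{connective} yields $\ul{\pi}_n\THR(\ul{\Z}) = 0$. For $n = 0$, the identification $\ul{\pi}_0\THR(\ul{\Z}) \simeq \ul{\Z}$ of \cite[Proposition 2.3.5]{HP23} makes the colimit equal to $\ul{\Q}$. For $n \geq 1$, Lemma \ref{lem:finiteness of THR} asserts that both $\ul{\pi}_n\THR(\ul{\Z})(C_2/e)$ and $\ul{\pi}_n\THR(\ul{\Z})(C_2/C_2)$ are finite; if $N$ is a common multiple of their exponents, multiplication by $N$ annihilates $\ul{\pi}_n\THR(\ul{\Z})$, and since $N$ appears eventually in the colimit system, the colimit vanishes. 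Hence $\THR(\ul{\Z}) \sotimes_{\ul{\Z}}^\L \ul{\Q}$ is concentrated in homological degree $0$ with value $\ul{\Q}$.

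Finally, I would identify the induced morphism with the identity on $\ul{H}_0$: it is induced from the unit $\rH\ul{\Z} \to \THR(\ul{\Z})$ of the normed $\Z/2$-algebra structure (Definition \ref{def:thrandhr}), which on $\ul{\pi}_0$ is the identity $\ul{\Z} \to \ul{\Z}$ under \cite[Proposition 2.3.5]{HP23}; tensoring with $\ul{\Q}$ then gives the identity on the only nonvanishing homology group. No step is especially delicate; the main care required is in the interplay of $\sotimes_{\ul{\Z}}^\L$ and $\ul{H}_n$ with filtered colimits, both of which are standard.
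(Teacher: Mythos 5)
Your proof is correct, but it takes a genuinely different route from the paper's. The paper first observes that $\Phi^{\Z/2}\rH\ul{\Q}$ vanishes, so both sides of the proposed equivalence die after applying geometric fixed points (as $\Phi^{\Z/2}$ is monoidal). By conservativity of the pair $(i^*, \Phi^{\Z/2})$ on $\Sp^{\Z/2}$ it then suffices to check the underlying non-equivariant statement, namely that $\rH\Q \to \THH(\Z) \wedge_{\rH\Z} \rH\Q$ is an equivalence, which follows from the classical finiteness of $\pi_*\THH(\Z)$ in \cite[Lemma 2.5]{BMS19}. You instead compute $\ul{H}_*$ of both sides directly from the filtered colimit presentation of $\ul{\Q}$, using Proposition \ref{connective} in negative degrees, \cite[Proposition 2.3.5]{HP23} in degree $0$, and, crucially, Lemma \ref{lem:finiteness of THR} in positive degrees. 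Your route thus leans on the strictly stronger equivariant finiteness result (which in the paper itself relies on the Dotto--Moi--Patchkoria--Reeh computation of $\Phi^{\Z/2}\THR(\Z)$), whereas the paper's argument disposes of all fixed-point information in one stroke by noting that rationalization kills $\Phi^{\Z/2}$, and thus needs only the non-equivariant finiteness of $\pi_*\THH(\Z)$ directly from \cite{BMS19}. Both arguments are valid, and they make the Lemma available at essentially the same point in the logical development. One small imprecision: the reference to Lemma \ref{lem:box base change} is not really what justifies computing the filtered colimit pointwise --- colimits in $\Mod_{\ul{\Z}}$ are automatically pointwise, and the exactness of filtered colimits that makes $\ul{H}_n$ commute with them is just AB5 for the presheaf category of Mackey functors; Lemma \ref{lem:box base change} would rather be the tool if you wanted to describe $\ul{\pi}_n\THR(\Z)\sotimes_{\ul{\Z}}\ul{\Q}$ directly as a box product.
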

\begin{proof}
It suffices to show that the induced map of $\Z/2$-spectra
\[
\rH \ul{\Q}
\to
\THR(\Z)
\wedge_{\rH \ul{\Z}}
\rH \ul{\Q}
\]
is an equivalence.
Both sides vanish after applying $\Phi^{\Z/2}$ since $\Phi^{\Z/2}\rH \ul{\Q}$ vanishes and $\Phi^{\Z/2}$ is monoidal.
Hence it suffices to show the claim after applying $i^*$,
i.e.,
it suffices to show that the induced map of spectra
\[
\rH \Q \to \THH(\Z) \wedge_{\rH \Z} \rH \Q
\]
is an equivalence.
This is a consequence of the finiteness of $\pi_*\THH(\Z)$ in \cite[Lemma 2.5]{BMS19}.
\end{proof}

The computation of $\ul{\pi}_*\THR(\Z)$ in \cite[Theorem B.0.1]{AKF} may be used to simplify the above proof,
but again, compare Remark \ref{DMPR conjecture}, we do not rely on this.

We are still working on the real refinement of \cite[Theorem 6.1]{BMS19}, which will be achieved in Theorem \ref{perfectoid THR} below. Some of the following results should also be compared with \cite[Proposition IV.4.2]{NS}.

\medskip

For a commutative ring $R$,
we have the map to the first smash factor
\(
\THR(R)
\to
\THR(R)\wedge_{\THR(\Z)}\rH \Z
=
\HR(R).
\)
After taking $(-)_p^\wedge$ on both sides, we obtain the morphism $\THR(R;\Z_p)\to \HR(R;\Z_p)$ in $\rD(\ul{R})$.

\begin{lem}
\label{lem:zeroth slice}
Let $R$ be a perfectoid ring.
Then the induced morphisms of $\ul{R}$-modules
\[
\ul{R}
\to
\rho_0 \THR(R;\Z_p)
\to
\rho_0 \HR(R;\Z_p)
\]
are isomorphisms.
\end{lem}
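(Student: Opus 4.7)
The plan is to use Proposition \ref{prop:hrcompleted} together with a connectivity estimate to reduce to the identification $\rho_0 \HR(R;\Z_p) \simeq \ul{R}$ from Proposition \ref{prop:perfectoid HR}. Tensoring the cofiber sequence $\tau_{\geq 1}\THR(\Z) \to \THR(\Z) \to \ul{\Z}$ of $\THR(\Z)$-modules with $\THR(R;\Z_p)$ over $\THR(\Z)$ yields a cofiber sequence
\[
F \to \THR(R;\Z_p) \to \HR(R;\Z_p)
\]
in $\rD(\ul{R})$, where $F := \THR(R;\Z_p) \sotimes_{\THR(\Z)}^\L \tau_{\geq 1}\THR(\Z)$. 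The identification of the third term uses Proposition \ref{prop:hrcompleted}, and the induced map to the third term coincides with the morphism $\THR(R;\Z_p) \to \HR(R;\Z_p)$ appearing in the lemma statement.

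Next, I would observe that $\THR(R;\Z_p)$ lies in $\rD_{\geq 0}(\ul{R})$ by Proposition \ref{connective}, while $\tau_{\geq 1}\THR(\Z)$ lies in $\rD_{\geq 1}(\ul{\Z})$ by definition. Since $\THR(\Z)$ is a connective normed ring in $\Sp^{\Z/2}$, the derived tensor product of modules over it is well-behaved with respect to the standard $t$-structure: the tensor product of modules in degrees $\geq a$ and $\geq b$, respectively, lies in degrees $\geq a+b$. Hence $F \in \rD_{\geq 1}(\ul{R})$. In particular, $\ul{H}_0(F) = \ul{H}_{-1}(F) = 0$, so the long exact sequence of Mackey functor homology attached to the cofiber sequence above forces $\rho_0 \THR(R;\Z_p) = \ul{H}_0 \THR(R;\Z_p) \to \ul{H}_0 \HR(R;\Z_p) = \rho_0 \HR(R;\Z_p)$ to be an isomorphism.

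To conclude, I would invoke Proposition \ref{prop:perfectoid HR} to identify $\rho_0 \HR(R;\Z_p) \simeq \ul{R}$. The composite
\[
\ul{R} \to \rho_0 \THR(R;\Z_p) \to \rho_0 \HR(R;\Z_p) \simeq \ul{R}
\]
is a Green functor endomorphism of $\ul{R}$ induced by canonical unit maps of algebras, hence equals the identity. Combined with the second arrow being an isomorphism, so is the first. The only subtle point is the connectivity estimate for $F$, which rests on the connectivity of $\THR(\Z)$ and the standard fact that $\sotimes^\L$ over a connective ring preserves and adds connectivity in the standard $t$-structure. No significant further obstacle is anticipated, since all remaining ingredients (derived completion behavior, cofiber identification via Proposition \ref{prop:hrcompleted}, and the identification of $\rho_0\HR(R;\Z_p)$) have already been established in the preceding sections.
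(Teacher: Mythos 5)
Your proof is correct, but it takes a genuinely different route from the paper. The paper's own argument is a two-liner: it cites \cite[Proposition 2.3.5]{HP23} (which identifies $\ul{\pi}_0\THR(R;\Z_p) \cong \ul{R}$ under the hypothesis that the Frobenius on $R/2$ is surjective, automatic for perfectoid $R$) together with Proposition \ref{prop:perfectoid HR} for $\rho_0\HR(R;\Z_p) \cong \ul{R}$, and the two-out-of-three property gives the middle isomorphism. You instead avoid the direct $\ul{\pi}_0$-computation for $R$: you use Proposition \ref{prop:hrcompleted} to write $\HR(R;\Z_p)$ as the cofiber of $\THR(R;\Z_p)\sotimes_{\THR(\Z)}^\L \tau_{\geq 1}\THR(\Z) \to \THR(R;\Z_p)$, estimate the connectivity of the fiber via the fact that base change over the connective ring $\THR(\Z)$ adds connectivity (the \cite[Corollary 6.8.1]{BGS20} input the paper uses in Proposition \ref{connective} and Lemma \ref{limit and smash}), and conclude the map is an isomorphism on $\rho_0 = \ul{H}_0$; then you read off the rest from Proposition \ref{prop:perfectoid HR} and unitality. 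Your route still needs \cite[Proposition 2.3.5]{HP23} for $\Z$ (to know $\tau_{\leq 0}\THR(\Z)\simeq \rH\ul{\Z}$) and still ultimately rests on the same Frobenius hypothesis hidden in the completeness of the filtration behind Proposition \ref{prop:perfectoid HR}, but it buys a cleaner reduction — the second map is shown to be an isomorphism for connectivity reasons alone, without computing either $\rho_0$ independently. The one point to tidy up is the assertion that the composite endomorphism of $\ul{R}$ is the identity: this should be justified by observing that all the maps are maps of $\rH\ul{R}$-algebras, so on $\rho_0$ they are $\ul{R}$-linear and preserve $1$, which forces the identity.
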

\begin{proof}
The Frobenius $\varphi\colon R/2\to R/2$ is surjective by assumption if $p=2$, and this holds for odd $p$ too since $R/2=0$ in this case. Now
combine \cite{HP23erratum} and Proposition \ref{prop:perfectoid HR}.
\end{proof}

For an $\F_p$-algebra $R$,
the \emph{colimit perfection} is $\colim(R \xrightarrow{\varphi} R \xrightarrow{\varphi} \cdots)$,
where $\varphi$ is the Frobenius, see e.g.\ \cite[Remark 8.15]{BMS19}.
\begin{lem}
\label{lem:induction}
Let $R$ be a perfectoid ring,
let $R'$ be the colimit perfection of $R/p$,
and let $n$ be an integer.
If $\rho_i\THR(R;\Z_p)$ is a finitely generated free $\ul{R}$-module and the induced morphism of $\ul{R}$-modules 
\[
\rho_i\THR(R;\Z_p)
\sotimes_{\ul{R}}
\ul{R'}
\to
\rho_i\THR(R';\Z_p)
\]
is an isomorphism for every integer $i<n$,
then $\rho_n\THR(R;\Z_p)$ is a finitely generated $\ul{R}$-module,
and the induced morphism of $\ul{R}$-modules
\[
\rho_n\THR(R;\Z_p)
\sotimes_{\ul{R}}
\ul{R'}
\to
\rho_n\THR(R';\Z_p)
\]
is an isomorphism.
A similar result holds for $\HR$ too.
\end{lem}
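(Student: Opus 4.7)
The plan is to run an induction built around the slice filtration on $\THR(R;\Z_p)$, combined with the pseudo-coherence toolkit of Lemmas \ref{lem:pseudo-coherence and exact sequence}--\ref{lem:pseudo-coherence} and the base change identity $\THR(R;\Z_p) \sotimes_{\ul{R}}^\L \ul{R'} \simeq \THR(R';\Z_p)$ supplied by Lemma \ref{lem:perfectoid basechange}. Note that $R'$ is a perfect $\F_p$-algebra and therefore perfectoid, so all the preceding results of this section apply to $R'$ as well.

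First I would establish finite generation of $\rho_n \THR(R;\Z_p)$. By the inductive hypothesis, for each $i<n$ the slice $P_i^i\THR(R;\Z_p)$ is (by Proposition \ref{abelian slicesexplicit}) an equivariant shift of the finitely generated free $\ul{R}$-module $\rho_i\THR(R;\Z_p)$, and therefore pseudo-coherent in the sense of Definition \ref{pseudo coherent}. Since $\THR(R;\Z_p)$ is $(-1)$-connected by Proposition \ref{connective} and hence lies in $\rD_{\geq 0}(\ul{R})$ by Lemma \ref{abelian filtration}(1), we have $P^{-1}\THR(R;\Z_p)\simeq 0$; iterating Lemma \ref{lem:pseudo-coherence and exact sequence} along the fiber sequences $P_i^i\THR(R;\Z_p)\to P^i\THR(R;\Z_p)\to P^{i-1}\THR(R;\Z_p)$ for $0\leq i\leq n-1$ then shows that $P^{n-1}\THR(R;\Z_p)$ is pseudo-coherent. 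Combining this with the fiber sequence
\[
P_n\THR(R;\Z_p)\to \THR(R;\Z_p)\to P^{n-1}\THR(R;\Z_p),
\]
the pseudo-coherence of $\THR(R;\Z_p)$ from Lemma \ref{lem:pseudo-coherence}, and Lemma \ref{lem:pseudo-coherence and exact sequence} once more, I obtain that $P_n\THR(R;\Z_p)$ is pseudo-coherent. As it lies in $\rD_{\geq n}(\ul{R})$, Lemma \ref{lem:pseudo-coherence and finitely generated} gives that $\rho_n P_n\THR(R;\Z_p)=\rho_n\THR(R;\Z_p)$ is finitely generated.

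For the base change statement I would apply $(-)\sotimes_{\ul{R}}^\L \ul{R'}$ to the same fiber sequence. The middle term becomes $\THR(R';\Z_p)$ by Lemma \ref{lem:perfectoid basechange}, and, as in the proof of Lemma \ref{perfectoid basechange HR}, this functor sends $\rD_{\geq n}(\ul{R})$ into $\rD_{\geq n}(\ul{R'})$, so the left term lands in $\rD_{\geq n}(\ul{R'})$. For each slice with $i<n$, since $\rho_i\THR(R;\Z_p)$ is a sum of copies of $\ul{R}$ and $\ul{R^{\oplus \Z/2}}$, Lemma \ref{lem:box base change} shows that its derived base change to $\ul{R'}$ is concentrated in a single homological degree and identified with the analogous sum over $\ul{R'}$, which by the inductive base change hypothesis matches $\rho_i\THR(R';\Z_p)$; assembling these slice-wise identifications yields $P^{n-1}\THR(R;\Z_p)\sotimes_{\ul{R}}^\L \ul{R'}\in \rD_{\leq n-1}(\ul{R'})$ together with an equivalence to $P^{n-1}\THR(R';\Z_p)$. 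Uniqueness of the slice filtration then forces $P_n\THR(R;\Z_p)\sotimes_{\ul{R}}^\L \ul{R'}\simeq P_n\THR(R';\Z_p)$, and passing to $\rho_n$ produces the desired isomorphism. The analogous argument for $\HR$ uses Lemma \ref{perfectoid basechange HR} in place of Lemma \ref{lem:perfectoid basechange} together with Proposition \ref{prop:perfectoid HR}. The main obstacle is the bookkeeping verifying that the derived base change of each finitely generated free slice remains concentrated in its prescribed slice degree; this is precisely where both finite generation and freeness of $\rho_i\THR(R;\Z_p)$ enter the inductive hypothesis, and where Lemma \ref{lem:box base change} plays a central role.
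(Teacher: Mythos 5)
Your argument follows essentially the same route as the paper's: pseudo-coherence (Lemmas \ref{lem:pseudo-coherence and exact sequence}--\ref{lem:pseudo-coherence}) gives finite generation of $\rho_n$, and then base-changing the fiber sequence $P_n\to\THR\to P^{n-1}$ slice by slice, using freeness of $\rho_i$ for $i<n$, gives the identification with $\THR(R';\Z_p)$. One small point worth tightening: Lemma \ref{lem:box base change} only computes the \emph{underived} box product pointwise, so to conclude that the derived base change of each slice is concentrated in a single degree you also need that finitely generated free $\ul{R}$-modules are flat (the paper cites \cite[Proposition A.5.3]{HP23} for this); once you add that citation, your proposal matches the paper's proof.
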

\begin{proof}
We focus on the case of $\THR$ since the proofs are similar.
By Lemmas \ref{lem:pseudo-coherence and exact sequence},
\ref{lem:pseudo-coherence and finitely generated},
and \ref{lem:pseudo-coherence},
$\rho_n\THR(\ul{R};\Z_p)$ is finitely generated.

A projective $\ul{R}$-module is flat due to \cite[Proposition A.5.3]{HP23}.
It follows that the induced morphism of $\Z/2$-spectra
\[
P_i^i\THR(R;\Z_p)
\wedge_{\rH \ul{R}}
\rH \ul{R'}
\to
P_i^i\THR(R';\Z_p)
\]
is an equivalence for every integer $i<n$.
This implies that the induced map of $\Z/2$-spectra
\[
P^{n-1}\THR(R;\Z_p)
\wedge_{\rH \ul{R}}
\rH \ul{R'}
\to
P^{n-1}\THR(R';\Z_p)
\]
is an equivalence.
Together with Lemma \ref{lem:perfectoid basechange},
we see that the induced map of $\Z/2$-spectra
\[
P_n\THR(R;\Z_p)
\wedge_{\rH \ul{R}}
\rH \ul{R'}
\to
P_n\THR(R';\Z_p)
\]
is an equivalence.
Take $\ul{\pi}_{m+m\sigma}$ (resp.\ $\ul{\pi}_{m+1+m\sigma}$) on both sides if $n=2m$ (resp.\ $n=2m+1$) to obtain the desired isomorphism.
\end{proof}

\begin{lem}
\label{lem:Nakayama}
Let $R$ be a perfectoid ring,
let $R'$ be the colimit perfection of $R/p$,
and let $f\colon M'\to M$ be a morphism of finitely generated $\ul{R}$-modules.
If the induced morphism $g\colon M\sotimes_{\ul{R}}\ul{R'}
\to
M'\sotimes_{\ul{R}} \ul{R'}$ is an epimorphism,
then $f$ is an epimorphism.
\end{lem}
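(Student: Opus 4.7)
The plan is to reduce to the classical (non-equivariant) Nakayama-type statement via pointwise evaluation, and then invoke standard commutative-algebra arguments for perfectoid rings. First I would set $N := \coker(f)$, which is a finitely generated $\ul{R}$-module as a quotient of $M$. The base-change functor $(-)\sotimes_{\ul{R}} \ul{R'}$ is right exact (since by Lemma \ref{lem:box base change} it is computed pointwise, where it reduces to the ordinary right-exact functor $(-)\otimes_R R'$), so the hypothesis that $g$ is an epimorphism gives $N \sotimes_{\ul{R}} \ul{R'} = 0$. Applying Lemma \ref{lem:box base change} once more reads off both components, yielding
\[
N(C_2/e) \otimes_R R' = 0 \quad\text{and}\quad N(C_2/C_2) \otimes_R R' = 0,
\]
with each a finitely generated $R$-module.

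It therefore suffices to establish the following non-equivariant analog: for a perfectoid ring $R$ with colimit perfection $R'$ of $R/p$, any finitely generated $R$-module $P$ satisfying $P\otimes_R R' = 0$ vanishes. The map $R \to R'$ factors through $R/p$, so the hypothesis forces $(P/pP)\otimes_{R/p} R' = 0$. Since $R$ is perfectoid the Frobenius on $R/p$ is surjective, so $R/p \to R'$ is surjective with kernel contained in the nilradical of $R/p$. Every maximal ideal $\mathfrak{m}$ of $R$ contains $p$ (as $R$ is $p$-adically complete), and its image $\bar{\mathfrak{m}} \subset R/p$ contains the nilradical, so $R/\mathfrak{m} \cong (R/p)/\bar{\mathfrak{m}}$ is a quotient of $R'$. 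Consequently $P\otimes_R R/\mathfrak{m} = 0$ for every maximal $\mathfrak{m}$, and the classical Nakayama lemma gives $P_\mathfrak{m} = 0$ for all such $\mathfrak{m}$, hence $P = 0$. Combining the two pointwise vanishings produces $N = 0$, i.e.\ $f$ is an epimorphism.

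The main (and essentially the only non-formal) obstacle is the underlying non-equivariant Nakayama statement; the equivariant reduction via Lemma \ref{lem:box base change} is purely formal once one knows that finite generation and the box-product base change are both detected pointwise.
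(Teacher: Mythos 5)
Your proof is correct and follows essentially the same strategy as the paper: reduce to the two pointwise statements via Lemma \ref{lem:box base change}, then invoke the classical Nakayama lemma using the fact that $\ker(R \to R')$ lies in the Jacobson radical of $R$. You simply unpack the Jacobson-radical observation into an explicit argument through maximal ideals, which the paper leaves as a one-line remark.
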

\begin{proof}
Lemma \ref{lem:box base change} implies that the induced homomorphisms of $R'$-modules
\[
M'(C_2/e)\otimes_R R'
\to
M(C_2/e)\otimes_R R'
\text{ and }
M'(C_2/C_2)\otimes_R R'
\to
M(C_2/C_2)\otimes_R R'
\]
are epimorphisms.
Nakayama's lemma finishes the proof since the kernel of $R\to R'$ is contained in the Jacobson radical of $R$.
\end{proof}

\begin{lem}
\label{lem:first slice}
Let $R$ be a perfectoid ring.
Then we have $\rho_1\THR(R;\Z_p)\simeq 0$.
\end{lem}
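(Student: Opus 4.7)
The plan is to apply Lemma \ref{lem:induction} at $n=1$ to reduce the desired vanishing to the corresponding statement for $R'$, the colimit perfection of $R/p$; then to verify the vanishing for the perfect $\F_p$-algebra $R'$ by noting that it follows from the non-equivariant fact $\pi_1\THH(R';\Z_p)=0$; and finally to descend back to $R$ via the Nakayama-type Lemma \ref{lem:Nakayama}.

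First, I verify the hypothesis of Lemma \ref{lem:induction} at $n=1$, i.e.\ the conditions on $\rho_i\THR(R;\Z_p)$ for all $i<1$. For $i<0$, the $(-1)$-connectedness of $\THR(R;\Z_p)$ from Proposition \ref{connective} forces $\rho_i\THR(R;\Z_p)=0$ (the slice-connective conclusion being straightforward from the Postnikov tower argument), which is vacuously finitely generated free with trivially isomorphic base change. For $i=0$, Lemma \ref{lem:zeroth slice} identifies $\rho_0\THR(R;\Z_p)$ with $\ul{R}$, which is free of rank one, and the base change map $\ul{R}\sotimes_{\ul{R}}\ul{R'}\cong\ul{R'}\cong\rho_0\THR(R';\Z_p)$ is tautological. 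Lemma \ref{lem:induction} then yields that $\rho_1\THR(R;\Z_p)$ is a finitely generated $\ul{R}$-module and that
\[
\rho_1\THR(R;\Z_p)\sotimes_{\ul{R}}\ul{R'}\;\longrightarrow\;\rho_1\THR(R';\Z_p)
\]
is an isomorphism.

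Next, I show $\rho_1\THR(R';\Z_p)=0$. By Definition \ref{rho definition}, $\rho_1=\cP^0\ul{H}_1$; since $(\cP^0 M)(C_2/e)=M(C_2/e)$ and $(\cP^0 M)(C_2/C_2)$ is a quotient of $M(C_2/C_2)$ that injects into $M(C_2/e)$ via $\res$, the functor $\cP^0$ annihilates any Mackey functor $M$ with $M(C_2/e)=0$. Using Proposition \ref{i* and THR},
\[
\ul{H}_1\THR(R';\Z_p)(C_2/e)\cong H_1(i^*\THR(R';\Z_p))\cong\pi_1\THH(R';\Z_p),
\]
and this vanishes for the perfect $\F_p$-algebra $R'$ by the classical non-equivariant computation (via B\"okstedt, or via \cite[Proposition 4.19(2)]{BMS19} combined with the ordinary HKR filtration). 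Finally, I invoke Lemma \ref{lem:Nakayama}: the finitely generated $\ul{R}$-module $\rho_1\THR(R;\Z_p)$ becomes zero after base change along $R\to R'$, and hence must itself be zero. The only mildly non-routine point is verifying the slice-connectivity input for $i<0$; everything else is formal once the classical input $\pi_1\THH(R';\Z_p)=0$ is granted.
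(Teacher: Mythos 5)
Your proof is correct and follows the same global structure as the paper's: apply Lemma \ref{lem:induction} at $n=1$ (whose hypotheses for $i=0$ are verified by Lemma \ref{lem:zeroth slice}), reduce to the colimit perfection $R'$, establish vanishing of $\rho_1\THR(R';\Z_p)$, and conclude by Lemma \ref{lem:Nakayama}. The one place you diverge is the vanishing over $R'$. The paper deduces $\rho_1\THR(R';\Z_p)\simeq 0$ from the \emph{equivariant} computation $\THR(\F_p)\simeq T_{\rH\ul{\F_p}}(S^{1+\sigma})$ of Dotto--Moi--Patchkoria--Reeh together with the perfectoid base change Lemma \ref{lem:perfectoid basechange} for $\F_p\to R'$. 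You instead observe (essentially the direction of Proposition \ref{prop:even} you need) that $\cP^0$ kills any Mackey functor vanishing at $C_2/e$, so that $\rho_1\THR(R';\Z_p)=\cP^0\ul{H}_1\THR(R';\Z_p)=0$ follows already from the non-equivariant vanishing $\pi_1\THH(R';\Z_p)=0$ via Proposition \ref{i* and THR}. Your route is marginally more elementary for this step, requiring only B\"okstedt periodicity for $\THH$ of perfect $\F_p$-algebras rather than its real refinement --- though of course the DMPR computation is needed elsewhere in the paper (e.g.\ Lemma \ref{lem:second slice} and Theorem \ref{perfectoid THR}), so it does not buy an overall reduction of inputs. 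Your explicit verification of the $i<0$ hypotheses of Lemma \ref{lem:induction} via $(-1)$-connectedness and Lemma \ref{abelian filtration}(1) is also fine and matches what the paper implicitly uses.
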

\begin{proof}
Consider the map $\rH\ul{R} \to \THR(R)$ to the second smash factor.
Let $R'$ be the colimit perfection of $R/p$.
Observe that $R'$ is a perfect $\F_p$-algebra.
By Lemma \ref{lem:perfectoid basechange} for $\F_p\to R'$ and the computation of $\THR(\F_p)$ obtained by Dotto-Moi-Patchkoria-Reeh \cite[Theorem 5.15]{DMPR21},
we have $\rho_1\THR(R')\simeq 0$.
Together with Lemmas \ref{lem:pseudo-coherence and finitely generated}, \ref{lem:pseudo-coherence}, \ref{lem:zeroth slice}, and \ref{lem:induction},
we have an isomorphism
\[
\rho_1\THR(R;\Z_p)\sotimes_{\ul{R}}\ul{R'}
\cong
0.
\]
Lemma \ref{lem:Nakayama} finishes the proof.
\end{proof}

\begin{lem}
\label{lem:second slice}
Let $R$ be a perfectoid ring.
Then the induced morphism of $\ul{R}$-modules
\[
\rho_2 \THR(R;\Z_p)
\to
\rho_2 \HR(R;\Z_p)
\]
is an epimorphism. 
\end{lem}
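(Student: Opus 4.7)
The plan is to imitate the strategy of Lemmas \ref{lem:zeroth slice} and \ref{lem:first slice}: first reduce, via Lemma \ref{lem:induction} and Nakayama (Lemma \ref{lem:Nakayama}), to the case where $R$ is a perfect $\F_p$-algebra; then reduce further to $R=\F_p$ using the perfectoid base-change Lemma \ref{lem:perfectoid basechange}; and finally read off the surjectivity from the Dotto--Moi--Patchkoria--Reeh computation \cite[Theorem 5.15]{DMPR21} of $\THR(\F_p)$.

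First, let $R'$ denote the colimit perfection of $R/p$. By Lemmas \ref{lem:zeroth slice} and \ref{lem:first slice}, $\rho_i \THR(R;\Z_p)$ is a finitely generated free $\ul{R}$-module for $i=0,1$, and the base change along $\ul{R}\to \ul{R'}$ is an isomorphism for these $i$. Applying Lemma \ref{lem:induction} with $n=2$ then shows that $\rho_2\THR(R;\Z_p)$ is a finitely generated $\ul{R}$-module and that
\[
\rho_2\THR(R;\Z_p) \sotimes_{\ul{R}} \ul{R'} \cong \rho_2\THR(R';\Z_p).
\]
Combining Proposition \ref{prop:perfectoid HR} with Lemma \ref{perfectoid basechange HR} provides the parallel $\HR$-statement, namely $\rho_2\HR(R;\Z_p)\cong \ul{R}$, $\rho_2\HR(R';\Z_p)\cong \ul{R'}$, and the corresponding base change. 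Both sides of the map in the statement are thus finitely generated $\ul{R}$-modules, so Lemma \ref{lem:Nakayama} reduces the claim to surjectivity of the corresponding map over $R'$, and we may assume $R$ is a perfect $\F_p$-algebra.

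For such $R$, the morphism $\F_p \to R$ is a map of perfectoid rings, so Lemmas \ref{lem:perfectoid basechange} and \ref{perfectoid basechange HR} give $\THR(R;\Z_p)\simeq \THR(\F_p;\Z_p)\sotimes_{\ul{\F_p}}^\L \ul{R}$ and likewise for $\HR$. Running the slice-tower argument of Lemma \ref{perfectoid basechange HR}, with the explicit slice structure of $\THR(\F_p;\Z_p)$ supplied by \cite[Theorem 5.15]{DMPR21} in place of Proposition \ref{prop:perfectoid HR}, will show that base change along $\ul{\F_p}\to \ul{R}$ commutes with $\rho_2$ for both $\THR$ and $\HR$. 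Hence it suffices to check surjectivity in the case $R=\F_p$, where the DMPR identification yields $\rho_2\THR(\F_p;\Z_p)\cong \ul{\F_p}$. Together with the multiplicativity of the canonical map $\THR(\F_p)\to \HR(\F_p;\Z_p)$ and the fact that this map is an isomorphism on $\rho_0$ (Lemma \ref{lem:zeroth slice}), this forces the induced map $\ul{\F_p}\to \ul{\F_p}$ on $\rho_2$ to be nonzero and hence an isomorphism.

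The main obstacle will lie in this final step: one must verify that the canonical map $\THR(\F_p)\to \HR(\F_p;\Z_p)$ is compatible with DMPR's identification of $\THR(\F_p)$ in a way that certifies non-vanishing on the $\rho_2$-piece. This compatibility should follow from tracking $\rH\ul{\F_p}$-module generators of $\THR(\F_p)$ through the DMPR equivalence and using that both the source and target of the map are normed $\rH\ul{\F_p}$-algebras with matching behavior on $\rho_0$.
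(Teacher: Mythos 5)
Your reductions to the case of a perfect $\F_p$-algebra and then to $R=\F_p$ follow the same strategy as the paper (Nakayama via Lemma~\ref{lem:Nakayama}, base change via Lemmas~\ref{lem:induction} and~\ref{lem:perfectoid basechange}, and free slices from \cite[Theorem 5.15]{DMPR21}), and that part is fine.

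The gap is exactly where you flagged it, and the proposed fix does not close it. Multiplicativity of $\THR(\F_p)\to\HR(\F_p;\Z_p)$ together with an isomorphism on $\rho_0$ gives no control on $\rho_2$: an $\rH\ul{\F_p}$-algebra map $\ul{\F_p}[\tilde u]\to\ul{\F_p}[u]$ can perfectly well send $\tilde u\mapsto 0$ while being the identity in degree $0$. Likewise, ``tracking generators through the DMPR equivalence'' begs the question, since that equivalence does not by itself tell you how the degree-$(1+\sigma)$ generator of $\THR(\F_p)$ lands in $\HR(\F_p;\Z_p)$. What is actually needed is a comparison with the underlying non-equivariant story: the paper considers the commutative square whose horizontal arrows are restriction maps
\[
\begin{tikzcd}
\pi_{1+\sigma}\THR(\F_p)\ar[d]\ar[r]&
\pi_{1+\sigma}\HR(\F_p)\ar[d,"\simeq"]
\\
\pi_2\THH(\F_p)\ar[r,"\simeq"]&
\pi_2\HH(\F_p),
\end{tikzcd}
\]
with the vertical maps the natural ones, and then cites three external inputs: the left vertical map $\pi_{1+\sigma}\THR(\F_p)\to\pi_2\THH(\F_p)$ is surjective by \cite[Lemma 5.18(ii)]{DMPR21}; the right vertical map is an isomorphism because $\HR(\F_p;\Z_p)$ is very even with constant slices by Proposition~\ref{prop:perfectoid HR}; and the bottom map $\pi_2\THH(\F_p)\to\pi_2\HH(\F_p)$ is an isomorphism by \cite[Proposition IV.4.2]{NS}. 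A diagram chase then forces the top map to be an epimorphism, which is the desired statement at $R=\F_p$. Without these specific inputs (especially the DMPR surjectivity of the restriction and the Nikolaus--Scholze identification of $\pi_2\THH(\F_p)\to\pi_2\HH(\F_p)$) the argument does not go through.
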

\begin{proof}
Assume first that $R$ is a perfect $\F_p$-algebra.
Then $R$ is a perfectoid ring.
By Lemma \ref{lem:perfectoid basechange},
we reduce to the case when $R=\F_p$.
Consider the induced commutative square
\begin{equation}
\label{eqn:second slice}
\begin{tikzcd}
\pi_{1+\sigma}\THR(\F_p)\ar[d]\ar[r]&
\pi_{1+\sigma}\HR(\F_p)\ar[d,"\simeq"]
\\
\pi_2\THH(\F_p)\ar[r,"\simeq"]&
\pi_2\HH(\F_p)
\end{tikzcd}
\end{equation}
whose vertical homomorphisms are the restriction homomorphisms.
The left vertical homomorphism is an epimorphism by \cite[Lemma 5.18(ii)]{DMPR21}.
The right vertical homomorphism can be identified with $\pi_{1+\sigma}(\ul{R}[1+\sigma])\to \pi_2(R[2])$ by Proposition \ref{prop:perfectoid HR},
which is an isomorphism.
The lower horizontal homomorphism is an isomorphism by \cite[Proposition IV.4.2]{NS}.
Hence the upper horizontal homomorphism is an epimorphism.

For general $R$,
let $R'$ be the colimit perfection of $R/p$.
Since $R'$ is a perfect $\F_p$-algebra,
we know that the induced morphism of $\ul{R'}$-modules
\[
\rho_2 \THR(R';\Z_p)
\to
\rho_2 \HR(R';\Z_p)
\]
is an epimorphism.
By Lemmas \ref{lem:zeroth slice}, \ref{lem:induction}, and \ref{lem:first slice},
$\rho_2\THR(R;\Z_p)$ is a finitely generated $\ul{R}$-module,
and the induced morphism of $\ul{R'}$-modules
\[
\rho_2 \THR(R;\Z_p)\sotimes_{\ul{R}}\ul{R'}
\to
\rho_2 \THR(R';\Z_p)
\]
is an isomorphism.
On the other hand,
Proposition \ref{prop:perfectoid HR} and
Lemma \ref{lem:induction} imply that $\rho_2(\HR(R;\Z_p))$ is a finitely generated $\ul{R}$-module and the induced morphism of $\ul{R'}$-modules
\[
\rho_2 \HR(R;\Z_p)\sotimes_{\ul{R}} \ul{R'}
\to
\rho_2\HR(R';\Z_p)
\]
is an isomorphism.
Lemma \ref{lem:Nakayama} finishes the proof.
\end{proof}

For $A\in \NAlg^{\Z/2}$,
let
\[
T_A(S^{1+\sigma})
:=
\bigoplus_{n=0}^\infty
\Sigma^{n+\sigma n} A
\]
denote the free associative $A$-algebra on $S^{1+\sigma}$.
Dotto-Moi-Patchkoria-Reeh \cite[Theorem 5.15]{DMPR21} show that there is an equivalence of associative $\Z/2$-equivariant ring spectra
\begin{equation}
T_{\rH \ul{\F_p}}(S^{1+\sigma})
\simeq
\THR(\F_p)
\end{equation}
for every prime $p$.
This is a crucial ingredient for our computation of $\THR$ of perfectoid rings below, similar to  B{\"o}kstedt's computation of $\THH(\F_p)$ being crucial for \cite{BMS19}.

In the case that $R$ is a perfect field of characteristic $2$, Dotto-Moi-Patchkoria compute $\THR(R;\Z_p)$ in \cite[Remark 5.15]{DMP}.
It is observed in \cite[Proposition 6.26]{QS22} that the proof holds for a perfect $\F_p$-algebra $R$ and any prime $p$.
We now generalize this computation to the mixed characteristic
case.
Note that if 2 is not invertible, the proof of the following result relies on Theorem \ref{conjHKR},
whose proof appears in \cite[Theorem 6.20]{Par23}.

\begin{thm}
\label{perfectoid THR}
Let $R$ be a perfectoid ring.
Then there is a natural equivalence of 
associative $\Z/2$-equivariant ring spectra
\[
\THR(R;\Z_p)
\simeq
T_{\rH \ul{R}} (S^{1+\sigma}),
\]
In particular,
we have
\[
\rho_n \THR(R;\Z_p)
\cong
\left\{
\begin{array}{ll}
\ul{R}  & \text{if $n$ is even and nonnegative},
\\
0 & \text{otherwise}.
\end{array}
\right.
\]
\end{thm}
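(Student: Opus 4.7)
The plan is to adapt the strategy of \cite[proof of Theorem 6.1]{BMS19} to the equivariant setting, combining the preparatory lemmas above with the Dotto-Moi-Patchkoria-Reeh equivalence $\THR(R';\Z_p)\simeq T_{\rH\ul{R'}}(S^{1+\sigma})$ for perfect $\F_p$-algebras $R'$ from \cite[Theorem 5.15]{DMPR21}, which holds at every prime by \cite[Proposition 6.26]{QS22}.

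First I would construct the comparison map. Using Lemma \ref{lem:second slice} together with Proposition \ref{prop:perfectoid HR}, pick an element $u\in H_{1+\sigma}\THR(R;\Z_p)$ lifting the canonical generator $1\in R\cong H_{1+\sigma}\HR(R;\Z_p)$. This element corresponds to an $\rH\ul{R}$-linear map $\Sigma^{1+\sigma}\rH\ul{R}\to \THR(R;\Z_p)$, which extends, by the universal property of the free associative algebra, to a morphism of normed $\Z/2$-spectra
\[
\Psi\colon T_{\rH\ul{R}}(S^{1+\sigma})\to \THR(R;\Z_p).
\]

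Next, I would prove by induction on $n\geq 0$ that $\rho_n\Psi$ is an isomorphism; the cases $n=0,1$ are Lemmas \ref{lem:zeroth slice} and \ref{lem:first slice}. For $n\geq 2$, the inductive hypothesis (namely that $\rho_i\THR(R;\Z_p)$ is finitely generated free and formation commutes with base change along $\ul{R}\to\ul{R'}$ for $i<n$, where $R'$ denotes the colimit perfection of $R/p$) allows us to apply Lemma \ref{lem:induction}, which guarantees that $\rho_n\THR(R;\Z_p)$ is finitely generated and commutes with the same base change. Since $R'$ is a perfect $\F_p$-algebra, the DMPR equivalence for $R'$ makes $\rho_n\Psi\sotimes_{\ul{R}}\ul{R'}$ into an isomorphism (the image $u\otimes 1$ is a generator of $\rho_2\THR(R';\Z_p)$, so the base-changed $\Psi$ agrees with the DMPR equivalence up to automorphism). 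Applying Lemma \ref{lem:Nakayama} to $\rho_n\Psi$ gives surjectivity; for injectivity I would apply a derived Nakayama argument to the fiber of $\Psi$, which is pseudo-coherent by Lemmas \ref{lem:pseudo-coherence and exact sequence} and \ref{lem:pseudo-coherence} and which base-changes to zero over $R'$.

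Finally, since $\rho_n\Psi$ is an isomorphism for all $n\in\Z$ (with negative slices vanishing by connectivity, Proposition \ref{connective}), Proposition \ref{conservative slices} implies that $\Psi$ is an equivalence, yielding the desired equivalence of normed $\Z/2$-spectra, and the slice formula for $\rho_n\THR(R;\Z_p)$ follows at once. The main obstacle is the derived Nakayama argument used to deduce isomorphism over $R$ from isomorphism over $R'$: while Lemma \ref{lem:Nakayama} immediately gives surjectivity at each slice, vanishing of the fiber of $\Psi$ requires the pseudo-coherence and derived $p$-completeness of both sides together with the fact that $\ker(R\to R')$ lies in the Jacobson radical of the perfectoid ring $R$.
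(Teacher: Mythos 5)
Your proposal follows the paper's strategy for constructing the comparison map $\Psi$ from a lift $u$ of a generator of $\pi_{1+\sigma}\HR(R;\Z_p)$, for establishing surjectivity of $\rho_n\Psi$ via the slice-wise induction with Lemmas \ref{lem:induction} and \ref{lem:Nakayama}, and for concluding via Proposition \ref{conservative slices}. The genuine point of departure is the injectivity step. The paper does \emph{not} use a derived Nakayama argument for this; instead it observes that the surjections $\ul{R}\twoheadrightarrow\rho_n\THR(R;\Z_p)$ are isomorphisms because $R$ is reduced and the ranks are $\geq 1$ at every prime, which is checked separately at characteristic-$p$ points (via $R'$) and at characteristic-zero points (via Lemma \ref{lem:rational THR(Z)} and Proposition \ref{prop:perfectoid HR}). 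Your derived-Nakayama route is thus a genuinely different argument, and it does appear to work: since $\Psi\sotimes_{\ul{R}}\ul{R'}$ sends the generator to a generator, it agrees with the DMPR equivalence up to a unit, so $\fib(\Psi)\sotimes^{\L}_{\ul{R}}\ul{R'}\simeq 0$; $\fib(\Psi)$ is pseudo-coherent and bounded below; and since $\Spec(R')\cong\Spec(R/p)=V(p)$ contains $\mathrm{MaxSpec}(R)$ (because $p$ lies in the Jacobson radical of the $p$-complete ring $R$), an induction on the lowest nonvanishing homology group — which is finitely generated by pseudo-coherence and whose support is a closed subset disjoint from $V(p)$, hence empty — forces $\fib(\Psi)\simeq 0$. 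What your route buys is elimination of the rational input (Lemma \ref{lem:rational THR(Z)}) and the rank-at-every-point bookkeeping; what it costs is that you must formulate and prove the derived-Nakayama lemma for pseudo-coherent objects of $\rD(\ul{R})$, which is strictly stronger than the paper's Lemma \ref{lem:Nakayama} (that lemma is only about surjectivity of a map of finitely generated modules), and you correctly flag this as the main obstacle. Note also that if you take the derived-Nakayama route, the slice-by-slice surjectivity induction becomes largely redundant for the equivalence itself: once $\Psi\sotimes\ul{R'}$ is known to be an equivalence and $\fib(\Psi)$ is known pseudo-coherent, the vanishing of the fiber is immediate, without needing to first establish surjectivity degree by degree.
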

\begin{proof}
Lemma \ref{lem:second slice} gives an epimorphism of $R$-modules
\[
\pi_{1+\sigma}\THR(R;\Z_p)
\to
\pi_{1+\sigma}\HR(R;\Z_p).
\]
Choose $\tilde{u}\in \pi_{1+\sigma}\THR(R;\Z_p)$ that maps to a generator of $\pi_{1+\sigma}\HR(R;\Z_p)\cong R$,
where the isomorphism follows from Proposition \ref{prop:perfectoid HR}.
Consider the corresponding map of $\Z/2$-spectra $x\colon \Sigma^{1+\sigma}\Sphere\to \THR(R;\Z_p)$.
Since $\THR(R;\Z_p)$ is an $\rH \ul{R}$-algebra,
we obtain the induced map
\[
T_{\rH \ul{R}}(S^{1+\sigma})
\to
\THR(R;\Z_p).
\]

We claim that the induced morphism
\[
\rho_n T_{\rH \ul{R}}(S^{1+\sigma})
\to
\rho_n\THR(R;\Z_p).
\]
is an isomorphism for every integer $n$,
which implies the theorem due to Proposition \ref{conservative slices}.
The claim holds when $R=\F_p$ by the proof of \cite[Theorem 5.15]{DMPR21} and the commutativity of \eqref{eqn:second slice}.
Lemmas \ref{lem:box base change} and \ref{lem:perfectoid basechange} imply
that the claim holds when $R$ has characteristic $p$.

Assume that $R$ has mixed characteristic.
Note that we have
\[
\rho_n T_{\rH \ul{R}}(S^{1+\sigma})
\cong
\left\{
\begin{array}{ll}
\ul{R}  & \text{if $n$ is even and nonnegative},
\\
0 & \text{otherwise}.
\end{array}
\right.
\]
We proceed by induction on $n$.
The claim holds for $n<0$ since $\THR(R;\Z_p)$ is $(-1)$-connected by Proposition \ref{connective}.

Assume $n\geq 0$.
Let $R'$ be the colimit perfection of $R/p$.
By the induction hypothesis
and Lemma \ref{lem:induction},
the induced morphism of $\ul{R'}$-modules
\[
\rho_n\THR(R;\Z_p)\sotimes_{\ul{R}} \ul{R'}
\to
\rho_n\THR(R';\Z_p)
\]
is an isomorphism.
Since the claim holds for the perfect $\F_p$-algebra $R'$,
the induced morphism of $\ul{R'}$-modules
\[
\rho_n T_{\rH \ul{R}}(S^{1+\sigma})\sotimes_{\ul{R}}\ul{R'}
\to
\rho_n \THR(R;\Z_p)\sotimes_{\ul{R}} \ul{R'}
\]
is an isomorphism using the obvious isomorphism $\rho_n T_{\rH \ul{R}}(S^{1+\sigma}) \sotimes_{\ul{R}}\ul{R'}\simeq \rho_n T_{\rH \ul{R'}}(S^{1+\sigma})$.
Together with Lemma \ref{lem:Nakayama},
we see that the induced morphism of $\ul{R}$-modules
\[
\rho_n T_{\rH \ul{R}}(S^{1+\sigma}) \to \rho_n\THR(R;\Z_p)
\]
is an epimorphism.
Since $R$ is reduced,
it suffices to show that the ranks of $\rho_n\THR(R;\Z_p)(C_2/e)$ and $\rho_n\THR(R;\Z_p)(C_2/C_2)$ at
any point of $\Spec(R)$ are at least one.
This claim holds for any point of $\Spec(R')$, which is a subset of $\Spec(R)$ as a topological space.
The complement of the inclusion $\Spec(R')\subset \Spec(R)$ consists of the characteristic zero points.
We have an equivalence
\[
\rho_n\THR(R;\Z_p)\wedge_{\rH \ul{\Z}} \rH \ul{\Q}
\simeq
\rho_n\HR(R;\Z_p)\wedge_{\rH \ul{\Z}} \rH \ul{\Q}
\]
using Lemma \ref{lem:rational THR(Z)}.
Proposition \ref{prop:perfectoid HR} finishes the proof.
\end{proof}

In particular, we have an isomorphism of
graded Green functors,
$\underline{\pi}_{(1+\sigma)*}(\THR(R)) \cong \ul{R} [\tilde{u}]$
with $\tilde{u} \in \pi_{1 + \sigma}(\THR(R))$,
where the Green functor structure follows e.g.\ from \cite[section A.4]{HP23}.
Also note that the result shows that for any perfectoid ring $R$, the $\Z/2$-spectrum $\THR(R)$ is very even in the sense of Definition \ref{def:even}.

The following result refines \cite[Theorem 6.7]{BMS19}. 

\begin{thm}\label{cofibersequence}
Let $R$ be a perfectoid ring,
and let $A$ be an $R$-algebra.
Then there is an equivalence of $\rH \ul{A}$-modules
\[
\HR(A/R;\Z_p)
\simeq
\THR(A;\Z_p)
\wedge_{\THR(R;\Z_p)}
\rH \ul{R}.
\]
Furthermore,
there is a cofiber sequence
$$ \THR(A;\Z_p)[1+ \sigma] \xrightarrow{\tilde{u}}  \THR(A;\Z_p) \to \HR(A/R;\Z_p).$$
\end{thm}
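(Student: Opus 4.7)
The plan is to first establish a cofiber sequence purely at the level of $\THR(R;\Z_p)$-modules, and then base change to $\THR(A;\Z_p)$, with the identification of the resulting object as $\HR(A/R;\Z_p)$ being an almost immediate consequence of Proposition \ref{monoidal completion algebra}.

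First I would use Theorem \ref{perfectoid THR} to obtain the equivalence of $\rH\ul{R}$-algebras
\[
\THR(R;\Z_p)
\simeq
T_{\rH\ul{R}}(S^{1+\sigma})
=
\bigoplus_{n=0}^\infty \Sigma^{n+n\sigma}\rH\ul{R},
\]
with $\tilde u\in \pi_{1+\sigma}\THR(R;\Z_p)$ corresponding to the generator in the summand $n=1$. Multiplication by $\tilde u$ on the right shifts grading by $1$, so the cofiber of $\tilde u\colon \THR(R;\Z_p)[1+\sigma]\to \THR(R;\Z_p)$ in $\Mod_{\THR(R;\Z_p)}$ is canonically identified with the summand $n=0$, i.e.\ with $\rH\ul{R}$ equipped with the augmentation $\THR(R;\Z_p)\to\rH\ul{R}$ from Definition \ref{def:thrandhr}. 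This produces a cofiber sequence of $\THR(R;\Z_p)$-modules
\[
\THR(R;\Z_p)[1+\sigma]
\xrightarrow{\tilde u}
\THR(R;\Z_p)
\to
\rH\ul{R}.
\]

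Next I would smash this cofiber sequence with $\THR(A;\Z_p)$ over $\THR(R;\Z_p)$, using the $\THR(R;\Z_p)$-algebra structure on $\THR(A;\Z_p)$ induced by $R\to A$. Since $\wedge_{\THR(R;\Z_p)}$ preserves colimits in each variable, and smashing with the equivariant shift $[1+\sigma]$ commutes with base change, this yields the cofiber sequence
\[
\THR(A;\Z_p)[1+\sigma]
\xrightarrow{\tilde u}
\THR(A;\Z_p)
\to
\THR(A;\Z_p)\wedge_{\THR(R;\Z_p)}\rH\ul{R}
\]
displayed in the theorem, once we identify the right-hand term with $\HR(A/R;\Z_p)$.

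For the identification, recall from Definition \ref{def:thrandhr} that $\HR(A/R)=\THR(A)\wedge_{\THR(R)}\rH\ul{R}$. Applying Proposition \ref{monoidal completion algebra} to $B=\THR(R)$ with $M=\THR(A)$ and $N=\rH\ul{R}$ gives
\[
\HR(A/R;\Z_p)
\simeq
\bigl(\THR(A;\Z_p)\wedge_{\THR(R;\Z_p)}(\rH\ul{R})_p^\wedge\bigr)_p^\wedge.
\]
Since $R$ is perfectoid, hence $p$-adically complete, $\rH\ul{R}$ is already derived $p$-complete (cf.\ Proposition \ref{completion results}(7)), so $(\rH\ul{R})_p^\wedge\simeq \rH\ul{R}$. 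Moreover, the two other terms in the cofiber sequence above, namely $\THR(A;\Z_p)$ and $\THR(A;\Z_p)[1+\sigma]$, are derived $p$-complete by construction, and Proposition \ref{completion results}(5) together with the stability of derived $p$-complete objects under cofibers (which are finite limits, hence preserved since $p$-completion is a localization) shows that the cofiber is already derived $p$-complete, so the outer $(-)_p^\wedge$ is redundant. This gives the first asserted equivalence and completes the cofiber sequence.

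The only real technical point — which I expect to be the mildest part of the argument rather than a serious obstacle — is the verification that $\rH\ul{R}$ is $p$-complete and that the smash-product-over-$\THR(R;\Z_p)$ of two $p$-complete objects remains $p$-complete; both follow from $R$ being perfectoid (hence $p$-adically complete by \cite[Definition 3.5]{BMS18}) and Proposition \ref{completion results}(5) applied to the cofiber sequence above.
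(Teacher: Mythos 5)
Your proposal is correct and takes essentially the same approach as the paper: both start from Theorem \ref{perfectoid THR} to write down the cofiber sequence $\THR(R;\Z_p)[1+\sigma]\to\THR(R;\Z_p)\to\rH\ul{R}$ (the paper writes the cofiber as $\HR(R/R;\Z_p)$, which is the same thing since $R$ is $p$-complete), base change along $\THR(R;\Z_p)\to\THR(A;\Z_p)$, observe the cofiber is derived $p$-complete because the other two terms are, and invoke Proposition \ref{monoidal completion algebra} to identify it with $\HR(A/R;\Z_p)$. The only cosmetic difference is that you cite Proposition \ref{completion results}(5) (which is stated for $p$-local rather than $p$-complete objects) where you really want the analogous and equally easy statement that $p$-completeness is preserved under equivariant shifts and cofibers; this is harmless.
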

\begin{proof}
Thanks to Theorem \ref{perfectoid THR},
we have a cofiber sequence
\[
\THR(R;\Z_p)[1+\sigma] \xrightarrow{\tilde{u}}  \THR(R;\Z_p) \to \HR(R/R;\Z_p).
\]
After taking $\THR(A;\Z_p)\wedge_{\THR(R;\Z_p)}$,
we obtain a cofiber sequence
\[
\THR(A;\Z_p)[1+\sigma] \xrightarrow{\tilde{u}}  \THR(A;\Z_p) \to \THR(A;\Z_p)\wedge_{\THR(R;\Z_p)}\rH \ul{R}.
\]
Since $\THR(A;\Z_p)$ and $\THR(A;\Z_p)[1+\sigma]$ are derived $p$-complete,
we see that
$\THR(A;\Z_p)\wedge_{\THR(R;\Z_p)}\rH \ul{R}$ is derived $p$-complete.
Together with Proposition \ref{monoidal completion algebra},
we obtain a natural equivalence of $\rH \ul{A}$-modules
\[
\THR(A;\Z_p)\wedge_{\THR(R;\Z_p)}\rH \ul{R}
\simeq
(\THR(A)\wedge_{\THR(R)}\rH \ul{R})_p^\wedge.
\]
The latter one is identified with $\HR(A/R;\Z_p)$,
which finishes the proof.
\end{proof}

The proof shows that this cofiber sequence is $S^{\sigma}$-equivariant, which will be important in \cite{Par23} when considering $\TCR$.

\medskip

\begin{rmk}
\label{perfectoid ring with nontrivial involution}
We continue the discussion of Proposition \ref{HRsigma} and Example \ref{thrgm}. Now, let $R$ be an $\F_p$-algebra.
Then the map $p\colon \alpha^*(\ul{R}[S^1])\to \alpha^*(\ul{R}[S^1])$ in $\rD(\ul{R})$ induced by $p\colon S^1\to S^1$ can be identified with $\id\oplus 0\colon R\oplus R[1]\to R\oplus R[1]$.
On the other hand,
using $\THR(R[\Z^\sigma])\simeq \THR(R)\wedge \THR(\Sphere[\Z^\sigma])$ from \cite[Proposition 2.1.5]{HP23} and the above description of $\THR(\Sphere[\Z^\sigma])$,
we have an equivalence
\[
\THR(R[\Z^\sigma])
\simeq
\THR(R)\sotimes_{\ul{R}}^\L (\ul{R[\Z^\sigma]}\oplus \cF[1])
\]
in $\rD(\ul{R[\Z^\sigma]})$ for some $\cF\in \rD(\ul{R[\Z^\sigma]})$ that forgets to $\ul{R}\oplus \bigoplus_{j\geq 1} \ul{R^{\oplus \Z/2}}\in \rD(\ul{R})$.
The above description of $p$ on $\alpha^*(\Sphere[S^1])$ implies that the map $p\colon \THR(R[\Z^\sigma])\to \THR(R[\Z^\sigma])$
sends the direct summand $\THR(R)\sotimes_{\ul{R}}^\L \cF[1]$ to $0$.
Let $\alpha\colon \THR(R[\Z^\sigma])\to \THR(R[\Z^\sigma])$ be the morphism in $\rD(\ul{R[\Z^\sigma]})$ induced by $p\colon \Z^\sigma\to \Z^\sigma$.
We obtain an equivalence
\[
\THR(R)\sotimes_{\ul{R}}^\L \ul{R[\Z[\tfrac{1}{p}]^\sigma]}
\simeq
\colim(\THR(R[\Z^\sigma]) \xrightarrow{\alpha} \THR(R[\Z^\sigma])\xrightarrow{\alpha}\cdots)
\]
in $\rD(\ul{R[\Z^\sigma]})$,
where $\Z[\tfrac{1}{p}]^\sigma$ is the abelian group $\Z[\tfrac{1}{p}]$ with the involution $x\mapsto -x$ for $x\in \Z[\tfrac{1}{p}]$,
which satisfies
\[
\Z[\tfrac{1}{p}]^\sigma
:=
\colim(\Z^\sigma \xrightarrow{p} \Z^\sigma\xrightarrow{p}\cdots).
\]
The functor $\THR\colon \NAlg^{\Z/2}\to \NAlg^{\Z/2}$ preserves colimits,
and the forgetful functor $\NAlg^{\Z/2}\to \Sp^{\Z/2}$ preserves sifted colimits by Proposition \ref{prop:NAlg has colimits}.
Hence we obtain an equivalence
\[
\THR(R[\Z[\tfrac{1}{p}]^\sigma])
\simeq
\THR(R)\sotimes_{\ul{R}}^\L \ul{R[\Z[\tfrac{1}{p}]^\sigma]}
\]
in $\rD(\ul{R[\Z^\sigma]})$.
Taking the induced maps of simplicial sets $\Z[\tfrac{1}{p}]^\sigma\to \Ndi \Z[\tfrac{1}{p}]^\sigma \to \Z[\tfrac{1}{p}]^\sigma$ into account,
we see that the above equivalence for $\THR(R[\Z[\tfrac{1}{p}]^\sigma])$ is indeed an equivalence in $\rD(\ul{R[\Z[\tfrac{1}{p}]^\sigma]})$.
The group ring of a uniquely $p$-divisible group over a perfect $\F_p$-algebra is a perfect $\F_p$-algebra and thus  perfectoid rings by \cite[Example 3.6]{BMS18}.
Hence if $R$ is a perfect $\F_p$-algebra, and $A:=R[\Z[\tfrac{1}{p}]^\sigma]$ the associated perfect $\F_p$-algebra with a nontrivial involution, then in particular the underlying ring $R[\Z[\tfrac{1}{p}]]$ is perfectoid.
In Example \ref{thrgm},
we show an equivalence $\THR(A)\simeq \THR(R)\sotimes_{\ul{R}}^\L \ul{A}$ in $\rD(\ul{A})$.
Theorem \ref{perfectoid THR} gives an equivalence of associative $\Z/2$-equivariant ring spectra
\(
T_{\rH \ul{R}}(S^{1+\sigma})
\simeq
\THR(R;\Z_p).
\)
Together with
\(
T_{\rH \ul{R}}(S^{1+\sigma})
\sotimes_{\ul{R}}^\L \ul{A}
\simeq
T_{\rH \ul{A}}(S^{1+\sigma}),
\)
we obtain an equivalence of associative $\Z/2$-equivariant ring spectra
\[
T_{\rH \ul{A}}(S^{1+\sigma})
\simeq
\THR(A;\Z_p).
\]
Hence we expect that Theorem \ref{perfectoid THR} is satisfied by a more general class of perfectoid rings $R$ with certain (or all) nontrivial involutions.
Note however that this spectrum is not necessarily very even, as $\rH \ul{A}$ is not very even for rings $A$ with a non-trivial involution.
We can argue similarly in the setting of Proposition \ref{thra2} to obtain an equivalence of associative $\Z/2$-equivariant ring spectra
\[
T_{\rH \ul{B}}(S^{1+\sigma})
\simeq
\THR(B;\Z_p)
\]
with $B:=R[\N^{\oplus \Z/2}[\tfrac{1}{p}]]$ if $R$ is a perfect $\F_p$-algebra.
Observe that Theorem \ref{cofibersequence} extends to $R[\Z[\tfrac{1}{p}]^\sigma]$ and $R[\N^{\oplus \Z/2}[\tfrac{1}{p}]]$.
\end{rmk}

\appendix

\section{\texorpdfstring{$\infty$-}{Infinity }category of modules and sifted colimits}
\label{secA}

The purpose of this appendix is to show that the $\infty$-category of modules admits sifted colimits under reasonable hypotheses by arguing as in \cite[Proposition 3.2.3.1]{HA}.

\begin{lem}
\label{module lemma}
Let $p\colon \cM\to \Delta^1$ be a cocartesian fibration of $\infty$-categories,
which classifies
a functor $f\colon \cC\to \cD$ in the sense of \cite[Definition 3.3.2.2]{HTT},
where $\cC$ and $\cD$ are the fibers of $p$ at $\{0\}$ and $\{1\}$.
Let $\cC'$ be the full subcategory of $\Fun_{\Delta^1}(\Delta^1,\cM)$ spanned by the functors $\Delta^1\to \cM$
which are $p$-cocartesian edges
(defined in \cite[Definition 2.4.11 and before Proposition 2.4.1.8]{HTT}).
If $K$ is a simplicial set and $f$ preserves $K$-indexed colimits,
then the inclusion functor $\cC'\to \Fun_{\Delta^1}(\Delta^1,\cM)$ preserves $K$-indexed colimits.
\end{lem}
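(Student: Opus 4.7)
The plan is to follow the strategy of \cite[Proposition 3.2.3.1]{HA}, which the lemma statement itself points to. The idea is to characterize $p$-cocartesian edges in terms of the classifying functor $f$, describe $K$-colimits in the section category $\Fun_{\Delta^1}(\Delta^1,\cM)$ pointwise through the source and target fibers, and then detect the cocartesian condition after taking colimits.

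First I would recall that, since $p$ is the cocartesian fibration classifying $f$, a morphism $\phi\colon C\to D$ in $\cM$ covering the unique non-degenerate edge of $\Delta^1$ is determined up to contractible choice by a pair $(C,g)$ with $g\colon f(C)\to D$ a morphism in $\cD$, obtained by factoring $\phi$ through the $p$-cocartesian lift at $C$. Moreover $\phi$ is $p$-cocartesian precisely when $g$ is an equivalence. Thus $\cC'$ coincides with the full subcategory of $\Fun_{\Delta^1}(\Delta^1,\cM)$ cut out by the requirement that this classifying map be an equivalence, and in particular the source projection $\cC'\to \cC$ is a trivial fibration.

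Next I would describe $K$-indexed colimits in $\Fun_{\Delta^1}(\Delta^1,\cM)$. Given a $K$-diagram $\{\phi_k\colon C_k\to D_k\}_{k\in K}$ whose colimit $\phi\colon \widetilde{C}\to \widetilde{D}$ exists in $\Fun_{\Delta^1}(\Delta^1,\cM)$, the evaluation functors at $\{0\}$ and $\{1\}$ identify $\widetilde{C}\simeq \colim_k C_k$ in $\cC$ and $\widetilde{D}\simeq \colim_k D_k$ in $\cD$. This pointwise computation of colimits in section categories of cocartesian fibrations is the technical heart of the argument, and mirrors the corresponding step in \cite[Proposition 3.2.3.1]{HA}.

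Assuming each $\phi_k$ lies in $\cC'$, each classifying map $g_k\colon f(C_k)\xrightarrow{\sim} D_k$ is an equivalence in $\cD$. The classifying map of the colimit $\phi$ then fits into the diagram
\[
f(\widetilde{C})\;=\;f\bigl(\colim_k C_k\bigr)\;\xleftarrow{\;\alpha\;}\;\colim_k f(C_k)\;\xrightarrow{\;\sim\;}\;\colim_k D_k\;=\;\widetilde{D}
\]
in $\cD$, where $\alpha$ is the canonical comparison map and the rightward arrow is the colimit of the equivalences $g_k$. Under the hypothesis that $f$ preserves $K$-indexed colimits, $\alpha$ is an equivalence, so the classifying map of $\phi$ is as well, which means $\phi\in\cC'$. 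Since $\cC'$ is a full subcategory of $\Fun_{\Delta^1}(\Delta^1,\cM)$, the colimit $\phi$ computed in the larger category is automatically the colimit in $\cC'$, which is exactly the assertion that the inclusion preserves $K$-indexed colimits. The main obstacle is the second step above, namely making rigorous the description of $K$-colimits in $\Fun_{\Delta^1}(\Delta^1,\cM)$ through the source and target fibers, which is where the general theory of (co)cartesian fibrations enters.
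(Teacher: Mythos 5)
Your overall strategy is the same as the paper's: identify $\cC'$ with $\cC$ via the source evaluation, recognize that the two endpoint evaluations $i_0^*$, $i_1^*$ compute colimits pointwise, and feed in the hypothesis that $f$ preserves $K$-indexed colimits to control the target fiber. The paper's proof is organized around the commutative diagram furnished by \cite[Lemma 5.4.7.15]{HTT}: one has an equivalence $g\colon\cC\to\cC'$ with $i_0^*\circ a\circ g\simeq\id_\cC$ and $i_1^*\circ a\circ g\simeq f$, and then one argues that $a\circ g$ preserves $K$-indexed colimits because $i_0^*$ and $i_1^*$ do, the pair $(i_0^*,i_1^*)$ is conservative, and both $\id$ and $f$ preserve them; finally $a$ inherits this since $g$ is an equivalence. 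You instead package the same data as ``$\cC'$ is a full subcategory that is closed under $K$-indexed colimits taken in $\Fun_{\Delta^1}(\Delta^1,\cM)$.''

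Two cautions on your write-up. First, the logical direction of your last step is subtly off: showing that an ambient colimit of a $K$-diagram of cocartesian edges is again cocartesian establishes that the inclusion \emph{reflects} (or creates) $K$-indexed colimits, not directly that it \emph{preserves} them. Preservation starts from a colimit cone in $\cC'$ and asks that its image be a colimit cone in $\Fun_{\Delta^1}(\Delta^1,\cM)$; for this you need to know, in addition, that the colimit \emph{exists} in the ambient category, so that by fullness it must coincide (up to equivalence) with the given colimit in $\cC'$. This existence is available once you know that the colimit of $i_0^*\phi_\bullet$ exists in $\cC$ (which it does, being a colimit in $\cC'\simeq\cC$) and that $f$ carries it to a colimit of $i_1^*\phi_\bullet$ in $\cD$, combined with the pointwise description of colimits in the section category. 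Second, that pointwise description --- which you rightly flag as the technical heart --- is not a gratuitous assumption: it is precisely the content of the relative-colimit machinery for cocartesian fibrations that the paper invokes in the surrounding arguments (e.g.\ \cite[Lemma 3.2.2.9]{HA}, \cite[Corollary 4.3.1.11]{HTT}). Citing one of those, or the comparison $\cC'\simeq\cC$ from \cite[Lemma 5.4.7.15]{HTT} as the paper does, would close both gaps and make your proof complete.
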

\begin{proof}
Let $i_0\colon \{0\}\to \Delta^1$ and $i_1\colon \{1\}\to \Delta^1$ be the inclusion functors.
By \cite[Proof of Lemma 5.4.7.15]{HTT},
there exists a commutative diagram of $\infty$-categories
\[
\begin{tikzcd}
\cC\ar[r,"g"']\ar[rrd,"f"',bend right=15]\ar[rrr,bend left=15,"\id"]&
\cC'\ar[r,hookrightarrow,"a"']&
\Fun(\Delta^1,\cM)\ar[d,"i_1^*"]\ar[r,"i_0^*"']&
\cC
\\
&
&
\cD
\end{tikzcd}
\]
for some equivalence of $\infty$-categories $g\colon \cC\to \cC'$,
where $a$ is the inclusion functor.
The functors $i_0^*$ and $i_1^*$ preserve $K$-indexed colimits,
and the pair of functors $(i_0^*,i_1^*)$ is conservative.
Hence the functor $ag$ preserves $K$-indexed colimits since $f$ and $\id$ preserve $K$-indexed colimits.
Since $g$ is an equivalence of $\infty$-categories,
$a$ preserves $K$-indexed colimits.
\end{proof}

For a monoidal $\infty$-category $\cC^\otimes$,
let $\LMod(\cC)$ denote the $\infty$-category of left module objects of $\cC$ in \cite[Definition 4.2.1.13]{HA}.
To form this,
we need the $\infty$-operads $\Assoc^\otimes$ and $\LM^\otimes$ in \cite[Definitions 4.1.1.3, 4.2.1.7]{HA}.

\begin{prop}
\label{left module sifted colimit}
Let $\cC^{\otimes}$ be a monoidal $\infty$-category such that $\cC$ admits sifted colimits and the monoidal product $\otimes$ preserves sifted colimits in each variable.
Then $\LMod(\cC)$ admits sifted colimits,
and the forgetful functor
\[
p
\colon
\LMod(\cC)
\to
\Alg(\cC) \times \cC
\]
sending a left $A$-module $M$ to $(A,M)$ preserves sifted colimits.
\end{prop}
\begin{proof}
We have the cofibration of $\infty$-operads $q\colon \cC^\otimes \to \Assoc^\otimes$.
The assumption that $\cC$ has sifted colimits and $\otimes$ preserves sifted colimits in each variable implies that $q$ is compatible with sifted colimits in the sense of \cite[Definition 3.1.1.18]{HA}.
Let $u\colon \LM^\otimes\to \Assoc^\otimes$ be the functor in \cite[Remark 4.2.1.9]{HA}.

By \cite[Lemma 3.2.3.7]{HA},
the associated functor $f_!\colon \cC_X^{\otimes} \to \cC_Y^{\otimes}$ preserves sifted colimits for every morphism $f\colon X\to Y$ in $\Assoc^\otimes$.
Let $K$ be a simplicial set.
Apply \cite[Corollary 4.3.1.11]{HTT} to $q$ and \cite[Lemma 3.2.2.9]{HA} to $q^{op}$ and $(\LM^{\otimes})^{op}$ to deduce the following results:
\begin{enumerate}
\item[(i)]
The induced functor
$
r\colon
\Fun(\LM^\otimes,\cC^\otimes)\to \Fun(\LM^\otimes,\Assoc^\otimes)
$
admits relative sifted colimits in the sense of \cite[Definition 4.3.1.1]{HTT}.
\item[(ii)]
A map $K^\rhd\to \Fun(\LM^\otimes,\cC^\otimes)$ is an $r$-colimit diagram if and only if the induced map $K^\rhd\to \Fun(\{X\},\cC^\otimes)$ is a colimit diagram for every $X\in \LM^\otimes$.
\end{enumerate}
By restricting to $u\in \Fun(\LM^\otimes,\Assoc^\otimes)$,
we deduce the following results:
\begin{enumerate}
\item[(i')]
The $\infty$-category $\Fun_{\Assoc^\otimes}(\LM^\otimes,\cC^\otimes)$ admits sifted colimits.
\item[(ii')]
A map $K^\rhd\to \Fun_{\Assoc^\otimes}(\LM^\otimes,\cC^\otimes)$ is a colimit diagram if and only if the induced map $K^\rhd\to \cC_{u(X)}^\otimes$ is a colimit diagram for every $X\in \LM^\otimes$.
\end{enumerate}
The full subcategory $\LMod(\cC)$ of $\Fun_{\Assoc^\otimes}(\LM^\otimes,\cC^\otimes)$ is spanned by the maps of $\infty$-operads,
i.e.,
those functors $\LM^\otimes\to\cC^\otimes$ sending inert morphisms in $\LM^\otimes$ to inert morphisms in $\cC^\otimes$.
We now claim the following:
\begin{enumerate}
\item[(i'')]
The $\infty$-category $\LMod(\cC)$ admits sifted colimits,
and the inclusion functor $\LMod(\cC)\to \Fun_{\Assoc^\otimes}(\LM^\otimes,\Assoc^\otimes)$ preserves sifted colimits.
\item[(ii'')]
A map $K^\rhd\to \LMod(\cC)$ is a colimit diagram if and only if the induced map $K^\rhd\to \cC_{u(X)}^\otimes$ is a colimit diagram for every $X\in \LM^\otimes$.
\end{enumerate}
Let $a\colon K\to \LMod(\cC)$ be a functor.
To show (i''),
it suffices to show that the colimit of $a$ computed in $\Fun_{\Assoc^\otimes}(\LM^\otimes,\cC^\otimes)$ is contained in $\LMod(\cC)$.
This amounts to show that for every inert morphism $e$ in $\LM^\otimes$,
the colimit of the restriction $K\to \Fun_{\Delta^1}(\Delta^1,\cC_{u(e)}^\otimes)$ corresponds to a morphism that is an inert morphism in $\cC^\otimes$,
i.e., a $q$-cartesian morphism in $\cC^\otimes$ by \cite[Proposition 2.1.2.22]{HA}.
Lemma \ref{module lemma} finishes the proof of (i'').
As a consequence of (i'') and (ii'),
we obtain (ii'').
\end{proof}

For a symmetric monoidal $\infty$-category $\cC^\otimes$,
let $\Mod(\cC)$ denote the underlying $\infty$-category of the generalized $\infty$-operad $\Mod(\cC)^{\otimes}$ in \cite[Definition 4.5.1.1]{HA}.
For a commutative algebra object $A$ of $\cC$,
let $\Mod_A(\cC)$ denote the restriction of $\Mod(\cC)$ to $A$-module objects.
We often use the simpler notation $\Mod_A$ instead of $\Mod_A(\cC)$.
By \cite[Corollary 4.5.1.6]{HA},
there is a natural equivalence of $\infty$-categories
\[
\Mod(\cC)\simeq \LMod(\cC)\times_{\Alg(\cC)}\CAlg(\cC).
\]

\begin{prop}
\label{module sifted colimit}
Let $\cC^{\otimes}$ be a symmetric monoidal $\infty$-category such that $\cC$ admits sifted colimits and the monoidal product $\otimes$ preserves sifted colimits in each variable.
Then $\Mod(\cC)$ admits sifted colimits,
and the forgetful functor
\[
p
\colon
\Mod(\cC)
\to
\CAlg(\cC) \times \cC
\]
sending an $A$-module $M$ to $(A,M)$ and preserves sifted colimits.
\end{prop}
\begin{proof}
This is an immediate consequence of Proposition \ref{left module sifted colimit}.
\end{proof}

\bibliography{THRPerfd}

\begin{thebibliography}{10}

\bibitem{AKGH}
{\sc G.~Angelini-Knoll, T.~Gerhardt, and M.~Hill}, {\em Real topological
  {H}ochschild homology via the norm and {R}eal {W}itt vectors}.
\newblock Preprint, arXiv 2111.06970, 2021.

\bibitem{AKQ}
{\sc G.~Angelini-Knoll, H.~J. Kong, and J.~D. Quigley}, {\em Real syntomic
  cohomology}.
\newblock Preprint, arXiv 2505.24734, 2025.

\bibitem{AKF}
{\sc H.~Asaf, I.~Klang, and Z.~Foling}, {\em {Equivariant nonabelian Poincaré
  duality and equivariant factorization homology of Thom spectra \textup{(with
  an appendix by Jeremy Hahn and Dylan Wilson)}}}.
\newblock Preprint, arxiv 2006.13348.

\bibitem{BH21}
{\sc T.~Bachmann and M.~Hoyois}, {\em Norms in motivic homotopy theory},
  Ast\'{e}risque,  (2021), pp.~ix+207.

\bibitem{Bar17}
{\sc C.~Barwick}, {\em Spectral {M}ackey functors and equivariant algebraic
  {$K$}-theory ({I})}, Adv. Math., 304 (2017), pp.~646--727.

\bibitem{BGS20}
{\sc C.~Barwick, S.~Glasman, and J.~Shah}, {\em Spectral {M}ackey functors and
  equivariant algebraic {$K$}-theory, {II}}, Tunis. J. Math., 2 (2020),
  pp.~97--146.

\bibitem{Bek00}
{\sc T.~Beke}, {\em Sheafifiable homotopy model categories}, Math. Proc.
  Cambridge Philos. Soc., 129 (2000), pp.~447--475.

\bibitem{BW20}
{\sc O.~Benoist and O.~Wittenberg}, {\em On the integral {H}odge conjecture for
  real varieties, {I}}, Invent. Math., 222 (2020), pp.~1--77.

\bibitem{BKSO15}
{\sc A.~J. Berrick, M.~Karoubi, M.~Schlichting, and P.~A. {\O}stv{\ae}r}, {\em
  The {H}omotopy {F}ixed {P}oint {T}heorem and the {Q}uillen-{L}ichtenbaum
  conjecture in {H}ermitian {$K$}-theory}, Adv. Math., 278 (2015), pp.~34--55.

\bibitem{BMS18}
{\sc B.~Bhatt, M.~Morrow, and P.~Scholze}, {\em Integral {$p$}-adic {H}odge
  theory}, Publ. Math. Inst. Hautes \'{E}tudes Sci., 128 (2018), pp.~219--397.

\bibitem{BMS19}
\leavevmode\vrule height 2pt depth -1.6pt width 23pt, {\em Topological
  {H}ochschild homology and integral {$p$}-adic {H}odge theory}, Publ. Math.
  Inst. Hautes \'{E}tudes Sci., 129 (2019), pp.~199--310.

\bibitem{BS22}
{\sc B.~Bhatt and P.~Scholze}, {\em Prisms and prismatic cohomology}, Ann. of
  Math. (2), 196 (2022), pp.~1135--1275.

\bibitem{Boekstedt}
{\sc M.~B{\"o}kstedt}, {\em The topological {H}ochschild homology of
  $\mathbb{Z}$ and $\mathbb{Z}/p$}.
\newblock Preprint.

\bibitem{Bouc}
{\sc S.~Bouc}, {\em Green functors and {$G$}-sets}, vol.~1671 of Lecture Notes
  in Mathematics, Springer-Verlag, Berlin, 1997.

\bibitem{CH02}
{\sc J.~D. Christensen and M.~Hovey}, {\em Quillen model structures for
  relative homological algebra}, Math. Proc. Cambridge Philos. Soc., 133
  (2002), pp.~261--293.

\bibitem{CMM21}
{\sc D.~Clausen, A.~Mathew, and M.~Morrow}, {\em {$K$}-theory and topological
  cyclic homology of henselian pairs}, J. Amer. Math. Soc., 34 (2021),
  pp.~411--473.

\bibitem{EGA}
{\sc J.~Dieudonn{\'e} and A.~Grothendieck}, {\em \'{E}l\'ements de
  g\'eom\'etrie alg\'ebrique}, Inst. Hautes \'Etudes Sci. Publ. Math., 4, 8,
  11, 17, 20, 24, 28, 32 (1961--1967).

\bibitem{DMP}
{\sc E.~Dotto, K.~Moi, and I.~Patchkoria}, {\em On the geometric fixed points
  of real topological cyclic homology}, J. Lond. Math. Soc. (2), 109 (2024),
  pp.~Paper No. e12862, 68.

\bibitem{DMPR21}
{\sc E.~Dotto, K.~Moi, I.~Patchkoria, and S.~P. Reeh}, {\em Real topological
  {H}ochschild homology}, J. Eur. Math. Soc. (JEMS), 23 (2021), pp.~63--152.

\bibitem{DO19}
{\sc E.~Dotto and C.~Ogle}, {\em {$K$}-theory of {H}ermitian {M}ackey functors,
  real traces, and assembly}, Ann. K-Theory, 4 (2019), pp.~243--316.

\bibitem{GP18}
{\sc O.~Gwilliam and D.~Pavlov}, {\em Enhancing the filtered derived category},
  J. Pure Appl. Algebra, 222 (2018), pp.~3621--3674.

\bibitem{HW21}
{\sc J.~Hahn and D.~Wilson}, {\em Real topological {H}ochschild homology and
  the {S}egal conjecture}, Adv. Math., 387 (2021), pp.~Paper No. 107839, 17.

\bibitem{HM}
{\sc L.~Hesselholt and I.~Madsen}, {\em Real algebraic ${K}$-theory}.
\newblock https://web.math.ku.dk/$\sim$larsh/papers/s05/book.pdf.

\bibitem{Hil20}
{\sc M.~A. Hill}, {\em Equivariant stable homotopy theory}, in Handbook of
  homotopy theory, CRC Press/Chapman Hall Handb. Math. Ser., CRC Press, Boca
  Raton, FL, 2020, pp.~699--756.

\bibitem{HHR}
{\sc M.~A. Hill, M.~J. Hopkins, and D.~C. Ravenel}, {\em On the nonexistence of
  elements of {K}ervaire invariant one}, Ann. of Math. (2), 184 (2016),
  pp.~1--262.

\bibitem{HHR21}
\leavevmode\vrule height 2pt depth -1.6pt width 23pt, {\em Equivariant stable
  homotopy theory and the {K}ervaire invariant problem}, vol.~40 of New
  Mathematical Monographs, Cambridge University Press, Cambridge, 2021.

\bibitem{HKR}
{\sc G.~Hochschild, B.~Kostant, and A.~Rosenberg}, {\em Differential forms on
  regular affine algebras}, Trans. Amer. Math. Soc., 102 (1962), pp.~383--408.

\bibitem{HP23erratum}
{\sc J.~Hornbostel and D.~Park}, {\em Addendum: {R}eal topological {H}ochschild
  homology of schemes}, J. Inst. Math. Jussieu, 23 (2024), pp.~1519--1520.

\bibitem{HP23}
\leavevmode\vrule height 2pt depth -1.6pt width 23pt, {\em Real topological
  {H}ochschild homology of schemes}, J. Inst. Math. Jussieu, 23 (2024),
  pp.~1461--1518.

\bibitem{L23}
{\sc M.~Land}, {\em Gabber rigidity in hermitian {K}-theory}, Proc. Roy. Soc.
  Edinburgh Sect. A, 154 (2024), pp.~1842--1847.

\bibitem{lewis88}
{\sc L.~G. Lewis, Jr.}, {\em The {$R{\rm O}(G)$}-graded equivariant ordinary
  cohomology of complex projective spaces with linear {${\bf Z}/p$} actions},
  in Algebraic topology and transformation groups ({G}\"{o}ttingen, 1987),
  vol.~1361 of Lecture Notes in Math., Springer, Berlin, 1988, pp.~53--122.

\bibitem{LM06}
{\sc L.~G. Lewis, Jr. and M.~A. Mandell}, {\em Equivariant universal
  coefficient and {K}\"{u}nneth spectral sequences}, Proc. London Math. Soc.
  (3), 92 (2006), pp.~505--544.

\bibitem{LMS86}
{\sc L.~G. Lewis, Jr., J.~P. May, M.~Steinberger, and J.~E. McClure}, {\em
  Equivariant stable homotopy theory}, vol.~1213 of Lecture Notes in
  Mathematics, Springer-Verlag, Berlin, 1986.
\newblock With contributions by J. E. McClure.

\bibitem{Lo}
{\sc J.-L. Loday}, {\em Cyclic homology}, vol.~301 of Grundlehren der
  mathematischen Wissenschaften [Fundamental Principles of Mathematical
  Sciences], Springer-Verlag, Berlin, second~ed., 1998.
\newblock Appendix E by Mar\'{\i}a O. Ronco, Chapter 13 by the author in
  collaboration with Teimuraz Pirashvili.

\bibitem{HTT}
{\sc J.~Lurie}, {\em {Higher topos theory}}, vol.~170, Princeton, NJ: Princeton
  University Press, 2009.

\bibitem{HA}
\leavevmode\vrule height 2pt depth -1.6pt width 23pt, {\em {Higher algebra}}.
\newblock https://www.math.ias.edu/$\sim$lurie/, 2017.

\bibitem{SAG}
\leavevmode\vrule height 2pt depth -1.6pt width 23pt, {\em Spectral algebraic
  geometry}.
\newblock https://www.math.ias.edu/$\sim$lurie/, 2018.

\bibitem{mathew21}
{\sc A.~Mathew}, {\em Some recent advances in topological {H}ochschild
  homology}, Bull. Lond. Math. Soc., 54 (2022), pp.~1--44.

\bibitem{NS}
{\sc T.~Nikolaus and P.~Scholze}, {\em On topological cyclic homology}, Acta
  Math., 221 (2018), pp.~203--409.

\bibitem{Par23log}
{\sc D.~Park}, {\em Motivic real topological {H}ochschild spectrum}.
\newblock Preprint, arXiv 2305.04150, 2023.

\bibitem{Par23}
\leavevmode\vrule height 2pt depth -1.6pt width 23pt, {\em Syntomic cohomology
  and real topological cyclic homology}.
\newblock Preprint, arXiv 2311.06593, 2023.

\bibitem{PSW22}
{\sc I.~Patchkoria, B.~Sanders, and C.~Wimmer}, {\em The spectrum of derived
  {M}ackey functors}, Trans. Amer. Math. Soc., 375 (2022), pp.~4057--4105.

\bibitem{QS22}
{\sc J.~D. Quigley and J.~Shah}, {\em On the equivalence of two theories of
  real cyclotomic spectra}.
\newblock Preprint, arXiv 2112.07462, 2021.

\bibitem{Rob15}
{\sc M.~Robalo}, {\em {$K$}-theory and the bridge from motives to
  noncommutative motives}, Adv. Math., 269 (2015), pp.~399--550.

\bibitem{Rog09}
{\sc J.~Rognes}, {\em Topological logarithmic structures}, in New topological
  contexts for {G}alois theory and algebraic geometry ({BIRS} 2008), vol.~16 of
  Geom. Topol. Monogr., Geom. Topol. Publ., Coventry, 2009, pp.~401--544.

\bibitem{SS03}
{\sc S.~Schwede and B.~Shipley}, {\em Stable model categories are categories of
  modules}, Topology, 42 (2003), pp.~103--153.

\bibitem{Shulman}
{\sc M.~G. Shulman}, {\em Equivariant local coefficients and the {RO(G)}-graded
  cohomology of classifying spaces}.
\newblock Preprint, arXiv 1405.1770, 2014.

\bibitem{stacks}
{\sc {Stacks project authors}}, {\em The stacks project}.
\newblock \url{https://stacks.math.columbia.edu}, 2025.

\bibitem{Ull13}
{\sc J.~Ullman}, {\em On the slice spectral sequence}, Algebr. Geom. Topol., 13
  (2013), pp.~1743--1755.

\bibitem{CS}
{\sc K.~\v{C}esnavi\v{c}ius and P.~Scholze}, {\em Purity for flat cohomology},
  Ann. of Math. (2), 199 (2024), pp.~51--180.

\bibitem{Weibel}
{\sc C.~A. Weibel}, {\em An introduction to homological algebra}, vol.~38 of
  Cambridge Studies in Advanced Mathematics, Cambridge University Press,
  Cambridge, 1994.

\bibitem{Yang}
{\sc L.~Yang}, {\em A filtered {H}ochschild-{K}ostant-{R}osenberg theorem for
  real {H}ochschild homology}.
\newblock Preprint, arXiv 2503.03024, 2025.

\end{thebibliography}
\bibliographystyle{siam}

\end{document}